\renewcommand\subsection{\leftskip 0pt\@startsection{subsection}{2}{\z@}%
                                     {-3.25ex\@plus -1ex \@minus -.2ex}%
                                     {1.5ex \@plus .2ex}%
                                     {\normalfont\normalsize\bfseries}}
\renewcommand\subsubsection{\@startsection{subsubsection}{3}{\z@}%
                                     {-3.25ex\@plus -1ex \@minus -.2ex}%
                                     {1.5ex \@plus .2ex}%
                                     {\normalfont\normalsize\bfseries\leftskip 3ex}}
\setlist[enumerate]{label*=\arabic*.}
\title{On the parametrized Tate construction}
\author{J.D. Quigley}
\address{
Dept. of Mathematics \\
Cornell University \\
Ithaca, NY, U.S.A.
}
\email{jdq27@cornell.edu}
\author{Jay Shah}
\address{Fachbereich Mathematik und Informatik, WWU Münster, 48149 M\"{u}nster, Germany}
\email{jayhshah@gmail.com}
\begin{document}

\tikzcdset{arrow style=tikz, diagrams={>=stealth}}

\begin{abstract} 
We introduce and study a genuine equivariant refinement of the Tate construction associated to an extension $\widehat{G}$ of a finite group $G$ by a compact Lie group $K$, which we call the parametrized Tate construction $(-)^{t_G K}$. Our main theorem establishes the coincidence of three conceptually distinct approaches to its construction when $K$ is also finite: one via recollement theory for the $K$-free $\widehat{G}$-family, another via parametrized ambidexterity for $G$-local systems, and the last via parametrized assembly maps. We also show that $(-)^{t_G K}$ uniquely admits the structure of a lax $G$-symmetric monoidal functor, thereby refining a theorem of Nikolaus and Scholze. Along the way, we apply a theorem of the second author to reprove a result of Ayala--Mazel-Gee--Rozenblyum on reconstructing a genuine $G$-spectrum from its geometric fixed points; our method of proof further yields a formula for the geometric fixed points of an $\cF$-complete $G$-spectrum for any $G$-family $\cF$.
\end{abstract}

\date{\today}
\maketitle

\tableofcontents

\section{Introduction}

Let $G$ be a finite group and $M$ a $G$-module. The \emph{Tate cohomology} $\widehat{H}^*(G;M)$ of $G$ with coefficients in $M$ was introduced by Tate in \cite{Tat52}. It combines information about three important invariants: group cohomology $H^*(G;M)$, group homology $H_*(G;M)$, and the additive norm map $\Nm: M_G \to M^G$ from coinvariants to invariants. Swan \cite{Swa60} used Tate cohomology to define an equivariant cohomology theory for $G$-spaces.

Greenlees \cite{Gre87} initiated the study of a vast generalization of Tate cohomology using equivariant homotopy theory. Let $X$ be a spectrum with $G$-action. The \emph{Tate construction} $X^{tG}$ is defined by
$$X^{tG} \coloneq (\widetilde{EG} \wedge F(EG_+, X) )^G,$$
where $EG$ is a contractible space with free $G$-action and $\widetilde{EG}$ is the cofiber of the map $EG_+ \to S^0$ which collapses $EG$ to the non-basepoint. This construction generalizes classical Tate cohomology by taking $X = HM$ to be the Eilenberg--MacLane spectrum for a $G$-module $M$, since
$$\pi_*(HM^{tG}) \cong \widehat{H}^{-*}(G;M).$$
The Tate construction was generalized to compact Lie groups and studied extensively by Greenlees and May in \cite{GM95}, where they constructed the primary computational tool for analyzing Tate constructions, the Tate spectral sequence.

The Tate construction has many applications in algebraic topology. In chromatic homotopy theory, the Tate spectrum often decreases chromatic complexity \cite{AMS98, BR19, DM84, DJKMW86, GM95, GS96, HS96} and its vanishing controls descent \cite{MM15, Rog08, mathew2017xamples}. In trace methods, the Tate construction arises when computing topological cyclic homology, an approximation to algebraic K-theory \cite{HM97, HM03, NS18}. In equivariant stable homotopy theory, the Tate construction naturally arises in the construction of equivariant Adams spectral sequences \cite{Gre87, HK01} and analysis of the Hill--Hopkins--Ravenel slice spectral sequence \cite{HHR, MSZ20, Ull13}.

The input of the Tate construction is a spectrum with $G$-action and the output is an ordinary (nonequivariant) spectrum. The purpose of this paper is to study the \emph{parametrized Tate construction}, a generalization of the Tate construction better suited for studying genuine\footnote{In this paper, we usually drop the qualifier ``genuine'' and simply refer to these as $G$-spectra.} $G$-spectra. The input of the parametrized Tate construction will be a $G$-spectrum (for a finite group $G$) with a twisted action by a compact Lie group $K$ and the output will be a $G$-spectrum.\footnote{Note the switch of notation: $K$ has now taken the role that $G$ used to play.} To make the idea of twisted action precise, we will use the formalism of \emph{parametrized $\infty$-categories} as developed by Barwick--Dotto--Glasman--Nardin and the second author \cite{Exp1,Exp2,Exp2b,nardin,paramalg}. Let $\sO_G$ denote the orbit category of $G$.
\begin{dfn}
A \emph{$G$-$\infty$-category} resp. \emph{$G$-space} is a cocartesian resp. left fibration $C \to \sO_G^{\op}$. A \emph{$G$-functor} $F: C \to D$ is then a morphism over $\sO_G^{\op}$ that preserves cocartesian edges.
\end{dfn}

\begin{rem}
Under the straightening correspondence, a $G$-$\infty$-category resp. $G$-space $C$ is equivalently specified by a presheaf on $\sO_G$ valued in $\infty$-categories resp. spaces, while a $G$-functor corresponds to a natural transformation of presheaves.
\end{rem}

\begin{exm}
We have the $G$-$\infty$-categories $\underline{\Spc}^G$ and $\underline{\Sp}^G$ of $G$-spaces and $G$-spectra. Their respective fibers over an orbit $U \cong G/H$ are given by the $\infty$-categories $\Spc^H \simeq \Fun(\sO_H^{\op}, \Spc)$ and $\Sp^H$ of $H$-spaces and $H$-spectra, and the cocartesian edges encode the functoriality of restriction and conjugation.
\end{exm}

Now let
$$\psi = [1 \to K \to \widehat{G} \to G \to 1]$$
be a group extension, regard $B K$ as a space with $G$-action via the Kan fibration $B \widehat{G} \to B G$, and let $B^{\psi}_G K$ be the $G$-space given by (the unstraightening of) the right Kan extension of $B K$ along the inclusion $B G \subset \sO_G^{\op}$.

\begin{dfn}[{\cref{dfn:BorelGSpectraRelativeToNormalSubgroup}}]
A \emph{$G$-spectrum with $\psi$-twisted $K$-action} is a $G$-functor $X: B_G^{\psi} K \to \underline{\Sp}^G$.
\end{dfn}

Suppose for now that $K$ is also finite. The most expeditious definition of the parametrized Tate construction proceeds through the following theorem, a parametrized analogue of the canonical embedding of spectra with $G$-action into $G$-spectra as the Borel complete (i.e., cofree) $G$-spectra. Recall that given a $G$-family $\cF$ with universal $G$-space $E \cF$, a $G$-spectrum $X$ is \emph{$\cF$-torsion} if $E \cF_+ \wedge X \xto{\simeq} X$ and \emph{$\cF$-complete} if $X \xto{\simeq} F(E \cF_+, X)$, cf. \cref{SS:families}.

\begin{ntn}[{\cref{ntn:NFreeFamily}}]
Let $\Gamma_K$ be the $\widehat{G}$-family of \emph{$K$-free} subgroups, i.e., those $H \leq \widehat{G}$ such that $H \cap K = 1$.
\end{ntn}

\begin{thmx}[{\cref{thm:BorelSpectraAsCompleteObjects}}] \label{thmA}
There exists a symmetric monoidal restriction functor\footnote{We endow $\Fun_{G}(B_{G}^\psi K, \underline{\Sp}^{G})$ with the pointwise symmetric monoidal structure of \cref{dfn:S-PointwiseMonoidal}.}
$$j^*: \Sp^{\widehat{G}} \to \Fun_{G}(B_{G}^\psi K, \underline{\Sp}^{G})$$
that participates in an adjoint triple
\[ \begin{tikzcd}[column sep=6ex]
\Fun_{G}(B_{G}^\psi K, \underline{\Sp}^{G}) \ar[shift left = 3, hookrightarrow]{r}{j_!} \ar[shift right = 3, hookrightarrow]{r}[swap]{j_*} & \Sp^{\widehat{G}} \ar{l}[description]{j^*}
\end{tikzcd} \]
in which $j_!$ and $j_*$ are fully faithful and embed as the $\Gamma_K$-torsion and $\Gamma_K$-complete $\widehat{G}$-spectra, respectively.
\end{thmx}

\begin{dfn}\label{dfn:FirstDefn}
The \emph{parametrized Tate construction} 
$$(-)^{t_K G} : \Fun_{G}(B_{G}^\psi K, \underline{\Sp}^{G}) \to \Sp^{G}$$
is the composite lax symmetric monoidal functor
$$\Fun_{G}(B_{G}^\psi K, \underline{\Sp}^{G}) \overset{j_*}{\hookrightarrow} \Sp^{\widehat{G}} \xrightarrow{- \wedge \widetilde{E \Gamma_K}} \Sp^{\widehat{G}} \xrightarrow{\Psi^K} \Sp^{G},$$
where
\begin{enumerate}
\item $j_*$ is the embedding of \cref{thmA},
\item $\widetilde{E\Gamma_K}$ is the cofiber of the map ${E\Gamma_K}_+ \to S^0$, and
\item $\Psi^K$ is the categorical $K$-fixed points.\footnote{We write this as $\Psi^K$ to distinguish it from the spectrum-valued functor of categorical $K$-fixed points $(-)^K: \Sp^{\widehat{G}} \to \Sp$.}
\end{enumerate}
\end{dfn}

\begin{exm} \begin{enumerate}[leftmargin=*]
\item If we take
$$\psi = [1 \to K \to K \to 1 \to 1],$$ 
then the parametrized Tate construction recovers the ordinary $K$-Tate construction. 

\item Let $\mu_{p^n}$ be the group of $p^n$th roots of unity with $C_2$-action given by inversion and let
$$\psi = [1 \to \mu_{p^n} \to D_{2p^n} = \mu_{p^n} \rtimes C^2 \to C_2 \to 1].$$
The associated parametrized Tate construction $(-)^{t_{C_2} \mu_{p^n}}$ features extensively in the authors' work on real cyclotomic spectra \cite{QS21b}. We discuss this further in \cref{Rmk:KR}.

\item The case 
$$\psi = [1 \to \mu_2^{\times 2^{n-1}} \to \mu_2^{\times 2^{n-1}} \rtimes C_{2^n} \to C_{2^n} \to 1],$$
where $C_{2^n}$ acts by cyclically permuting the factors of $\mu_2^{\times 2^{n-1}}$, is analyzed in forthcoming work of the first author with Chatham--Li--Lorman. The $n=1$ case was used in \cite{LLQ19} to obtain a Tate splitting for real Johnson--Wilson theories and in \cite{Qui19b} to understand the $C_2$-equivariant stable stems. These applications are outlined further in \cref{Rmk:ERn}. 
\end{enumerate}
\end{exm}

\begin{rem}[Extension to compact Lie groups]
With suitable modifications, our proof of \cref{thmA} (and consequently, \cref{dfn:FirstDefn}) also applies in the more general situation where $K$ is a compact Lie group. The reason that we state and prove \cref{thmA} only when $K$ is finite is due to our choice of foundations for equivariant stable homotopy theory. In this paper, we adopt the foundations laid down by Bachmann and Hoyois \cite[\S 9]{BachmannHoyoisNorms}, who attach to every profinite groupoid $X$ a presentable, stable, and symmetric monoidal $\infty$-category $\SH(X)$ such that for $X = BG$, $\SH(BG)$ is equivalent to the underlying $\infty$-category $\Sp^G$ of the category of orthogonal $G$-spectra. To avoid mixing different approaches to the foundations of $G$-spectra, we then wish to avoid any mention of $\Sp^G$ for an infinite compact Lie group $G$. Instead, we will define the parametrized Tate construction when $K$ is compact Lie via the machinery of parametrized assembly maps (cf. \cref{thmC}).
\end{rem}

To justify our claim that the parametrized Tate construction is a suitable genuine equivariant refinement of the Tate construction, we then have the following suite of basic results.

\begin{obs}[Norm cofiber sequence] \label{obs:NormCofiberSequence}
For the constant $G$-diagram functor
$$\delta: \Sp^G \to \Fun_G(B^{\psi}_G K, \underline{\Sp}^G),$$
define the \emph{parametrized homotopy orbits} $(-)_{h_G K}$ to be its left adjoint and the \emph{parametrized homotopy fixed points} to be its right adjoint.\footnote{These are the $G$-colimit and $G$-limit functors, respectively.} The parametrized Tate construction is then designed to ``measure the difference'' between $(-)_{h_G K}$ and $(-)^{h_G K}$.

More precisely, we have that the adjoint triple $j_! \dashv j^* \dashv j_*$ of \cref{thmA} is one half of the recollement on $\Sp^{\widehat{G}}$ determined by the idempotent $E_{\infty}$-algebra $\widetilde{E \Gamma_K}$. Recollement theory then yields the cofiber sequence
\begin{equation*} j_!(-) \to j_*(-) \to j_*(-) \wedge \widetilde{E \Gamma_K}. \end{equation*}
Since $\Psi^K$ is right adjoint to the inflation functor $\inf_G^{\widehat{G}}$ and $j^* \inf_G^{\widehat{G}} \simeq \delta$, we get that $\Psi^K j_* \simeq (-)^{h_G K}$. We also have the Adams-type isomorphism $\Psi^K j_! \simeq (-)_{h_G K}$ (\cref{AdamsIsomorphism}). Applying $\Psi^K$ then yields the \emph{norm cofiber sequence}
\[ (-)_{h_G K} \to (-)^{h_G K} \to (-)^{t_G K}. \]
\end{obs}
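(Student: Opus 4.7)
The plan is to execute the strategy outlined in the statement: produce a cofiber sequence of functors $\Fun_{G}(B_{G}^\psi K, \underline{\Sp}^{G}) \to \Sp^{\widehat{G}}$ via recollement theory, then apply $\Psi^K$ and identify the three resulting terms with $(-)_{h_G K}$, $(-)^{h_G K}$, and $(-)^{t_G K}$.

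First I would check that the adjoint triple $j_! \dashv j^* \dashv j_*$ of \cref{thmA} is half of a stable recollement on $\Sp^{\widehat{G}}$ whose smashing localization is $(-) \wedge \widetilde{E\Gamma_K}$. Since $j_!$ embeds as the $\Gamma_K$-torsion objects and $j_*$ as the $\Gamma_K$-complete objects---precisely the subcategories on which $E\Gamma_K{}_+ \wedge (-)$, respectively $F(E\Gamma_K{}_+,-)$, is an equivalence---the idempotent $E_\infty$-algebra classifying the recollement is $\widetilde{E\Gamma_K}$. Smashing the cofiber sequence $E\Gamma_K{}_+ \to S^0 \to \widetilde{E\Gamma_K}$ with $j_*(Y)$ then produces the cofiber sequence
$$E\Gamma_K{}_+ \wedge j_*(Y) \to j_*(Y) \to j_*(Y) \wedge \widetilde{E\Gamma_K}.$$
The linchpin identification is $E\Gamma_K{}_+ \wedge j_*(Y) \simeq j_!(Y)$: the functor $E\Gamma_K{}_+ \wedge (-)$ is the colocalization onto $\Gamma_K$-torsion objects, hence coincides with $j_! j^*$, and $j^* j_* \simeq \mathrm{id}$ by full faithfulness of $j_*$.

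Applying the exact functor $\Psi^K$ then yields a cofiber sequence in $\Sp^G$ whose third term is $Y^{t_G K}$ by \cref{dfn:FirstDefn}. To identify the middle term, I would take right adjoints in the equivalence $\delta \simeq j^* \inf_G^{\widehat{G}}$: since $j_*$ is right adjoint to $j^*$ and $\Psi^K$ is right adjoint to $\inf_G^{\widehat{G}}$, uniqueness of adjoints gives $(-)^{h_G K} \simeq \Psi^K j_*$. The first term $\Psi^K j_!(Y) \simeq Y_{h_G K}$ is then precisely the content of the parametrized Adams isomorphism \cref{AdamsIsomorphism}.

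The formal ingredients---the recollement cofiber sequence, the identification $E\Gamma_K{}_+ \wedge (-) \simeq j_! j^*$, and the adjunction-based identification $\Psi^K j_* \simeq (-)^{h_G K}$---are essentially bookkeeping once \cref{thmA} is in hand, so the main substantive input is the parametrized Adams isomorphism of \cref{AdamsIsomorphism} (a Wirthm\"uller-type statement in the parametrized setting). Verifying the auxiliary input $\delta \simeq j^* \inf_G^{\widehat{G}}$ is routine: inflating a $G$-spectrum $X$ to $\widehat{G}$ and restricting along $j$ yields the constant $G$-diagram on $X$ over $B_G^\psi K$.
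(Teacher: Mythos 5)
Your proposal is correct and follows essentially the same route the paper sketches in the Observation itself: deriving the recollement cofiber sequence via the idempotent algebra $\widetilde{E\Gamma_K}$ and the identity $E\Gamma_K{}_+ \wedge (-) \simeq j_!j^*$, identifying the middle term by passing to right adjoints in $\delta \simeq j^* \inf_G^{\widehat{G}}$ (which the paper carries out rigorously as \cref{lm:IdentifyingDiagonalAsComposition}), and quoting the parametrized Adams isomorphism \cref{AdamsIsomorphism} for the fiber term. Your final remark that $\delta \simeq j^* \inf_G^{\widehat{G}}$ is ``routine'' slightly undersells the diagram chase the paper does for it, but the intuition you give is the right one.
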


\begin{obs}[Geometric model] \label{obs:GeometricModel}
Let $Y$ be a $\widehat{G}$-spectrum. By monoidal recollement theory, we have that $j_! j^* Y \simeq E {\Gamma_K}_+ \wedge Y$ and $j_* j^* Y \simeq F(E {\Gamma_K}_+, Y)$. If we then consider $X = j^* Y$, we obtain a \emph{geometric model} for the parametrized Tate construction
\[ X^{t_G K} \simeq \Psi^K (F(E {\Gamma_K}_+, Y) \wedge \widetilde{E\Gamma_K}).\footnote{As we have already seen, classically it is customary to conflate $X$ and $Y$ and simply write $X^{t K} \simeq (F(E K_+, X) \wedge \widetilde{E K})^K$.} \]

This identifies $(-)^{t_G K}$ as a special instance of the Greenlees--May generalized Tate construction for a $\widehat{G}$-family \cite[Part IV]{GM95} and hence yields a spectral sequence with $E_2$-term given by certain Amitsur--Dress--Tate cohomology groups (\cref{rem:TateSS}).
\end{obs}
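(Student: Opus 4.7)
The plan is to unfold \cref{dfn:FirstDefn} after substituting $X = j^\ast Y$, which reduces the claim to the two asserted identifications $j_! j^\ast Y \simeq E\Gamma_K{}_+ \wedge Y$ and $j_\ast j^\ast Y \simeq F(E\Gamma_K{}_+, Y)$. The displayed geometric model is then just the definition with $j_\ast(j^\ast Y)$ rewritten via the second identification.

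To produce the identifications, I would first match the adjoint triple $j_! \dashv j^\ast \dashv j_\ast$ of \cref{thmA} with the monoidal recollement on $\Sp^{\widehat{G}}$ associated to the idempotent $E_\infty$-algebra $\widetilde{E\Gamma_K}$. By \cref{thmA}, the essential images of $j_!$ and $j_\ast$ are the $\Gamma_K$-torsion and $\Gamma_K$-complete $\widehat{G}$-spectra, respectively. On the other hand, the cofiber sequence $E\Gamma_K{}_+ \to S^0 \to \widetilde{E\Gamma_K}$ exhibits $\widetilde{E\Gamma_K}$ as an idempotent $E_\infty$-algebra, and by the standard yoga of monoidal recollements (as in, e.g., \cite{NS18} or the recollement chapter of Lurie's \emph{HA}), the two halves of the corresponding recollement are precisely the full subcategories of $Z$ for which $E\Gamma_K{}_+ \wedge Z \xrightarrow{\simeq} Z$ or $Z \xrightarrow{\simeq} F(E\Gamma_K{}_+, Z)$, i.e., the $\Gamma_K$-torsion and $\Gamma_K$-complete objects. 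Hence the two recollements coincide.

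With this matching in hand, the recollement formulas give the counit $j_! j^\ast Y \to Y$ as the $\Gamma_K$-torsion coreflection, equivalent to $E\Gamma_K{}_+ \wedge Y \to Y$, and the unit $Y \to j_\ast j^\ast Y$ as the $\Gamma_K$-complete reflection, equivalent to $Y \to F(E\Gamma_K{}_+, Y)$. Plugging $X = j^\ast Y$ into \cref{dfn:FirstDefn} then gives
\[ X^{t_G K} = \Psi^K\bigl(j_\ast X \wedge \widetilde{E\Gamma_K}\bigr) \simeq \Psi^K\bigl(F(E\Gamma_K{}_+, Y) \wedge \widetilde{E\Gamma_K}\bigr), \]
as desired. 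I do not expect any genuine obstacle: the only substantive input is the identification of the \cref{thmA} recollement with the one generated by $\widetilde{E\Gamma_K}$, which is forced by both having the same torsion/complete halves, and every other step is formal.
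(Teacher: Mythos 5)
Your proposal is correct and matches the paper's own route: the paper likewise identifies the adjoint triple of \cref{thmA} with the recollement on $\Sp^{\widehat{G}}$ generated by the idempotent $E_\infty$-algebra $\widetilde{E\Gamma_K}$ (this is exactly what \cref{obs:NormCofiberSequence} and \cref{thm:BorelSpectraAsCompleteObjects} record, with the torsion/complete essential images forcing the identification since reflections and coreflections onto a fixed full subcategory are unique), and then reads off $j_!j^*Y \simeq E\Gamma_{K+} \wedge Y$ and $j_*j^*Y \simeq F(E\Gamma_{K+},Y)$ from standard monoidal recollement theory before substituting into \cref{dfn:FirstDefn}. No gaps.
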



For the next observation, we write $\underline{\Fun}_G$ for the internal hom in $G$-$\infty$-categories and recall that for a $G$-cocomplete $G$-$\infty$-category such as $\underline{\Sp}^G$ and a (small) $G$-$\infty$-category $I$, the $G$-colimit functor
$$\colim^G: \Fun_G(I, \underline{\Sp}^G) \to \Sp^G$$
always refines to a $G$-functor $\underline{\Fun}_G(I, \underline{\Sp}^G) \to \underline{\Sp}^G$ such that the fiber over $G/H$ is given by the $H$-colimit. If $I = B_G^{\psi} K$, we then write $(-)_{\underline{h}_G K}$ for the $G$-colimit $G$-functor. Likewise, $\underline{\Sp}^G$ is also $G$-complete, and we write $(-)^{\underline{h}_G K}$ for the $G$-limit $G$-functor.

\begin{obs}[{Compatibility with restriction, \cref{rem:ParamTateCompatibleRestriction}}] \label{introobs1}
Let $X$ be a $G$-spectrum with $\psi$-twisted $K$-action, $H \leq G$ a subgroup, $\widehat{H} = \pi^{-1}(H)$ for the quotient map $\pi: \widehat{G} \to G$, and $\psi_H = [1 \to K \to \widehat{H} \to H \to 1].$ Regard $X$ as a $H$-spectrum with $\psi_H$-twisted $K$-action via the restriction functor
\[ \Fun_G(B^{\psi}_G K, \underline{\Sp}^G) \to \Fun_H(B^{\psi_H}_H K, \underline{\Sp}^H) \]
given by pulling back along $\sO_H^{\op} \simeq (\sO_G^{\op})^{(G/H)/} \to \sO_G^{\op}$. Then $\res^G_H X^{t_G K} \simeq X^{t_H K}$. In particular, the underlying spectrum of $X^{t_G K}$ is $X^{t K}$. More generally, $(-)^{t_G K}$ refines to a $G$-functor
\[ (-)^{\underline{t}_G K}: \underline{\Fun}_G(B^{\psi}_G K, \underline{\Sp}^G) \to \underline{\Sp}^G, \]
and the norm cofiber sequence refines to a sequence of natural transformations of $G$-functors
\[ (-)_{\underline{h}_G K} \to (-)^{\underline{h}_G K} \to (-)^{\underline{t}_G K} \]
whose fiber over $G/H$ is given by the norm cofiber sequence
\[ (-)_{h_H K} \to (-)^{h_H K} \to (-)^{t_H K}. \]
\end{obs}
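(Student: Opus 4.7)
My plan is to refine each of the three functors comprising Definition \ref{dfn:FirstDefn} to a $G$-functor, compose them, and then identify fibers. First, I would refine the adjoint triple of Theorem \ref{thmA} to an adjoint triple of $G$-functors between $\underline{\Fun}_G(B^{\psi}_G K, \underline{\Sp}^G)$ and a $G$-$\infty$-category $\underline{\Sp}^{\widehat{G}}_G$ of ``$\widehat{G}$-spectra parametrized over $G$'', whose fiber over $G/H$ is $\Sp^{\widehat{H}}$ with $\widehat{H} = \pi^{-1}(H)$. The key point is that for any $H \leq G$, the pullback of $\psi$ to $H$ is the extension $\psi_H$, and the pullback of $\Gamma_K$ to $\widehat{H}$ is again the $\widehat{H}$-family of $K$-free subgroups; so Theorem \ref{thmA} applies at every fiber. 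Since the recollement is induced by the idempotent $E_\infty$-algebra $\widetilde{E \Gamma_K}$ and restriction preserves idempotent algebras, $\widetilde{E \Gamma_K}$ lifts to a $G$-commutative idempotent algebra in $\underline{\Sp}^{\widehat{G}}_G$, and the associated parametrized recollement then refines $j_!$, $j^*$, and $j_*$ simultaneously.

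Second, I would invoke the $G$-parametrized inflation--fixed points adjunction: the fixed-point functors $\Psi^K \colon \Sp^{\widehat{H}} \to \Sp^H$ are right adjoint to inflation and assemble into a $G$-functor $\underline{\Psi}^K \colon \underline{\Sp}^{\widehat{G}}_G \to \underline{\Sp}^G$ whose fiber over $G/H$ is $\Psi^K \colon \Sp^{\widehat{H}} \to \Sp^H$. Composing $\underline{j}_*$, the $G$-functor given by smashing with the now $G$-commutative $\widetilde{E\Gamma_K}$, and $\underline{\Psi}^K$ defines the desired $G$-functor $(-)^{\underline{t}_G K}$. By construction, the fiber over $G/H$ is $(-)^{t_H K}$, so passing to fibers yields the restriction compatibility $\res^G_H X^{t_G K} \simeq X^{t_H K}$, and specializing to $H = 1$ gives the underlying spectrum statement.

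Finally, applying $\underline{\Psi}^K$ to the cofiber sequence $\underline{j}_! \to \underline{j}_* \to \underline{j}_*(- \wedge \widetilde{E \Gamma_K})$ supplied by the parametrized monoidal recollement yields a cofiber sequence of $G$-functors whose last two terms are, by definition, $(-)^{\underline{h}_G K}$ and $(-)^{\underline{t}_G K}$. The identification $\underline{\Psi}^K \underline{j}_! \simeq (-)_{\underline{h}_G K}$ is a $G$-parametrized Adams isomorphism, obtained by the same argument as in Observation \ref{obs:NormCofiberSequence} applied at each fiber and natural in $H$ via the proper base change exhibited in the previous paragraph. The fiber computation for the refined norm cofiber sequence is then immediate. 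I expect the main obstacle to be setting up $\underline{\Sp}^{\widehat{G}}_G$ correctly and checking that the classical recollement, the $\widetilde{E\Gamma_K}$-smash product, and the inflation--fixed points adjunction all assemble $G$-parametrically; once this infrastructure is in place, the argument proceeds essentially fiberwise by naturality.
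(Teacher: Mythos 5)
Your proposal is correct and follows essentially the same route as the paper: refine the $\Gamma_K$-recollement of \cref{thmA} $G$-parametrically (using that restriction preserves the $K$-free family and hence the torsion/complete/local subcategories), use the base-change equivalence $\res^G_H\,\Psi^K \simeq \Psi^K\,\res^{\widehat{G}}_{\widehat{H}}$ to assemble categorical fixed points into a $G$-functor over $\sO_G^{\op}$, and identify the refined norm cofiber sequence fiberwise via the Adams isomorphism. The only cosmetic difference is that the paper first builds the construction over all of $\sO_{\widehat{G}}^{\op}$ (where it fails to be a parametrized functor, since $\Psi^K$ only laxly commutes with general restrictions) and then restricts the base to $\sO_G^{\op}$, whereas you work over $\sO_G^{\op}$ from the start, which avoids that subtlety directly.
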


\begin{obs}[{Residual action, \cref{prp:ResidualAction}}] \label{introobs2}
Let $X$ as above. Suppose that $\widehat{G} \cong K \rtimes G$ for a $G$-action on $K$, and let $L \trianglelefteq K$ be a normal subgroup such that the inclusion is $G$-equivariant. Consider the group extensions
$$\psi' = [L \to L \rtimes G \to G], \quad \psi''=[K/L \to K/L \rtimes G \to G],$$
and view $X$ as a $G$-spectrum with $\psi'$-twisted $L$-action via restriction along $B^{\psi'}_G L \to B^{\psi}_G K$. Then $X^{t_G L}$ canonically acquires a ``residual action'' in the sense of lifting to a $G$-spectrum with $\psi''$-twisted $K/L$-action, and we have a fiber sequence of $G$-spectra
\[ (X_{h_G L})^{t_{G} K/L} \to X^{t_G K} \to (X^{t_G L})^{h_G K/L}.\footnote{Note that $X_{h_G L}$ acquires a residual action by a more basic result of parametrized higher category theory. Namely, consider the $G$-left Kan extension $\rho_! X$ of $X$ along $\rho: B^{\psi}_G K \to B^{\psi}_G K/L$. By definition, $\rho_! X$ is a $G$-spectrum with $\psi''$-twisted $K/L$-action, and since $\rho$ has $G$-fiber $B^{\psi}_G L$ (with the basepoint of $B^{\psi}_G K/L$ given by the semidirect product splitting), the underlying $G$-spectrum of $\rho_! X$ is indeed $X_{h_G L}$.} \]
\end{obs}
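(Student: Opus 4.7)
The plan is to first upgrade $X^{t_G L}$ from a $G$-spectrum to a $\psi''$-twisted $K/L$-action $G$-spectrum, and then derive the fiber sequence from a $3 \times 3$ diagram comparing two norm cofiber sequences.

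For the residual action, let $\rho: B^{\psi}_G K \to B^{\psi''}_G K/L$ be the $G$-functor induced by the quotient $K \to K/L$. The footnote to \cref{introobs2} already identifies the underlying $G$-spectrum of the $G$-left Kan extension $\rho_! X$ with $X_{h_G L}$, and dually the underlying $G$-spectrum of the $G$-right Kan extension $\rho_* X$ is $X^{h_G L}$. I would then invoke the parametrized norm map $\rho_! \to \rho_*$ supplied by the parametrized ambidexterity/assembly machinery (cf.~\cref{thmC}), define the $K/L$-object $X^{t_G L}$ as its cofiber in $\Fun_G(B^{\psi''}_G K/L, \underline{\Sp}^G)$, and verify that passing to the underlying $G$-spectrum recovers the pointwise $X^{t_G L}$ of \cref{dfn:FirstDefn}.

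With the residual action in hand, apply the functor $(-)^{h_G K/L}: \Fun_G(B^{\psi''}_G K/L, \underline{\Sp}^G) \to \Sp^G$ to the norm cofiber sequence of $K/L$-objects to obtain
\[ (X_{h_G L})^{h_G K/L} \to (X^{h_G L})^{h_G K/L} \to (X^{t_G L})^{h_G K/L}. \]
By transitivity of parametrized right Kan extension along $B^{\psi}_G K \xto{\rho} B^{\psi''}_G K/L \to \sO_G^{\op}$, the middle term is canonically $X^{h_G K}$. Compare this with the $K$-norm cofiber sequence $X_{h_G K} \to X^{h_G K} \to X^{t_G K}$ via the square
\[ \begin{tikzcd}
X_{h_G K} \ar{r} \ar{d} & X^{h_G K} \ar{d}{\simeq} \\
(X_{h_G L})^{h_G K/L} \ar{r} & (X^{h_G L})^{h_G K/L}
\end{tikzcd} \]
whose left vertical is the composite $X_{h_G K} \simeq (X_{h_G L})_{h_G K/L} \to (X_{h_G L})^{h_G K/L}$ of the transitivity equivalence with the $K/L$-norm applied to $X_{h_G L}$. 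Commutativity encodes naturality of parametrized norms under composition of $G$-functors. Extending to a $3 \times 3$ diagram in the stable $\infty$-category $\Sp^G$ and using that the middle vertical map is an equivalence, the fiber of the induced map $X^{t_G K} \to (X^{t_G L})^{h_G K/L}$ on rightmost cofibers is the cofiber of the left vertical map, which by the $K/L$-norm cofiber sequence applied to $X_{h_G L}$ is $(X_{h_G L})^{t_G K/L}$.

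The main obstacle is producing the norm map $\rho_! \to \rho_*$ as a natural transformation of $\Fun_G(B^{\psi''}_G K/L, \underline{\Sp}^G)$-valued $G$-functors rather than merely of $\Sp^G$-valued functors, so as to equip $X^{t_G L}$ with its residual action and supply the naturality needed for the comparison square. This reduces to the parametrized assembly-map technology together with a Beck--Chevalley-style transitivity for parametrized Kan extensions along the composite $\rho$ followed by the projection to $\sO_G^{\op}$; once these are packaged correctly, the remainder of the argument is formal cofiber-sequence bookkeeping in the stable setting.
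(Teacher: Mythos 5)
Your proposal is correct and is essentially the paper's own argument: the residual action is obtained as the cofiber of the relative norm $\Nm_{\rho}\colon \rho_! \to \rho_*$ — which the ambidexterity theory for $\LocSys^G(\underline{\Sp}^G)$ of \cref{SS:Ambi} already supplies for any weakly ambidextrous map of $G$-spaces, with base change \cite[Rem.~4.2.3]{HL13} along the pullback square identifying its underlying $G$-spectrum with $X^{t_G L}$ — and the fiber sequence then follows from the composition formula for norms \cite[Rem.~4.2.4]{HL13} applied to the factorization of $\rho_K$ through $B^{\psi''}_G K/L$, plus the same stable cofiber bookkeeping you describe. The ``main obstacle'' you flag is therefore already resolved by the machinery of \cref{section:NormMaps}, exactly as in the paper's proof of \cref{prp:ResidualAction}.
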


Before stating our deeper results, we pause to explain some applications of the parametrized Tate construction.

\subsection{Motivation and applications}\label{SS:Applications}

Our study of $G$-spectra with $\psi$-twisted $K$-action and the parametrized Tate construction $(-)^{t_K G}$ is motivated by a number of topics in equivariant stable homotopy theory. In particular, these concepts play a central role in our work on real algebraic K-theory, hyperreal oriented cohomology theories, and generalized Mahowald invariants. Our guiding philosophy is that the parametrized Tate construction should replace the ordinary Tate construction whenever a statement in classical stable homotopy theory might admit a genuine equivariant refinement. We discuss the implementation of this philosophy in the following remarks. 

\begin{rem}[Real algebraic K-theory]\label{Rmk:KR}
Hesselholt--Madsen \cite{HM13}, Schlichting \cite{Sch17}, Spitzweck \cite{spitzweck2016}, Heine--Spitzweck--Verdugo \cite{heine2019eal}, and Calm{\`e}s et. al. \cite{CDH+20a, CDH+20c, CDH+20b} have all defined \emph{real algebraic K-theory}.\footnote{Hesselholt--Madsen work with exact categories with duality and weak equivalences, while Schlichting works with dg categories with weak equivalences and duality. Spitzweck constructed a Grothendieck--Witt space and connective real K-theory $C_2$-spectrum for stable $\infty$-categories with duality, and subsequently Heine--Spitzweck--Verdugo defined the real algebraic K-theory of Waldhausen $\infty$-categories with genuine duality. In a separate line of development, Calm{\`e}s et. al. define the real algebraic K-theory of Poincar\'{e} $\infty$-categories, which were introduced by Lurie \cite{Lur13} in his course on algebraic $L$-theory and surgery.} All approaches ultimately yield a genuine $C_2$-equivariant refinement of algebraic K-theory whose categorical $C_2$-fixed points are Grothendieck--Witt theory (i.e., hermitian K-theory) and whose geometric $C_2$-fixed points are some flavor of algebraic L-theory.

By the Dundas--Goodwillie--McCarthy Theorem \cite{DGM12}, the algebraic K-theory of connective ring spectra is closely approximated by topological cyclic homology, an invariant obtained from the \emph{cyclotomic structure} on topological Hochschild homology. Cyclotomic structures were originally defined using genuine $S^1$-equivariant homotopy theory \cite{BHM93, HM97, BM16}. An incredible insight by Nikolaus--Scholze in \cite{NS18} is that for bounded-below spectra, cyclotomic structures can be described completely using Borel $S^1$-equivariant homotopy theory. The Tate construction features prominently in their perspective: given a bounded-below spectrum $X$ with $S^1$-action, a cyclotomic structure on $X$ amounts to a collection of $S^1$-equivariant \emph{Tate-valued Frobenius maps} $\varphi_p : X \to X^{tC_p}$, one for each prime $p$.

A genuine $C_2$-equivariant refinement of cyclotomic spectra, called \emph{real cyclotomic spectra}, was introduced by H{\o}genhaven in \cite{Hog16}. This notion mirrors the original definition of cyclotomic spectra and requires the use of genuine $O(2)$-equivariant homotopy theory. In \cite{QS21b}, we refine the approach of Nikolaus--Scholze to define real cyclotomic spectra using $C_2$-spectra with twisted $S^1$-action and the parametrized Tate construction as the receptacle of the real cyclotomic Frobenius, and prove that this recovers the genuine formulation provided that the underlying spectrum of the $C_2$-spectrum is bounded-below. This will be used to study real algebraic K-theory in future work; we refer the reader to the introduction of \cite{QS21b} for further discussion.

\end{rem}

The following result is a relatively straightforward application of the results mentioned above. It extends \cite[Thm. 16.1]{GM95} to the parametrized setting.

\begin{thmx}[{Inverse limit formula for the parametrized Tate construction, \cref{Thm:GM161}}]\label{MT:GM161}
Let $\psi = [K \to \widehat{G} \to G]$ be an extension with $K \subseteq H$ for each $H \notin \Gamma_K$. Suppose $V$ is as in \cref{Lem:VRestrictions} for $\cF = \Gamma_K$, i.e., $V$ is a $G$-representation with $V^H=0$ for $H \notin \Gamma_K$ and $V^H \neq 0$ for $H \in \Gamma_K$. For any $X \in \Sp^{\Phi \Gamma_K} \simeq \Sp^{G}$, we have
$$(j^* i_* X)^{t[\psi]} \simeq \lim_n ({B_{G}^{\psi} K}^{-nV} \otimes \Sigma X).$$
\end{thmx}

This formula for the parametrized Tate construction has several applications in equivariant stable homotopy theory. 

\begin{rem}[Tate blueshift]\label{Rmk:ERn}
\emph{Hyperreal oriented cohomology theories} are genuine $C_{2^n}$-spectra which generalize complex oriented cohomology theories, or cohomology theories with Thom isomorphisms for complex vector bundles. When $n=1$, a hyperreal oriented cohomology theory is a real oriented cohomology theory as introduced by Hu--Kriz in \cite{HK01}. Hyperreal oriented cohomology theories for $n>1$ were first applied in the Hill--Hopkins--Ravenel solution to the Kervaire invariant one problem in geometric topology \cite{HHR}. They have since been applied to study the stable homotopy groups of spheres \cite{HS20, HSWX18, LSWX19}, where their fixed points model the fixed points of Lubin--Tate spectra. 

The ordinary Tate construction was shown to decrease the height of certain complex oriented cohomology theories by Ando--Morava--Sadofsky \cite{AMS98}. The key computational input to their work is the isomorphism of graded rings
$$\pi_*(E^{tC_p}) \cong E^*((x))/[p](x),$$
where $[p](x)$ is the $p$-series of the formal group law associated to $E$. This isomorphism arises from a geometric description of the Tate construction proven by Greenlees--May in \cite[\S 16]{GM95}. The fact that it is an isomorphism of graded rings relies on the multiplicativity of the Tate construction (cf. \cite[Pg. 7]{GM95}). 

The parametrized Tate construction was used to produce analogous results for real oriented cohomology theories in \cite{LLQ19} and will be applied in forthcoming work of the first author with Chatham--Li--Lorman to study hyperreal oriented cohomology theories. The starting point for these computations is the observation that the parametrized Tate construction, instead of the ordinary Tate construction, can be used to access formal group law techniques for hyperreal oriented cohomology. \cref{MT:GM161} is used to construct spectrum-level splittings of the parametrized Tate construction. 
\end{rem}

\begin{rem}[Generalized Mahowald invariants]\label{Rmk:MI}
\cref{MT:GM161} and the generalized Segal conjecture (cf. \cite{AHJM88b} and \cref{Thm:Segal}) can be used to define a $G$-equivariant Mahowald invariant
$$M(-): \pi_\star^{G}(\s) \rightsquigarrow \pi_\star^{G}(\s)$$
whenever there exists a $\widehat{G}$-representation $V$ as in \cref{Lem:VRestrictions}. The classical Mahowald invariant may be used to produce Hopf invariant one elements in $\pi_*(\s)$: one has $\eta = M(2)$, $\nu = M(\eta)$, and $\sigma = M(\nu)$. Similarly, it was shown in \cite{Qui19b} that the $C_2$-equivariant Mahowald invariant gives rise to the $C_2$-equivariant Hopf invariant one elements in $\pi_\star^{C_2}(\s)$. It could be interesting to produce analogous elements for other groups. 

The Mahowald invariant can often be approximated by studying $E^{t\mu_p}$, where $E$ is a ring spectrum equipped with a trivial $\mu_p$-action. The multiplicativity of the ordinary Tate construction is often useful in these analyses.\footnote{For instance, it implies that the inverse limit Adams spectral sequence for $E^{t\mu_p}$ is multiplicative.} The multiplicativity of the parametrized Tate construction should have similar consequences for computing approximations to generalized Mahowald invariants. 
\end{rem}


\subsection{Deeper results and main theorems}

\cref{dfn:FirstDefn} of the parametrized Tate construction may be viewed as a generalization of Greenlees' geometric model for the ordinary Tate construction. Another important abstract perspective on Tate constructions is afforded by Lurie's theory of \emph{ambidexterity} \cite[\S 6.1.6]{HA}. Suppose $C$ is any semiadditive $\infty$-category that admits limits and colimits indexed by finite groupoids, suppose $K$ is a finite group, and let $X$ be an object in $C$ with $K$-action. Then one has an additive \emph{norm map}
$$\Nm: X_{h K} \to X^{h K}$$
whose cofiber defines the Tate construction $X^{t K}$. The vanishing of Tate (e.g., in the $T(n)$ or $K(n)$-local category \cite{Kuhn2004,Clausen2017}) then leads to a rich theory of \emph{higher semiadditivity} \cite{HL13,CSY18,CSY20,CSY21}.

Using the formalism of Beck--Chevalley fibrations, Hopkins and Lurie have set up a very general framework for producing norm maps in \cite[\S 4.1-2]{HL13}; to apply this theory in the above example, they then prove that the $\infty$-category $\LocSys(C)$ of local systems on $C$ constitutes a Beck--Chevalley fibration over $\Spc$ \cite[\S 4.3]{HL13}. We initiate the theory of \emph{parametrized ambidexterity} in this paper by proving that for a suitably $G$-cocomplete $G$-$\infty$-category $C$, the $\infty$-category $\LocSys^G(C)$ of $G$-local systems on $C$ is a Beck--Chevalley fibration over $\Spc^G$ (\cref{LocalSystemsIsBCFibration}). We then explicate the relevant $\pi$-finiteness and truncatedness conditions. These conditions turn out to be related to the notion of \emph{$G$-semiadditivity} studied in \cite{nardin}.

\begin{ntn}
Let $C$ be a $G$-$\infty$-category and let $U$ be a finite $G$-set with orbit decomposition $U \simeq \coprod_{i=1}^n U_i$. We then write $C_U \coloneq \prod_{i=1}^n C_{U_i}$. Moreover, for a map of finite $G$-sets $f: U \to V$, we write $f^*: C_V \to C_U$ for the evident restriction functor.
\end{ntn}

\begin{dfn} \label{dfn:admitCoproducts}
Let $C$ be a $G$-$\infty$-category. Then $C$ \emph{admits finite $G$-coproducts} if for every map of finite $G$-sets $f: U \to V$, $f^*: C_V \to C_U$ admits a left adjoint $f_!$, and the Beck--Chevalley condition is satisfied, i.e., for every pullback square of finite $G$-sets
\[ \begin{tikzcd}
U' \ar{r}{f'} \ar{d}{g'} & V' \ar{d}{g} \\ 
U \ar{r}{f} & V
\end{tikzcd} \]
the exchange transformation $f'_! g'^* \Rightarrow g^* f_!$ is an equivalence. Dually, we have the evident notion of when $C$ admits finite $G$-products, and we denote the resulting right adjoints as $f_*$.

Suppose now that $C$ admits finite $G$-products and $G$-coproducts. We say that $C$ is \emph{$G$-semiadditive} if $C$ is fiberwise pointed and for all maps of finite $G$-sets $f: U \to V$, the canonical natural transformation $\chi: f_! \Rightarrow f_*$ is an equivalence.\footnote{$\chi$ is constructed as in \cite[5.2]{nardin} or \cref{dfn:semiadditive}.}
\end{dfn}

We now have that for a suitably $G$-bicomplete $G$-semiadditive $G$-$\infty$-category $C$ and any $G$-functor $X: B^{\psi}_G K \to C$, one has a \emph{parametrized norm map}
\[ \Nm: X_{h_G K} \to X^{h_G K}. \]

\begin{thmx}[{\cref{thm:EquivalentTateConstructions}}] \label{thmB}
Suppose $C = \underline{\Sp}^G$. Then the two parametrized norm maps constructed via ambidexterity theory and recollement theory (\cref{obs:NormCofiberSequence}) coincide.
\end{thmx}

\begin{rem}
After \cref{thmB}, we establish \cref{introobs1} and \ref{introobs2} as a corollary of some general properties of norm maps established in \cite[\S 4.2]{HL13}.
\end{rem}

There is yet a third approach to the Tate construction via \emph{assembly maps} in the sense of Weiss--Williams \cite{WW95}, which was first studied by John Klein \cite{Klein2001} and later taken up in the $\infty$-categorical context by Nikolaus and Scholze \cite[\S I.3-4]{NS18}, who used it together with the multiplicative theory of the Verdier quotient to prove that the Tate construction uniquely admits a lax symmetric monoidal structure \cite[Thm.~I.4.1]{NS18}. We now let $K$ be a compact Lie group and write $\SS^{\mathfrak{a}}$ for $\Sigma^{\infty}$ of the one-point compactification of the adjoint representation $\mathfrak{a}$ of $K$. Suppose also that $\widehat{G} \cong K \rtimes G$ for some $G$-action on $K$,\footnote{We make this assumption to identify the dualizing $G$-spectrum as $\SS^{\mathfrak{a}}$; the parametrized assembly map itself exists for any $G$-space. We also don't need to suppose $\widehat{G} \cong K \rtimes G$ if $K$ is finite, since the dualizing $G$-spectrum is then necessarily the unit.} and note that $\SS^{\mathfrak{a}}$ canonically admits the structure of a $G$-spectrum with $\psi$-twisted $K$-action (\cref{obs:ActionsLieGroup}).

\begin{thmx} \label{thmC} There exists a parametrized assembly map
\[ (\SS^{\mathfrak{a}} \wedge -)_{\underline{h}_G K} \xto{\alpha} (-)^{\underline{h}_G K}: \underline{\Fun}_G(B^{\psi}_G K, \underline{\Sp}^G) \to \underline{\Sp}^G \]
such that:
\begin{enumerate}
\item If $K$ is finite, $\alpha$ coincides with the norm map $(-)_{\underline{h}_G K} \to (-)^{\underline{h}_G K}$ of \cref{introobs1}.
\item $\alpha$ is terminal among all natural transformations $\alpha': F \to (-)^{h_G K}$ from $G$-colimit preserving $G$-functors $F$.
\item $\alpha$ is an equivalence when restricted to the full $G$-subcategory $\underline{\Fun}_G(B^{\psi}_G K, \underline{\Sp}^G)^{\omega}$ of compact objects.
\end{enumerate}
Moreover, properties (2) and (3) uniquely specify $\alpha$.
\end{thmx}
\begin{proof}
Combine \cref{thm:paramTateGeneral}, \cref{rem:agreement}, and \cref{prp:DualizingSpectrum}.
\end{proof}
Given \cref{thmC}, we may unambiguously write $(-)^{\underline{t}_G K}$ for the cofiber of $\alpha$. We now wish to understand the multiplicative properties of $(-)^{\underline{t}_G K}$ as a $G$-functor. Note that, at least if $K$ is finite, \cref{dfn:FirstDefn} already endows $(-)^{t_G K}$ with the structure of a lax symmetric monoidal functor. However, in the context of $G$-$\infty$-categories, more elaborate multiplicative structure on $\underline{\Sp}^G$ in the form of the \emph{Hill-Hopkins-Ravenel norm functors} is present \cite{HHR,hillhopkins,BachmannHoyoisNorms}, and these endow $\underline{\Sp}^G$ with the structure of a \emph{$G$-symmetric monoidal $\infty$-category} (\cref{dfn:GSMC}). One also has a pointwise $G$-symmetric monoidal structure on $\underline{\Fun}_G(I, \underline{\Sp}^G)$ (\cref{exm:pointwiseGSMC}). We then don't expect to be able to uniquely characterize the lax symmetric monoidal structure on $(-)^{t_G K}$, but rather only a to-be-defined \emph{lax $G$-symmetric monoidal structure} on $(-)^{\underline{t}_G K}$. Indeed, by developing the theory of parametrized Verdier quotients, $G$-$\otimes$-ideals, and induced $G$-objects, we are able to prove:

\begin{thmx}[{\cref{cor:LaxGSymmetricMonoidalTate}}] \label{thmD}
The $G$-functor $(-)^{\underline{t}_G K}$ and natural transformation $\beta: (-)^{\underline{h}_G K} \to (-)^{\underline{t}_G K}$ uniquely admit the structure of a lax $G$-symmetric monoidal functor and morphism thereof.\footnote{As with \cite[Thm.~I.4.1(vi)]{NS18}, the uniqueness assertion is really about $\beta$ and not $(-)^{\underline{t}_G K}$ in isolation.}
\end{thmx}

\begin{wrn}
The universal property of the map $\beta$ in \cref{thmD} is with respect to mapping $(-)^{\underline{h}_G K}$ into lax $G$-symmetric monoidal functors that vanish on ``induced objects''. This turns out to be a subtle notion in the parametrized setting: see \cref{dfn:InducedObjects}.
\end{wrn}

\begin{rem}
In applications to real trace methods, \cref{thmD} is important since one wants the real topological cyclic homology of a $C_2$-$E_{\infty}$-algebra to again be a $C_2$-$E_{\infty}$-algebra.
\end{rem}


Lastly, it is imperative for applications to be able to understand the various geometric fixed points of the parametrized Tate construction in terms of computationally accessible spectra. To this end, we leverage recent advances in reconstructing $G$-spectra from their geometric fixed points \cite{Glasman17,AMGR-NaiveApproach} in order to give an explicit formula for the geometric fixed points of an $\cF$-complete $G$-spectrum, for $G$ a finite group and $\cF$ any family of subgroups of $G$. To explain our result, we need to introduce a few more definitions, cf. \cref{section:Reconstruction}. 

\begin{dfn}
Given a preordered set $(S, <)$, we let $\sd(S)$ denote its \emph{barycentric subdivision}. This is the category whose objects are \emph{strings} $[x_0 < x_1 < \cdots < x_n]$ in $S$ and where a morphism
\[ \overline{\alpha} : [x_0 < x_1 \cdots < x_n] \to [y_0 < y_1 < \cdots < y_m] \]
is the data of an injective map $\alpha:\into{[n]}{[m]}$ of totally ordered sets and a commutative diagram in $S$ (regarded as a category)
\[ \begin{tikzcd}
x_0 \ar{r} \ar{d}{\cong} & x_1 \ar{r} \ar{d}{\cong} & \cdots \ar{r} & x_n \ar{d}{\cong} \\ 
y_{f(0)} \ar{r} & y_{f(1)} \ar{r} & \cdots \ar{r} & y_{f(n)}
\end{tikzcd} \]
where the vertical maps are isomorphisms.\footnote{Note that $\sd(S)$ is then a category associated to a preordered set.} By a slight abuse of terminology, we call such morphisms \emph{string inclusions}.
\end{dfn}

\begin{dfn}
Let $\mathfrak{S}$ be the preordered set whose objects are subgroups of $G$ and where the relation is that of subconjugacy.
\end{dfn}

For any subgroup $H \leq G$, let $W_G H = N_G H / H$ denote the Weyl group, and recall that the geometric $H$-fixed points functor
$$(-)^{\phi H}: \Sp^G \to \Sp^{h W_G H} = \Fun(B W_G H, \Sp)$$
admits a fully faithful right adjoint $\iota_H$.

\begin{dfn}
Let $H_0$ and $H_1$ be subgroups of $G$. The \emph{generalized Tate functor} $\tau^{H_1}_{H_0}$ is defined to be the composite
\[ \begin{tikzcd}
\tau^{H_1}_{H_0}: \Sp^{h W_G H_0} \ar[hookrightarrow]{r}{\iota_{H_0}} & \Sp^G \ar{r}{(-)^{\phi H_1}} & \Sp^{h W_G H_1}.
\end{tikzcd} \]
\end{dfn}

\begin{exm}
\begin{enumerate}[leftmargin=*]
\item If $H_0$ is not subconjugate to $H_1$, then $\tau^{H_1}_{H_0}$ is the zero functor. If $H_0$ is conjugate to $H_1$, then $\tau^{H_1}_{H_0}$ is an equivalence.
\item If $H_0 = 1$ and $H_1 = G$, then $\tau^G$ is the \emph{proper Tate construction}. This agrees with the ordinary Tate construction if $G$ is cyclic of prime order, but not in general. For instance, if $G = C_{p^2}$, then
$$(-)^{\tau C_{p^2}} \simeq ((-)^{h C_p})^{t C_{p^2}/ C_p}.$$
\end{enumerate}
\end{exm}

\begin{obs}\label{rem:canonical1}
Given a sequence of properly subconjugate subgroups $H_0 < H_1 < H_2$, generalized Tate functors only \emph{left laxly} (i.e., oplaxly) compose: that is, one has a canonical natural transformation
$$\can: \tau^{H_2}_{H_0} \Rightarrow \tau^{H_2}_{H_1} \circ \tau^{H_1}_{H_0}$$
which fails to be an equivalence in general. More generally, for any string inclusion
$$\overline{\alpha}: [H_0 < \cdots < H_n] \to [K_0 < \cdots < K_m]$$
such that $\alpha(0) = 0$ and $\alpha(n) = m$, one has a canonical natural transformation\footnote{Note that we implicitly make the identifications $\Sp^{h W_G H_0} \simeq \Sp^{h W_G K_0}$ and $\Sp^{h W_G H_n} \simeq \Sp^{h W_G K_m}$ using the conjugacy relations. All issues of conjugacy are dealt with rigorously in the main body of the paper.}
$$ \can_{\overline{\alpha}}: \tau^{H_n}_{H_{n-1}} \circ \cdots \circ \tau^{H_1}_{H_0} \Rightarrow \tau^{K_m}_{K_{m-1}} \circ \cdots \circ \tau^{K_1}_{K_0}.$$
\end{obs}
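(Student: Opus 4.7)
The plan is to build each $\can_{\overline{\alpha}}$ purely out of the units of the adjunctions $(-)^{\phi H} \dashv \iota_H$. Because $\iota_H$ is fully faithful, the counit is invertible, but the unit $u^H : \mathrm{id}_{\Sp^G} \Rightarrow \iota_H \circ (-)^{\phi H}$ is generally not, and it is the only ingredient I need.

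For the basic case $[H_0 < H_2] \hookrightarrow [H_0 < H_1 < H_2]$, I whisker $u^{H_1}$ on the left by $(-)^{\phi H_2}$ and on the right by $\iota_{H_0}$, producing
\[ (-)^{\phi H_2} \circ \iota_{H_0} \;\Longrightarrow\; (-)^{\phi H_2} \circ \iota_{H_1} \circ (-)^{\phi H_1} \circ \iota_{H_0}, \]
which after unwinding definitions is precisely $\can: \tau^{H_2}_{H_0} \Rightarrow \tau^{H_2}_{H_1} \circ \tau^{H_1}_{H_0}$. For a general $\overline{\alpha}$ I expand both source and target as alternating composites of $\iota_{(-)}$'s and $(-)^{\phi(-)}$'s; since $H_i = K_{\alpha(i)}$ with $\alpha(0)=0$ and $\alpha(n)=m$, the target is obtained from the source by inserting a factor $\iota_{K_j} \circ (-)^{\phi K_j}$ for every $j \in \{1, \ldots, m-1\} \setminus \alpha([n])$ at the position corresponding to the unique $i$ with $\alpha(i) < j < \alpha(i+1)$. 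Whiskering the units $u^{K_j}$ at these spots produces the desired $\can_{\overline{\alpha}}$. Independence of the order of insertion is immediate from the interchange law, since units whiskered at disjoint positions commute; equivalently, one can factor $\overline{\alpha}$ as a sequence of single-element insertions and compose the associated basic canonical maps, with any two such factorizations yielding the same result for the same reason.

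Functoriality in $\overline{\alpha}$, i.e., compatibility of $\can_{(-)}$ with composition of string inclusions, then reduces to associativity of whiskering. The main obstacle in making this fully precise is coherence: one has to uniformly identify $\Sp^{h W_G H}$ with $\Sp^{h W_G H'}$ across the conjugation data packaged into the morphisms of $\sd(\mathfrak{S})$, rather than dealing with each subgroup individually. The cleanest way to handle this in the $\infty$-categorical setting is to organize the functors $(-)^{\phi H}$, $\iota_H$ together with the canonical maps as the structure of a single locally cocartesian fibration over $\sd(\mathfrak{S})$ (or the twisted arrow category of $\mathfrak{S}$); this is also the perspective that dovetails with the reconstruction-theoretic framework of \cite{AMGR-NaiveApproach} that will be invoked below.
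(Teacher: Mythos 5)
Your construction is correct and is essentially the paper's own: the canonical maps are obtained by inserting the units $\id \Rightarrow \iota_{K_j}(-)^{\phi K_j}$ at the missing positions (exactly as the paper spells out in the companion observation \cref{rem:canonical2}), and the coherence across conjugacy and composition of string inclusions is handled in the body of the paper precisely by packaging the $\tau^{K}_{H}$ as the pushforwards of the locally cocartesian fibration $\Sp^G_{\locus{\phi}} \to \fS$, which is the same device you propose. No gaps.
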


\begin{obs}\label{rem:canonical2}
For any string inclusion
$$\overline{\alpha}: [H_0 < \cdots < H_n] \to [K_0 < \cdots < K_m < H_0 < \cdots < H_n]$$
such that $\alpha$ is the inclusion of a convex subset, one has a natural transformation
\[ \can_{\overline{\alpha}}: \tau^{H_n}_{H_{n-1}} \circ \cdots \circ \tau^{H_{1}}_{H_0} \circ (-)^{\phi H_0} \Rightarrow \tau^{H_n}_{H_{n-1}} \circ \cdots \circ \tau^{K_1}_{K_0} \circ (-)^{\phi K_0} \]
given by applying $\tau^{H_n}_{H_{n-1}} \circ \cdots \circ \tau^{H_{1}}_{H_0} \circ (-)^{\phi H_0}$ to the composite of unit maps
\[ \id \Rightarrow \iota_{K_0} (-)^{\phi K_0} \Rightarrow \cdots \Rightarrow \iota_{K_n} (-)^{\phi K_n} \circ \cdots \circ \iota_{K_0} (-)^{\phi K_0}. \]
\end{obs}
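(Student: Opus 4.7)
The statement of \cref{rem:canonical2} is essentially a definition: it declares $\can_{\overline{\alpha}}$ to be the result of applying the functor $F \coloneq \tau^{H_n}_{H_{n-1}} \circ \cdots \circ \tau^{H_1}_{H_0} \circ (-)^{\phi H_0}$ to a specified natural transformation $\eta$ in $\Fun(\Sp^G, \Sp^G)$. So the plan is simply to verify (i) that $\eta$ is a well-defined natural transformation with the stated source and target, and (ii) that the whiskered transformation $F \eta$ has source and target matching the claimed ones after unfolding the definition of the generalized Tate functor.

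For (i), I would proceed as follows. Each adjunction $(-)^{\phi K_i} \dashv \iota_{K_i}$ supplies a unit natural transformation $\eta_i: \id_{\Sp^G} \Rightarrow \iota_{K_i}(-)^{\phi K_i}$. Horizontally composing these in the order $\eta_m \circ \eta_{m-1} \circ \cdots \circ \eta_0$ — i.e., inductively whiskering $\eta_{i+1}$ into the $\id$ slot of the previously built natural transformation — assembles to the single natural transformation
\[ \eta: \id \Rightarrow \iota_{K_m}(-)^{\phi K_m} \circ \cdots \circ \iota_{K_0}(-)^{\phi K_0}, \]
exactly as displayed in the statement. Naturality is automatic since it is a composite of unit transformations.

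For (ii), I whisker by $F$ on the left. The source is $F \circ \id = F$, which agrees with the stated source $\tau^{H_n}_{H_{n-1}} \circ \cdots \circ \tau^{H_1}_{H_0} \circ (-)^{\phi H_0}$ on the nose. The target is
\[ F \circ \iota_{K_m}(-)^{\phi K_m} \circ \cdots \circ \iota_{K_0}(-)^{\phi K_0}, \]
and I would then repeatedly apply the defining identity $(-)^{\phi H} \circ \iota_K \simeq \tau^H_K$, first to the innermost pair $(-)^{\phi H_0} \circ \iota_{K_m} = \tau^{H_0}_{K_m}$, then successively to $(-)^{\phi K_{i+1}} \circ \iota_{K_i} = \tau^{K_{i+1}}_{K_i}$. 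Chaining these identifications converts the target into $\tau^{H_n}_{H_{n-1}} \circ \cdots \circ \tau^{H_1}_{H_0} \circ \tau^{H_0}_{K_m} \circ \tau^{K_m}_{K_{m-1}} \circ \cdots \circ \tau^{K_1}_{K_0} \circ (-)^{\phi K_0}$, which is the chain of Tate functors indexed by the full string $[K_0 < \cdots < K_m < H_0 < \cdots < H_n]$ displayed in abbreviated form in the statement.

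The only delicate point — and the main thing to be careful about — is the conjugacy bookkeeping. Strictly speaking, $(-)^{\phi H}$ lands in $\Sp^{h W_G H}$, and the identifications $\Sp^{h W_G H} \simeq \Sp^{h W_G K}$ implicit in the formula $(-)^{\phi H} \circ \iota_K \simeq \tau^H_K$ are only available once one matches Weyl groups along appropriate conjugation isomorphisms. I would defer the rigorous handling of this to the treatment promised in the main body (and invoked in \cref{rem:canonical1}), and otherwise regard the construction as a formal consequence of pasting units of adjunctions. There is nothing further to verify: the observation makes no claim about $\can_{\overline{\alpha}}$ being an equivalence, only that it is canonically defined.
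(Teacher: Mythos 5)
Your proposal is correct and coincides with the paper's own construction: the observation is by design a definition-by-whiskering, and you verify exactly the source/target bookkeeping (including silently correcting the typo $\iota_{K_n}$ to $\iota_{K_m}$) via the identity $(-)^{\phi H}\circ\iota_K \simeq \tau^H_K$, deferring conjugacy identifications just as the paper does. Nothing further is needed.
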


\begin{ntn}
For a subgroup $H$ not in $\cF$, let $J_H \subset \sd(\mathfrak{S})$ be the full subcategory on strings $[K_0 < \cdots < K_n < H]$ such that $K_i \in \cF$ for all $1 \leq i \leq n$. 
\end{ntn}

\begin{thmx}[{\cref{cor:FormulaForGeomFixedPointsOfCompleteSpectrum}}] \label{MT:GFP} 
Suppose that $X \in \Sp^G$ is an $\cF$-complete $G$-spectrum. Then for each subgroup $H \notin \cF$, the $H$-geometric fixed points $ X^{\phi H} \in \Sp^{hW_G H}$ can be computed as a limit involving only the geometric fixed points $X^{\phi K}$ for subgroups $K \in \cF$ and generalized Tate functors thereon. More precisely, we may functorially associate to $X$ a functor $F: J_H \to \Sp^{h W_G H}$ such that $X^{\phi H} \simeq \lim_{J_H} F$ and
\begin{enumerate}
\item $F([K_0 < \cdots < K_n < H]) = \tau^{H}_{K_n} \tau^{K_n}_{K_{n-1}} \cdots \tau^{K_1}_{K_0} (X^{\phi K_0})$.
\item F sends a string inclusion $[K_0 < \cdots < K_n < H] \to [L_0 < \cdots < L_m < H]$ to the composite of the canonical maps of \cref{rem:canonical1} and \cref{rem:canonical2} associated to the factorization
\[ [K_0 < \cdots < K_n < H] \to [L_0 < \cdots < K_0 < \cdots < K_n < H] \to [L_0 < \cdots < L_m < H] \]
in which the first string inclusion is the inclusion of a convex subset.
\end{enumerate}
\end{thmx}

\begin{rem}
The formula of \cref{MT:GFP} in the case where $G = D_{2 p} = \mu_p \rtimes C_2$ and $\cF = \Gamma_{\mu_p}$ plays an important role in our proof of the \emph{dihedral Tate orbit lemma} in \cite{QS21b}, which is the key computational input needed to show that genuine and Borel real cyclotomic spectra agree in the ``underlying bounded-below'' case.
\end{rem}

\begin{rem}
Our proof of \cref{MT:GFP} is based off of an alternative proof of the Ayala--Mazel-Gee--Rozenblyum reconstruction theorem (\cref{thm:AMGRoriginal}) due to the second author. On the other hand, given their theorem and the appropriate cofinality arguments, it should not be difficult to derive \cref{MT:GFP}. We were motivated to take a slightly longer route in reproving their theorem in order to illustrate how a detailed understanding of the combinatorics of barycentric subdivision can substitute for any explicit usage of $(\infty,2)$-category theory in the proof.\footnote{Of course, using the theory of locally cocartesian fibrations may be thought of as implicitly using $(\infty,2)$-category theory.}
\end{rem}

\begin{rem}
Shortly after the original appearance of this article, Ayala--Mazel-Gee--Rozenblyum published a greatly expanded version of their work in \cite{AMGRb}. There, they derive a formula for the categorical fixed points of a $G$-spectrum in terms of its geometric fixed points \cite[Obs.~5.4.3]{AMGRb} that should be contrasted with \cref{MT:GFP}.
\end{rem}

\begin{rem}
In \cite[Thm.~A]{ayala2021derived}, Ayala--Mazel-Gee-Rozenblyum give a formula for the generalized Tate functors in terms of \emph{proper} Tate constructions.
\end{rem}

\subsection{Outline}

In \cref{section:FirstEquivariantSection}, we discuss background and the $\cF$-recollement on $\Sp^G$. In \cref{section:EquivariantConventions}, we recall the $\infty$-category of genuine $G$-spectra $\Sp^G$ from Bachmann--Hoyois \cite{BachmannHoyoisNorms}. In \cref{SS:families}, we recall the theory of families of subgroups of a finite group and describe the $\cF$-recollement on $\Sp^G$ for any family of subgroups $\cF$ of $G$. We describe the relationship between the $\cF$-recollement and the acyclization, completion, and localization functors of Mathew--Naumann--Noel \cite{MATHEW2017994}, as well as the canonical fracture of Ayala--Mazel-Gee--Rozenblyum \cite{AMGR-NaiveApproach}. In \cref{section:Reconstruction}, we reprove Thm. A from \emph{loc. cit.} which identifies genuine $G$-spectra in terms of their geometric fixed points. We then prove \cref{MT:GFP} and apply it in the cases relevant for \cite{QS21b}. 

In \cref{Sec:RelThy}, we specialize to the $N$-free family $\Gamma_N = \{ H: H \cap N = 1 \}$ of subgroups for a normal subgroup $N \leq G$. In \cref{SS:NNaive}, we define the $\infty$-category of $N$-naive $G$-spectra $\Sp^G_{N\textnormal{-naive}}$ and discuss its fundamental properties. In \cref{SS:NBorel}, we define the $\infty$-category of $N$-Borel $G$-spectra $\Sp^G_{N\textnormal{-Borel}}$. We study the forgetful functor $\mathscr{U}_b[N]: \Sp^G \to \Sp^G_{N\textnormal{-Borel}}$, its adjoints, and the relationship to the $\Gamma_N$-recollement, proving \cref{thmA}. As an application, we obtain the stable symmetric monoidal recollement describing $\Sp^{D_{2p^n}}$, which we use in \cite{QS21b} to analyze real cyclotomic spectra.

In \cref{section:NormMaps}, we define the parametrized Tate construction in the finite group case. In \cref{SS:Ambi}, we extend the ambidexterity theory of Hopkins--Lurie \cite{HL13} to the parametrized setting. In \cref{SS:PTate}, we use parametrized ambidexterity to define a parametrized norm map between parametrized homotopy orbits and parametrized homotopy fixed points for any finite group extension of $G$. The parametrized Tate construction is then defined as the cofiber of this map. Reconciling this definition with \cref{dfn:FirstDefn}, we prove \cref{thmB} and deduce several useful properties mentioned above, such as Observations \ref{obs:NormCofiberSequence}--\ref{introobs2}. We also prove \cref{MT:GM161} regarding the inverse limit formula for the parametrized Tate construction.

In \cref{Sec:Assembly}, we define the parametrized Tate construction for an extension of a finite group by a compact Lie group by extending Klein's assembly map definition of the Tate construction (cf. \cite{Klein2001} and \cite[{\S I.4}]{NS18}) to the parametrized setting. In \cref{SS:GSM}, we recall several notions related to $G$-symmetric monoidal structures used throughout the sequel. We then discuss parametrized Verdier quotients and $G$-$\otimes$-ideals in \cref{SS:Verdier} and the parametrized notion of induced objects in \cref{SS:Induced}. Our key result is that the $G$-subcategory of induced objects is a $G$-$\otimes$-ideal (\cref{cor:InducedObjectsFormGTensorIdeal}). In \cref{SS:MainAssembly}, we apply all of these ideas to define the parametrized Tate construction for infinite groups (\cref{dfn:paramTateGeneral}) and prove Theorems~\ref{thmC} and \ref{thmD}. We also discuss an example (\cref{exm:CircleTate}) for $\widehat{G} = O(2)$ which plays an important role in our study of real cyclotomic spectra in \cite{QS21b}. Finally, we digress in the middle to upgrade the generalized Segal conjecture to a statement about Tambara functors (\cref{Thm:Segal}).

\subsection{Notation and terminology}

We assume knowledge of the theory of recollements in this paper and refer to \cite{ShahRecoll} as our primary reference. Given a stable $\infty$-category $\sX$ decomposed by a stable recollement $(\sU, \sZ)$, we will generically label the recollement adjunctions as
\[ \begin{tikzcd}[column sep=4em]
\sU \ar[hookrightarrow, shift left=2]{r}{j_!} \ar[hookrightarrow, shift right=4]{r}[swap]{j_*} & \sX \ar[shift left=1]{l}[description]{j^*} \ar[shift left=2]{r}{i^*} \ar[shift right=1, hookleftarrow]{r}[swap, description]{i_*} \ar[shift right=4]{r}[swap]{i^!} & \sZ.
\end{tikzcd} \]
Here $j^* i_* = 0$ determines the directionality of the recollement.

As is already apparent from the introduction, we will also use concepts from parametrized higher category theory in this paper, mostly in Sections \ref{section:NormMaps} and \ref{Sec:Assembly}. Given any $\infty$-category $S$, an \emph{$S$-$\infty$-category} is a cocartesian fibration over $S$, and we then have attendant notions of $S$-(co)limits and $S$-Kan extensions. Note then that the terminology $G$-$\infty$-category, $G$-functor, etc. is synonymous with $\sO_G^{\op}$-$\infty$-category, $\sO_G^{\op}$-functor, etc. Apart from the basic reference \cite{Exp2}, we refer the reader to \cite[Sec.~2]{Exp2b} for a quick overview of the theory of $S$-(co)limits and $S$-Kan extensions.\footnote{In that reference, we set $T = S^{\op}$ and instead speak of $T$-$\infty$-categories, $T$-functors, etc.} Let us also highlight the following example, which locates \cref{dfn:admitCoproducts} within the formalism of parametrized higher category theory:

\begin{exm}[Corepresentable $S$-diagrams] \label{exm:corepresentableDiagrams} Suppose that $T = S^{\op}$ admits \emph{multipullbacks}, i.e., the finite coproduct completion $\FF_T$ of $T$ admits pullbacks. For example, $\sO_G$ satisfies this condition. We call $\FF_T$ the $\infty$-category of \emph{finite $T$-sets} and $T \subset \FF_T$ the \emph{orbits}. For $U \in \FF_T$ with orbit decomposition $\coprod_{i \in I} U_i$, let
$$\ul{U} \coloneq \coprod_{i \in I} S^{U_i/} \to S$$
be the corresponding \emph{$S$-$\infty$-category of points}, and note that the assignment $\fromto{U}{\ul{U}}$ is covariant in morphisms in $\FF_T$. Let $\alpha: U \to V$ be a morphism in $\FF_T$ such that $V$ is an orbit. Let $x_i \in C_{U_i}$ be a set of objects for all $i \in I$ and write $(x_i): \ul{U} \to C_{\ul{V}}$ for the $S^{V/}$-functor determined by the $x_i$. Then the \emph{$S$-coproduct along $\alpha$}
$$\coprod_{\alpha} x_i \in C_V$$
is defined to be the $S^{V/}$-colimit of $(x_i)$. A \emph{finite $S$-coproduct} is any $S^{V/}$-colimit of this form. We have that $C$ admits all finite $S$-coproducts if and only if the following conditions obtain \cite[Prop.~5.12]{Exp2}:
\begin{enumerate}
\item For all $V \in S$, $C_V$ admits finite coproducts, and for all morphisms $\alpha: V \to W$ in $T$, the restriction functor $\alpha^{\ast}: C_W \to C_{V}$ preserves finite coproducts.
\item For all morphisms $\alpha: V \to W$ in $T$, $\alpha^{\ast}$ admits a left adjoint $\alpha_!$.
\item Given $U \in \FF_T$ with orbit decomposition $\coprod_{i \in I} U_i$, let $C_U \coloneq \prod_{i \in I} C_{U_i}$ and extend $\alpha^{\ast}$ and $\alpha_!$ to be defined for all morphisms $\alpha$ in $\FF_T$ in the obvious way.\footnote{e.g., if $\alpha: U \to V$ is a map with $V$ an orbit, then $\alpha_!(x_i) = \coprod_{i \in I}(\alpha_i)_!(x_i)$ for $\alpha_i: U_i \to V$ the restriction of $\alpha$ to $U_i$.} Then the \emph{Beck-Chevalley conditions} hold: for every pullback square
\[ \begin{tikzcd}[row sep=4ex, column sep=4ex, text height=1.5ex, text depth=0.25ex]
U' \ar{r}{\alpha'} \ar{d}{\beta'} & V' \ar{d}{\beta} \\
U \ar{r}{\alpha} & V
\end{tikzcd} \]
in $\FF_T$, the exchange transformation $(\alpha')_! (\beta')^{\ast} \Rightarrow \beta^{\ast} \alpha_!$ is an equivalence.
\end{enumerate}
In this case, the $S$-coproduct $\coprod_{\alpha} x_i$ above is computed by $\alpha_!(x_i)$.

Dually, $C$ admits all finite $S$-products if and only if the analogous conditions hold with respect to finite products in the fibers and right adjoints $\alpha_{\ast}$.
\end{exm}

We will also use the following terminology:

\begin{dfn}[{\cite[Def.~8.3]{Exp2}}]
Let $C,D$ be $S$-$\infty$-categories. Then an \emph{$S$-adjunction} is a relative adjunction
\[ \adjunct{F}{C}{D}{G} \]
in the sense of \cite[Def.~7.3.2.2]{HA} such that $F$ and $G$ are both $S$-functors.
\end{dfn}

\subsection{Acknowledgments}

This work is an expansion of \cite[Secs.~3-5]{QS19}. The main changes are as follows:
\begin{enumerate}
	\item We extended the theory to compact Lie groups.
    \item We added the material on parametrized assembly.
    \item We proved that the parametrized Tate construction uniquely admits the structure of a lax $G$-symmetric monoidal functor.
	\item We added an application to the generalized Segal conjecture.
	\item We added an inverse limit formula for the parametrized Tate construction. 
\end{enumerate}

We would like to thank Mark Behrens, Andrew Blumberg, Emanuele Dotto, Jeremy Hahn, Kristian Moi, Irakli Patchkoria, Dylan Wilson, Inna Zakharevich, and Mingcong Zeng for helpful discussions. The authors were partially supported by NSF grant DMS-1547292. J.S. was also funded by the Deutsche Forschungsgemeinschaft (DFG, German Research Foundation) under Germany’s Excellence Strategy EXC 2044–390685587, Mathematics Münster: Dynamics–Geometry–Structure.

\section{The \texorpdfstring{$\cF$}{F}-recollement on \texorpdfstring{$\Sp^G$}{SpG}}
\label{section:FirstEquivariantSection}

Let $G$ be a finite group. In this section, we introduce and study recollements on the $\infty$-category $\Sp^G$ of $G$-spectra determined by a family $\cF$ of subgroups of $G$. We then apply \cite[Thm.~3.35]{ShahRecoll} to reprove a theorem of Ayala, Mazel-Gee, and Rozenblyum that reconstructs $\Sp^G$ from its geometric fixed points (\cref{thm:GeometricFixedPointsDescriptionOfGSpectra}). As a corollary, we deduce a limit formula (\cref{cor:FormulaForGeomFixedPointsOfCompleteSpectrum}) for the geometric fixed points of an $\cF$-complete spectrum by means of the pointwise formula of \cite[Thm.~3.29]{ShahRecoll}, which will play an important role in our proof of the dihedral Tate orbit lemma in \cite{QS21b} (cf. \cref{exm:DihedralEven} and \cref{exm:DihedralOdd}).

\subsection{Conventions on equivariant stable homotopy theory}
\label{section:EquivariantConventions}

At the outset, let us be clear about which foundations for equivariant stable homotopy theory are employed in this paper. In their monograph, Nikolaus and Scholze choose to work with the classical point-set model of orthogonal $G$-spectra \cite[Def.~II.2.3]{NS18}, then obtaining the $\infty$-category $\Sp^G$ of $G$-spectra via inverting equivalences \cite[Def.~II.2.5]{NS18}. In contrast, we will use the foundations laid out by Bachmann and Hoyois in \cite[\S 9]{BachmannHoyoisNorms}, which attaches to every profinite groupoid $X$ a presentable, stable, and symmetric monoidal $\infty$-category $\SH(X)$ such that for $X = B G$, $\SH(B G)$ is equivalent to $\Sp^G$ as defined in \cite{NS18} (cf. the remark prior to \cite[Lem.~9.5]{BachmannHoyoisNorms}). In fact, we will only need the Bachmann-Hoyois construction for finite groupoids.

\begin{dfn} \label{dfn:BachmannHoyoisFunctor} Let $\Gpd_{\fin}$ be the $(2,1)$-category of finite groupoids, and let $$\sH, \sH_{\sbullet}, \SH: \Gpd_{\fin}^{\op} \to \CAlg(\Pr^{\mr{L}})$$
denote the (restriction of the) functors constructed in \cite[\S 9.2]{BachmannHoyoisNorms}. For a map $f: X \to Y$ of finite groupoids, write $f^{\ast}$ for the associated functor and $f_{\ast}$ for its right adjoint.
\end{dfn}

\begin{rem} Let $X = BG$. Then $\sH(BG) \simeq \Spc^G \coloneq \Fun(\sO^{\op}_G, \Spc)$, the $\infty$-category of $G$-spaces defined as presheaves on the orbit category $\sO_G$, and likewise $\sH_{\sbullet}(BG)$ is the $\infty$-category $\Spc^G_{\ast}$ of pointed $G$-spaces. As we already mentioned, $\SH(BG) \simeq \Sp^G$ is the $\infty$-category of $G$-spectra, defined as the filtered colimit taken in $\Pr^L$
\[ \Spc^G_{\ast} \xto{\Sigma^{\rho}} \Spc^G_{\ast} \xto{\Sigma^{\rho}} \Spc^G_{\ast}  \xto{\Sigma^{\rho}} \cdots, \]
where $\rho$ is the regular $G$-representation. In addition, by \cite[Exm.~9.11]{BachmannHoyoisNorms} $\Sp^G$ is equivalent to the $\infty$-category of \emph{spectral Mackey functors} on finite $G$-sets that was studied by Barwick \cite{M1} and Guillou-May \cite{guillou2}.
\end{rem}

Note that by definition, $f^{\ast}: \SH(Y) \to \SH(X)$ is the symmetric monoidal left Kan extension of $\sH_{\sbullet}(Y) \xto{f^{\ast}} \sH_{\sbullet}(X) \xto{\Sigma^{\infty}} \SH(X)$ along $\Sigma^{\infty}: \sH_{\sbullet}(Y) \to \SH(Y)$. Therefore:
\begin{enumerate} \item Suppose $f: BH \to BG$ is the map of groupoids induced by an injective group homomorphism $H \to G$. Then $f^{\ast}: \Sp^G \to \Sp^H$ is homotopic to the usual restriction functor, and $f_{\ast}: \Sp^H \to \Sp^G$ is homotopic to the usual induction functor. Instead of $f^{\ast} \dashv f_{\ast}$, we will typically write this adjunction as $\res^G_H \dashv \ind^G_H$. Note that this adjunction is ambidextrous and satisfies the projection formula (in fact, \cite[Lem.~9.4(3)]{BachmannHoyoisNorms} establishes the projection formula for any finite covering map).
\item Suppose $f: BG \to BG/N$ is the map of groupoids induced by a surjective group homomorphism $G \to G/N$. Then $f^{\ast}: \Sp^{G/N} \to \Sp^G$ is homotopic to the usual inflation functor, which we denote as $\inf^N$. The right adjoint to $\inf^N$ is the \emph{categorical fixed points} functor
$$ \Psi^N: \Sp^G \to \Sp^{G/N}. $$
Now suppose $H \leq G$ is any subgroup and let $W_G H = N_G H / H$ be the Weyl group of $H$. Then we will also write
$$ \Psi^H: \Sp^G \xtolong{\res^G_{N_G H}}{1.5} \Sp^{N_G H} \xto{\Psi^H} \Sp^{W_G H} $$
\end{enumerate}

Given a $G$-spectrum $X$, we introduce notation to distinguish the underlying spectrum of $\Psi^H X$.

\begin{ntn} For a $G$-spectrum $X$ and subgroup $H \leq G$, we let $X^H = \res^{W_G H} \Psi^H (X)$.\footnote{With respect to the description of $\Sp^G$ as spectral Mackey functors, $X^H$ is given by evaluation at $G/H$.}
\end{ntn}

Since the restriction functor $\Sp^{W_G H} \to \Sp$ lifts to $\Fun(B W_G H, \Sp)$, the spectrum $X^H$ also comes endowed with a $W_G H$-action.

\begin{rem} By stabilizing the adjointability relations in \cite[Lem.~9.4]{BachmannHoyoisNorms}, it follows that that for any pullback square of finite groupoids
\[ \begin{tikzcd}[row sep=4ex, column sep=4ex, text height=1.5ex, text depth=0.25ex]
W \ar{r}{f} \ar{d}{g} & Y \ar{d}{g} \\
X \ar{r}{f} & Z,
\end{tikzcd} \]
the canonical natural transformation $f^{\ast} g_{\ast} \to f_{\ast} g^{\ast}$ of functors $\SH(X) \to \SH(Y)$ is an equivalence. In particular, we have an equivalence $X^H \simeq \Psi^H \res^G_H (X)$.
\end{rem}

We now turn to the \emph{geometric fixed points} and \emph{Hill-Hopkins-Ravenel norm} functors.

\begin{dfn} \label{BachmannHoyoisFunctorNorms} Let $\sH^{\otimes}, \sH_{\sbullet}^{\otimes}, \SH^{\otimes}: \Span(\Gpd_{\fin}) \to \CAlg(\Cat_{\infty}^{\mr{sift}})$ be the (restrictions of the) functors defined as in \cite[\S 9.2]{BachmannHoyoisNorms}, which on the subcategory $\Gpd_{\fin}^{\op}$ restrict to the functors $\sH, \sH_{\sbullet}, \SH$ of \cref{dfn:BachmannHoyoisFunctor}. For a map of finite groupoids $f: X \to Y$, write $f_{\otimes}$ for the associated covariant functor.
\end{dfn}

Parallel to the discussion above, we note \cite[Rem.~9.9]{BachmannHoyoisNorms}:
\begin{enumerate} \item Suppose $f: BH \to BG$ for a subgroup $H \leq G$. Then $f_{\otimes}: \Sp^H \to \Sp^G$ is homotopic to the multiplicative norm functor $N^G_H$ introduced by Hill, Hopkins, and Ravenel \cite{HHR}.
\item Suppose $f: BG \to B(G/N)$. Then $f_{\otimes}: \Sp^G \to \Sp^{G/N}$ is homotopic to the usual geometric fixed points functor $\Phi^N$. For $H \leq G$ any subgroup, we also write
$$ \Phi^H: \Sp^G \xtolong{\res^G_{N_G H}}{1.5} \Sp^{N_G H} \xto{\Phi^H} \Sp^{W_G H}. $$
\end{enumerate}

\begin{ntn} For a $G$-spectrum $X$ and subgroup $H \leq G$, we let $X^{\phi H} = \res^{W_G H} \Phi^H (X)$. Also let
$$ \phi^H: \Sp^G \xto{\Phi^H} \Sp^{W_G H} \xto{\res} \Fun(B W_G H, \Sp). $$
\end{ntn}

\begin{rem} Because $\SH^{\otimes}$ is defined on $\Span(\Gpd_{\fin})$, we have that for any pullback square of finite groupoids
\[ \begin{tikzcd}[row sep=4ex, column sep=4ex, text height=1.5ex, text depth=0.25ex]
W \ar{r}{f} \ar{d}{g} & Y \ar{d}{g} \\
X \ar{r}{f} & Z,
\end{tikzcd} \]
there is a canonical equivalence $f^{\ast} g_{\otimes} \simeq f_{\otimes} g^{\ast}$ of functors $\SH(X) \to \SH(Y)$. In particular, we have an equivalence $X^{\phi H} \simeq \Phi^H \res^G_H X$.
\end{rem}

\begin{rem} We will use some additional features of these fixed points functors:
\begin{enumerate} \item For any subgroup $H \leq G$, the functor $\Psi^H$ is colimit-preserving, since the inflation functors preserve dualizable and hence compact objects; indeed, by equivariant Atiyah duality \cite[\S III.5.1]{MR866482} every compact object in $\Sp^G$ is dualizable, and conversely, since the unit in $\Sp^G$ is compact, all dualizable objects in $\Sp^G$ are compact.
\item The functors $\{ (-)^H : H \leq G \}$ are jointly conservative, since the orbits $\Sigma^{\infty}_+ G/H$ corepresent $(-)^H$ and form a set of compact generators for $\Sp^G$.
\item The functors $\{ \phi^H : H \leq G \}$ are jointly conservative, since the evaluation functors $\ev_{G/H}$ are jointly conservative for $\Spc^G$, $\phi^H \Sigma^{\infty}_+ \simeq \Sigma^{\infty}_+ \ev_{G/H}$, and suspension spectra generate $\Sp^G$ under desuspensions and sifted colimits.
\end{enumerate}
\end{rem}

We will also need to use some aspects of the theory of $G$-$\infty$-categories in this work.

\begin{dfn} Let $\omega_G: \FF_G \to \Gpd_{\fin}$ be the functor that sends a finite $G$-set $U$ to its action groupoid $U//G$.
\end{dfn}

\begin{dfn} \label{dfn:GCategoryGSpectra} Define the \emph{$G$-$\infty$-category of $G$-spectra} $\underline{\Sp}^G \to \sO^{\op}_G$ to be the cocartesian fibration classified by $\SH \circ (\omega^{\op}_G|_{\sO^{\op}_G})$. In addition, let $\underline{\Sp}^{G, \otimes} \to \sO^{\op}_G \times \Fin_{\ast}$ be the cocartesian $\sO_G^{\op}$-family of symmetric monoidal $\infty$-categories classified by $\SH \circ (\omega^{\op}_G|_{\sO^{\op}_G})$ (when viewed as valued in $\CMon(\Cat_{\infty})$).
\end{dfn}

\begin{ntn}
We will typically write $\underline{C}$ to distinguish a $G$-$\infty$-category from the $\infty$-category $C$ given by the fiber of $\underline{C}$ over $G/G$.
\end{ntn}

\begin{rem} \label{rem:sliceCategoryPassage} For a subgroup $H$ of $G$, let
\[ \adjunct{\ind^G_H}{\FF_H}{\FF_G}{\res^G_H} \]
 denote the induction-restriction adjunction, where $\ind^G_H(U) = G \times_H U$. Then $\ind^G_H: \sO_H \to \sO_G$ factors as $\sO_H \simeq (\sO_G)_{/(G/H)} \to \sO_G$. Moreover, $\omega_G \circ \ind^G_H$ and $\omega_H$ are canonically equivalent, so we have an equivalence of $H$-$\infty$-categories
\[ \underline{\Sp}^H \simeq \sO_H^{\op} \times_{\sO_G^{\op}} \underline{\Sp}^G. \]
\end{rem}

\begin{rem} Given a $G$-$\infty$-category $K$, we may endow $\Fun_G(K, \ul{\Sp}^G)$ with the pointwise symmetric monoidal structure of \cref{dfn:S-PointwiseMonoidal} with respect to the construction $\underline{\Sp}^{G, \otimes}$ of \cref{dfn:GCategoryGSpectra}.
\end{rem}

Finally, we will later need the $G$-symmetric monoidal structure on $\underline{\Sp}^G$ furnished by the Hill--Hopkins--Ravenel norms; we record this as \cref{exm:GSMCSpectra}.

\subsection{Basic theory of families}\label{SS:families}

\begin{dfn} \label{dfn:subconjugacyPoset} Given a finite group $G$, its \emph{subconjugacy category} $\fS[G]$ is the category whose objects are subgroups $H$ of $G$, and whose morphism sets are defined by
\begin{align*} \Hom_{\fS[G]}(H,K) := \begin{cases} \ast \quad \text{ if } H \text{ is subconjugate to } K, \\
\emptyset \quad \text{ otherwise}.
\end{cases}
\end{align*}
We will also write $\fS = \fS[G]$ if the ambient group $G$ is clear from context.
\end{dfn}

\begin{dfn} \label{dfn:family} A \emph{$G$-family} $\cF$ is a sieve in $\fS$, i.e., a full subcategory of $\fS$ whose set of objects is a set of subgroups of $G$ closed under subconjugacy.
\end{dfn}

\begin{rem} Abusing notation, we will also denote the set of objects of $\fS$ or a family $\cF$ by the same symbol. If we view morphisms in $\fS$ as defining a binary relation $\leq$ on the set of subgroups of $G$, then $\fS$ is a preordered set, which is a poset if $G$ is abelian. Although we generally reserve the expression $H \leq K$ for $H$ a subgroup of $K$, when discussing strings in the preordered set $\fS$ we will also write $\leq$ for its binary relation -- we trust the meaning to be clear from context.
\end{rem}

\begin{cnstr} \label{cnstr:GspaceFromGfamily} Given a $G$-family $\cF$, define $G$-spaces $E \cF$ and $\widetilde{E \cF}$ by the formulas
\begin{align*} E \cF^K = \begin{cases} \emptyset \; \text{ if } K \notin \cF, \\
\ast \; \text{ if } K \in \cF, 
\end{cases}, \quad \text{ and } \quad 
\widetilde{E \cF}^K = \begin{cases} S^0 \; \text{ if } K \notin \cF, \\
\ast \; \text{ if } K \in \cF.
\end{cases}
\end{align*}
We have a cofiber sequence of pointed $G$-spaces
\[ E \cF_+ \to S^0 \to \widetilde{E \cF}. \]
The unit map $S^0 \to \widetilde{E \cF}$ exhibits $\widetilde{E \cF}$ as an idempotent object \cite[Def.~4.8.2.1]{HA} of $\Spc^G_{\ast}$ with respect to the smash product, hence $\widetilde{E \cF}$ is a idempotent $E_{\infty}$-algebra by \cite[Prop.~4.8.2.9]{HA}.\footnote{This is also obvious since we are considering presheaves of sets.} Let $E \cF_+$ and $\widetilde{E \cF}$ also denote $\Sigma^{\infty}$ of the same pointed $G$-spaces. Then $\widetilde{E \cF}$ is an idempotent $E_{\infty}$-algebra in $\Sp^G$, and hence by \cite[Obs.~2.36]{ShahRecoll} defines a stable symmetric monoidal recollement
\[ \begin{tikzcd}[row sep=4ex, column sep=4ex, text height=1.5ex, text depth=0.25ex]
\Sp^{h \cF} \ar[shift right=1,right hook->]{r}[swap]{j_{\ast}} & \Sp^G \ar[shift right=2]{l}[swap]{j^{\ast}} \ar[shift left=2]{r}{i^{\ast}} & \Sp^{\Phi \cF} \ar[shift left=1,left hook->]{l}{i_{\ast}}
\end{tikzcd} \]
such that $\Sp^{\Phi \cF} \simeq \Mod_{\Sp^G}(\widetilde{E \cF})$. By \cite[Cor.~2.35]{ShahRecoll}, for any $X \in \Sp^G$ we have the \emph{$\cF$-fracture square}
\[ \begin{tikzcd}[row sep=4ex, column sep=4ex, text height=1.5ex, text depth=0.25ex]
X \ar{r} \ar{d} & X \otimes \widetilde{E \cF} \ar{d} \\
F(E \cF_+, X) \ar{r} & F(E \cF_+, X) \otimes \widetilde{E \cF}.
\end{tikzcd} \]
Following standard terminology, we say that a $G$-spectrum $X$ is \emph{$\cF$-torsion}, \emph{$\cF$-complete}, or \emph{$\cF^{-1}$-local} if it is in the essential image of $j_!$, $j_{\ast}$, or $i_{\ast}$, respectively. Note that for a $G$-spectrum $X$,
\begin{itemize}
    \item $X$ is $\cF$-torsion if and only if $X \otimes E \cF_+ \xto{\simeq} X$ or $X \otimes\widetilde{E \cF} \simeq 0$.
    \item $X$ is $\cF$-complete if and only if $X \xto{\simeq} F(E \cF_+, X)$ or $F(\widetilde{E \cF},X) \simeq 0$.
    \item $X$ is $\cF^{-1}$-local if and only if $X \xto{\simeq} X \otimes \widetilde{E \cF}$ or $X \otimes E \cF_+ \simeq 0$.
\end{itemize}
\end{cnstr}

\begin{ntn} For a $G$-family $\cF$, we have already set $\Sp^{h \cF} \subset \Sp^G$ to be the full subcategory of $\cF$-complete $G$-spectra and $\Sp^{\Phi \cF} \subset \Sp^G$ to be the full subcategory of $\cF^{-1}$-local $G$-spectra. We also let $\Sp^{\tau \cF} \subset \Sp^G$ denote the full subcategory of $\cF$-torsion $G$-spectra.

In addition, if $\cF$ is the trivial family $\{1\}$, we will also write $E \cF = E G$, $\Sp^{h \cF} = \Sp^{h G}$, and refer to $\cF$-torsion or complete objects as \emph{Borel} torsion or complete.\footnote{Other authors refer to Borel torsion spectra as \emph{free} and Borel complete spectra as \emph{cofree}.} It is well-known that $\Sp^{h G} \simeq \Fun(B G, \Sp)$ (\cite[Prop.~6.17]{MATHEW2017994}, \cite[Thm.~II.2.7]{NS18}) -- we will later give two different generalizations of this fact (\cref{lem:LocallyClosedFibersAreBorel} and \cref{thm:BorelSpectraAsCompleteObjects}).
\end{ntn}

\begin{rem} \label{rem:torsionCompleteEquivalence} The functor $j_! j^{\ast}: \Sp^{h \cF} \xto{\simeq} \Sp^{\tau \cF}$ implements an equivalence between $\cF$-complete and $\cF$-torsion objects \cite[Prop.~7]{BarwickGlasmanNoteRecoll}.
\end{rem}

\begin{rem} \label{rem:MathewComparison} The endofunctors $j_! j^{\ast}$, $j_{\ast} j^{\ast}$, and $i_{\ast} i^{\ast}$ of $\Sp^G$ attached to a family $\cF$ agree with the $A_\cF$-acyclization, $A_\cF$-completion, and $A_\cF^{-1}$-localization functors in \cite{MATHEW2017994} defined with respect to the $E_{\infty}$-algebra
$$A_{\cF} \coloneq \prod_{H \in \cF} F(G/H_+, 1)$$
by \cite[Prps.~6.5-6.6]{MATHEW2017994}. Moreover, the theory of $A$-torsion, $A$-complete, and $A^{-1}$-local objects for a dualizable $E_{\infty}$-algebra $A$ (\cite[Part 1]{MATHEW2017994} under the hypotheses \cite[2.26]{MATHEW2017994}) extends the more general monoidal recollement theory for the idempotent object $1 \to U_A$ of \cite[Constr.~3.12]{MATHEW2017994}. For example, the $\cF$-fracture square for $\Sp^G$ given by \cite[Cor.~2.12]{ShahRecoll} agrees with the $A_{\cF}$-fracture square given by \cite[Thm.~3.20]{MATHEW2017994} (although we additionally consider the monoidal refinement \cite[Thm.~2.30]{ShahRecoll}).

As a separate consequence, we also have that $\Sp^{\tau \cF} \subset \Sp^G$ is the localizing subcategory generated by the orbits $\{G/H_+ : H \in \cF \}$. Also, $G/H_+$ is both $\cF$-complete and $\cF$-torsion.
\end{rem}

In the remainder of this subsection, we collect some basic results concerning $\cF$-recollements that we will need in the sequel. Classical references for this material are \cite[\S II]{MR866482} and \cite[\S 17]{GM95}, and other references include \cite[\S 6]{MATHEW2017994} and \cite[\S 2]{AMGR-NaiveApproach}.

\begin{lem} \label{lem:GeometricFixedPointsDetectionCriterion} Let $\cF$ be a $G$-family and let $X \in \Sp^G$.
\begin{enumerate} \item $X$ is $\cF^{-1}$-local if and only if $X^{\phi K} \simeq 0$ for all $K \in \cF$.
\item $X$ is $\cF$-torsion if and only if $X^{\phi K} \simeq 0$ for all $K \notin \cF$.
\end{enumerate}
Therefore, for a map $f: X \to Y$ in $\Sp^G$, $f$ is a $j^{\ast}$-equivalence if and only if $f^{\phi K}$ is an equivalence for all $K \in \cF$, and $f$ is an $i^{\ast}$-equivalence if and only if $f^{\phi K}$ is an equivalence for all $K \notin \cF$.
\end{lem}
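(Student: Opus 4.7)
The plan is to reduce both detection criteria to the monoidal characterizations provided by Construction \ref{cnstr:GspaceFromGfamily}: namely, $X$ is $\cF^{-1}$-local iff $X \otimes E\cF_+ \simeq 0$, and $X$ is $\cF$-torsion iff $X \otimes \widetilde{E\cF} \simeq 0$. The key inputs I will use are (i) the joint conservativity of $\{\phi^K : K \leq G\}$ on $\Sp^G$, noted in the remark preceding \S\ref{SS:families}; (ii) the fact that each $\Phi^K$ is symmetric monoidal and exact, being a coproduct-smashing left adjoint packaged from $\SH^{\otimes}: \Span(\Gpd_{\fin}) \to \CAlg(\Cat_\infty^{\mathrm{sift}})$; and (iii) the identification $\Phi^K \Sigma^\infty_+ Y \simeq \Sigma^\infty_+ Y^K$ for a $G$-space $Y$.

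First I would compute the geometric fixed points of the two distinguished $G$-spectra. Using (iii) and the definition $E\cF^K = \ast$ for $K \in \cF$ and $E\cF^K = \emptyset$ for $K \notin \cF$, one obtains
\[ (E\cF_+)^{\phi K} \simeq \begin{cases} \SS & K \in \cF, \\ 0 & K \notin \cF. \end{cases} \]
Applying the exact functor $\Phi^K$ to the cofiber sequence $E\cF_+ \to \SS \to \widetilde{E\cF}$ then gives
\[ (\widetilde{E\cF})^{\phi K} \simeq \begin{cases} 0 & K \in \cF, \\ \SS & K \notin \cF. \end{cases} \]

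With these calculations in hand, the symmetric monoidality of $\Phi^K$ yields $(X \otimes E\cF_+)^{\phi K} \simeq X^{\phi K} \otimes (E\cF_+)^{\phi K}$ and similarly for $\widetilde{E\cF}$. By joint conservativity, $X \otimes E\cF_+ \simeq 0$ iff $X^{\phi K} \otimes (E\cF_+)^{\phi K} \simeq 0$ for every $K \leq G$; the $K \notin \cF$ cases are automatic by the calculation above, and the $K \in \cF$ cases reduce to $X^{\phi K} \simeq 0$, proving (1). The symmetric argument using $\widetilde{E\cF}$ gives (2).

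Finally, the statement about maps follows formally from the stable recollement structure of \cref{cnstr:GspaceFromGfamily}: for a cofiber sequence $X \xto{f} Y \to \cof(f)$, $f$ is a $j^*$-equivalence iff $\cof(f) \in \ker(j^*)$, i.e.\ iff $\cof(f)$ is $\cF^{-1}$-local; and $f$ is an $i^*$-equivalence iff $\cof(f) \in \ker(i^*)$, i.e.\ iff $\cof(f)$ is $\cF$-torsion (using that $j_!$ embeds as the $\cF$-torsion objects, per \cref{rem:MathewComparison}). Applying (1) and (2) to $\cof(f)$, together with the exactness of each $\Phi^K$, translates these conditions into the stated geometric fixed-point criteria. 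I do not anticipate a genuine obstacle; the only nontrivial content is the clean fixed-point computation of $E\cF_+$ and $\widetilde{E\cF}$, which is a standard consequence of the monoidality and conservativity properties already catalogued in \S\ref{section:EquivariantConventions}.
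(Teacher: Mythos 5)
Your proof is correct and follows essentially the same route as the paper: compute $(E\cF_+)^{\phi K}$ and $(\widetilde{E\cF})^{\phi K}$, use monoidality of $\Phi^K$ plus joint conservativity of the $\phi^K$, and derive the statement about maps formally from the recollement. The only cosmetic difference is that you route both directions of (1) through $X \otimes E\cF_+$, whereas the paper proves the forward implication directly via $X \simeq X \otimes \widetilde{E\cF}$; the substance is identical.
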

\begin{proof} First note that for any $X \in \Sp^G$ and subgroup $K$ of $G$,
\begin{align*} (X \otimes E \cF_+)^{\phi K} \simeq X^{\phi K} \otimes (E \cF_+)^{\phi K} & \simeq \begin{cases} 0 \; \text{ if } K \notin \cF, \\
X^{\phi K} \; \text{ if } K \in \cF,
\end{cases} \\
(X \otimes \widetilde{E \cF})^{\phi K} \simeq X^{\phi K} \otimes \widetilde{E \cF} {}^{\phi K} & \simeq \begin{cases} X^{\phi K} \; \text{ if } K \notin \cF, \\
0 \; \text{ if } K \in \cF.
\end{cases}
\end{align*}
Thus, if $X$ is $\cF^{-1}$-local so that $X \simeq X \otimes \widetilde{E \cF}$, then $X^{\phi K} \simeq 0$ for all $K \in \cF$. Conversely, if $X^{\phi K} \simeq 0$ for all $K \in \cF$, then $(X \otimes E \cF_+)^{\phi K} \simeq 0$ for all subgroups $K$, so by the joint conservativity of the functors $\phi^K$, $X \otimes E \cF_+ \simeq 0$ and $X$ is $\cF^{-1}$-local. This proves (1), and the proof of (2) is similar.
\end{proof}

\begin{rem}
The identification of $\cF^{-1}$-local objects in $\Sp^G$ above shows that if $N \leq G$ is a normal subgroup such that $N \subseteq H$ for each $H \notin \Gamma_N$, then there is an equivalence of $\infty$-categories
$$\Sp^{\Phi \Gamma_N} \simeq \Sp^{G/N}.$$
\end{rem}

\begin{rem}[Extension to $G$-recollement] \label{ParamRecollementFamily}  Suppose $\cF$ is a $G$-family, and let $\cF^H \subset \fS[H]$ denote the $H$-family obtained by intersecting $\cF$ with $\fS[H] \subset \fS[G]$. For any map of $G$-orbits $f: G/H \to G/K$ with associated adjunction $\adjunct{f^{\ast}}{\Sp^K}{\Sp^H}{f_{\ast}}$, note that 
\[ f^{\ast}(E \cF^K_+ \to S^0 \to \widetilde{E \cF^K}) \simeq E \cF^H_+ \to S^0 \to \widetilde{E \cF^H}. \]
By monoidality of $f^{\ast}$, it follows that $f^{\ast}$ preserves $\cF$-torsion and $\cF^{-1}$-local objects. Furthermore, the projection formula implies that 
\[ f^{\ast} F(E \cF^K_+,X) \simeq F(E \cF^H_+, f^{\ast} X), \]
so $f^{\ast}$ preserves $\cF$-complete objects. Therefore, $\cF$ defines a lift of the functor $\SH: \sO_G^{\op} \to \Cat^{\st}_{\infty}$ to $\Recoll^{\stab}_{\str}$ (\cite[Def.~2.15]{ShahRecoll}), the $\infty$-category of stable recollements and strict morphisms thereof. Passing to Grothendieck constructions, let
\[ \begin{tikzcd}[row sep=4ex, column sep=4ex, text height=1.5ex, text depth=0.25ex]
\underline{\Sp}^{h \cF} \ar[shift right=1,right hook->]{r}[swap]{j_{\ast}} & \underline{\Sp}^G \ar[shift right=2]{l}[swap]{j^{\ast}} \ar[shift left=2]{r}{i^{\ast}} & \underline{\Sp}^{\Phi \cF} \ar[shift left=1,left hook->]{l}{i_{\ast}}
\end{tikzcd} \]
denote the resulting diagram of $G$-adjunctions. By \cite[Cor.~2.42]{ShahRecoll}, $\underline{\Sp}^{h \cF}$ and $\underline{\Sp}^{\Phi \cF}$ are $G$-stable $G$-$\infty$-categories and all $G$-functors in the diagram are $G$-exact. We thereby obtain a \emph{$G$-stable $G$-recollement} $(\ul{\Sp}^{h \cF}, \ul{\Sp}^{\Phi \cF})$ of $\ul{\Sp}^G$ in the sense of \cite[Def.~2.43]{ShahRecoll}. We also write $\underline{\Sp}^{\tau \cF} \subset \underline{\Sp}^G$ for the essential image of $j_!$.
\end{rem}

We may also consider $\cF$-recollements of the $\infty$-category of $G$-spaces (indeed, of any $\infty$-category of $\cE$-valued presheaves on $\sO_G$).

\begin{ntn} Given a $G$-family $\cF$, let $\sO_{G,\cF} \subset \sO_G$ be the full subcategory on those orbits with stabilizer in $\cF$, and let $\sO_{G,\cF}^c$ be its complement. 
\end{ntn}

\begin{cnstr}[$\cF$-recollement of $G$-spaces] \label{cnstr:SpacesRecollement} Given a $G$-family $\cF$, we may define a functor $\pi: \sO_G^{\op} \to \Delta^1$ such that $(\sO_G^{\op})_1 = (\sO_{G,\cF})^{\op}$ and $(\sO_G^{\op})_0 = (\sO_{G,\cF}^c)^{\op}$. Let $\Spc^{h \cF} = \Fun((\sO_{G,\cF})^{\op}, \Spc)$ and $\Spc^{\Phi \cF} = \Fun((\sO^c_{G,\cF})^{\op}, \Spc)$. By \cite[Exm.~3.6]{ShahRecoll}, we obtain a symmetric monoidal recollement with respect to the cartesian product on $G$-spaces
\[ \begin{tikzcd}[row sep=4ex, column sep=4ex, text height=1.5ex, text depth=0.25ex]
\Spc^{h \cF} \ar[shift right=1,right hook->]{r}[swap]{j_{\ast}} & \Spc^G \ar[shift right=2]{l}[swap]{j^{\ast}} \ar[shift left=2]{r}{i^{\ast}} & \Spc^{\Phi \cF} \ar[shift left=1,left hook->]{l}{i_{\ast}}.
\end{tikzcd} \]
Moreover, if we instead take presheaves in $\Spc_{\ast}$, we obtain a symmetric monoidal recollement with respect to the smash product of pointed $G$-spaces
\[ \begin{tikzcd}[row sep=4ex, column sep=4ex, text height=1.5ex, text depth=0.25ex]
\Spc^{h \cF}_{\ast} \ar[shift right=1,right hook->]{r}[swap]{j_{\ast}} & \Spc^G_{\ast} \ar[shift right=2]{l}[swap]{j^{\ast}} \ar[shift left=2]{r}{i^{\ast}} & \Spc^{\Phi \cF}_{\ast} \ar[shift left=1,left hook->]{l}{i_{\ast}}.
\end{tikzcd} \]
where $\widetilde{E \cF} \simeq i_{\ast} i^{\ast}(S^0)$ and the unit map exhibits $\widetilde{E \cF}$ as the same idempotent object as above.

Given a map $f: X \to Y$ in $\Spc^G$, by definition $f$ is a $j^{\ast}$-equivalence if and only if $X^K \to Y^K$ is an equivalence for all $K \in \cF$, and $f$ is a $i^{\ast}$-equivalence if and only if $X^K \to Y^K$ is an equivalence for all $K \notin \cF$. Therefore, by \cref{lem:GeometricFixedPointsDetectionCriterion} and the compatibility of geometric fixed points with $\Sigma^{\infty}_+$, the functor $\Sigma^{\infty}_+$ is a morphism of recollements $(\Spc^{h \cF}, \Spc^{\Phi \cF}) \to (\Sp^{h \cF}, \Sp^{\Phi \cF})$, and likewise for $\Sigma^{\infty}$. In particular, we get induced functors
\[ \Sigma^{\infty}_+: \Spc^{h \cF} \to \Sp^{h \cF}, \quad \Sigma^{\infty}_+: \Spc^{\Phi \cF} \to \Sp^{\Phi \cF}. \]
On the other hand, $\Omega^{\infty}$ is not a morphism of recollements; indeed, if $X \in \Sp^G$ is $\cF$-torsion, then we may have that $i^{\ast} \Omega^{\infty} X$ is non-trivial, so $\Omega^{\infty}$ does not preserve $i^{\ast}$-equivalences. However, if $f: X \to Y$ is a $j^{\ast}$-equivalence in $\Sp^G$, so that $f^{\phi K}$ is an equivalence for all $K \in \cF$, then $\res^G_H(f)$ is an equivalence for all $H \in \cF$ because the functors $\phi^K$ for $K \leq H$ jointly detect equivalences in $\Sp^H$. Therefore, $\Omega^{\infty} (f)$ is a $j^{\ast}$-equivalence, and the $\Sigma^{\infty}_+ \dashv \Omega^{\infty}$ adjunction induces an adjunction
\[ \adjunct{\Sigma^{\infty}_+}{\Spc^{h \cF}}{\Sp^{h \cF}}{\Omega^{\infty}}. \]
Now suppose $X$ is $\cF^{-1}$-local, so that $X^{\phi K} \simeq 0$ for all $K \in \cF$. Then $\res^G_H X \simeq 0$ for all $H \in \cF$, so $(\Omega^{\infty} X)^H \simeq \ast$ for all $H \in \cF$ and thus $\Omega^{\infty} X$ lies in the essential image of $i_{\ast}$. We thereby obtain an adjunction
\[ \adjunct{\Sigma^{\infty}_+}{\Spc^{\Phi \cF}}{\Sp^{\Phi \cF}}{\Omega^{\infty}}. \]

To summarize the various compatibilities, we have that
\begin{enumerate} \item $j^{\ast} \Sigma^{\infty}_+ \simeq \Sigma^{\infty}_+ j^{\ast}: \Spc^{G} \to \Sp^{h \cF}$ and $j_{\ast} \Omega^{\infty} \simeq \Omega^{\infty} j_{\ast}: \Sp^{h \cF} \to \Spc^{G}$.
\item $j^{\ast} \Omega^{\infty} \simeq \Omega^{\infty} j^{\ast}: \Sp^G \to \Spc^{h \cF}$ and $j_! \Sigma^{\infty}_+ \simeq \Sigma^{\infty} j_!: \Spc^{h \cF} \to \Sp^G$.
\item $i^{\ast} \Sigma^{\infty}_+ \simeq \Sigma^{\infty}_+ i^{\ast}: \Spc^G \to \Sp^{\Phi \cF}$ and $i_{\ast} \Omega^{\infty} \simeq \Omega^{\infty} i_{\ast}: \Sp^{\Phi \cF} \to \Spc^{G}$.
\end{enumerate}
\end{cnstr}

Next, we study situations that arise in the presence of two $G$-families.

\begin{rem} \label{rem:FamilyIntersectionFormula} Let $\cF$ and $\cG$ be two $G$-families. Then their intersection $\cF \cap \cG$ is again a $G$-family. Note that $E(\cF \cap \cG) \simeq E \cF \times E \cG$ as $G$-spaces, so $E \cF_+ \otimes E \cG_+ \simeq E(\cF \cap \cG)_+$. Consequently, for any $X \in \Sp^G$, the $\cG$-fracture square for $F(E \cF_+, X)$ yields a commutative diagram
\[ \begin{tikzcd}[row sep=4ex, column sep=4ex, text height=1.5ex, text depth=0.25ex]
F(E \cF_+, X) \otimes {E \cG}_+ \ar{r} \ar{d}{\simeq} & F(E \cF_+, X) \ar{r} \ar{d} & F(E \cF_+, X) \otimes \widetilde{E \cG} \ar{d} \\
F(E (\cF \cap \cG)_+, X) \otimes {E \cG}_+ \ar{r} & F(E (\cF \cap \cG)_+, X) \ar{r} & F(E (\cF \cap \cG)_+, X) \otimes \widetilde{E \cG}
\end{tikzcd} \]
in which the righthand square is a pullback square.
\end{rem}

\begin{lem} \label{lem:KeyIntersectionPropertyFamilies} Let $\cF$ and $\cG$ be two $G$-families. Then $\Sp^{\Phi \cG} \cap \Sp^{h \cF} = \Sp^{\Phi(\cF \cap \cG)} \cap \Sp^{h \cF}$ and $\Sp^{\Phi \cG} \cap \Sp^{h \cF} = \Sp^{\Phi \cG} \cap \Sp^{h(\cF \cup \cG)}$.
\end{lem}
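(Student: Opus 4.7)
The plan is to prove each identity by establishing both inclusions separately. The $\subseteq$ directions are formal monotonicity statements: for the first identity, since $\cF \cap \cG \subseteq \cG$, the geometric fixed points criterion of \cref{lem:GeometricFixedPointsDetectionCriterion}(1) immediately yields $\Sp^{\Phi\cG} \subseteq \Sp^{\Phi(\cF \cap \cG)}$; for the second, since larger families give weaker completion conditions (in the extreme case $\cF = \{\text{all subgroups}\}$ we have $E\cF \simeq \ast$, so every $G$-spectrum is $\cF$-complete), the inclusion $\cF \subseteq \cF \cup \cG$ yields $\Sp^{h\cF} \subseteq \Sp^{h(\cF \cup \cG)}$. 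I then focus on the reverse inclusions.

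For the first identity, I would take $X \in \Sp^{h\cF} \cap \Sp^{\Phi(\cF \cap \cG)}$ and show that $X \otimes E\cG_+ \simeq 0$, i.e., that $X$ is $\cG^{-1}$-local. The key input is \cref{rem:FamilyIntersectionFormula}, whose leftmost vertical map is the equivalence
$$F(E\cF_+, X) \otimes E\cG_+ \xto{\simeq} F(E(\cF \cap \cG)_+, X) \otimes E\cG_+.$$
The $\cF$-completeness of $X$ identifies the source with $X \otimes E\cG_+$. For the target, the $(\cF \cap \cG)$-recollement identifies $F(E(\cF \cap \cG)_+, X)$ with the $(\cF \cap \cG)$-completion $j_\ast j^\ast X$, which vanishes on $(\cF \cap \cG)^{-1}$-local objects since $j^\ast i_\ast \simeq 0$ in any recollement. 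Therefore $X \otimes E\cG_+ \simeq 0$, establishing the desired $\cG^{-1}$-locality.

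For the second identity, I would take $X \in \Sp^{h(\cF \cup \cG)} \cap \Sp^{\Phi\cG}$ and show $\mathrm{Map}(Y, X) \simeq \ast$ for an arbitrary $\cF^{-1}$-local test object $Y$; this characterizes $\cF$-completeness by the $\cF$-recollement adjunctions. Smashing the cofiber sequence $E\cG_+ \to S^0 \to \widetilde{E\cG}$ with $Y$ produces
$$Y \otimes E\cG_+ \to Y \to Y \otimes \widetilde{E\cG}.$$
The first term is $\cG$-torsion (its geometric fixed points vanish outside $\cG$), and the orthogonality $\mathrm{Map}(\text{$\cG$-torsion}, \text{$\cG^{-1}$-local}) \simeq \ast$ coming from $j^\ast i_\ast \simeq 0$ in the $\cG$-recollement kills its mapping spectrum into $X$. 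The third term is $(\cF \cup \cG)^{-1}$-local: for $K \in \cG$ the factor $\widetilde{E\cG}^{\phi K} \simeq 0$ vanishes, and for $K \in \cF \setminus \cG$ the factor $Y^{\phi K} \simeq 0$ vanishes by $\cF^{-1}$-locality of $Y$. The analogous orthogonality for the $(\cF \cup \cG)$-recollement kills its mapping spectrum into $X$ as well, so the resulting long fiber sequence forces $\mathrm{Map}(Y, X) \simeq \ast$.

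I do not anticipate any significant technical obstacles, since both halves are essentially formal applications of the recollement orthogonality relations once the right decompositions are in hand. The main conceptual insight is that \cref{rem:FamilyIntersectionFormula} directly handles the first identity via its pullback square, while for the second identity one instead applies a $\cG$-cofiber decomposition to a universal $\cF^{-1}$-local test object, thereby simultaneously engaging both the $\cG$- and $(\cF \cup \cG)$-recollement orthogonalities.
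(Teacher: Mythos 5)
Your proof is correct. For the first identity you use exactly the paper's argument: the paper also deduces the reverse inclusion from \cref{rem:FamilyIntersectionFormula} together with $X \simeq F(E\cF_+,X)$ (by $\cF$-completeness) and $F(E(\cF\cap\cG)_+, X) \simeq 0$ (by $(\cF\cap\cG)^{-1}$-locality). For the second identity, the paper only says "the proof being similar," which most naturally means the dual computation: from $\widetilde{E\cG} \otimes \widetilde{E\cF} \simeq \widetilde{E(\cF\cup\cG)}$ one gets $F(\widetilde{E\cG}, F(E\cF_+, X)) \simeq F(\widetilde{E\cG}, F(E(\cF\cup\cG)_+, X))$, and then $\cG^{-1}$-locality together with $(\cF\cup\cG)$-completeness collapse both sides to yield $F(E\cF_+,X) \simeq X$. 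Your route instead decomposes an arbitrary $\cF^{-1}$-local test object $Y$ via $Y \otimes E\cG_+ \to Y \to Y \otimes \widetilde{E\cG}$ and kills both outer mapping spectra by recollement orthogonality. Both are sound; your version is slightly more indirect in that it runs through the Bousfield-local characterization of $\cF$-completeness (which the paper does use, in the proof of \cref{lm:subfamilyProperties}(3)) rather than producing the completion equivalence directly, but it has the pedagogical advantage of making the role of both hypotheses on $X$ symmetric and transparent. One small stylistic point: you invoke \cref{lm:subfamilyProperties}(2)--(3) reasoning for the easy inclusions (correctly), but be aware that in the paper's ordering that lemma is stated just after this one, so a self-contained phrasing in terms of \cref{lem:GeometricFixedPointsDetectionCriterion} — as you do for the $\Phi$-containment — would avoid the forward reference.
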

\begin{proof} We prove the first equality, the proof of the second being similar. If $X$ is $\cG^{-1}$-local, then $X$ is $(\cG \cap \cF)^{-1}$-local by \cref{lm:subfamilyProperties}(2), so we have the forward inclusion. On the other hand, by \cref{rem:FamilyIntersectionFormula}, for any $X \in \Sp^G$ we have that
\[ F(E \cF_+,X) \otimes E \cG_+ \simeq F(E(\cF \cap \cG)_+, X) \otimes E \cG_+. \]
But $F(E(\cF \cap \cG)_+, X) \simeq 0$ if $X$ is $(\cF \cap \cG)^{-1}$-local, and $X \simeq F(E \cF_+,X)$ if $X$ is $\cF$-complete. Thus, if $X$ is both $\cF$-complete and $(\cF \cap \cG)^{-1}$-local, then $X$ is $\cG^{-1}$-local. We thereby deduce the reverse inclusion.
\end{proof}

\begin{lem} \label{lm:subfamilyProperties} Suppose $\cG$ is a subfamily of $\cF$. Then
    \begin{enumerate}
    \item If $X$ is $\cG$-torsion, then $X$ is $\cF$-torsion.
    \item If $X$ is $\cF^{-1}$-local, then $X$ is $\cG^{-1}$-local.
    \item If $X$ is $\cG$-complete, then $X$ is $\cF$-complete.
    \item If $X$ is $\cG^{-1}$-local, then its $\cF$-completion $F(E \cF_+, X)$ is again $\cG^{-1}$-local.
    \item If $X$ is $\cG^{-1}$-local, then its $\cF$-acyclization $X \otimes E \cF_+$ is again $\cG^{-1}$-local.
    \item If $X$ is $\cG$-complete and $\cF^{-1}$-local, then $X \simeq 0$.
    \end{enumerate}
\end{lem}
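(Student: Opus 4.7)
The plan is to prove the six parts in an order that builds on the immediately preceding ones, using \cref{lem:GeometricFixedPointsDetectionCriterion} for parts (1) and (2) and exploiting the intersection formula $E \cF_+ \otimes E \cG_+ \simeq E(\cF \cap \cG)_+ = E \cG_+$ (from \cref{rem:FamilyIntersectionFormula}, using $\cG \subseteq \cF$) for parts (4) and (5). Parts (3) and (6) should then drop out of the recollement formalism.

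For (1) and (2), the characterizations of \cref{lem:GeometricFixedPointsDetectionCriterion} make both statements immediate: if $K \notin \cF$ then a fortiori $K \notin \cG$, so vanishing of $X^{\phi K}$ for $K \notin \cG$ implies the same for $K \notin \cF$, which gives (1); the containment $\cG \subseteq \cF$ gives (2) in exactly the same way. For (3), I would invoke the standard recollement characterization that $X$ is $\cF$-complete iff $\Map_{\Sp^G}(Y, X) \simeq 0$ for every $\cF^{-1}$-local $Y$; by (2) every $\cF^{-1}$-local object is $\cG^{-1}$-local, so a $\cG$-complete $X$ is automatically $\cF$-complete.

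The main content is in (4). Here I would test $\cG^{-1}$-locality of $F(E \cF_+, X)$ against the compact generators $\{G/H_+ : H \in \cG\}$ of $\Sp^{\tau \cG}$ (see \cref{rem:MathewComparison}). The key observation is that for $H \in \cF$, the $G$-space $G/H \times E\cF$ coincides with $G/H$, because $(G/H)^K \neq \emptyset$ forces $K$ to be subconjugate to $H$, and since $H \in \cF$ and $\cF$ is a sieve, $K \in \cF$ and $E\cF^K \simeq \ast$. Hence $G/H_+ \otimes E \cF_+ \simeq G/H_+$, so
\[ \Map_{\Sp^G}(G/H_+, F(E \cF_+, X)) \simeq \Map_{\Sp^G}(G/H_+ \otimes E \cF_+, X) \simeq \Map_{\Sp^G}(G/H_+, X), \]
which vanishes for $H \in \cG$ since $X$ is $\cG^{-1}$-local. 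Part (5) is easier: by \cref{rem:FamilyIntersectionFormula},
\[ (X \otimes E \cF_+) \otimes E \cG_+ \simeq X \otimes E(\cF \cap \cG)_+ \simeq X \otimes E \cG_+ \simeq 0. \]
Finally, for (6), (2) upgrades $\cF^{-1}$-locality to $\cG^{-1}$-locality, and orthogonality $j^{\ast}_{\cG} i_{\cG, \ast} = 0$ in the $\cG$-recollement forces any simultaneously $\cG$-complete and $\cG^{-1}$-local $X$ to vanish.

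The only step requiring genuine care is (4); the others are formal once one has the geometric fixed point criterion and the intersection formula. I expect the main obstacle to be arranging the mapping-space argument for (4) crisply, since one has to identify the relevant smash product $G/H_+ \otimes E \cF_+$ using the sieve condition — a small but essential point that does not follow purely from the abstract recollement structure.
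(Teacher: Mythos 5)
Your proposal is correct and takes essentially the same approach as the paper: parts (1), (2), (3) and (6) are argued identically, and (4) rests on the same adjunction $\Map(Y, F(E\cF_+, X)) \simeq \Map(Y \otimes E\cF_+, X)$ together with the fact that smashing a $\cG$-torsion object with $E\cF_+$ does nothing. The only cosmetic differences are that the paper proves (4) by applying part (1) to an arbitrary $\cG$-torsion $Y$ instead of reducing to the generators $G/H_+$, and proves (5) by orthogonality against $\cG$-complete objects via part (3) rather than your (slightly more direct) computation $(X \otimes E\cF_+) \otimes E\cG_+ \simeq X \otimes E\cG_+ \simeq 0$.
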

\begin{proof} (1) and (2) follow immediately from \cref{lem:GeometricFixedPointsDetectionCriterion}. For (3), to show $X$ is $\cF$-complete, we need to show that for all $\cF^{-1}$-local $Y$, $\Map(Y,X) \simeq \ast$. But by (2), $Y$ is $\cG^{-1}$-local, so this mapping space is contractible since $X$ is $\cG$-complete by assumption. For (4), we need to show that for all $\cG$-torsion $Y$, $\Map(Y,F(E \cF_+,X)) \simeq \ast$. But
\[ \Map(Y,F(E \cF_+,X)) \simeq \Map(Y \otimes E \cF_+,X) \simeq \Map(Y,X) \simeq \ast \]
since $Y \otimes E \cF_+ \simeq Y$ by (1) and the assumption that $X$ is $\cG^{-1}$-local. The proof of (5) is similar: given $\cG^{-1}$-local $X$ and any $\cG$-complete $Y$, we have that $\Map(X \otimes E \cF_+, Y) \simeq \ast$ because $Y$ is also $\cF$-complete by (3), hence $X \otimes E \cF_+$ is $\cG^{-1}$-local. Finally, for (6) note that $X$ is then $\cG^{-1}$-local by (2), hence $X \simeq 0$.
\end{proof}

Supposing still that $\cG$ is a subfamily of $\cF$, by \cref{lm:subfamilyProperties}(1-3), the defining adjunctions of the $\cF$ and $\cG$-recollements on $\Sp^G$ restrict to adjunctions
\[ \adjunct{(i_{\cF})^{\ast}}{\Sp^{\Phi \cG}}{\Sp^{\Phi \cF}}{(i_{\cF})_{\ast}}, \adjunct{(j_{\cG})^{\ast}}{\Sp^{h \cF}}{\Sp^{h \cG}}{(j_{\cG})_{\ast}}, \adjunct{(j'_{\cG})_!}{\Sp^{\tau \cG}}{\Sp^{\tau \cF}}{(j'_{\cG})^{\ast}}. \]
By \cref{lm:subfamilyProperties}(4-5), the $\cF$-completion adjunction restricts to
\[  \adjunct{(j_{\cF})^{\ast}}{\Sp^{\Phi \cG}}{\Sp^{h \cF} \cap \Sp^{\Phi \cG}}{(j_{\cF})_{\ast}} \]
such that $(j_{\cF})^{\ast}$ admits a left adjoint $(j_{\cF})_!$ given by the inclusion of $\cF$-torsion and $\cG^{-1}$-local objects under the equivalence $\Sp^{h \cF} \cap \Sp^{\Phi \cG} \simeq \Sp^{\tau \cF} \cap \Sp^{\Phi \cG}$.

Next, let $(i_{\cG})^{\ast}: \Sp^{h \cF} \to \Sp^{h \cF} \cap \Sp^{\Phi \cG}$ be the composite $\Sp^{h \cF} \subset \Sp^G \xto{i^{\ast}} \Sp^{\Phi \cG} \xto{(j_{\cF})^{\ast}} \Sp^{h \cF} \cap \Sp^{\Phi \cG}$. Then $(i_{\cG})^{\ast}$ is left adjoint to the inclusion $(i_{\cG})_{\ast}$. Likewise, define the left adjoint $(i'_{\cG})^{\ast}$ to the inclusion $(i'_{\cG})_{\ast}: \Sp^{\tau \cF} \cap \Sp^{\Phi \cG} \to \Sp^{\tau \cF}$. Finally, note that $\Sp^{h \cF} \cap \Sp^{\Phi \cG}$ inherits a symmetric monoidal structure from the localization $(j_\cF)^{\ast} \dashv (j_{\cF})_{\ast}$, with respect to which $(i_{\cG})^{\ast}$ is symmetric monoidal. Under the equivalence of \cref{rem:torsionCompleteEquivalence}, this transports to a symmetric monoidal structure on $\Sp^{\tau \cF}$ and $\Sp^{\tau \cF} \cap \Sp^{\Phi \cG}$ for which the adjunction $(i'_{\cG})^{\ast} \dashv (i'_{\cG})_{\ast}$ is symmetric monoidal.

\begin{prp} \label{prp:RecollementsOfRecollements} Let $\cG$ be a subfamily of $\cF$. We have stable symmetric monoidal recollements
\[ \begin{tikzcd}[row sep=4ex, column sep=6ex, text height=1.5ex, text depth=0.5ex]
\Sp^{h \cG} \ar[shift right=1,right hook->]{r}[swap]{(j_{\cG})_{\ast}} & \Sp^{h \cF} \ar[shift right=2]{l}[swap]{(j_{\cG})^{\ast}} \ar[shift left=2]{r}{(i_{\cG})^{\ast}} & \Sp^{h \cF} \cap \Sp^{\Phi \cG} \ar[shift left=1,left hook->]{l}{(i_{\cG})_{\ast}},
\end{tikzcd}
\begin{tikzcd}[row sep=4ex, column sep=6ex, text height=1.5ex, text depth=0.5ex]
\Sp^{\tau \cG} \ar[shift right=1,right hook->]{r}[swap]{(j'_{\cG})_{\ast}} & \Sp^{\tau \cF} \ar[shift right=2]{l}[swap]{(j'_{\cG})^{\ast}} \ar[shift left=2]{r}{(i'_{\cG})^{\ast}} & \Sp^{\tau \cF} \cap \Sp^{\Phi \cG} \ar[shift left=1,left hook->]{l}{(i'_{\cG})_{\ast}},
\end{tikzcd} \],
\[ \begin{tikzcd}[row sep=4ex, column sep=6ex, text height=1.5ex, text depth=0.5ex]
\Sp^{h \cF} \cap \Sp^{\Phi \cG} \ar[shift right=1,right hook->]{r}[swap]{(j_{\cF})_{\ast}} & \Sp^{\Phi \cG} \ar[shift right=2]{l}[swap]{(j_{\cF})^{\ast}} \ar[shift left=2]{r}{(i_{\cF})^{\ast}} & \Sp^{\Phi \cF} \ar[shift left=1,left hook->]{l}{(i_{\cF})_{\ast}}.
\end{tikzcd} \]
Furthermore, the equivalence $\Sp^{h \cF} \xto{\simeq} \Sp^{\tau \cF}$ of \cref{rem:torsionCompleteEquivalence} is an equivalence of recollements under which $(j_{\cG})_!$ is the inclusion of $\cG$-torsion objects into $\cF$-torsion objects.
\end{prp}
\begin{proof} The defining properties of a stable symmetric monoidal recollement follow immediately from the same properties for the $\cF$ and $\cG$ recollements on $\Sp^G$. For the last assertion, the equivalence of $\cF$-complete and $\cF$-torsion objects is implemented by $j_! j^{\ast}$, and as such clearly restricts to equivalences $\Sp^{h \cG} \xto{\simeq} \Sp^{\tau \cG}$ and $\Sp^{h \cF} \cap \Sp^{\Phi \cG} \xto{\simeq} \Sp^{\tau \cF} \cap \Sp^{\Phi \cG}$ compatibly with the adjunctions in view of \cref{lm:subfamilyProperties}(4-5). Finally, the claim about $(j_{\cG})_!$ follows from a diagram chase of the right adjoints.
\end{proof}

\begin{rem}[Compact generation] \label{rem:CompactGenerationIntersection} Given a $G$-family $\cF$, the $\cF^{-1}$-local objects $\{ G/H_+ \otimes \widetilde{E \cF}: H \notin \cF \}$ form a set of compact generators for $\Sp^{\Phi \cF}$ because $\Sp^{\Phi \cF} = \Mod_{\Sp^G}(\widetilde{E \cF})$ and $G/H_+$ is $\cF$-torsion for all $H \in \cF$. Given two $G$-families $\cF$ and $\cG$, the essential image of $(j_\cF)_!$ is the localizing subcategory of $\Sp^{\Phi \cG}$ generated by $\{ G/H_+ \otimes \widetilde{E \cG}: H \notin \cG, H \in \cF \}$.
\end{rem}

\begin{rem} \label{NewRecollementSpaceCompatibility} The conclusions of \cref{prp:RecollementsOfRecollements} are also valid for the $\cF$ and $\cG$ recollements on the $\infty$-category of $G$-spaces. We likewise have the adjunction $\adjunct{\Sigma^{\infty}_+}{\Spc^{h \cF} \cap \Spc^{\Phi \cG} }{\Sp^{h \cF} \cap \Sp^{\Phi \cG}}{\Omega^{\infty}}$ and the same compatibility relations as in \cref{cnstr:SpacesRecollement}.
\end{rem}

\begin{rem} \label{rem:Fracture} Let us relate \cref{prp:RecollementsOfRecollements} to the `canonical fracture' of $G$-spectra studied in \cite[\S 2.4]{AMGR-NaiveApproach}. We say that a full subcategory $C_0 \subset C$ is \emph{convex} if given any $x,z \in C_0$ such that there exists a $2$-simplex $[x \to y \to z] \in C$, then $y \in C_0$. Let $\Conv(\fS)$ denote the poset of convex subcategories of $\fS$ and let $\Loc(\Sp^G)$ denote the poset of reflective subcategories of $\Sp^G$, with the order given by inclusion. Suppose $Q \in \Conv(\fS)$ and write $Q = \cF \setminus \cG$ for some $G$-family $\cF$ and subfamily $\cG$. Then the assignment $$\mathfrak{F}_G: \Conv(\fS) \to \Loc(\Sp^G)$$ of \cite[Prop.~2.69]{AMGR-NaiveApproach} sends $Q$ to $\Sp^{h \cF} \cap \Sp^{\Phi \cG}$. Indeed, if we let $\cK_{H}$ be the localizing subcategory of $\Sp^G$ generated by $G/H_+$ and examine \cite[Notn.~2.54]{AMGR-NaiveApproach}, we see that $\cK_{\leq Q} \simeq \Sp^{h \cF}$ and $\cK_{< Q} \simeq \Sp^{h \cG}$ under the equivalence between torsion and complete objects. Thus, $\Sp^G_Q$ defined as the presentable quotient of $\cK_{< Q} \to \cK_{\leq Q}$ is equivalent to $\Sp^{h \cF} \cap \Sp^{\Phi \cG}$ in view of \cref{prp:RecollementsOfRecollements}. Moreover, by inspection the functor $\rho: \Sp^G_Q \xto{\nu} \cK_{\leq Q} \xto{i_R} \Sp^G$ in \cite[Notn.~2.54]{AMGR-NaiveApproach} exhibiting $\Sp^G_Q$ as a reflective subcategory  embeds $\Sp^G_Q$ as $\cF$-complete and $\cG^{-1}$-local objects.

By \cite[Prop.~2.69]{AMGR-NaiveApproach} the functor $\mathfrak{F}_G: \Conv(\fS) \to \Loc(\Sp^G)$ is a \emph{fracture} in the sense of \cite[Def.~2.32]{AMGR-NaiveApproach}. Thus, for any convex subcategory $Q = \cF \setminus \cG$ and sieve-cosieve decomposition of $Q$ into $Q_0 = \cF_0 \setminus \cG_0$ and $Q_1 = \cF_1 \setminus \cG_1$, we obtain a recollement $(\Sp^{h \cF_0} \cap \Sp^{\Phi \cG_0}, \Sp^{h \cF_1} \cap \Sp^{\Phi \cG_1})$ of $\Sp^{h \cF} \cap \Sp^{\Phi \cG}$. It is easily seen that these specialize to those considered in \cref{prp:RecollementsOfRecollements} in the case where $Q$ is itself a sieve or a cosieve.
\end{rem}

\begin{ntn} \label{ntn:locallyClosedFibers} Given a subgroup $H$ of $G$, let $\overline{H} = \fS_{\leq H}$ and $\partial \overline{H} = \fS_{<H}$ denote the $G$-family of subgroups that are subconjugate to $H$ and properly subconjugate to $H$, respectively.\footnote{This notation is consistent with viewing sieves as closed sets and cosieves as open sets for a topology on $\fS$.} Let $\fS^c_{\geq H}$ denote the $G$-family of subgroups $K$ such that $H$ is \emph{not} subconjugate to $K$.
\end{ntn}

\begin{lem} \label{lem:VanishingOutsideCone} Suppose $X \in \Sp^G$ is $\overline{H}$-complete and $(\partial \overline{H})^{-1}$-local. Then $X$ is in addition $(\fS^c_{\geq H})^{-1}$-local, i.e., for all subgroups $K$ such that $H$ is not subconjugate to $K$, $X^{\phi K} \simeq 0$.
\end{lem}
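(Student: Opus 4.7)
The plan is to localize the statement to the subgroup $K$ and then invoke \cref{lm:subfamilyProperties}(6), which says that being complete and local with respect to the same family forces vanishing. Concretely, fix a subgroup $K$ such that $H$ is not subconjugate to $K$; we will show that $\res^G_K X \simeq 0$, from which $X^{\phi K} \simeq \Phi^K \res^G_K X \simeq 0$ is immediate.

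For this, set
\[ \cF_K \coloneq \overline{H} \cap \fS[K] = \{L \leq K : L \leq_G H\}, \qquad \cF'_K \coloneq \partial \overline{H} \cap \fS[K] = \{L \leq K : L <_G H\}. \]
By the discussion of \cref{ParamRecollementFamily}, the restriction functor $\res^G_K \colon \Sp^G \to \Sp^K$ preserves complete and local objects for a family. Hence $\res^G_K X$ is simultaneously $\cF_K$-complete and $(\cF'_K)^{-1}$-local.

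The key combinatorial observation is that $\cF_K = \cF'_K$ under our hypothesis on $K$. Indeed, suppose $L \leq K$ is subconjugate in $G$ to $H$; we must show $L$ is \emph{properly} subconjugate to $H$. If instead $L$ were $G$-conjugate to $H$, say $L = gHg^{-1}$, then from $gHg^{-1} \leq K$ we would deduce $H \leq g^{-1} K g$, contradicting the assumption that $H$ is not subconjugate to $K$. Therefore every $L \in \cF_K$ lies in $\cF'_K$, and the reverse inclusion is automatic.

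With this identification in hand, $\res^G_K X$ is both $\cF_K$-complete and $\cF_K^{-1}$-local. Taking $\cG = \cF = \cF_K$ in \cref{lm:subfamilyProperties}(6) (or equivalently, observing that the $\cF_K$-recollement on $\Sp^K$ forces such an object to be zero since it lies in the intersection of the two sides), we conclude that $\res^G_K X \simeq 0$, and hence $X^{\phi K} \simeq 0$. No real obstacle is anticipated beyond correctly verifying the combinatorial identity $\cF_K = \cF'_K$; the rest follows formally from the recollement machinery already set up.
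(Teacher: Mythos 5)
Your argument is correct, and it reaches the conclusion by a genuinely different route than the paper. The paper's proof is a one-liner: it observes the family identity $\partial \overline{H} = \overline{H} \cap \fS^c_{\geq H}$ and applies \cref{lem:KeyIntersectionPropertyFamilies} (whose proof rests on the formula $E\cF_+ \otimes E\cG_+ \simeq E(\cF \cap \cG)_+$ and the fracture square) with $\cF = \overline{H}$, $\cG = \fS^c_{\geq H}$, so that $\overline{H}$-complete and $(\partial\overline{H})^{-1}$-local objects are automatically $(\fS^c_{\geq H})^{-1}$-local. You instead localize at each offending subgroup $K$: using \cref{ParamRecollementFamily} to see that $\res^G_K$ carries completeness and locality to the restricted families, you reduce to the group-theoretic identity $\overline{H} \cap \fS[K] = \partial\overline{H} \cap \fS[K]$ when $H$ is not subconjugate to $K$ (your conjugacy argument for this is fine, since subconjugate-but-not-properly-subconjugate forces conjugacy by a cardinality count), and then conclude $\res^G_K X \simeq 0$ from \cref{lm:subfamilyProperties}(6), whence $X^{\phi K} \simeq \Phi^K \res^G_K X \simeq 0$. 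The underlying combinatorics are the same in both arguments, but the homotopical input differs: the paper invokes the intersection lemma once, globally, while your proof trades that for the restriction-compatibility of the $\cF$-recollement plus the elementary "complete and local implies zero" principle, and as a by-product records the marginally sharper-looking (though, since $\fS^c_{\geq H}$ is a family, equivalent) statement that $\res^G_K X \simeq 0$ for every $K$ to which $H$ is not subconjugate. There is no circularity in your references, as both \cref{ParamRecollementFamily} and \cref{lm:subfamilyProperties} precede the lemma in question.
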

\begin{proof} Note that $\partial \overline{H} = \overline{H} \cap \fS^c_{\geq H}$ and use \cref{lem:KeyIntersectionPropertyFamilies}.
\end{proof}

The following two lemmas are explained in \cite[Obs.~2.11-14]{AMGR-NaiveApproach}) (and the first one also in \cite[Prop.~II.2.14]{NS18}), so we will omit their proofs.

\begin{lem} \label{lem:ClosedPartRecollementNormalSubgroup} Let $N$ be a normal subgroup of $G$. Then the geometric fixed points functor $\Phi^N: \Sp^G \to \Sp^{G/N}$ has fully faithful right adjoint with essential image $\Sp^{\Phi \fS^c_{\geq H}}$. Consequently, $\Sp^{G/N}$ is equivalent to the smashing localization $\Mod_{\Sp^G}(\widetilde{E \fS^c_{\geq N}})$.
\end{lem}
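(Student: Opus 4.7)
The plan is to identify $\Sp^{G/N}$ with $\Sp^{\Phi \fS^c_{\geq N}}$ as symmetric monoidal $\infty$-categories, from which the claims about the right adjoint to $\Phi^N$ follow as a corollary. Write $\mathcal{P} \coloneq \fS^c_{\geq N}$; since $N$ is normal, $\mathcal{P}$ consists of those $K \leq G$ with $K \not\supseteq N$. Let $i^* \dashv i_*$ be the $\mathcal{P}$-localization adjunction of \cref{cnstr:GspaceFromGfamily}, so $\Sp^{\Phi\mathcal{P}} \simeq \Mod_{\Sp^G}(\widetilde{E\mathcal{P}})$. I would consider the colimit-preserving symmetric monoidal composite
$$F \coloneq i^* \circ \inf^N : \Sp^{G/N} \longrightarrow \Sp^{\Phi\mathcal{P}}$$
and show $F$ is an equivalence with inverse $\Psi^N \circ i_*$, by directly checking the unit and counit of the associated adjunction.

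For the unit $Y \to \Psi^N(\inf^N Y \wedge \widetilde{E\mathcal{P}})$, I would invoke the classical Greenlees--May formula $\Phi^N(-) \simeq \Psi^N((-) \wedge \widetilde{E\mathcal{P}})$ together with the symmetric monoidality of $\Phi^N$ to compute
$$\Psi^N(\inf^N Y \wedge \widetilde{E\mathcal{P}}) \simeq \Phi^N(\inf^N Y \wedge \widetilde{E\mathcal{P}}) \simeq \Phi^N(\inf^N Y) \wedge \Phi^N(\widetilde{E\mathcal{P}}) \simeq Y \wedge \mathbb{S} \simeq Y,$$
where $\Phi^N \inf^N \simeq \mathrm{id}$ follows from the base-change isomorphism in $\SH^{\otimes}$ applied to the evident pullback square of classifying groupoids for $BG \to B(G/N)$, and $\Phi^N \widetilde{E\mathcal{P}} \simeq \mathbb{S}$ follows from $\Phi^N \Sigma^\infty \simeq \Sigma^\infty(-)^N$ for pointed $G$-spaces together with $(\widetilde{E\mathcal{P}})^N \simeq S^0$.

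For the counit $\inf^N \Psi^N(X) \wedge \widetilde{E\mathcal{P}} \to X$ with $X \in \Sp^{\Phi\mathcal{P}}$, I would check it is an equivalence via the joint conservativity of $\{\phi^K : K \leq G\}$. When $K \in \mathcal{P}$, both sides vanish at $\phi^K$: the right side by $\mathcal{P}^{-1}$-locality (\cref{lem:GeometricFixedPointsDetectionCriterion}), the left side because $(\widetilde{E\mathcal{P}})^{\phi K} \simeq 0$. When $K \supseteq N$, I invoke the factorization $\Phi^K \simeq \Phi^{K/N} \circ \Phi^N$ (transitivity of $q_{\otimes}$ in $\SH^{\otimes}$ applied to the composite of quotient maps), the agreement $\Phi^N X \simeq \Psi^N X$ for $\mathcal{P}^{-1}$-local $X$ (which is the Greenlees--May formula applied to $X \wedge \widetilde{E\mathcal{P}} \simeq X$), and $(\widetilde{E\mathcal{P}})^{\phi K} \simeq \mathbb{S}$, so that both sides reduce to $\Phi^{K/N}(\Phi^N X)$.

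Once $F$ is an equivalence, $\Phi^N$ factors as $\overline{\Phi^N} \circ i^*$ with $\overline{\Phi^N} \coloneq F^{-1}$, so the right adjoint to $\Phi^N$ is $i_* \circ F$; this is fully faithful with essential image $\Sp^{\Phi\mathcal{P}} \simeq \Mod_{\Sp^G}(\widetilde{E\mathcal{P}})$, yielding both assertions of the lemma. The main obstacle is carefully justifying the identities $\Phi^N \inf^N \simeq \mathrm{id}$ and $\Phi^K \simeq \Phi^{K/N} \circ \Phi^N$ inside the Bachmann--Hoyois framework, but both are direct consequences of the base-change and composition isomorphisms built into $\SH^{\otimes}$ viewed on $\Span(\Gpd_{\fin})$.
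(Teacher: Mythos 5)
The paper does not prove this lemma at all---it cites \cite[Obs.~2.11--14]{AMGR-NaiveApproach} and \cite[Prop.~II.2.14]{NS18}, where the argument runs by factoring $\Phi^N$ through the smashing localization at $\widetilde{E\fS^c_{\geq N}}$ and checking the induced functor on the compact generators $G/K_+ \otimes \widetilde{E\fS^c_{\geq N}}$, $N \subseteq K$. Your route is a legitimate alternative: writing $\mathcal{P} = \fS^c_{\geq N}$, the adjunction $i^*\inf^N \dashv \Psi^N i_*$ is correctly identified, the final assembly (right adjoint of $\Phi^N$ is $i_* \circ F$, fully faithful with image $\Sp^{\Phi\mathcal{P}}$, hence $\Sp^{G/N} \simeq \Mod_{\Sp^G}(\widetilde{E\mathcal{P}})$) is correct, and quoting the classical formula $\Phi^N \simeq \Psi^N(- \otimes \widetilde{E\mathcal{P}})$ is fair game given that the paper itself defers to the literature---though you should be aware that this formula is carrying essentially all of the content, so the residual work is exactly the bookkeeping you gloss over.

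Two steps need repair. First, $\Phi^N\inf^N \simeq \mathrm{id}$ does \emph{not} follow from ``the base-change isomorphism applied to the evident pullback square'': in the Bachmann--Hoyois formalism, base change exchanges $\Phi^N$ with restriction across genuine pullback squares of groupoids (e.g.\ $B K \to B G$ over $B(K/N) \to B(G/N)$ gives $\res^{G/N}_{K/N}\Phi^N \simeq \Phi^N\res^G_K$), whereas $\Phi^N\inf^N$ is the value of $\SH^{\otimes}$ on the span $B(G/N) \leftarrow BG \to B(G/N)$, which is not the identity span and is not simplified by any pullback. You must instead argue via compatibility of $\Phi^N$ with $\Sigma^{\infty}_+$ and space-level $N$-fixed points (so $\Phi^N\inf^N\Sigma^{\infty}_+ Y \simeq \Sigma^{\infty}_+(\inf^N Y)^N \simeq \Sigma^{\infty}_+ Y$), together with monoidality and colimit-preservation, or simply cite the classical statement; by contrast your transitivity $\Phi^K \simeq \Phi^{K/N}\Phi^N$ for $N \subseteq K$ is genuinely a consequence of functoriality on composable forward maps (after the small check that the preimage of $N_{G/N}(K/N)$ is $N_G K$). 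Second, and more seriously, your ``check'' of the unit and counit only identifies source and target up to equivalence; it does not show the unit and counit maps themselves are equivalences. This is not a pedantic point in this neighborhood: $\Psi^N\inf^N$ is \emph{not} the identity (tom Dieck splitting), so canonical maps between abstractly equivalent objects here can and do fail to be equivalences, and the unit $Y \to \Psi^N(\inf^N Y \otimes \widetilde{E\mathcal{P}})$ has to be traced, not just its target computed. The standard fix is available: all functors in sight preserve colimits ($\Psi^N$ does, as noted in \S 2.1, and $i_*i^*$ is smashing), so it suffices to evaluate the unit on the generators $\Sigma^{\infty}_+(G/N)/(K/N)$ and the counit, after each $\phi^K$, on the generators $G/K_+\otimes\widetilde{E\mathcal{P}}$, where the maps are explicit; alternatively, record that the Greenlees--May equivalence is compatible with the canonical transformation $\Psi^N \to \Psi^N(-\otimes\widetilde{E\mathcal{P}})$, which is what your computations implicitly assume. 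With these two repairs the argument goes through.
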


\begin{lem} \label{lem:LocallyClosedFibersAreBorel} The geometric fixed points functor $\phi^H: \Sp^G \to \Fun(B W_G H, \Sp)$ has fully faithful right adjoint with essential image $\Sp^{h \overline{H}} \cap \Sp^{\Phi (\partial \overline{H})} =\Sp^{h \overline{H}} \cap \Sp^{\Phi \fS^c_{\geq H}}$.
\end{lem}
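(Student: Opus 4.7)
The plan is to construct the fully faithful right adjoint $\iota_H$ explicitly as a composite, verify that it lands in $\Sp^{h \overline{H}} \cap \Sp^{\Phi \partial \overline{H}}$ by a pointwise $\phi^K$-computation, then deduce equality of the essential image via a fiber argument.

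Set $N = N_G H$, so that $H \trianglelefteq N$ and $W \coloneq W_G H = N/H$; define
\[
\iota_H \colon \Fun(BW, \Sp) \xrightarrow{\mathrm{Borel}} \Sp^W \xrightarrow{i_\ast} \Sp^N \xrightarrow{\ind^G_N} \Sp^G,
\]
where $\mathrm{Borel}$ is the Borel-complete embedding (fully faithful right adjoint to the underlying functor $\Sp^W \to \Fun(BW, \Sp)$) and $i_\ast$ is the fully faithful right adjoint to $\Phi^H \colon \Sp^N \to \Sp^W$ provided by \cref{lem:ClosedPartRecollementNormalSubgroup}. Since $\res^G_N \dashv \ind^G_N$ is ambidextrous for the finite-index inclusion (\cref{section:EquivariantConventions}), $\iota_H$ is right adjoint to $\phi^H$ by composing adjunctions. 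Writing $Z = i_\ast\,\mathrm{Borel}(Y)$, a direct calculation shows $Z^{\phi L} \simeq 0$ for $L \leq N$ with $H \not\leq L$, while $Z^{\phi H} \simeq Y$.

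To verify $\iota_H Y \in \Sp^{\Phi \partial \overline{H}}$ I would compute $(\iota_H Y)^{\phi K}$ using the double coset formula
\[
\res^G_K \ind^G_N \simeq \bigoplus_{g \in K \backslash G / N} \ind^K_{K \cap g N g^{-1}} c_g \res^N_{g^{-1} K g \cap N}
\]
coming from base-change in $\SH$ (\cref{section:EquivariantConventions}); together with the vanishing of $\Phi^K \ind^K_L$ for $L \subsetneq K$, this collapses the formula to $\bigoplus_g c_g(Z^{\phi g^{-1} K g})$, summed over $g$ with $g^{-1} K g \leq N$. For $K \in \partial \overline{H}$ one has $|K| < |H|$, so $g^{-1} K g$ cannot contain $H$, forcing every summand to vanish. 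The $\overline{H}$-completeness of $\iota_H Y$ follows from the adjunction and the vanishing $\phi^H Z \simeq 0$ for every $\overline{H}^{-1}$-local $Z$ (immediate from \cref{lem:GeometricFixedPointsDetectionCriterion}). Applying the same collapse to $K = H$, nontrivial contributions require $g^{-1} H g \supseteq H$, and comparing orders forces $g^{-1} H g = H$, i.e.\ $g \in N$; hence only the trivial double coset contributes, yielding $\phi^H \iota_H Y \simeq Y$. The counit of the adjunction is therefore an equivalence, so $\iota_H$ is fully faithful.

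For the essential image claim, let $X \in \Sp^{h \overline{H}} \cap \Sp^{\Phi \partial \overline{H}}$ and let $F$ be the fiber of the unit $X \to \iota_H \phi^H X$. Since both endpoints lie in the intersection and the intersection is closed under fibers, so does $F$. The triangle identity combined with $\phi^H \iota_H \simeq \mathrm{id}$ gives $\phi^H F \simeq 0$; by $G$-equivariance of geometric fixed points, $\phi^K F \simeq 0$ for every $G$-conjugate $K$ of $H$, and combined with the $\partial \overline{H}^{-1}$-locality of $F$ this yields $F \in \Sp^{\Phi \overline{H}}$. Since $F$ is simultaneously $\overline{H}$-complete and $\overline{H}^{-1}$-local, \cref{lm:subfamilyProperties}(6) forces $F \simeq 0$. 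Hence $X \simeq \iota_H \phi^H X$ lies in the essential image. The main obstacle is the combinatorial analysis of the double coset decomposition in the non-normal case, where $N_G H$ may house several $G$-conjugates of $H$ (as in $G = S_4$, $H = \langle (1\,2)(3\,4) \rangle$, with $N_G H$ containing the full Klein four subgroup); the saving observation is that $Z^{\phi L}$ is strictly supported at $L = H$ inside $\Sp^N$, so contributions from conjugates $g^{-1} H g \neq H$ vanish automatically.
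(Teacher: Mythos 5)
Your argument is correct. Note that the paper itself omits the proof of this lemma, deferring to \cite[Obs.~2.11-14]{AMGR-NaiveApproach} and \cite[Prop.~II.2.14]{NS18}, so the comparison is really with those references, whose arguments are recollement-theoretic: restrict to $N_G H$, invoke the normal-subgroup case (\cref{lem:ClosedPartRecollementNormalSubgroup}) together with the Borel equivalence $\Sp^{h W_G H} \simeq \Fun(B W_G H, \Sp)$, and identify the locally closed stratum by purely formal manipulations of the family recollements. You instead build the right adjoint explicitly as $\iota_H = \ind^G_{N_G H} \circ i_* \circ (\text{Borel embedding})$ and check everything pointwise on geometric fixed points via the Mackey double coset formula. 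This is a genuinely more computational route: it buys an explicit formula for $\iota_H$ and for $(\iota_H Y)^{\phi K}$, at the cost of the double-coset combinatorics that the formal approach avoids. Your treatment of the essential image (fiber of the unit, $\phi$-vanishing at all subgroups subconjugate to $H$, then \cref{lm:subfamilyProperties}(6)) is exactly the standard closing move and is fine.

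Two small points to tighten. First, full faithfulness needs the counit $\phi^H \iota_H Y \to Y$ itself to be an equivalence, not merely an abstract identification $\phi^H \iota_H Y \simeq Y$; this is repaired by noting that the right adjoint of $\res^G_{N_G H}$ is coinduction (identified with $\ind^G_{N_G H}$ by the Wirthm\"uller equivalence the paper records in \cref{section:EquivariantConventions}), whose counit is the projection onto the identity double-coset factor of the Mackey decomposition, so that after applying the exact functor $\Phi^H$ it becomes an equivalence precisely because you have shown every other summand vanishes. Second, the equality $\Sp^{h \overline{H}} \cap \Sp^{\Phi(\partial\overline{H})} = \Sp^{h \overline{H}} \cap \Sp^{\Phi \fS^c_{\geq H}}$ asserted in the statement is not addressed in your write-up; it is exactly \cref{lem:VanishingOutsideCone} (with \cref{lm:subfamilyProperties}(2) for the converse inclusion), so nothing substantive is missing, but it should be cited. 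Finally, in your closing remark the object $Z = i_*\,\mathrm{Borel}(Y)$ is supported on the subgroups of $N_G H$ \emph{containing} $H$, not only at $H$ itself; your argument only ever uses vanishing at subgroups not containing $H$, so this is a harmless slip of wording.
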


\subsection{Reconstruction from geometric fixed points}
 
\label{section:Reconstruction}

We next aim to state the reconstruction theorem \cite[Thm.~A]{AMGR-NaiveApproach} of Ayala, Mazel-Gee, and Rozenblyum. For this, we need a few preliminary notions.

\begin{dfn} \label{dfn:GeometricLocus} The $G$-\emph{geometric locus} $$\Sp^G_{\locus{\phi}} \subset \Sp^G \times \fS[G]$$ is the full subcategory on objects $(X,H)$ such that $X \in \Sp^{h \overline{H}} \cap \Sp^{\Phi (\partial \overline{H})}$, i.e., $X$ is $\overline{H}$-complete and $(\partial \overline{H})^{-1}$-local (\cref{ntn:locallyClosedFibers}).
\end{dfn}

\begin{dfn}\label{dfn:GeneralizedTate} Given $H$ subconjugate to $K$, the \emph{generalized Tate construction}
\[ \tau^K_H: \Fun(B W_G H, \Sp) \to \Fun(B W_G K, \Sp) \]
is the functor given by the composition
\[ \Fun(B W_G H, \Sp) \to \Sp^G \xto{\phi^K} \Fun(B W_G K, \Sp) \]
where the first functor is the right adjoint to $\phi^H$. If $H=1$, then we will write $\tau^K \coloneq \tau^K_1$.
\end{dfn}

\begin{rem} \label{rem:GenTateResCompatibility} Evidently, the generalized Tate functors $\tau^K_H$ inherit some compatibility properties from the geometric fixed points functors. For example, for $H$ a subgroup of $K$ in $G$, the commutative diagrams
\[ \begin{tikzcd}[row sep=4ex, column sep=4ex, text height=1.5ex, text depth=0.25ex]
\Sp^K \ar{r}{\Phi^H} \ar{d}[swap]{\ind} & \Sp^{W_K H} \ar{d}[swap]{\ind} \ar{r} & \Fun(B W_K H, \Sp) \ar{d}[swap]{\ind} \\
\Sp^G \ar{r}{\Phi^H} & \Sp^{W_G H} \ar{r} & \Fun(B W_G H, \Sp)
\end{tikzcd},
\begin{tikzcd}[row sep=4ex, column sep=4ex, text height=1.5ex, text depth=0.25ex]
\Sp^G \ar{r}{\Phi^K} \ar{d}{\res} & \Sp^{W_K H} \ar{d}{\res} \\
\Sp^K \ar{r}{\Phi^K} & \Sp
\end{tikzcd}
 \]
imply that the diagram of generalized Tate functors defined relative to $G$ and $K$
\[ \begin{tikzcd}[row sep=4ex, column sep=4ex, text height=1.5ex, text depth=0.25ex]
\Sp^{h W_G H} \ar{r}{\tau^K_H} \ar{d}{\res} & \Sp^{h W_G K} \ar{d}{\res} \\
\Sp^{h W_K H} \ar{r}{\tau^K_H} & \Sp
\end{tikzcd} \]
commutes. The notation is therefore unambiguous (or abusive) in the same sense as that for geometric fixed points.

Also, if $N_G K = N_G H$, then the composite (with the first functor right adjoint to $\Phi^H$)
\[ \Sp^{W_G H} \to \Sp^G \xto{\Phi^K} \Sp^{W_G K} \]
is homotopic to $\Phi^{K/H}$, and thus $\tau^K_H \simeq \tau^{K/H}$ for $K/H$ regarded as a normal subgroup of $W_G H$. 

Finally, note that if $G = C_p$ is a cyclic group of prime order, then $\tau^{C_p} \simeq t^{C_p}$ is the ordinary Tate construction, but not generally otherwise.
\end{rem}

\begin{lem} The structure map $p: \Sp^G_{\locus{\phi}} \to \fS$ is a locally cocartesian fibration such that the functors $\tau^K_H$ are the pushforward functors encoded by $p$ under the equivalence of \cref{lem:LocallyClosedFibersAreBorel}.
\end{lem}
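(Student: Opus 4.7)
The plan is to verify the universal property of a locally $p$-cocartesian edge directly, leveraging the reflective structure on each fiber provided by \cref{lem:LocallyClosedFibersAreBorel}. Concretely, by that lemma the fiber $p^{-1}(H)$ is the essential image of the fully faithful right adjoint $\iota_H: \Fun(BW_G H, \Sp) \to \Sp^G$ of $\phi^H$, and hence sits in $\Sp^G$ as a reflective subcategory with reflector $L_H \coloneq \iota_H \circ \phi^H$ and unit $\eta^H: \id_{\Sp^G} \Rightarrow L_H$ given by the unit of $\phi^H \dashv \iota_H$.

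Given a morphism $e: H \leq K$ in $\fS$ and an object $(X, H) \in \Sp^G_{\locus{\phi}}$, the proposed locally $p$-cocartesian lift of $e$ at $(X,H)$ will be
\[ (\eta^K_X, e): (X, H) \longrightarrow (L_K X, K). \]
To verify the universal property, note that since $\Sp^G_{\locus{\phi}}$ is a full subcategory of $\Sp^G \times \fS$ and $\fS$ is a preorder, for any $(Z, K) \in \Sp^G_{\locus{\phi}}$ the mapping space $\Map_{\Sp^G_{\locus{\phi}}}((X, H), (Z, K))$ is equivalent to $\Map_{\Sp^G}(X, Z)$. Hence the required equivalence
\[ \Map_{\Sp^G}(L_K X, Z) \xrightarrow{(-) \circ \eta^K_X} \Map_{\Sp^G}(X, Z) \]
is precisely the universal property of $\eta^K_X$ for the reflective subcategory $p^{-1}(K) \subseteq \Sp^G$, into which $Z$ belongs.

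To identify the pushforwards, observe that the functor $L_K$ restricts to a functor $p^{-1}(H) \to p^{-1}(K)$. Under the equivalences $\phi^H$ (with inverse $\iota_H$) and $\phi^K$ (with inverse $\iota_K$) of \cref{lem:LocallyClosedFibersAreBorel}, this transports to the functor
\[ W \longmapsto \phi^K(L_K(\iota_H W)) = \phi^K \iota_K \phi^K (\iota_H W) \simeq \phi^K(\iota_H W) = \tau^K_H(W), \]
where the middle equivalence uses that $\iota_K$ is fully faithful (so $\phi^K \iota_K \simeq \id$), and the last equality is the definition of $\tau^K_H$ recorded in \cref{dfn:GeneralizedTate}.

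There is no substantive obstacle beyond these routine verifications; the proof is essentially an unpacking of the definition of a locally cocartesian fibration in light of the reflective subcategory structure. It is worth remarking, however, on why $p$ is only locally and not globally cocartesian: given $H \leq K \leq L$, the pushforwards fail to compose strictly, since the canonical comparison $\tau^L_H \Rightarrow \tau^L_K \circ \tau^K_H$ induced by the unit $\id \Rightarrow L_K$ is not in general an equivalence, as discussed in \cref{rem:canonical1}.
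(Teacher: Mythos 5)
Your proof is correct, and it takes a genuinely different---and more elementary---route than the paper's. The paper's argument reduces to a single edge $e: \Delta^1 \to \fS$, builds an intermediate cocartesian fibration $C' \subset \Sp^G \times \Delta^1$ by constraining only the fiber over $0$, extends $L_K$ to a compatible localization $L: C' \to C'$, and then cites \cite[Lem.~2.2.1.11]{HA} on reflective localizations of cocartesian fibrations. You instead verify the locally cocartesian universal property directly for the explicit candidate $(\eta^K_X, e): (X,H) \to (L_K X, K)$: because $\Sp^G_{\locus{\phi}}$ is a full subcategory of $\Sp^G \times \fS$ and $\fS$ is a preorder, the space of morphisms to any $(Z,K)$ lying over $e$ collapses to $\Map_{\Sp^G}(X,Z)$, and the reflective-localization adjunction $\phi^K \dashv \iota_K$ gives the required equivalence at once. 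The identification of the pushforward with $\tau^K_H$ via $\phi^K L_K \iota_H \simeq \phi^K \iota_H$ is then the same short computation either way. What the paper's route buys is machinery that applies over bases that are not preorders, where the mapping spaces don't collapse so cleanly; what yours buys is a self-contained argument that makes transparent exactly which facts are being used (fullness of the geometric locus, $\fS$ a preorder, the reflective structure on each fiber). Both proofs leave implicit that $p$ is an inner fibration, but this is automatic since $\Sp^G_{\locus{\phi}}$ is a full subcategory of the (trivially cocartesian) fibration $\Sp^G \times \fS \to \fS$.
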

\begin{proof} This is \cite[Constr.~2.38]{AMGR-NaiveApproach} applied to the fracture $\mathfrak{F}_G$ of \cref{rem:Fracture}. To spell out a few more details, we need to show that for every edge $e: \Delta^1 \to \fS$ given by $H$ subconjugate to $K$, the pullback $p|_{e}$ of $p$ over $\Delta^1$ is a cocartesian fibration. Let $C' \subset \Sp^G \times \Delta^1$ be the full subcategory on objects $\{(X,i) \}$ where if $i=0$, then $X \in (\Sp^G_{\locus{\phi}})_H$. Then we have a factorization
 \[ \Sp^G_{\locus{\phi}} \times_{\fS, e} \Delta^1 \xto{i''} C' \xto{i'} \Sp^G \times \Delta^1. \]
Note that $C' \to \Delta^1$ is a sub-cocartesian fibration of $\Sp^G \times \Delta^1$ via $i'$ (with cocartesian edges exactly those sent to equivalences via the projection to $\Sp^G$). As for the fiber over $1$, by definition we have that $(\Sp^G_{\locus{\phi}})_K$ is a localization of $\Sp^G$. By an elementary lifting argument, this extends to a localization functor $L: C' \to C'$ whose essential image is $\Sp^G_{\locus{\phi}} \times_{\fS, e} \Delta^1$. By \cite[Lem.~2.2.1.11]{HA}, we deduce that $p|_{e}$ is a cocartesian fibration.
\end{proof}

Recall the barycentric subdivision construction (\cite[Def.~3.19]{ShahRecoll} and \cite[Obs.~3.20]{ShahRecoll}). Unwinding that definition in our situation of interest for a preordered set, we see that $\sd(\fS)$ is the category whose objects are strings $\kappa = [H_0 < H_1 < \cdots < H_n]$ in $\fS$ with each $H_i$ properly subconjugate to $H_{i+1}$, and where a morphism
\[ \kappa = [H_0 < H_1 < \cdots < H_n] \to \lambda = [K_0 < K_1 < \cdots < K_m] \]
is the data of an injective map $\alpha: [n] \to [m]$ of totally ordered sets and a commutative diagram in $\fS$
\[ \begin{tikzcd}[row sep=4ex, column sep=4ex, text height=1.5ex, text depth=0.25ex]
H_{0} \ar{r} \ar{d} & H_{1} \ar{r} \ar{d} & \cdots \ar{r} & H_n \ar{d} \\
K_{\alpha(0)} \ar{r} & K_{\alpha(1)} \ar{r} & \cdots \ar{r} & K_{\alpha(n)}
\end{tikzcd} \]
whose vertical morphisms are equivalences. Note that if a morphism $\kappa \to \lambda$ exists, then $\alpha$ and the commutative ladder are uniquely determined. Thus, the morphism sets in $\sd(\fS)$ are either empty or singleton and $\sd(\fS)$ is also a preordered set. Regard $\sd(\fS)$ as a locally cocartesian fibration over $\fS$ via the functor which takes a string to its maximum element (\cite[Constr.~3.21]{ShahRecoll}).

\begin{rem} Given any locally cocartesian fibration $p: C \to \fS$ whose fibers $C_H$ are stable $\infty$-categories and whose pushforward functors are exact, the right-lax limit $\Fun^{\cocart}_{/\fS}(\sd(\fS),C)$ is a stable $\infty$-category by \cite[Lem.~3.34]{ShahRecoll}. Moreover, if the fibers are presentable and the pushforward functors are also accessible, then the right-lax limit is presentable by \cite[Prop.~3.38]{ShahRecoll}. 
\end{rem}

We may now state \cite[Thm.~A]{AMGR-NaiveApproach}, rewritten in our notation.

\begin{thm} \label{thm:AMGRoriginal} There is a canonical equivalence $\Sp^G \simeq \Fun^{\cocart}_{/\fS}(\sd(\fS),\Sp^G_{\locus{\phi}})$.
\end{thm}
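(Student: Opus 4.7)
The strategy is to invoke the right-lax reconstruction theorem \cite[Thm.~3.35]{ShahRecoll} of the second author, specialized to the fracture $\mathfrak{F}_G: \Conv(\fS) \to \Loc(\Sp^G)$ recorded in \cref{rem:Fracture}. This fracture encodes the stratification of $\Sp^G$ whose locally closed strata, indexed by conjugacy classes of subgroups $H \leq G$, are identified with $\Fun(BW_G H, \Sp)$ via \cref{lem:LocallyClosedFibersAreBorel}. These strata are precisely the fibers of the locally cocartesian fibration $p: \Sp^G_{\locus{\phi}} \to \fS$, whose pushforward along an edge $H < K$ is the generalized Tate functor $\tau^K_H$; this fibration will serve as the coefficient system for the right-lax limit model.

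To construct the comparison functor
$$\Phi: \Sp^G \longrightarrow \Fun^{\cocart}_{/\fS}(\sd(\fS), \Sp^G_{\locus{\phi}}),$$
I would assemble, for each $X \in \Sp^G$, the collection of localizations $L_H X \in \Sp^{h \overline{H}} \cap \Sp^{\Phi(\partial \overline{H})}$ as $H$ varies. The unit and counit maps between successive such localizations along a string $\kappa = [H_0 < \cdots < H_n]$ yield a canonical section of $p$ over $\sd(\fS)$ sending $\kappa$ to $L_{H_n} X$. The cocartesian condition over the locally cocartesian fibration $\sd(\fS) \to \fS$ is then essentially automatic: a string inclusion $\kappa \hookrightarrow \lambda$ covers the morphism $\max\kappa \to \max\lambda$ in $\fS$, and the associated map in the section is, by construction, the locally cocartesian pushforward of $L_{\max\kappa}X$ under the corresponding composite of generalized Tate functors.

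The main step is to verify that $\Phi$ is an equivalence by applying \cite[Thm.~3.35]{ShahRecoll}. That theorem asserts that a presentable stable $\infty$-category equipped with a suitable recollement-type fracture over a preordered set is reconstructed as the right-lax limit of its locally closed strata, packaged as the $\infty$-category of cocartesian sections over the barycentric subdivision. The needed inputs are precisely the recollements of \cref{prp:RecollementsOfRecollements} attached to every sieve-cosieve decomposition of a convex subcategory $Q \subset \fS$, together with the compatibilities supplied by \cref{lem:KeyIntersectionPropertyFamilies} and \cref{lem:VanishingOutsideCone}. The principal obstacle is the combinatorial bookkeeping required to translate these nested recollement data into cocartesian sections over $\sd(\fS)$; this is exactly the role of the barycentric subdivision, which straightens the locally cocartesian, but not cocartesian, pushforward structure of $p$ into an honest cocartesian fibration over $\sd(\fS)$, and thereby allows one to sidestep any explicit use of $(\infty,2)$-categorical machinery in the proof.
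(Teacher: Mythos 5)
Your proposal constructs a comparison functor $\Phi \colon \Sp^G \to \Fun^{\cocart}_{/\fS}(\sd(\fS), \Sp^G_{\locus{\phi}})$ in precisely the direction that the paper deliberately avoids, and the construction as described does not actually produce a cocartesian section. The formula "$\kappa = [H_0 < \cdots < H_n] \mapsto L_{H_n}X$" fails the cocartesian condition: for the string inclusion $e \colon [K_0 < \cdots < K_n] \to [K_0 < \cdots < K_n < H]$, a cocartesian section must send $e$ to a locally cocartesian edge of $\Sp^G_{\locus{\phi}}$ over $K_n < H$, i.e.\ to the unit of the localization $(\Sp^G_{\locus{\phi}})_H \subset \Sp^G$ applied to $L_{K_n}X$, so the target must be $L_H(L_{K_n}X) \simeq \tau^H_{K_n}\phi^{K_n}X$ --- which is not $L_H X = \phi^H X$ in general. (If it were, the generalized Tate functors would compose strictly, collapsing the whole point of the theorem.) So the cocartesian condition is not "essentially automatic"; it is exactly where the difficulty lives, and patching it means coherently globalizing the left adjoints $\phi^H$ into a right-lax functor $\Sp^G \times \fS \dashrightarrow \Sp^G_{\locus{\phi}}$, which is not expressible as an honest functor and is precisely the $(\infty,2)$-categorical content of the original AMGR argument (\cite[Lem.~1.34]{AMGR-NaiveApproach}, \cite[Cor.~3.1.7]{gaitsgory2017study}) that this paper seeks to circumvent.

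The paper instead builds the comparison functor in the \emph{opposite} direction. Using the defining inclusion $\Sp^G_{\locus{\phi}} \subset \Sp^G \times \fS$ (which packages only the fully faithful right adjoints $\iota_H$, not the left adjoints $\phi^H$), Construction~\ref{cnstr:ComparisonFunctorFromRightLaxLimitToGSpectra} defines $\Theta \colon \Fun^{\cocart}_{/\fS}(\sd(\fS),\Sp^G_{\locus{\phi}}) \to \Sp^G$ by postcomposing with the projection to $\Sp^G$ and taking the limit over $\sd(\fS)$; this is a strict, $1$-categorical construction. Lemma~\ref{lm:GeometricFixedPointsOfComparisonFunctor} then identifies $\phi^H \circ \Theta$ with evaluation at the singleton string $[H]$. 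Finally, one does not simply "apply" \cite[Thm.~3.35]{ShahRecoll} to conclude: that theorem supplies a \emph{recollement} of the right-lax limit attached to any sieve-cosieve decomposition of $\fS$, and Proposition~\ref{prp:ComparisonFunctorIsStrictMorphismOfRecollements} shows that $\Theta$ is a strict morphism of recollements onto the corresponding recollement $(\Sp^{h\cG}, \Sp^{h\cF} \cap \Sp^{\Phi\cG})$ of $\Sp^{h\cF}$. The equivalence is then established by induction on the size of the family, peeling off one conjugacy class at a time: the base case $\cF = \{1\}$ is trivial, the closed stratum is an equivalence by construction, and the open part is an equivalence by the inductive hypothesis. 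Your proposal is missing both the correct construction of the comparison functor and the inductive recollement argument that makes it an equivalence.
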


Examining the proof of \cite[Thm.~2.40]{AMGR-NaiveApproach}, we see that this equivalence is implemented by the \emph{right-lax} functor $\Sp^G \times \fS \dashrightarrow \Sp^G_{\locus{\phi}}$ that globalizes the left adjoints $\phi^H$. This is not expressible as a functor $\Sp^G \times \fS \to \Sp^G_{\locus{\phi}}$; rather, its construction derives from an existence and uniqueness theorem on adjunctions in $(\infty,2)$-categories (\cite[Lem.~1.34]{AMGR-NaiveApproach} and \cite[Cor.~3.1.7]{gaitsgory2017study}). However, by instead working with the defining inclusion $\Sp^G_{\locus{\phi}} \subset \Sp^G \times \fS$, we can avoid any explicit usage of $(\infty,2)$-category theory and still define a comparison functor, as in the following construction.

\begin{cnstr} \label{cnstr:ComparisonFunctorFromRightLaxLimitToGSpectra} Let $\cF$ be a $G$-family, $\cG$ a subfamily, and $\cH = \cF \setminus \cG$. Consider the composite functor
\[ \Theta'_{\cH}: \Fun^{\cocart}_{/\cH}(\sd(\cH), \cH \times_{\fS} \Sp^G_{\locus{\phi}}) \to \Fun(\sd(\cH), \Sp^G) \xto{\lim} \Sp^G \]
where the first functor is postcomposition by the projection to $\Sp^G$ and the second takes the limit. Note that by \cref{lm:subfamilyProperties}, if $X \in (\Sp^G_{\locus{\phi}})_H$ for any $H \in \cF \setminus \cG$, then $X \in \Sp^{h \cF} \cap \Sp^{\Phi \cG}$. Therefore, $\Theta'_{\cH}$ factors through the inclusion $\Sp^{h \cF} \cap \Sp^{\Phi \cG} \subset \Sp^G$. Denote that functor by $\Theta_{\cH}$.

In the case of $\cF = \fS$ and $\cG = \emptyset$, we also write $\Theta$ for the comparison functor.
\end{cnstr}

\begin{lem} \label{lm:GeometricFixedPointsOfComparisonFunctor} Let $\cF$ be a $G$-family, $\cG$ a subfamily, and $\cH = \cF \setminus \cG$. For every $H \in \cH$, the composition
\[ \Fun^{\cocart}_{/\cH}(\sd(\cH), \cH \times_{\fS} \Sp^G_{\locus{\phi}}) \xto{\Theta'_{\cH}} \Sp^G \xto{\phi^H} \Fun(B W_G H, \Sp) \]
is homotopic to evaluation at $H \in \sd(\cH)$ under the equivalence $(\Sp^G_{\locus{\phi}})_H \simeq \Fun(B W_G H, \Sp)$.
\end{lem}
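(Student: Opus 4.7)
The plan is to compute $\phi^H(\Theta'_{\cH}(F))$ by reducing the defining limit $\lim_{\sigma\in\sd(\cH)} F(\sigma)$ in three stages: exactness, right Kan extension from a subcategory on which the diagram is nonzero, and a reflection onto an even smaller subcategory on which the limit is easy to evaluate. First, since $\phi^H$ is a stable functor between stable $\infty$-categories (via $\Phi^H$), it is exact, and since $\cH$ is a finite preorder, $\sd(\cH)$ is a finite $\infty$-category; hence $\phi^H$ commutes with the limit, giving
\[ \phi^H \Theta'_{\cH}(F) \simeq \lim_{\sigma\in\sd(\cH)} \phi^H F(\sigma). \]
For any $\sigma = [H_0 < \cdots < H_n]$, the object $F(\sigma)$ lies in $(\Sp^G_{\locus{\phi}})_{H_n}$, hence is $\overline{H_n}$-complete and $\partial\overline{H_n}^{-1}$-local. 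Applying \cref{lem:VanishingOutsideCone} with $H_n$ in place of $H$, together with the $\partial\overline{H_n}^{-1}$-locality, yields $\phi^H F(\sigma) = 0$ whenever $\max\sigma \not\leq H$.

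Let $J' \subseteq \sd(\cH)$ be the full subcategory on strings $\sigma$ with $\max\sigma \leq H$; since any morphism $\sigma \to \tau$ in $\sd(\cH)$ forces $\max\sigma \leq \max\tau$, the slice $J'_{\sigma/}$ is empty for $\sigma \notin J'$, so $\phi^H F$ is right Kan extended from $J'$ along the inclusion, giving $\lim_{\sd(\cH)} \phi^H F \simeq \lim_{J'} \phi^H F|_{J'}$. Next, let $J'' \subseteq J'$ be the further subcategory of strings ending at $H$, and define $R: J' \to J''$ sending $\sigma$ with $\max\sigma < H$ to the string obtained by appending $H$ at the top, and fixing $J''$. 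An elementary bijection of Hom-sets (using that any $\tau \in J''$ ends at $H$) shows $R$ is left adjoint to the inclusion $i: J'' \hookrightarrow J'$, with unit $\eta_\sigma: \sigma \to R(\sigma)$ realized by the cocartesian edge in $\sd(\cH)$ over $\max\sigma \to H$. Because $F$ is a cocartesian section, $F(\eta_\sigma)$ is the cocartesian lift in $\Sp^G_{\locus{\phi}}$ of $\max\sigma \to H$ starting at $F(\sigma)$, i.e., the unit map $F(\sigma) \to \iota_H \phi^H F(\sigma) \simeq F(R(\sigma))$; the triangle identity $\phi^H \iota_H \simeq \id$ then shows $\phi^H F(\eta_\sigma)$ is the identity on $\phi^H F(\sigma)$. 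Thus $\phi^H F|_{J'} \simeq (\phi^H F|_{J''}) \circ R$, and the reflection principle for limits (cones on $G \circ R$ over $J'$ biject with cones on $G$ over $J''$ when $R \dashv i$) gives $\lim_{J'} \phi^H F|_{J'} \simeq \lim_{J''} \phi^H F|_{J''}$.

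Finally, $[H]$ is initial in $J''$: for each $\sigma = [K_0 < \cdots < K_{n-1} < H] \in J''$, the unique morphism $[H] \to \sigma$ is given by $\alpha(0) = n$ with the isomorphism $H \cong H$. Hence $\lim_{J''} \phi^H F|_{J''} \simeq \phi^H F([H])$, which under the equivalence $(\Sp^G_{\locus{\phi}})_H \simeq \Fun(B W_G H, \Sp)$ is precisely the asserted evaluation at $[H]$. The main technical obstacle is verifying both the adjunction $R \dashv i$ and the identity $\phi^H F(\eta_\sigma) \simeq \id$ coherently in the $\infty$-categorical setting; the former reduces to an elementary combinatorial bijection between subsequence inclusions, while the latter is a direct consequence of the cocartesian section property together with the triangle identity for $\phi^H \dashv \iota_H$.
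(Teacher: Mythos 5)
Your overall strategy is the same as the paper's: exactness of $\phi^H$ to pull the limit inside (using finiteness of $\sd(\cH)$), \cref{lem:VanishingOutsideCone} to kill the terms indexed by strings whose maximum is not subconjugate to $H$, the resulting right Kan extension to cut the limit down to the subcategory $J'$ of strings with maximum subconjugate to $H$, the observation that a cocartesian section sends the ``append $H$'' edges to units of the localization onto $(\Sp^G_{\locus{\phi}})_H$ (so that $\phi^H F$ of such an edge is an equivalence), and finally initiality of the string $[H]$. The paper packages the second reduction as a further right Kan extension onto the subcategory $I$ of strings whose maximum is \emph{conjugate} to $H$, using that $e: \tau \to [\tau < H]$ is initial in the relevant comma category; your reflection $R \dashv i$ is essentially that same observation in adjunction form.

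There is, however, a genuine gap in your construction of $R$, coming from conjugacy. Since $\fS$ is a preordered set and families are closed under conjugation, $J'$ contains strings whose maximum is a conjugate $H' = gHg^{-1}$ of $H$ with $H' \neq H$ (such strings exist whenever $N_G H \neq G$, and the vanishing step does not dispose of them, since $\phi^H$ is essentially an equivalence on $(\Sp^G_{\locus{\phi}})_{H'}$). These strings lie in $J'$ but not in your $J''$, and you cannot send them to ``the string with $H$ appended'': consecutive entries of a string must be \emph{properly} subconjugate, and $H'$ is conjugate to $H$, so $[\cdots < H' < H]$ is not a string. Hence $R$ as defined (``append $H$ when $\max\sigma < H$, fix $J''$'') is simply undefined on these objects, and the cofinality step does not go through as stated. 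The repair is easy: either define $R$ on such strings by replacing the top entry $H'$ by $H$ (since all hom-sets in $\sd(\fS)$ are empty or singletons, the adjunction is just a Galois connection, and the unit edge lies over the conjugacy isomorphism $H' \to H$, so $\phi^H F$ of it is again an equivalence), or, as the paper does, take the target subcategory to consist of strings whose maximum is merely conjugate to $H$, so that the only strings needing to be reflected have maximum properly subconjugate to $H$; the string $[H]$ remains initial in that larger subcategory because morphism sets in $\sd(\fS)$ are empty or singletons. With either fix your argument is complete and agrees with the paper's.
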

\begin{proof} Let $f: \sd(\cH) \to \Sp^G_{\locus{\phi}}$ be an object in $\Fun^{\cocart}_{/\cH}(\sd(\cH), \cH \times_{\fS} \Sp^G_{\locus{\phi}})$, and let $f': \sd(\cH) \to \Sp^G$ denote the subsequent functor obtained by the projection to $\Sp^G$. We need to produce a natural equivalence
$$\phi^H \lim f' \simeq f'(H).$$
Since $\sd(\cH)$ is finite, it suffices instead to show $\lim \phi^H f' \simeq f'(H)$. Note that for any $X \in (\Sp^G_{\locus{\phi}})_K$, if $K$ is not in $\overline{H}$ then $X^{\phi H} \simeq 0$; indeed, $\Phi^L(X) \simeq 0$ for all $L \in \fS_{\geq K}^c$ by definition. Therefore, if we let $J \subset \sd(\cH)$ be the full subcategory on those strings $\sigma$ with $\max(\sigma) \leq H$, the functor $\phi^H f'$ is a right Kan extension of its restriction to $J$ (for this, also note that if $\tau = [K_0 < \cdots < K_n] \in \sd(\cH)$ with $K_n \notin \overline{H}$, then $\sd(\cH)_{\tau/} \times_{\sd(\cH)} J = \emptyset$).

Next, let $I \subset J$ be the full subcategory on those strings $\sigma$ with $\max(\sigma)$ conjugate to $H$. For a string $\tau = [K_0 < ... < K_n] \in J$ with $K_n$ properly subconjugate to $H$, the unique string inclusion
$$e: \into{[K_0 < ... < K_n]}{[K_0 < ... < K_n < H]}$$
is sent to an equivalence by $\phi^H f'$ by definition of the locally cocartesian edges in $\Sp^G_{\locus{\phi}}$; indeed, $f'(e)$ is a unit map of the localization for the reflective subcategory $(\Sp^G_{\locus{\phi}})_H \subset \Sp^G$.  Observe also that $e$ is an initial object in $I \times_J J^{\tau/}$. We deduce that $\phi^H f'$ is a right Kan extension of its further restriction to $I$. Because $H$ is an initial object of $I$, we conclude that $\lim \phi^H f' \simeq f'(H)$, as desired.
\end{proof}

For the next proposition, recall from \cite[Thm.~3.35]{ShahRecoll} the recollement of a right-lax limit defined by a sieve-cosieve decomposition of the base. 

\begin{prp} \label{prp:ComparisonFunctorIsStrictMorphismOfRecollements} Let $\cF$ be a $G$-family, $\cG$ a subfamily, and $\cH = \cF \setminus \cG$. The functor 
\[ \Theta_{\cF}: \Fun^{\cocart}_{/\cF}(\sd(\cF), \cF \times_{\fS} \Sp^G_{\locus{\phi}}) \to \Sp^{h \cF} \]
 is a strict morphism\footnote{A \emph{strict morphism of recollements} is one that commutes with the gluing functors; cf. \cite[Def.~2.6]{ShahRecoll}.} of stable recollements
\[ (\Fun^{\cocart}_{/\cG}(\sd(\cG), \cG \times_{\fS} \Sp^G_{\locus{\phi}}), \Fun^{\cocart}_{/\cH}(\sd(\cH), \cH \times_{\fS} \Sp^G_{\locus{\phi}})) \to (\Sp^{h \cG}, \Sp^{h \cF} \cap \Sp^{\Phi \cG}). \]
Moreover, the resulting functors between the open and closed parts are equivalent to $\Theta_{\cG}$ and $\Theta_{\cH}$.
\end{prp}
\begin{proof} We need to show that $\Theta_{\cF}$ sends the essential images of $j_!$, $j_{\ast}$, and $i_{\ast}$ to $\cG$-torsion\footnote{More precisely, $\cG$-torsion with respect to the embedding of $\Sp^{h \cF}$ in $\Sp^G$ as $\cF$-torsion objects.}, $\cG$-complete, and $\cG^{-1}$-local objects, respectively. Let
$$f: \sd(\cF) \to \cF \times_{\fS} \Sp^G_{\locus{\phi}}$$
be a functor that preserves locally cocartesian edges. By \cite[Prop.~3.32]{ShahRecoll}, if $f$ is in the essential image of $j_!$, then $f(H) = 0$ for all $H \in \cH$. By \cref{lm:GeometricFixedPointsOfComparisonFunctor}, we then have $\phi^H \Theta_{\cF}(f) \simeq 0$ for all $H \in \cH$, so $\Theta_{\cF}(f)$ is $\cG$-torsion. Similarly, using \cite[Prop.~3.33]{ShahRecoll} and \cref{lm:GeometricFixedPointsOfComparisonFunctor} again, the same proof shows that if $f$ is in the essential image of $i_{\ast}$, then $\phi^H \Theta_{\cF}(f) \simeq 0$ for all $H \in \cG$ and thus $\Theta_{\cF}(f)$ is $\cG^{-1}$-local. Finally, suppose that $f$ is in the essential image of $j_{\ast}$. By \cite[Thm.~3.29]{ShahRecoll}, $f$ is a relative right Kan extension of its restriction to the subcategory $\sd(\cF)_0$ of strings whose minimums lie in $\cG$. Because the inclusion $(\Sp^G_{\locus{\phi}})_H \subset \Sp^G$ of each fiber preserves limits, the further composition
$$f': \sd(\cF) \xto{f} \Sp^G_{\locus{\phi}} \to \Sp^G$$
is then a right Kan extension of its restriction to $\sd(\cF)_0$ (in the non-relative sense). Moreover, the inclusion $\sd(\cG) \subset \sd(\cF)_0$ is right cofinal. Indeed, for every string $\sigma = [K_0 < ... < K_n]$ in $\sd(\cF)_0$, if we let $\sigma'$ denote its maximal substring in $\sd(\cG)$, then $\sigma'$ is a terminal object in $(\sd(\cF)_0)^{/\sigma} \times_{\sd(\cF)_0} \sd(\cG)$, so these slice categories are weakly contractible and we may thus apply Joyal's version of Quillen's Theorem A \cite[Thm.~4.1.3.1]{HTT}. It follows that $\Theta_{\cF}(f)$ is computed as a limit of $\cG$-complete spectra and is hence itself $\cG$-complete.

The two functors on the open and closed parts induced by the morphism of stable recollements are then definitionally $(j_{\cG})^{\ast} \Theta_{\cF} j_{\ast}$ and $(i_{\cG})^{\ast} \Theta_{\cF} i_{\ast}$. These are equivalent to $\Theta_{\cG}$ and $\Theta_{\cH}$ by the same cofinality arguments as above.
\end{proof}

\begin{thm} \label{thm:GeometricFixedPointsDescriptionOfGSpectra} For every $G$-family $\cF$ and subfamily $\cG$, the functor $\Theta_{\cF \setminus \cG}$ is an equivalence of $\infty$-categories. In particular, we have an equivalence
\[ \Theta: \Fun^{\cocart}_{/\fS}(\sd(\fS), \Sp^G_{\locus{\phi}}) \xto{\simeq} \Sp^G. \]
\end{thm}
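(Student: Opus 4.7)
The plan is to reduce to the case $\cG = \emptyset$ and induct on the number of conjugacy classes in $\cF$. For the reduction, apply \cref{prp:ComparisonFunctorIsStrictMorphismOfRecollements} to an arbitrary pair $(\cF, \cG)$: this exhibits $\Theta_\cF$ as a strict morphism of stable recollements whose closed part is $\Theta_{\cF \setminus \cG}$. Since an equivalence in the middle of a strict morphism of stable recollements restricts to an equivalence on the closed part, once the special case $\cG = \emptyset$ is established for all $\cF$, the general statement follows.

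I induct on $|\cF|$; the base case $\cF = \emptyset$ is trivial. For the inductive step, pick $H \in \cF$ maximal for subconjugacy, and set $\cF_0 = \cF \setminus \{[H]\}$. Maximality of $H$ ensures $\cF_0$ is still a family. Applying \cref{prp:ComparisonFunctorIsStrictMorphismOfRecollements} with subfamily $\cF_0$ realizes $\Theta_\cF$ as a strict morphism of stable recollements with open part $\Theta_{\cF_0}$, an equivalence by the inductive hypothesis, and closed part
\[ \Theta_{\{[H]\}}\colon (\Sp^G_{\locus\phi})_H \longrightarrow \Sp^{h\cF} \cap \Sp^{\Phi\cF_0}. \]
Since equivalences on both open and closed parts of a strict morphism of stable recollements force an equivalence in the middle, it suffices to prove that $\Theta_{\{[H]\}}$ is an equivalence.

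The key claim is that the source and target of $\Theta_{\{[H]\}}$ coincide as full subcategories of $\Sp^G$, with $\Theta_{\{[H]\}}$ the identity under this identification. By \cref{lem:LocallyClosedFibersAreBorel} the source is $\Sp^{h \overline{H}} \cap \Sp^{\Phi \partial \overline{H}}$. For $(\subseteq)$: maximality forces every $K \in \cF_0$ to lie in $\partial \overline{H} \cup \fS^c_{\geq H}$, since otherwise $H \leq K^g$ for some $g$ with $K \in \cF$ would contradict maximality; combined with \cref{lem:VanishingOutsideCone}, $\partial \overline{H}^{-1}$-locality then gives $\cF_0^{-1}$-locality, while the containment $\overline{H} \subseteq \cF$ and \cref{lm:subfamilyProperties}(3) give $\cF$-completeness. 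For $(\supseteq)$: given $X \in \Sp^{h\cF} \cap \Sp^{\Phi \cF_0}$, the inclusion $\partial \overline{H} \subseteq \cF_0$ yields $\partial \overline{H}^{-1}$-locality; for $\overline{H}$-completeness, pass to the torsion avatar $Y = X \otimes E\cF_+$ and compute that $\phi^K Y = 0$ for $K \notin \cF$ (by $\cF$-torsion) and for $K \in \cF_0$ (since $\phi^K Y \simeq \phi^K X$ when $K \in \cF$), which exhibits $Y$ as $\overline{H}$-torsion. The equivalence $\Sp^{h\cF} \simeq \Sp^{\tau\cF}$ of \cref{prp:RecollementsOfRecollements}, which restricts to an equivalence $\Sp^{h\overline{H}} \simeq \Sp^{\tau\overline{H}}$, then promotes this to $\overline{H}$-completeness of $X$.

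The main obstacle is verifying this identification of subcategories in the key claim; everything else is formal bookkeeping with strict morphisms of stable recollements.
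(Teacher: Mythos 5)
Your proposal is correct and follows the same inductive strategy as the paper's proof: reduce to $\cG = \emptyset$, induct on $|\cF|$, peel off a maximal conjugacy class $[H]$ via the recollement of \cref{prp:ComparisonFunctorIsStrictMorphismOfRecollements}, and invoke the two-out-of-three property for strict morphisms of stable recollements. Where the paper asserts that the closed part is ``definitionally an equivalence'' because $\cF \setminus \cG = \overline{H} \setminus \partial\overline{H}$, you correctly identify (and verify, using \cref{lem:VanishingOutsideCone}, \cref{lm:subfamilyProperties}, and \cref{prp:RecollementsOfRecollements}) the underlying content of that assertion, namely the equality of full subcategories $\Sp^{h\cF} \cap \Sp^{\Phi\cF_0} = \Sp^{h\overline{H}} \cap \Sp^{\Phi\partial\overline{H}}$, which the paper leaves implicit.
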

\begin{proof} Our strategy is to use \cref{prp:ComparisonFunctorIsStrictMorphismOfRecollements} in conjunction with the fact that given a strict morphism $F: \cX \to \cX'$ of stable recollements $(\cU, \cZ) \to (\cU', \cZ')$, if $F_{U}$ and $F_{Z}$ are equivalences then $F$ is an equivalence (\cite[Rem.~2.7]{ShahRecoll}).\footnote{This type of inductive argument is also used in the proof of \cite[Thm.~2.40]{AMGR-NaiveApproach}.} Let us first prove that $\Theta_{\cF}$ is an equivalence for all families $\cF$. We proceed by induction on the size of $\cF$. For the base case, if $\cF = \{ 1 \}$ is the trivial family, then $\sd(\cF) \cong \cF$ and $\Theta_{\cF}$ is definitionally an equivalence. Now suppose for the inductive hypothesis that $\Theta_{\cG}$ is an equivalence for all proper subfamilies $\cG$ of $\cF$. Let $H \in \cF$ be a maximal element and let $\cG \subset \cF$ be the largest subfamily excluding $H$. Then $\cF \setminus \cG = \overline{H} \setminus \partial \overline{H}$, so $\Theta_{\cF \setminus \cG}$ is definitionally an equivalence. By \cref{prp:ComparisonFunctorIsStrictMorphismOfRecollements}, we deduce that $\Theta_{\cF}$ is an equivalence.

Finally, to deal with the general case, we note that any strict morphism of stable recollements that is also an equivalence restricts to equivalences between the open and closed parts. Thus, having proven that $\Theta_{\cF}$ is an equivalence, we further deduce that $\Theta_{\cF \setminus \cG}$ is an equivalence for any subfamily $\cG$.
\end{proof}

\begin{rem} The generalized Tate functors $\tau^K_H$ are lax symmetric monoidal, and the various natural transformations among these functors encoded by the locally cocartesian fibration are also lax symmetric monoidal. In \cite[\S 5]{AMGRb}, Ayala--Mazel-Gee--Rozenblyum explain how this data assembles to a symmetric monoidal structure on the right-lax limit $\Fun^{\cocart}_{/\fS}(\sd(\fS),\Sp^G_{\locus{\phi}})$ such that the functor $\Theta$ of \cref{thm:GeometricFixedPointsDescriptionOfGSpectra} is an equivalence of symmetric monoidal $\infty$-categories.
\end{rem}

\cref{thm:GeometricFixedPointsDescriptionOfGSpectra} and \cref{prp:ComparisonFunctorIsStrictMorphismOfRecollements}, along with the explicit description of the functor $j_{\ast}$ given in \cite[Thm.~3.29]{ShahRecoll}, gives the following formula for the geometric fixed points of an $\cF$-complete spectrum in terms of a limit of generalized Tate constructions.

\begin{dfn}
For $H \notin \cF$, let $J_H \subset \sd(\fS)$ be the full subcategory on strings $[K_0 < \cdots < K_n < H]$ such that $K_i \in \cF$ for $1 \leq i \leq n$. 
\end{dfn}

\begin{cor} \label{cor:FormulaForGeomFixedPointsOfCompleteSpectrum} Let $X$ be a $G$-spectrum and let $X^{\bullet}: \sd(\fS) \to \Sp^G_{\locus{\phi}}$ denote a lift of $X$ under the equivalence $\Theta$. Suppose that $X$ is $\cF$-complete. Then $$X^{\phi H} \simeq \lim_{J_H} X^{\bullet},$$
with the limit taken in the fiber $\Fun(B W_G H, \Sp) \simeq (\Sp^G_{\locus{\phi}})_H$.
\end{cor}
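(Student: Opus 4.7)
The plan is to combine three ingredients: Lemma \ref{lm:GeometricFixedPointsOfComparisonFunctor} identifies $X^{\phi H}$ with $X^\bullet([H])$, Proposition \ref{prp:ComparisonFunctorIsStrictMorphismOfRecollements} together with \cite[Thm.~3.29]{ShahRecoll} describes $X^\bullet$ pointwise as a right Kan extension, and a two-step cofinality argument rewrites the resulting limit as one indexed by $J_H$.

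First, I would apply Lemma \ref{lm:GeometricFixedPointsOfComparisonFunctor} with $\cH = \fS$ (taking $\cF = \fS$ and $\cG = \emptyset$ in that lemma) to obtain $\phi^H(X) \simeq X^\bullet([H])$ under the equivalence $(\Sp^G_{\locus{\phi}})_H \simeq \Fun(B W_G H, \Sp)$; it therefore suffices to compute $X^\bullet([H])$ as the desired limit. Since $X$ is $\cF$-complete, Proposition \ref{prp:ComparisonFunctorIsStrictMorphismOfRecollements} applied with ambient family $\fS$ and subfamily $\cF$ places $X^\bullet$ in the essential image of $j_*$ in the recollement on the right-lax limit. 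By \cite[Thm.~3.29]{ShahRecoll}, this means $X^\bullet$ is a relative right Kan extension of its restriction to $\sd(\fS)_0 \subset \sd(\fS)$, the subcategory of strings whose minimum lies in $\cF$; composing with the projection $\Sp^G_{\locus{\phi}} \to \Sp^G$ and using that the fiber inclusions preserve limits (as in the proof of Proposition \ref{prp:ComparisonFunctorIsStrictMorphismOfRecollements}), $X^\bullet$ becomes an ordinary right Kan extension in $\Sp^G$. The pointwise formula at $[H]$ then gives
\[ X^\bullet([H]) \simeq \lim_{\tau \in I} X^\bullet(\tau), \qquad I := \sd(\fS)_0 \times_{\sd(\fS)} \sd(\fS)^{[H]/}, \]
where $I$ consists of strings containing $H$ whose minimum lies in $\cF$.

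The heart of the proof is to show that $J_H \hookrightarrow I$ is right cofinal (in the sense of Lurie-initial), which I would establish in two stages via Joyal's version of Quillen's Theorem A. Let $A \subset I$ denote the further full subcategory of strings ending at $H$. For the first stage, given $\tau = [L_0 < \cdots < L_m] \in I$ with $L_j = H$, the prefix $[L_0 < L_1 < \cdots < L_j]$ lies in $A$ and is a terminal object in the slice $A \times_I I^{/\tau}$, since any substring of $\tau$ ending at $H$ is automatically a substring of this prefix; thus $A \hookrightarrow I$ is right cofinal. For the second stage, given $\tau = [L_0 < \cdots < L_{j-1} < H] \in A$, let $\ell \leq j - 1$ be the largest index with $L_\ell \in \cF$, which exists because $L_0 \in \cF$. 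By the sieve property of $\cF$, the slice $J_H \times_A A^{/\tau}$ is identified with the poset of nonempty subsets of $\{0, 1, \ldots, \ell\}$, where $\{i_1 < \cdots < i_s\}$ corresponds to the substring $[L_{i_1} < \cdots < L_{i_s} < H]$ of $\tau$; this poset has the terminal object $\{0, 1, \ldots, \ell\}$ and is hence weakly contractible. Composing, $J_H \hookrightarrow I$ is right cofinal, so $X^\bullet([H]) \simeq \lim_{J_H} X^\bullet|_{J_H}$.

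Finally, the identification of $X^\bullet|_{J_H}$ with the functor $F$ of the statement follows from the locally cocartesian structure on $\Sp^G_{\locus{\phi}} \to \fS$: since $X^\bullet$ preserves locally cocartesian edges and the pushforwards are by definition the generalized Tate functors, iterating along the string $[K_0] \to [K_0 < K_1] \to \cdots \to [K_0 < \cdots < K_n < H]$ yields $X^\bullet([K_0 < \cdots < K_n < H]) \simeq \tau^H_{K_n} \tau^{K_n}_{K_{n-1}} \cdots \tau^{K_1}_{K_0}(X^{\phi K_0})$, where I also use $X^\bullet([K_0]) \simeq X^{\phi K_0}$ from the first step; the naturality of this identification in string inclusions within $J_H$ encodes the canonical maps of \cref{rem:canonical1} and \cref{rem:canonical2}. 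The main obstacle I anticipate is the cofinality argument itself: naively trying to extend strings upward (toward longer strings in $J_H$) fails since strings in $J_H$ already terminate at $H$, and the correct setup is instead to pass to substrings via the over-slice, where the sieve property of $\cF$ supplies a canonical terminal object, namely the maximal $\cF$-prefix appended by $H$, in each slice.
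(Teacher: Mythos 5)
Your proof is correct and follows the paper's intended route: reduce to $X^{\bullet}([H])$ via Lemma~\ref{lm:GeometricFixedPointsOfComparisonFunctor}, identify $X^{\bullet}$ as a relative right Kan extension from $\sd(\fS)_0$ via Proposition~\ref{prp:ComparisonFunctorIsStrictMorphismOfRecollements} and \cite[Thm.~3.29]{ShahRecoll}, then reindex the pointwise limit over the comma category down to $J_H$ by cofinality. The two-stage cofinality check, exhibiting the maximal $\cF$-prefix appended by $H$ as a terminal object in each over-slice, is exactly the detail the paper leaves implicit and mirrors the cofinality argument for $\sd(\cG) \subset \sd(\cF)_0$ given within the proof of Proposition~\ref{prp:ComparisonFunctorIsStrictMorphismOfRecollements}.
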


\begin{exm} Suppose that $G = C_{p^2}$ and let $\cP$ be the family of proper subgroups of $G$. Then $\sd(\cP) \cong \sd(\Delta^1)$, so the data of a $\cP$-complete spectrum $X$ amounts to
\begin{itemize}
\item A Borel $C_{p^2}$-spectrum $X^1$.
\item A Borel $C_{p^2}/ C_p$-spectrum $X^{\phi C_p}$.
\item A $C_{p^2}/ C_p \cong C_p$-equivariant map $\alpha: X^{\phi C_p} \to (X^1)^{t C_p}$.
\end{itemize}
The category $J_{C_{p^2}}$ as well as the functor $J_{C_{p^2}} \to \Sp$ is then identified as
\[ \left( \begin{tikzcd}[row sep=4ex, column sep=4ex, text height=1.5ex, text depth=0.25ex]
& \goesto{\left[ C_p < C_{p^2} \right] \ar{d} \\
\left[ 1 < C_{p^2} \right] \ar{r} & \left[ 1 < C_p < C_{p^2} \right]
\end{tikzcd} \right) }{ \left(
\begin{tikzcd}[row sep=4ex, column sep=4ex, text height=1.5ex, text depth=0.25ex]
& X^{\phi C_p t C_p} \ar{d}{\alpha^{t C_p}} \\
(X^1)^{\tau C_{p^2}} \ar{r}{\can} & (X^1)^{t C_p t C_p}
\end{tikzcd} \right) }, \]
where $\mit{can}$ is the canonical map encoded by the locally cocartesian fibration. Thus,
\[ X^{\phi C_{p^2}} \simeq (X^1)^{\tau C_{p^2}} \times_{(X^1)^{t C_p t C_p}} X^{\phi C_p t C_p}. \]
Moreover, it is not difficult to see that $(-)^{\tau C_{p^2}} \simeq (-)^{h C_p tC_p}$; in fact, we will explain the identification $(-)^{\tau C_{p^n}} \simeq (-)^{h C_{p^{n-1}} t C_p}$ in \cite{QS21b}.
\end{exm}

Let us now turn to the examples of interest for the dihedral Tate orbit lemma that we will prove in \cite{QS21b}. 

\begin{exm} \label{exm:DihedralEven} Suppose that $G = D_4 = C_2 \times \mu_2$ is the Klein four-group and let $\Gamma = \{ 1, C_2, \Delta \}$ for $\Delta$ the diagonal subgroup. The data of a $\Gamma$-complete spectrum $X$ amounts to
\begin{itemize}
\item A Borel $D_4$-spectrum $X^1$, Borel $(D_4/C_2)$-spectrum $X^{\phi C_2}$, and Borel $(D_4/\Delta)$-spectrum $X^{\phi \Delta}$.
\item A $(D_4/C_2)$-equivariant map $\alpha: X^{\phi C_2} \to (X^1)^{t C_2}$ and $(D_4/ \Delta)$-equivariant map $\beta: X^{\phi \Delta} \to (X^1)^{t \Delta}$.
\end{itemize}
Since $J_{\mu_2} = \{ [1<\mu_2] \}$ and $J_{\mu_2} \to \Fun(B(D_4/\mu_2),\Sp)$ is the pushforward of $X^1$ by $(-)^{t \mu_2}$, we see that $X^{\phi \mu_2} \simeq (X^1)^{t \mu_2}$. On the other hand, $J_{D_4} \to \Sp$ is given by
\[ \left( \begin{tikzcd}[row sep=4ex, column sep=4ex, text height=1.5ex, text depth=0.25ex]
\goesto{\left[ \Delta < D_4 \right] \ar{r} & \left[ 1<\Delta<D_4 \right] \\
\left[ 1 < D_4 \right] \ar{ru} \ar{rd} &  \\
\left[ C_2 < D_4 \right] \ar{r} & \left[ 1 < C_2 < D_4 \right]
\end{tikzcd} \right)}{\left(
\begin{tikzcd}[row sep=4ex, column sep=8ex, text height=1.5ex, text depth=0.25ex]
(X^{\phi \Delta})^{t(D_4/\Delta)} \ar{r}{\beta^{t(D_4/\Delta)}} & ((X^1)^{t \Delta})^{t (D_4/ \Delta)} \\
(X^1)^{\tau D_4} \ar{ru}[swap]{\can} \ar{rd}{\can} &  \\
(X^{\phi C_2})^{t (D_4/C_2)} \ar{r}[swap]{\alpha^{t(D_4/C_2)}} & ((X^1)^{t C_2})^{t (D_4/C_2)}
\end{tikzcd} \right)}, \]
and $X^{\phi D_4}$ is the limit of this diagram.
\end{exm}

To handle the case of the dihedral group $D_{2p}$ of order $2p$ for $p$ an odd prime, we first record a vanishing property of the generalized Tate construction.

\begin{lem} \label{lem:GenTateVanishingMultiprime} Let $G$ be a finite group and suppose $K \leq G$ is a subgroup that is not a $p$-group. Then $\tau^K \simeq 0$.
\end{lem}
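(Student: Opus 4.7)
The plan is to first reduce to the ambient-group case $G = K$, then produce the vanishing by an iterated decomposition of geometric fixed points. By the restriction compatibility displayed in \cref{rem:GenTateResCompatibility}, together with the conservativity of the forgetful functor $\Fun(B W_G K, \Sp) \to \Sp$ (here $W_K K$ is trivial), $\tau^K$ for $K \le G$ fits into a commutative square with $\tau^K$ for the ambient group $K$, precomposed with $\res^G_K$. Thus it suffices to prove $\tau^K \simeq 0$ when $G = K$, where $\tau^K = \Phi^K \circ \iota_1 \colon \Fun(BK,\Sp) \to \Sp$ is the classical ``proper Tate'' construction, given by Borel completion followed by geometric $K$-fixed points.

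With this reduction, the strategy is to iteratively decompose $\Phi^K$ along a chief series $1 = N_0 \trianglelefteq N_1 \trianglelefteq \cdots \trianglelefteq N_r = K$ of $K$-normal subgroups via $\Phi^K \simeq \Phi^{K/N_{r-1}} \circ \cdots \circ \Phi^{N_1}$, using that $\Phi^{N}$ carries Borel-complete $K$-spectra to Borel-complete $(K/N)$-spectra when $N$ is normal in $K$. The core vanishing input is the standard fact: if $R$ is a spectrum on which a prime $\ell$ acts invertibly and $J$ is a finite $\ell$-group, then $R^{tJ} \simeq 0$, which follows because in the cofiber sequence $R_{hJ} \to R^{hJ} \to R^{tJ}$ the norm map is an equivalence once $|J|$ is invertible on $R$. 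Since $|K|$ has at least two distinct prime divisors $p \neq q$, the chief factors $N_i/N_{i-1}$ will involve at least two distinct primes; after an initial iterated Tate by $p$-power chief factors, the intermediate spectrum becomes a module over a $p$-complete ring spectrum (an iterated proper Tate of $C_p$'s on the sphere), in which $q$ is automatically a unit, so the subsequent $\Phi^{N_i/N_{i-1}}$ for a $q$-group chief factor contributes zero, forcing the iterated composite $\tau^K(X)$ to vanish.

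The hard part will be the combinatorial bookkeeping when $K$ is non-solvable, where chief factors can be direct products of isomorphic non-abelian simple groups rather than elementary abelian $p$-groups. I would handle this by induction on $|K|$ together with the reconstruction theorem \cref{thm:GeometricFixedPointsDescriptionOfGSpectra}: realizing $\tau^K(X)$ as a limit of iterated generalized Tate functors indexed by chains in $\sd(\fS[K])$ reduces the problem to showing that every such chain contributes a vanishing factor, and this follows by applying the inductive hypothesis to any non-abelian simple chief factor (each such group is a non-$p$-group, since finite $p$-groups have non-trivial center and hence cannot be non-abelian simple) and then invoking the coprime-order Tate vanishing at the final step.
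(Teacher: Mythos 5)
Your reduction to the ambient case $G=K$ via \cref{rem:GenTateResCompatibility} is fine and agrees with the paper, but the central decomposition step is false, and the vanishing mechanism built on it does not survive the correction. It is not true that $\Phi^N$ carries Borel-complete $K$-spectra to Borel-complete $K/N$-spectra, so $\tau^K$ does \emph{not} factor as an iterated composite of Tate constructions for the chief factors. The paper itself records the counterexample: for $K=C_{p^2}$ one has $(-)^{\tau C_{p^2}} \simeq ((-)^{hC_p})^{tC_p}$, whereas your factorization would give $((-)^{tC_p})^{tC_p}$; the difference is $((-)_{hC_p})^{tC_p}$, which vanishes only under bounded-below hypotheses (the Tate orbit lemma) and certainly not functorially on all of $\Fun(BK,\Sp)$, which is what a proof of $\tau^K\simeq 0$ requires. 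Concretely, $\Phi^N(F(EK_+,Y))$ is a genuine $K/N$-spectrum whose $M/N$-geometric fixed points are $\tau^M(Y)$ for $N\le M\le K$, and this object is generally not Borel complete. Once the factorization is corrected (the Tate construction appears only at the top layer, with homotopy fixed points, not Tate constructions, at the inner layers), your completeness argument collapses: homotopy fixed points do not render anything $p$-complete, so there is no stage at which a prime $q\neq p$ becomes invertible for free, and the coprime-order Tate vanishing never gets to fire. The fallback via \cref{thm:GeometricFixedPointsDescriptionOfGSpectra} does not repair this either: for a Borel-complete spectrum the indexing category $J_K$ consists of the single string $[1<K]$, so \cref{cor:FormulaForGeomFixedPointsOfCompleteSpectrum} just returns $\tau^K(X^1)$ and the argument is circular; you cannot invoke the limit formula for a larger family without first completing $X$ at that family.

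The actual mechanism in the paper is induction-theoretic rather than a normal-series iteration: $\tau^K$ is computed as the left Kan extension of $(-)^{hK}$ along the projection from $\Sp^{hK}$ to its Verdier quotient by the orbits $\{K/H_+ : H< K\}$ of proper subgroups. Since $K$ is not a $p$-group, every subgroup of prime power order is proper, and by the derived induction theorem of Mathew (the thick $\otimes$-ideal generated by $\{K/H_+ : H \text{ of prime power order}\}$ contains the Borel completion of the unit), this Verdier quotient is the trivial category, whence $\tau^K\simeq 0$. Note that this input is exactly what handles the multi-prime interaction that your prime-by-prime iteration was trying to capture; without it (or an equivalent Dress-induction style statement), the vanishing for, say, $K=C_q\rtimes C_p$ already does not follow from your sketch.
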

\begin{proof} By the compatibility of the generalized Tate functors with restriction (\cref{rem:GenTateResCompatibility}), we may suppose $K = G$ without loss of generality. Note that $\tau^G$ may be computed as the left Kan extension of $(-)^{h G}$ along the functor from $\Sp^{h G}$ to its Verdier quotient by orbits $\{ G/H_+ : H <G\}$ with $H$ proper \cite[Rem.~2.16]{AMGR-NaiveApproach}. If we let $\ul{All}$ be the family of subgroups $H$ such that $|H|=p^n$ for some prime $p$ and integer $n$ as in \cite[Fig.~1.7]{mathew2019}, then $\ul{All}$ is a subfamily of the proper subgroups under our assumption. However, by \cite[Thm.~4.25]{mathew2019}, the thick $\otimes$-ideal in $\Sp^G$ generated by $\{G/H_+: H \in \ul{All} \}$ includes the Borel completion of the unit. Therefore, the Verdier quotient in question is the trivial category, and we deduce that $\tau^G \simeq 0$.
\end{proof}

\begin{exm} \label{exm:DihedralOdd} Let $p$ be an odd prime, $G = D_{2p} = \mu_p \rtimes C_2$ the dihedral group of order $2p$, and $\Gamma$ the family of subgroups $H$ such that $H \cap \mu_p = 1$. Note that up to conjugacy, $\Gamma$ consists of the subgroups $1$ and $C_2$, and the Weyl group of $C_2$ is trivial. Thus, up to equivalence, the data of a $\Gamma$-complete spectrum $X$ amounts to 
\begin{itemize}
\item A Borel $D_{2p}$-spectrum $X^1$ and a spectrum $X^{\phi C_2}$.
\item A map $\alpha: X^{\phi C_2} \to (X^1)^{t C_2}$.
\end{itemize}
Using that $J_{\mu_p} = [1 < \mu_p]$, we compute $X^{\phi \mu_p} \simeq (X^1)^{t \mu_p}$. As for $X^{\phi D_{2p}}$, by \cref{lem:GenTateVanishingMultiprime} we have that $(X^1)^{\tau D_{2p}} \simeq 0$. We further claim that the generalized Tate functor $\tau^{D_{2p}}_{C_2}$ vanishes:
\begin{itemize} \item[($\ast$)] Let $\cH = \{ 1, \mu_p \} = \fS^c_{\geq C_2}$. By \cref{ParamRecollementFamily} applied to $\ul{\Sp}^{\Phi \cH}$, the restriction and induction functors for $C_2 \subset D_{2p}$ descend to an adjunction
\[ \adjunct{\res'}{\Sp^{\Phi \cH}}{\Sp}{\ind'}, \]
where $(\ul{\Sp}^{\Phi \cH})_{D_{2p}/C_2} \simeq \Sp$ because the restriction of $\cH$ to a $C_2$-family yields the trivial family. Now consider the inclusion of the open fiber
\[ j_{\ast}: \Sp^{h W_G C_2} \to \Sp^{\Phi \cH}. \]
Because $W_{D_{2p}} C_2 \cong 1$, we have that $\res' \simeq j^{\ast}$, and we deduce that $j_! \simeq j_{\ast}$. Because $\cH^c = \fS_{\geq C_2}$ consists only of the two subgroups $C_2$ and $D_{2p}$ up to conjugacy, we may identify $\phi^{D_{2p}}: \Sp^{\Phi \cH} \to \Sp$ with the restriction $i^{\ast}$ to the closed complement of a recollement of $\Sp^{\Phi \cH}$ with $j_{\ast}$ as the inclusion of the open part. We then have $\tau^{D_{2p}}_{C_2} \simeq i^{\ast} j_{\ast}$. In view of the fiber sequence $$j_! \xto{\simeq} j_{\ast} \to i_{\ast} i^{\ast} j_{\ast} \simeq 0,$$ we deduce that $\tau^{D_{2p}}_{C_2} \simeq 0$.
\end{itemize}
Using \cref{cor:FormulaForGeomFixedPointsOfCompleteSpectrum}, we conclude that $X^{\phi D_{2p}} \simeq 0$.
\end{exm}

We conclude this section by indicating how the comparison functor $\Theta$ is functorial in the group $G$ with respect to restriction and geometric fixed points.

\begin{cnstr}[Restriction functoriality for geometric loci] \label{restrictionGeometricLoci} Let $H$ be a subgroup of $G$ and consider the map $i: \fS[H] \to \fS[G]$ that sends a subgroup $K$ of $H$ to the same $K$ viewed as a subgroup of $G$. Since $i$ preserves the subconjugacy relation, $i$ is a functor,\footnote{However, since there may be additional conjugacy relations in $G$, $i$ is not generally the inclusion of a subcategory.} and also let $i: \sd(\fS[H]) \to \sd(\fS[G])$ denote the induced functor on barycentric subdivisions. Next, consider the functor $\res^G_H \times \id: \Sp^G \times \fS[H] \to \Sp^H \times \fS[H]$. Since for any subgroup $K \leq H$, the restriction of the $G$-families $\fS[G]_{\leq K}$, $\fS[G]_{<K}$ to $H$ yields $H$-families $\fS[H]_{\leq K}$, $\fS[H]_{<K}$, by \cref{ParamRecollementFamily} we have an induced functor over $\fS[H]$
\[ \res^G_H: \Sp^G_{\locus{\phi}} \times_{\fS[G]} \fS[H] \to \Sp^H_{\locus{\phi}} \]
that preserves locally cocartesian edges. Precomposition by $i$ and postcomposition by $\res^G_H$ then defines a functor
\[ \res^G_H: \Fun^{\cocart}_{/\fS[G]}(\sd(\fS[G]), \Sp^G_{\locus{\phi}}) \to \Fun^{\cocart}_{/\fS[H]}(\sd(\fS[H]), \Sp^H_{\locus{\phi}}). \]
We have a lax commutative diagram
\[ \begin{tikzcd}[row sep=4ex, column sep=6ex, text height=1.5ex, text depth=0.5ex]
\Fun^{\cocart}_{/\fS[G]}(\sd(\fS[G]), \Sp^G_{\locus{\phi}}) \ar{d}[swap]{\res^G_H} \ar{r}{\Theta_G}[swap]{\simeq} \ar[phantom]{rd}{\NEarrow} & \Sp^G \ar{d}{\res^G_H} \\
\Fun^{\cocart}_{/\fS[H]}(\sd(\fS[H]), \Sp^H_{\locus{\phi}}) \ar{r}{\Theta_H}[swap]{\simeq} & \Sp^H
\end{tikzcd} \]
where the natural transformation $\eta: \res^G_H \circ \Theta_G \to \Theta_H \circ \res^G_H$ is defined using the contravariant functoriality of the limit for $i: \sd(\fS[H]) \to \sd(\fS[G])$.

We claim that $\eta$ is an equivalence, so that this diagram commutes. Indeed, suppose given
$$f: \sd(\fS[G]) \to \Sp^G_{\locus{\phi}}$$
and let $g = \res^G_H f: \sd(\fS[H]) \to \Sp^H_{\locus{\phi}}$. Let $f': \sd(\fS[G]) \to \Sp^G$ and $g': \sd(\fS[H]) \to \Sp^H$ be the functors obtained by postcomposition, so $g' = \res^G_H f' i$ by definition and $\eta_f$ is the comparison map
\[ \lim_{\sd(\fS[G])} \res^G_H f' \to \lim_{\sd(\fS[H])} \res^G_H f' i. \]
It suffices to check that for all subgroups $K \leq H$,  $\phi^K(\eta_f)$ is an equivalence. But then by the commutativity of the diagram
\[ \begin{tikzcd}[row sep=4ex, column sep=4ex, text height=1.5ex, text depth=0.25ex]
\Sp^G \ar{d}[swap]{\res^G_H} \ar{r}{\phi^K} & \Fun(B W_G K, \Sp) \ar{d}{\res^{W_G K}_{W_H K}} \\
\Sp^H \ar{r}{\phi^K} & \Fun(B W_H K, \Sp),
\end{tikzcd} \]
and under the equivalences $\phi^K \Theta_G \simeq \ev_K$ and $\phi^K \Theta_H \simeq \ev_K$ of \cref{lm:GeometricFixedPointsOfComparisonFunctor}, we see that $\phi^K(\eta_f)$ is an equivalence.
\end{cnstr}

\begin{cnstr}[Geometric fixed points functoriality for geometric loci] \label{GeometricFixedPointsGeometricLoci} Let $N$ be a normal subgroup of $G$. Then we may embed $\fS[G/N]$ as a cosieve in $\fS[G]$ via the functor $i: \fS[G/N] \to \fS[G]$ that sends $M/N$ to $M$. We also let $i: \sd(\fS[G/N]) \to \sd(\fS[G])$ denote the induced functor on barycentric subdivisions, which is a cosieve inclusion. By \cref{lem:ClosedPartRecollementNormalSubgroup}, $\Phi^N: \Sp^G \to \Sp^{G/N}$ has fully faithful right adjoint with essential image given by the $(\fS[G] \setminus \fS[G/N])^{-1}$-local objects. Therefore, $\Phi^N$ implements an equivalence over $\fS[G/N]$
\[ \Sp^G_{\locus{\phi}} \times_{\fS[G]} \fS[G/N] \xto{\simeq} \Sp^{G/N}_{\locus{\phi}}. \]
Define $\Phi^N: \Fun^{\cocart}_{/\fS[G]}(\sd(\fS[G]), \Sp^G_{\locus{\phi}}) \to \Fun^{\cocart}_{/\fS[G/N]}(\sd(\fS[G/N]), \Sp^{G/N}_{\locus{\phi}})$ to be the functor obtained by $i^{\ast}$ under that equivalence. Then because $\Theta$ is a morphism of recollements, we have a commutative diagram
\[ \begin{tikzcd}[row sep=4ex, column sep=6ex, text height=1.5ex, text depth=0.5ex]
\Fun^{\cocart}_{/\fS[G]}(\sd(\fS[G]), \Sp^G_{\locus{\phi}}) \ar{d}[swap]{\Phi^N} \ar{r}{\Theta_G}[swap]{\simeq} & \Sp^G \ar{d}{\Phi^N} \\
\Fun^{\cocart}_{/\fS[G/N]}(\sd(\fS[G/N]), \Sp^{G/N}_{\locus{\phi}}) \ar{r}{\Theta_{G/N}}[swap]{\simeq} & \Sp^{G/N}.
\end{tikzcd} \]
\end{cnstr}

\section{Theories of \texorpdfstring{$G$}{G}-spectra relative to a normal subgroup \texorpdfstring{$N$}{N}}\label{Sec:RelThy}

In classical approaches to equivariant stable homotopy theory \cite{MR866482, AlaskaNotes}, one attaches to every $G$-universe $\cU$ a corresponding theory of $G$-spectra indexed with respect to $\cU$; upon inverting the weak equivalences, this yields a stable $\infty$-category $\Sp^G_{\cU}$. For the complete $G$-universe $\cU$, one obtains \emph{genuine} $G$-spectra $\Sp^G_{\cU} \simeq \Sp^G$, whereas for the trivial $G$-universe $\sU^G$, one obtains \emph{naive} $G$-spectra $\Sp^G_{\cU^G} \simeq \Fun(\sO^{\op}_G, \Sp)$.\footnote{We identify the $\infty$-category as the ordinary stabilization of $G$-spaces $\Spc^G = \Fun(\sO_G^{\op}, \Spc)$.} Interpolating between genuine and naive $G$-spectra, for every normal subgroup $N \trianglelefteq G$, one has the fixed points $G$-universe $\cU^N$ \cite[Ch.~XVI, \S 5]{AlaskaNotes} and the associated $\infty$-category $\Sp^G_{\cU^N}$. In this section, we will revisit these notions from a different and intrinsically $\infty$-categorical perspective that makes no reference to representation theory. Using the language of parametrized $\infty$-category theory, we define $\infty$-categories $\Sp^G_{\naive{N}}$ and $\Sp^G_{\Borel{N}}$ of \emph{$N$-naive} and \emph{$N$-Borel} $G$-spectra (\cref{dfn:NaiveGSpectraRelativeToNormalSubgroup} and \cref{dfn:BorelGSpectraRelativeToNormalSubgroup}). We then show $\Sp^G_{\Borel{N}}$ admits two canonical embeddings into $\Sp^G$ as the $\Gamma_N$-torsion and $\Gamma_N$-complete $G$-spectra for $\Gamma_N$ the $N$-free $G$-family (\cref{thm:BorelSpectraAsCompleteObjects}). 

\begin{rem} Although we expect the $\infty$-category $\Sp^G_{\naive{N}}$ to be equivalent to $\Sp^G_{\cU^N}$, we will not give a precise comparison in this paper.
\end{rem}

\begin{wrn}
From this section until the end of \cref{section:NormMaps}, the role of $G$ is different from its role in the introduction. In particular, $G$ is always viewed as an extension of $G/N$ by a normal subgroup $N$ throughout this section, whereas $G$ was the quotient $\widehat{G}/K$ in the introduction. This change is made to emphasize the fact that $G$ is finite throughout the paper. 
\end{wrn}

To begin with, we will need a lemma on cartesian fibrations arising from adjunctions.

\begin{lem} \label{lem:QuotientMapCartesianFibration} \begin{enumerate}[leftmargin=*] \item Let $\adjunct{L}{C}{D}{R}$ be an adjunction such that for all $c \in C$, $d \in D$, and $f: d \to L c$ the natural map
\[ L( Rd \times_{R L c} c ) \to d \]
adjoint to the projection $Rd \times_{R L c} c \to R d$ is an equivalence. Then $L$ is an essentially cartesian fibration\footnote{An \emph{essentially} cartesian fibration is the version of cartesian fibration that is stable under equivalence, defined to be the obvious generalization of a Street fibration to the $\infty$-categorical context.} with $L$-cartesian edges given by $Rd \times_{RLc} c \to c$, and hence a cartesian fibration if $L$ is assumed to be a categorical fibration.
\item Let 
\[ \begin{tikzcd}[row sep=4ex, column sep=4ex, text height=1.5ex, text depth=0.25ex]
L'' = L' \circ L: C \ar[shift left=2]{r}{L} & C' \ar[shift left=1]{l}{R} \ar[shift left=2]{r}{L'} & D : R \circ R' = R'' \ar[shift left=1]{l}{R'}
\end{tikzcd} \]
\end{enumerate}
be a diagram of adjunctions such that $L \dashv R$, $L' \dashv R'$, and $L'' \dashv R''$ all satisfy the assumption in (1). Then $L$ sends $L''$-cartesian edges to $L'$-cartesian edges.
\end{lem}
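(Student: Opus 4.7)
The plan is to use the standard mapping-space characterization of cartesian edges, combined with the fact that right adjoints preserve pullbacks, and to exploit the pullback-pasting lemma to compose the three adjunctions cleanly.

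For part (1), given any edge $f\colon d \to Lc$ in $D$, I take as candidate $L$-cartesian lift the projection $\tilde{f}\colon Rd \times_{RLc} c \to c$, where the pullback is formed along $f$ and the unit $c \to RLc$. The hypothesis $L(Rd \times_{RLc} c) \simeq d$ says precisely that $L \tilde{f}$ is equivalent to $f$ as an edge into $Lc$, so $\tilde{f}$ is genuinely a lift (up to the ambient equivalence tolerated by essentially cartesian fibrations). To check the cartesian property, for any $c' \in C$ the universal property of pullbacks in $C$ gives
\[
\mathrm{Map}_C(c', Rd \times_{RLc} c) \simeq \mathrm{Map}_C(c', Rd) \times_{\mathrm{Map}_C(c', RLc)} \mathrm{Map}_C(c', c),
\]
and the adjunction equivalences $\mathrm{Map}_C(c', Rd) \simeq \mathrm{Map}_D(Lc', d)$ and $\mathrm{Map}_C(c', RLc) \simeq \mathrm{Map}_D(Lc', Lc)$ rewrite this as
\[
\mathrm{Map}_C(c', c) \times_{\mathrm{Map}_D(Lc', Lc)} \mathrm{Map}_D(Lc', d),
\]
which is exactly the condition characterizing $\tilde{f}$ as $L$-cartesian. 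If $L$ is additionally a categorical fibration, then any essentially cartesian lift may be replaced by a strictly cartesian one using the standard lifting property for equivalences.

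For part (2), I would recognize the $L$-image of an $L''$-cartesian lift as an $L'$-cartesian lift via pullback pasting. Given $f\colon d \to L''c = L'Lc$, the $L''$-cartesian lift furnished by part (1) is $R''d \times_{R''L''c} c \simeq RR'd \times_{RR'L'(Lc)} c \to c$. Set $d_0 \coloneq R'd \times_{R'L'(Lc)} Lc$, which is by part (1) applied to $L' \dashv R'$ the $L'$-cartesian lift of $f$ at $Lc$. Since $R$ is a right adjoint, it preserves pullbacks, giving $Rd_0 \simeq RR'd \times_{RR'L'(Lc)} RLc$. Pasting this pullback against the further pullback along the unit $c \to RLc$ yields
\[
Rd_0 \times_{RLc} c \simeq RR'd \times_{RR'L'(Lc)} c,
\]
compatibly with the projections to $c$. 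Applying $L$ and invoking the hypothesis for $L \dashv R$ on the left side produces an edge equivalent to $d_0 \to Lc$, as required.

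The main obstacle will be bookkeeping in part (2): I need the string of equivalences to identify \emph{edges} into $Lc$ and not merely abstract objects in the overcategory, which amounts to tracking how the units of the three adjunctions and the exchange transformation interact with the two pullback projections. Everything is formally forced once the pasting diagram is drawn, but writing it out cleanly—especially since part (1) only gives equivalence-up-to-homotopy of $L\tilde{f}$ with $f$—requires some care to ensure the comparison map in $D_{/Lc}$ is indeed the one arising from the $L'$-cartesian structure.
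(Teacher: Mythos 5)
Your proposal is correct and follows essentially the same route as the paper: part (1) via the pullback square of mapping spaces rewritten through the adjunction equivalences, and part (2) via the factorization of the unit of $L''\dashv R''$, the fact that $R$ preserves pullbacks, and pullback pasting to identify $R''d\times_{R''L''c}c$ with $R\bigl(R'd\times_{R'L''c}Lc\bigr)\times_{RLc}c$ before applying the hypothesis for $L\dashv R$. The bookkeeping point you flag (compatibility of the equivalence with the projection to $Lc$, and reducing an arbitrary $L''$-cartesian edge to the canonical one up to equivalence) is handled in the paper at the same informal level, so nothing essential is missing.
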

\begin{proof} For (1), under our assumption, we need only show that $R d \times_{R L c} c \to c$ is a $L$-cartesian edge. But for this, for any $c' \in C$ we have the pullback square of spaces
\[ \begin{tikzcd}[row sep=4ex, column sep=4ex, text height=1.5ex, text depth=0.25ex]
\Map_{C}(c',Rd \times_{RL c} c) \ar{r} \ar{d} & \Map_C(c',Rd) \ar{d} \ar{r}{\simeq} & \Map_D(Lc',d) \ar{d} \\
\Map_C(c',c) \ar{r} & \Map_C(c',RLc) \ar{r}{\simeq} & \Map_D(Lc',Lc),
\end{tikzcd} \]
and the assertion follows from the definition of $L$-cartesian edge and a simple diagram chase.

For (2), let $c \in C$, $(f:d \to L'' c) \in D$, and consider the $L''$-cartesian edge $R'' d \times_{R'' L'' c} c \to c$ (this case suffices since all $L''$-cartesian edges are of this form up to equivalence). Note that the unit map for $L'' \dashv R''$ factors as the composition $R'' L'' c \simeq R R' L' L c \to R L c \to c$ of unit maps for $L' \dashv R'$ and $L \dashv R$. Thus, we have
\[ L(R'' d \times_{R'' L'' c} c) \simeq L(R \left(R' d \times_{R' L'' c} Lc \right) \times_{R L c} c) \xto{\simeq} R' d \times_{R' L'' c} L c  \]
by our assumption on $L \dashv R$, and this equivalence respects the projection map to $L(c)$. But our assumption on $L' \dashv R'$ ensures that $R'(d) \times_{R' L''(c)} L(c) \to L(c)$ is a $L'$-cartesian edge.
\end{proof}

\subsection{\texorpdfstring{$N$}{N}-naive \texorpdfstring{$G$}{G}-spectra}\label{SS:NNaive}

\begin{obs} \label{inflationFunctors} Let $N$ be a normal subgroup of $G$ and let $\pi: G \to G/N$ denote the quotient map. We have the adjunction
\[ \adjunct{r_N}{\FF_G}{\FF_{G/N}}{\iota_N} \]
where $r_N(U) = U/N$ and $\iota_N(V) = V$ with $V$ regarded as a $G$-set via $\pi$. Note that $r_N \iota_N (V) = V$, so $\iota_N$ is fully faithful. For $U \in \FF_G$, $V \in \FF_{G/N}$, and a $G/N$-map $f: V \to U/N$, we also have
$$(V \times_{U/N} U)/N \cong V,$$
so $r_N$ is a cartesian fibration by \cref{lem:QuotientMapCartesianFibration}. Note also that the adjunction $r_N \dashv \iota_N$ restricts on orbits to
$$\adjunct{r_N}{\sO_G}{\sO_{G/N}}{\iota_N}$$
and $V \times_{U/N} U$ is transitive if $V$ and $U$ are, hence $r_N$ remains a cartesian fibration when restricted to $\sO_G$. Given a $G$-orbit $G/H$ and a $G/N$-map $f: \frac{G/N}{K/N} \cong G/K \to \frac{G/N}{H N/N} \cong G/ H N$, we may identify the pullback
$$G/H \times_{G/H N} G/K \cong G/(H \cap K),$$
and thus an $r_N$-cartesian edge lifting $f$ is given by $G/(H \cap K) \to G/H$. 
\end{obs}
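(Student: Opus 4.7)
The plan is to verify each assertion of the observation in sequence, working essentially at the level of the 1-category of finite $G$-sets so that all relevant constructions can be checked elementwise. The four claims to establish are: (i) the adjunction $r_N \dashv \iota_N$ with $\iota_N$ fully faithful; (ii) $r_N: \FF_G \to \FF_{G/N}$ is a cartesian fibration; (iii) the adjunction and fibration restrict to the orbit subcategories; (iv) the explicit description of $r_N$-cartesian edges as $G/(H\cap K) \to G/H$.

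For (i), I would construct the adjunction directly. A $G$-map $U \to \iota_N V$ necessarily factors through $U/N$ (since $N$ acts trivially on $\iota_N V$) and thus corresponds to a $G/N$-map $r_N U \to V$; conversely a $G/N$-map $U/N \to V$ pulls back along the quotient $U \to U/N$. These constructions are mutually inverse and natural, giving the adjunction. Since $r_N \iota_N V = V/N = V$ (trivial $N$-action), the counit is an isomorphism and $\iota_N$ is fully faithful.

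For (ii), I would apply \cref{lem:QuotientMapCartesianFibration}(1) with $L = r_N$, $R = \iota_N$. The hypothesis reduces to verifying, for each $f: V \to U/N$, that the natural comparison
\[ r_N(\iota_N V \times_{\iota_N r_N U} U) \to V \]
is an equivalence in $\FF_{G/N}$. Elementwise, $V \times_{U/N} U$ carries the $G$-action in which $N$ acts solely on the $U$-coordinate (because $V$ has trivial $N$-action), so quotienting by $N$ collapses exactly the $U$-coordinate and returns $V$ via first projection. Hence $(V \times_{U/N} U)/N \cong V$, which gives the required equivalence and identifies the $r_N$-cartesian edges as the projections $\iota_N V \times_{\iota_N r_N U} U \to U$.

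For (iii) and (iv), note that $\iota_N$ preserves transitivity, and $(G/H)/N \cong G/(HN)$ is again a $G/N$-orbit, so both functors restrict to $\sO_G$ and $\sO_{G/N}$. Given a $G/N$-map $f: G/K \to G/(HN)$ between orbits (with $K \supseteq N$), choose a representative lift so that $K \subseteq HN$; then the pullback $G/K \times_{G/(HN)} G/H$ is computed by a double-coset decomposition that collapses to a single orbit $G/(H \cap K)$ (the resulting set is transitive under the diagonal $G$-action because $K \subseteq HN$ ensures that every element of $G/H$ lying over the image of the basepoint of $G/K$ is reached from it by an element of $K$). This both confirms that the cartesian edge stays inside $\sO_G$, so $r_N|_{\sO_G}$ remains a cartesian fibration, and identifies it as $G/(H \cap K) \to G/H$. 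The main (minor) obstacle is the orbit-level pullback computation in (iv); everything else is formal once the elementary identity $(V \times_{U/N} U)/N \cong V$ is established.
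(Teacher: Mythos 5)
Your proposal is correct and follows essentially the same route as the paper: you verify the adjunction directly, reduce the cartesian-fibration claim to the identity $(V \times_{U/N} U)/N \cong V$ and \cref{lem:QuotientMapCartesianFibration}, and settle the orbit case by identifying the pullback with the single orbit $G/(H \cap K)$, which simultaneously shows the restricted functor is still a cartesian fibration. One small correction to your transitivity justification: the containment doing the work is $N \subseteq K$ (automatic since $K/N \leq G/N$), which gives $HN \subseteq KH$ and hence that $K$ moves the basepoint of the fiber onto every element of it, whereas the containment $K \subseteq HN$ that you cite only guarantees that the map $G/K \to G/HN$ exists.
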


\begin{cvn} For $N$ a normal subgroup of $G$, we will regard $\sO_G^{\op}$ as a $G/N$-category via $r_N^{\op}$.
\end{cvn}

\begin{dfn} \label{dfn:NaiveGSpectraRelativeToNormalSubgroup} Let $\Sp^G_{\naive{N}} := \Fun_{G/N}(\sO_G^{\op}, \underline{\Sp}^{G/N})$ be the $\infty$-category of \emph{naive $G$-spectra relative to $N$}, or \emph{$N$-naive $G$-spectra}.
\end{dfn}

For example, if $N = G$ we have the usual $\infty$-category $\Fun(\sO_G^{\op}, \Sp)$ of naive $G$-spectra, and if $N = 1$ we instead have the $\infty$-category $\Sp^G$ itself. 

\begin{cnstr} \label{cnstr:ForgetfulFunctorToNaiveGSpectra} We define a `forgetful' functor
$$\sU[N]: \Sp^G \to \Sp^G_{\naive{N}} = \Fun_{G/N}(\sO_G^{\op}, \underline{\Sp}^{G/N}).$$
First, let
$$q_N: \omega_G \circ \iota_N \Rightarrow \omega_{G/N}: \FF_{G/N} \to \Gpd_{\fin}$$
be the natural transformation defined on objects $U \in \FF_{G/N}$ by the functor $U//G \to U//(G/N)$ which sends objects $x \in U$ to the same $x \in U$ and morphisms $[g: x \rightarrow g \cdot x = \pi(g) \cdot x]$ to $[\pi(g): x \rightarrow \pi(g) \cdot x]$.

Recall the functor $\SH: \Gpd_{\fin}^{\op} \to \CAlg(\Pr^{\mr{L}})$ of \cref{dfn:BachmannHoyoisFunctor}. For any $\infty$-category $C$, the adjunction $r_N \dashv \iota_N$ induces an adjunction
\[ \adjunct{(r_N^{\op})^{\ast}}{\Fun(\FF_{G/N}^{\op},C)}{\Fun(\FF_G^{\op}, C)}{(\iota_N^{\op})^{\ast}} \]
where we may identify $(r_N^{\op})^{\ast}$ with the left Kan extension along $\iota_N^{\op}$. Let
$$\underline{\inf}[N]: \SH \omega_{G/N}^{\op} r_N^{\op} \to \SH \omega_G^{\op}$$
be the natural transformation adjoint to $\SH q_N^{\op}$ and let 
\[ \underline{\inf}[N]: \sO_G^{\op} \times_{r_N^{\op},\sO_{G/N}^{\op}} \underline{\Sp}^{G/N} \to \underline{\Sp}^G \]
also denote the associated $G$-functor given by unstraightening. Note that for a $G$-orbit $G/H$, $\underline{\inf}[N]_{G/H}$ is given by the inflation functor $\inf^{H \cap N}: \Sp^{H/H \cap N} \to \Sp^H$. By the dual of \cite[Prop.~7.3.2.6]{HA}, $\underline{\inf}[N]$ admits a relative right adjoint
\[ \widehat{\Psi}[N]: \underline{\Sp}^G \to \sO_G^{\op} \times_{r_N^{\op},\sO_{G/N}^{\op}} \underline{\Sp}^{G/N} \]
over $\sO_G^{\op}$ that does \emph{not} preserve cocartesian edges; rather, for a map of $G$-orbits $f: G/K \to G/H$ we have a lax commutative square
\[ 
\begin{tikzcd}[row sep=6ex, column sep=6ex, text height=1.5ex, text depth=0.25ex]
\Sp^H \ar{r}{\gamma_{\ast}} \ar{d}{f^{\ast}} \ar[phantom]{rd}{\SWarrow}  & \Sp^{H/H \cap N} \ar{d}{f^{\ast}} \\
\Sp^K \ar{r}{\gamma_{\ast}} & \Sp^{K/ K \cap N}
\end{tikzcd}
\]
for the square of $G$-orbits
\[
\begin{tikzcd}[row sep=6ex, column sep=4ex, text height=1.5ex, text depth=0.25ex]
G/K \ar{d}{f} \ar{r}{\gamma} & (G/N)/(K N/ N) \ar{d}{f} \\
G/H \ar{r}{\gamma} & (G/N)/(H N/N).
\end{tikzcd}
\]
However, for a map of $G/N$-orbits $f: \frac{G/N}{K/N} \to \frac{G/N}{H N/N}$ we have a homotopy commutative square
\[ \begin{tikzcd}[row sep=6ex, column sep=6ex, text height=1.5ex, text depth=0.25ex]
\Sp^H \ar{r}{\gamma_{\ast}} \ar{d}{f^{\ast}} & \Sp^{H/(H \cap N)} \ar{d}{f^{\ast}} \\
\Sp^{K \cap H} \ar{r}{\gamma_{\ast}} & \Sp^{(K \cap H)/(H \cap N)}
\end{tikzcd}
\]
for the pullback square
\[
\begin{tikzcd}[row sep=6ex, column sep=4ex, text height=1.5ex, text depth=0.25ex]
G/(K \cap H) \ar{d}{\gamma} \ar{r}{f} & G/H \ar{d}{\gamma} \\
G/K \ar{r}{f} & G/H N
\end{tikzcd} 
\]
and hence the further composition $\underline{\Psi}[N] \coloneq \pr \circ \widehat{\Psi}[N]: \underline{\Sp}^G \to \underline{\Sp}^{G/N}$ \emph{does} preserve cocartesian edges over $\sO_{G/N}^{\op}$, where we regard $\underline{\Sp}^G$ as a $G/N$-$\infty$-category via $r_N^{\op}$. We also have the unit $\eta: \id \to \iota_N r_N$ which by precomposition yields the $G$-functor
\[ \underline{\res}[N]: \sO_G^{\op} \times_{(\iota_N r_N)^{\op}, \sO_G^{\op}} \underline{\Sp}^G \to \underline{\Sp}^G, \]
where for a $G$-orbit $G/H$, $\underline{\res}[N]_{G/H}$ is given by the restriction functor $\res^{H N}_H: \Sp^{H N} \to \Sp^H$. The composite
$$\underline{\Psi}[N] \circ \underline{\res}[N]: \sO_G^{\op} \times_{(\iota_N r_N)^{\op}, \sO_G^{\op}} \underline{\Sp}^G \cong \sO_G^{\op} \times_{r_N^{\op}, \sO_{G/N}^{\op}} (\sO_{G/N}^{\op} \times_{i_N^{\op},\sO_G^{\op}} \underline{\Sp}^G) \to \underline{\Sp}^{G/N} $$
is then a $G/N$-functor. This defines by adjunction the $G/N$-functor\footnote{The ad-hoc notation $\widetilde{\sU}[N]$ for this $G/N$-functor is employed so as not to conflict with the $G$-functor $\underline{\sU}[N]$ in \cref{ExtendingNaiveSpectraToGCategory} below.}
\[ \widetilde{\sU}[N]: \sO_{G/N}^{\op} \times_{i_N^{\op},\sO_G^{\op}} \Sp^G \to \underline{\Fun}_{G/N}(\sO_G^{\op}, \underline{\Sp}^{G/N}). \]
Define $\sU[N]$ to be the fiber of $\widetilde{\sU}[N]$ over $(G/N)/(G/N)$.
\end{cnstr}

\begin{obs}[Monoidality of forgetful functor] \label{LaxMonoidalForgetfulFunctorToNaiveSpectra} In \cref{cnstr:ForgetfulFunctorToNaiveGSpectra}, the monoidality of inflation and restriction implies that with respect to $\underline{\Sp}^{G, \otimes}$ and $\underline{\Sp}^{G/N, \otimes}$, the $G$-functors $\underline{\inf}[N]$ and $\underline{\res}[N]$ are symmetric monoidal and the $G/N$-functor $\underline{\Psi}[N]$ is lax symmetric monoidal. Therefore, $\sU[N]$ is lax symmetric monoidal with respect to the pointwise symmetric monoidal structure on $\Sp^G_{\naive{N}}$.
\end{obs}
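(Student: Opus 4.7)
The plan is to promote each of the three building blocks of $\sU[N]$ to an appropriate (lax) symmetric monoidal structure and then compose. First, since the functor $\SH^{\otimes}: \Span(\Gpd_{\fin}) \to \CAlg(\Cat_\infty^{\mr{sift}})$ of \cref{BachmannHoyoisFunctorNorms} is already valued in symmetric monoidal $\infty$-categories, applying it to the natural transformation $q_N^{\op}$ promotes $\underline{\inf}[N]$ canonically to a symmetric monoidal $G$-functor with respect to $\underline{\Sp}^{G/N,\otimes}$ and $\underline{\Sp}^{G,\otimes}$ from \cref{dfn:GCategoryGSpectra}; fiberwise this recovers the standard symmetric monoidal structure on each inflation functor $\inf^{H \cap N}$. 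Analogously, precomposition by the unit $\eta : \id \Rightarrow \iota_N r_N$ applied to $\underline{\Sp}^{G,\otimes}$ realizes $\underline{\res}[N]$ as a symmetric monoidal $G$-functor, corresponding fiberwise to the symmetric monoidal restriction functors $\res^{HN}_H$.

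Next, I would promote the relative right adjoint $\widehat{\Psi}[N]$ to a lax symmetric monoidal $G$-functor. Since $\underline{\inf}[N]$ is a morphism of cocartesian $\sO_G^{\op}$-families of symmetric monoidal $\infty$-categories whose underlying left adjoint $G$-functor preserves cocartesian edges, one can apply a parametrized version of the standard fact \cite[Cor.~7.3.2.7]{HA} that a right adjoint to a symmetric monoidal left adjoint inherits a lax symmetric monoidal structure. In the parametrized setting this is proved by factoring through the $\CAlg$-construction: fiberwise, each right adjoint $\Psi^{H \cap N}$ is lax symmetric monoidal, and the lax commutative squares encoded by the Beck--Chevalley relations for $\SH^{\otimes}$ on spans assemble to the required coherence. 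Passing to cocartesian edges over $\sO_{G/N}^{\op}$ as in the construction of $\underline{\Psi}[N]$, we obtain a lax symmetric monoidal $G/N$-functor $\underline{\Psi}[N]: \underline{\Sp}^{G,\otimes}|_{\sO_{G/N}^{\op}} \to \underline{\Sp}^{G/N,\otimes}$ of cocartesian $\sO_{G/N}^{\op}$-families of $\infty$-operads.

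The composite $\underline{\Psi}[N] \circ \underline{\res}[N]$ is then a lax symmetric monoidal $G/N$-functor. By definition of the pointwise (parametrized) symmetric monoidal structure on $\underline{\Fun}_{G/N}(\sO_G^{\op}, \underline{\Sp}^{G/N})$ as in \cref{dfn:S-PointwiseMonoidal}, its adjoint $\widetilde{\sU}[N]$ acquires the structure of a lax symmetric monoidal $G/N$-functor. Taking the fiber over the terminal object $(G/N)/(G/N)$ and noting that the pointwise symmetric monoidal structure on $\Sp^G_{\naive{N}}$ is by definition the fiber of the $G/N$-symmetric monoidal structure on the $G/N$-$\infty$-category of parametrized functors, we obtain the desired lax symmetric monoidal structure on $\sU[N]$.

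The main obstacle is the parametrized adjoint functor theorem for lax symmetric monoidal structures used in the second step: one must track compatibility with both the $\sO_G^{\op}$-fibration structure and the symmetric monoidal structure simultaneously, and verify that the relative right adjoint lifts to a morphism of $\sO_G^{\op}$-families of $\infty$-operads. Once this is in hand, the remaining steps are essentially formal consequences of the Bachmann--Hoyois formalism and the definition of the pointwise monoidal structure.
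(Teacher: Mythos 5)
Your proposal is correct and follows essentially the same route as the paper, which likewise deduces that $\underline{\inf}[N]$ and $\underline{\res}[N]$ are symmetric monoidal from the monoidality of inflation and restriction in the Bachmann--Hoyois formalism, obtains $\underline{\Psi}[N]$ as a lax symmetric monoidal relative right adjoint, and transfers the resulting structure on $\underline{\Psi}[N] \circ \underline{\res}[N]$ to $\sU[N]$ via the pointwise structure of \cref{dfn:S-PointwiseMonoidal}. One cosmetic point: for this step only the contravariant functor $\SH: \Gpd_{\fin}^{\op} \to \CAlg(\Pr^{\mr{L}})$ of \cref{dfn:BachmannHoyoisFunctor} applied to $q_N$ is needed (no norms enter), though invoking $\SH^{\otimes}$ restricted to $\Gpd_{\fin}^{\op}$ amounts to the same thing.
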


\begin{obs}[Extension to $G$-$\infty$-category] \label{ExtendingNaiveSpectraToGCategory} For any subgroup $H$ of $G$, consider the commutative diagram of restriction functors
\[ \begin{tikzcd}[row sep=4ex, column sep=8ex, text height=1.5ex, text depth=0.25ex]
\FF_H & \FF_G \ar{l}[swap]{\res^G_H} \\
\FF_{H/(H \cap N)} \ar{u}[swap]{\iota_{H \cap N}} & \FF_{G/N} \ar{l}{\res^{G/N}_{H/(H \cap N)}} \ar{u}{\iota_N}
\end{tikzcd}
\]
that yields by adjunction
\[ \begin{tikzcd}[row sep=4ex, column sep=8ex, text height=1.5ex, text depth=0.25ex]
\FF_H \ar{r}{\ind^G_H} \ar{d}{r_{N \cap H}} & \FF_G \ar{d}{r_N} \\
\FF_{H/(H \cap N)} \ar{r}[swap]{\ind^{G/N}_{H/(H \cap N)}} & \FF_{G/N}.
\end{tikzcd} \]
Precomposition by $(\ind^G_H)^{\op}: \sO_H^{\op} \to \sO_G^{\op}$ yields functors
\[ \res^G_H: \Fun_{G/N}(\sO^{\op}_G, \underline{\Sp}^{G/N}) \to \Fun_{H/(H\cap N)}(\sO^{\op}_H, \underline{\Sp}^{H/(H \cap N)}) \]
that assemble to the data of a functor $\sO_G^{\op} \to \Cat_{\infty}$ and thereby define a $G$-$\infty$-category $\underline{\Sp}^G_{\naive{N}}$. Furthermore, $\sU[N]$ extends to a $G$-functor $\underline{\sU}[N]: \underline{\Sp}^G \to \underline{\Sp}^G_{\naive{N}}$, given on the fiber over $G/H$ by $\sU[N \cap H]$.
\end{obs}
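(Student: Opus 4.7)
The strategy is to re-express $\underline{\Sp}^G_{\naive{N}}$ as a parametrized $\infty$-category of sections of a single cocartesian fibration, which will make the functoriality in subgroups manifest, and then to revisit \cref{cnstr:ForgetfulFunctorToNaiveGSpectra} to package $\sU[N]$ equivariantly.

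First, I would promote the commutative square of orbit categories to a Beck-Chevalley square by verifying, on representables, that $H \times_{HN} G/K \cong H/(H \cap K)$; this identifies $\ind^{G/N}_{H/(H \cap N)} \circ r_{H \cap N}$ with $r_N \circ \ind^G_H$ as functors $\sO_H \to \sO_{G/N}$. Combined with \cref{rem:sliceCategoryPassage}, which gives $\underline{\Sp}^{H/(H \cap N)} \simeq \sO_{H/(H \cap N)}^{\op} \times_{\sO_{G/N}^{\op}} \underline{\Sp}^{G/N}$, this yields a natural equivalence
\[ \sO_H^{\op} \times_{\sO_G^{\op}} \left( \sO_G^{\op} \times_{\sO_{G/N}^{\op}} \underline{\Sp}^{G/N} \right) \simeq \sO_H^{\op} \times_{\sO_{H/(H \cap N)}^{\op}} \underline{\Sp}^{H/(H \cap N)} \]
for every subgroup $H \leq G$. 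Passing to cocartesian sections therefore identifies $\Fun_{H/(H \cap N)}(\sO_H^{\op}, \underline{\Sp}^{H/(H \cap N)})$ with $\Fun_{/\sO_G^{\op}}^{\cocart}(\sO_H^{\op}, \sO_G^{\op} \times_{\sO_{G/N}^{\op}} \underline{\Sp}^{G/N})$.

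Next, because $\ind^G_H \colon \sO_H \to \sO_G$ is naturally equivalent to the projection $(\sO_G)_{/(G/H)} \to \sO_G$, restriction of sections along $\ind^G_K \to \ind^G_H$ for $K \leq H$ is governed by the usual functoriality of slice categories, and thus the assignment $G/H \mapsto \Fun_{H/(H \cap N)}(\sO_H^{\op}, \underline{\Sp}^{H/(H \cap N)})$ organizes into a functor $\sO_G^{\op} \to \Cat_\infty$. Unstraightening yields the $G$-$\infty$-category $\underline{\Sp}^G_{\naive{N}}$ of the statement.

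For the extension of $\sU[N]$ to a $G$-functor, I would revisit \cref{cnstr:ForgetfulFunctorToNaiveGSpectra}: each of $\underline{\inf}[N]$, $\underline{\res}[N]$, and $\widehat{\Psi}[N]$ is built natively as a (relative) $G$- or $G/N$-functor over the appropriate orbit category, so composing and adjoining produces $\widetilde{\sU}[N]$ as a $G/N$-functor $\sO_{G/N}^{\op} \times_{\sO_G^{\op}} \Sp^G \to \underline{\Fun}_{G/N}(\sO_G^{\op}, \underline{\Sp}^{G/N})$. Base-changing this $G/N$-functor along $r_N^{\op} \colon \sO_G^{\op} \to \sO_{G/N}^{\op}$ and restricting to fibers over orbits $G/H \in \sO_G^{\op}$ recovers, by the same Beck-Chevalley identification used above (applied to $H$ in place of $G$), precisely $\sU[N \cap H] \colon \Sp^H \to \Sp^H_{\naive{N \cap H}}$. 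The only real obstacle is ensuring that the fiberwise identifications are coherent — and this is exactly what the Beck-Chevalley property of Step 1 guarantees at the level of pulled-back cocartesian fibrations.
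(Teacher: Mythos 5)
Your Steps 1--3 are essentially a recapitulation of the paper's construction of $\underline{\Sp}^G_{\naive{N}}$ via precomposition along $(\ind^G_H)^{\op}$, recast in terms of pullbacks. Two remarks there. First, the commutativity of the induction/quotient square is automatic: passing to left adjoints of every leg of a commutative square yields a commutative square by uniqueness of adjoints, so no separate ``Beck--Chevalley'' condition needs to be checked for that; the specific identity you write, $H \times_{HN} G/K \cong H/(H\cap K)$, does not appear to be the relevant formula and would not establish what you want. Second, the subtle point that actually does need verification (and is implicit in the paper's ``Precomposition by $(\ind^G_H)^{\op}$'') is that $(\ind^G_H)^{\op}$ carries $r_{H\cap N}^{\op}$-cocartesian edges of $\sO_H^{\op}$ to $r_N^{\op}$-cocartesian edges of $\sO_G^{\op}$, so that precomposition actually lands in $\Fun_{H/(H\cap N)}(\sO_H^{\op}, \underline{\Sp}^{H/(H\cap N)})$. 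This follows from the group-theoretic identity $L' \cap K'N = L' \cap K'$ for $L', K' \leq H$ with $H\cap N \leq K'$, but it cannot be deduced from pullback formalities: the square $\sO_H \to \sO_G$, $\sO_{H/(H\cap N)} \to \sO_{G/N}$ is generally \emph{not} a pullback (e.g.\ already for $G = C_4$, $N = H = C_2$).

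Step 4 has a genuine gap. Pulling back $\widetilde{\sU}[N]$ along $r_N^{\op}: \sO_G^{\op} \to \sO_{G/N}^{\op}$ produces a $G$-functor with source $\sO_G^{\op} \times_{(\iota_N r_N)^{\op},\sO_G^{\op}} \underline{\Sp}^G$, not $\underline{\Sp}^G$: its fiber over $G/H$ is $\Sp^{HN}$, not $\Sp^H$, and the induced functor on that fiber is $\sU[N]: \Sp^{HN} \to \Fun_{HN/N}(\sO_{HN}^{\op}, \underline{\Sp}^{HN/N})$, not $\sU[N\cap H]: \Sp^H \to \Fun_{H/(H\cap N)}(\sO_H^{\op}, \underline{\Sp}^{H/(H\cap N)})$. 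These are functors between different $\infty$-categories unless $N \leq H$ -- the ``Beck--Chevalley identification'' from Step 1 can relate $\underline{\Sp}^{HN/N}$ to $\underline{\Sp}^{H/(H\cap N)}$ via $HN/N \cong H/(H\cap N)$, but it does nothing to reconcile the sources $\Sp^{HN}$ vs.\ $\Sp^H$ or the indexing categories $\sO_{HN}^{\op}$ vs.\ $\sO_H^{\op}$. To produce $\underline{\sU}[N]$ you need to carry out the whole of \cref{cnstr:ForgetfulFunctorToNaiveGSpectra} with ambient group $H$ over each orbit $G/H$ and verify that the results assemble coherently -- concretely, that the building blocks $\underline{\inf}[N]$, $\underline{\res}[N]$, $\widehat{\Psi}[N]$ restrict along $\sO_H^{\op} \to \sO_G^{\op}$ to their $(H, H\cap N)$-analogues -- rather than simply base-changing $\widetilde{\sU}[N]$.
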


\begin{obs}[Evaluation functors] \label{evaluationFactorizationNaiveSpectra} For any subgroup $H$ of $G$, evaluation on the orbit $G/H$ yields a functor
\[ s_H^{\ast}: \Fun_{G/N}(\sO^{\op}_G, \underline{\Sp}^{G/N}) \to \Sp^{H/(H \cap N)}. \]
By construction, this fits into a commutative diagram
\[ \begin{tikzcd}[row sep=4ex, column sep=4ex, text height=1.5ex, text depth=0.25ex]
\Sp^G \ar{r}{\sU[N]} \ar{d}[swap]{\res^G_H} & \Fun_{G/N}(\sO_G^{\op}, \underline{\Sp}^{G/N}) \ar{d}{s_H^{\ast}} \\
\Sp^H \ar{r}{\Psi^{H \cap N}} & \Sp^{H/(H \cap N)}.
\end{tikzcd} \]
Because $\underline{\Sp}^{G/N}$ is a $G/N$-presentable $G/N$-stable $G/N$-$\infty$-category, the same holds for $\underline{\Fun}_{G/N}(\sO^{\op}_G, \underline{\Sp}^{G/N})$ with $G/N$-limits and colimits computed as in \cite[Prop.~9.17]{Exp2}. Thus, $\Fun_{G/N}(\sO_G^{\op}, \underline{\Sp}^{G/N})$ is a presentable stable $\infty$-category such that the $s_H^{\ast}$ form a set of jointly conservative functors that preserve and detect limits and colimits. Since both the restriction and categorical fixed points functors preserve limits and colimits, it follows that $\sU[N]$ preserves limits and colimits and therefore admits both left and right adjoints.

We also have a partial compatibility relation as $H$ varies. Namely, given $H$, if $K$ is a subgroup such that $N \cap H \leq K \leq H$ (so $K \cap N = H \cap N$), then
\[ \res^{H/(N \cap H)}_{K/(N \cap H)} \circ s_H^{\ast} \simeq s_K^{\ast}: \Fun_{G/N}(\sO^{\op}_G, \underline{\Sp}^{G/N}) \to \Sp^{K/N \cap H}. \]
\end{obs}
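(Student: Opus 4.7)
My plan is to verify the claims of the observation in sequence, using mainly that all constructions in sight are pointwise over $\sO_G^{\op}$ and that the structural results of parametrized higher category theory apply to $\underline{\Sp}^{G/N}$.

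First, the commutative square is essentially definitional. By \cref{cnstr:ForgetfulFunctorToNaiveGSpectra}, $\sU[N]$ is obtained by adjunction from the $G/N$-functor $\underline{\Psi}[N] \circ \underline{\res}[N]$, whose fiber over the $G$-orbit $G/H$ is $\Psi^{H \cap N} \circ \res^G_H$: the relative right adjoint $\widehat{\Psi}[N]$ restricted to $G/H$ is $\Psi^{H \cap N}$ since $\underline{\inf}[N]_{G/H} = \inf^{H \cap N}$, and $\underline{\res}[N]_{G/H} = \res^{HN}_H$ by construction. Evaluating the $G/N$-functor adjoint to this composite at $G/H$ then recovers the same fiberwise functor, giving the square.

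Second, $\underline{\Sp}^{G/N}$ is $G/N$-presentable and $G/N$-stable, so by \cite[Prop.~9.17]{Exp2} the internal hom $\underline{\Fun}_{G/N}(\sO_G^{\op}, \underline{\Sp}^{G/N})$ is again $G/N$-presentable and $G/N$-stable, with $G/N$-(co)limits computed pointwise. Specializing to sections over the terminal $G/N$-orbit yields that $\Fun_{G/N}(\sO_G^{\op}, \underline{\Sp}^{G/N})$ is presentable and stable, and that the fiberwise evaluation functors $s_H^{\ast}$ detect and preserve all limits and colimits. Joint conservativity of the family $\{s_H^{\ast}\}$ is the statement that a $G/N$-natural transformation is an equivalence iff it is pointwise so on every orbit of $\sO_G$, which is immediate.

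Third, using the commutative square plus joint detection of (co)limits by $\{s_H^{\ast}\}$, showing that $\sU[N]$ preserves all limits and colimits reduces to the same claim for each $\Psi^{H \cap N} \circ \res^G_H$. The restriction $\res^G_H$ is ambidextrous (being adjoint to induction on both sides for a finite cover, cf.\ the discussion following \cref{dfn:BachmannHoyoisFunctor}) and hence bicontinuous, while $\Psi^{H \cap N}$ preserves limits as a right adjoint and preserves colimits by the earlier remark that inflations preserve compact (in fact dualizable) objects via equivariant Atiyah duality. The adjoint functor theorem in the presentable setting then supplies both left and right adjoints to $\sU[N]$. For the partial compatibility, when $N \cap H \leq K \leq H$ one checks that $H \cap KN = K$, so $G/K \to G/H$ is the $r_N$-cartesian edge lifting the $G/N$-orbit map $G/KN \to G/HN$, meaning its reverse in $\sO_G^{\op}$ is $r_N^{\op}$-cocartesian. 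A $G/N$-functor must send this to a cocartesian edge of $\underline{\Sp}^{G/N}$ over $G/HN \to G/KN$, which is precisely the restriction $\res^{H/(N \cap H)}_{K/(N \cap H)}$; this yields the stated compatibility.

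The principal subtlety is ensuring that $\widehat{\Psi}[N]$, though only relatively right adjoint to $\underline{\inf}[N]$ over $\sO_G^{\op}$, becomes $G/N$-functorial after composition with the projection to $\underline{\Sp}^{G/N}$. This reduces to the Beck--Chevalley identity $f^{\ast} \gamma_{\ast} \simeq \gamma_{\ast} f^{\ast}$ for the pullback squares of orbits arising from $G/N$-orbit maps, which is exactly the homotopy commutative square verified in the second half of \cref{cnstr:ForgetfulFunctorToNaiveGSpectra}.
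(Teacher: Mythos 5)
Your proposal is correct and follows essentially the same route as the paper: the square is definitional unwinding of \cref{cnstr:ForgetfulFunctorToNaiveGSpectra} (fiberwise $\Psi^{H\cap N}\circ\res^{HN}_H$ together with the cocartesian pushforward $\res^G_{HN}$), presentability/stability and pointwise (co)limits come from \cite[Prop.~9.17]{Exp2}, and bicontinuity of $\sU[N]$ reduces via joint detection to that of $\Psi^{H\cap N}\circ\res^G_H$. Your cocartesian-edge argument for the partial compatibility (using $H\cap KN=K$ so that $G/H\to G/K$ is the $r_N^{\op}$-cocartesian lift of $G/HN\to G/KN$) is exactly the intended justification the paper leaves implicit.
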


Let us separate out the conclusion of \cref{evaluationFactorizationNaiveSpectra} into a definition.
\begin{dfn}
Let $\sF[N], \sF^{\vee}[N]: \Sp^G_{\naive{N}} \to \Sp^G$ denote the left resp. right adjoints to $\sU[N]$. 
\end{dfn}

\begin{obs}[Interaction with $G$-spaces] By repeating the construction of $\sU[N]$ for $G$-spaces and using the compatibility of restriction and categorical fixed points with $\Omega^{\infty}$, we obtain a commutative diagram
\[ \begin{tikzcd}[row sep=4ex, column sep=4ex, text height=1.5ex, text depth=0.25ex]
\Sp^G \ar{r}{\sU[N]} \ar{d}[swap]{\Omega^{\infty}} & \Fun_{G/N}(\sO^{\op}_G, \underline{\Sp}^{G/N}) \ar{d}{\Omega^{\infty}}  \\
\Spc^G \ar{r}{\sU'[N]} & \Fun_{G/N}(\sO^{\op}_G, \underline{\Spc}^{G/N}) 
\end{tikzcd} \]
where the righthand $\Omega^{\infty}$ functor denotes postcomposition by the $G/N$-functor $\Omega^{\infty}: \underline{\Sp}^{G/N} \to \underline{\Spc}^{G/N}$. Moreover, a diagram chase reveals that under the equivalence
\[ \Fun_{G/N}(\sO_G^{\op}, \underline{\Spc}^{G/N}) \simeq \Fun(\sO^{\op}_G, \Spc) = \Spc^G \]
of \cite[Prop.~3.10]{Exp2}, $\sU'[N]$ is an equivalence.
\end{obs}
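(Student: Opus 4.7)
The plan is to dispatch the two assertions — commutativity of the square and that $\sU'[N]$ is an equivalence — in turn.

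For commutativity, I would observe that the construction of $\sU[N]$ in \cref{cnstr:ForgetfulFunctorToNaiveGSpectra} only used the formalism of $\SH: \Gpd_{\fin}^{\op} \to \CAlg(\Pr^{\mr{L}})$ together with the natural transformation $q_N: \omega_G \circ \iota_N \Rightarrow \omega_{G/N}$. One may therefore repeat the construction verbatim with $\SH$ replaced by $\sH_{\sbullet}$, producing unstable analogues $\underline{\inf}'[N]$, $\underline{\res}'[N]$, $\underline{\Psi}'[N]$, and ultimately $\sU'[N]$. Since $\Omega^{\infty}: \SH \Rightarrow \sH_{\sbullet}$ is a natural transformation of functors on $\Gpd_{\fin}^{\op}$ (as the right adjoint of $\Sigma^{\infty}: \sH_{\sbullet} \Rightarrow \SH$), it intertwines the stable and unstable inflation, restriction, and categorical fixed-points functors uniformly. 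The naturality of the construction then immediately yields the commutativity of the square.

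For the equivalence, I would first invoke the parametrized unstraightening identification of \cite[Prop.~3.10]{Exp2}, under which $\Fun_{G/N}(\sO_G^{\op}, \underline{\Spc}^{G/N}) \simeq \Fun(\sO_G^{\op}, \Spc) = \Spc^G$; recall that when $\sO_G^{\op}$ is regarded as a $G/N$-$\infty$-category via the cocartesian fibration $r_N^{\op}$, its total $\infty$-category is $\sO_G^{\op}$ itself. I would then check that $\sU'[N]$ corresponds to the identity under this equivalence by a pointwise computation: for $X \in \Spc^G$ with corresponding presheaf $F: \sO_G^{\op} \to \Spc$ and any subgroup $H \leq G$, the unstable analogue of \cref{evaluationFactorizationNaiveSpectra} gives $s_H^{\ast} \sU'[N](X) \simeq \Psi^{H \cap N} \res^G_H(X)$, which as an object of $\Spc^{H/(H \cap N)} = \Fun(\sO_{H/(H \cap N)}^{\op}, \Spc)$ sends the orbit $K/(H \cap N)$ (for $H \cap N \leq K \leq H$) to $F(G/K)$. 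Assembling these data over all orbits of $G$ recovers the presheaf $F$, so $\sU'[N] \simeq \id_{\Spc^G}$, and in particular it is an equivalence.

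The main technical step is pinning down the match between the two descriptions of $\underline{\Spc}^{G/N}$: on the one hand, as the cocartesian fibration whose fibers are $\Spc^{H'} \simeq \Fun(\sO_{H'}^{\op}, \Spc)$ with restriction-style functoriality (built from $\sH$), and on the other, as the $G/N$-$\infty$-category corepresenting $G/N$-spaces (via which one deduces the unstraightening equivalence \cite[Prop.~3.10]{Exp2}). Once this compatibility is fixed, the pointwise identification above completes the argument; alternatively, one could argue more abstractly that $\sU'[N]$ preserves small colimits (since, by the unstable version of \cref{evaluationFactorizationNaiveSpectra}, the jointly conservative evaluation functors $s_H^{\ast}$ preserve colimits) and sends the orbit presheaves $G/K$ to their corresponding unstraightened functors, and conclude by comparing to the identity on these generators.
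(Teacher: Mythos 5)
Your proposal is correct and follows essentially the same route as the paper: the paper likewise just repeats the construction of $\sU[N]$ with spaces in place of spectra, invokes the commutation of $\Omega^{\infty}$ with restriction and categorical fixed points to get the square, and identifies $\sU'[N]$ with the identity by exactly the kind of pointwise diagram chase under \cite[Prop.~3.10]{Exp2} that you describe. The only cosmetic point is that since the target is unpointed $G$-spaces $\underline{\Spc}^{G/N}$, the unstable construction should be run with $\sH$ rather than $\sH_{\sbullet}$ (or one inserts the forgetful functor), which changes nothing in the argument.
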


To understand the compact generation of $N$-naive $G$-spectra, we need the following lemma.

\begin{lem} \label{lem:CompactGeneration} Let $C$ and $\{C_i: i \in I \}$ be presentable stable $\infty$-categories (with $I$ a small set) such that each $C_i$ has a (small) set $\{ x_{i \alpha}: \alpha \in \Lambda_i \}$ of compact generators. Suppose we have functors $U_i: C \to C_i$ that preserve limits and colimits and are jointly conservative. Let $F_i$ be left adjoint to $U_i$. Then $C$ has a (small) set of compact generators given by $\{  F_i x_{i \alpha}: i \in I, \alpha \in \Lambda_i \}$. In particular, $C$ is compactly generated. 
\end{lem}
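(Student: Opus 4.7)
The plan is to verify the two defining properties of a set of compact generators for the presentable stable $\infty$-category $C$: each $F_i x_{i \alpha}$ must be compact, and the collection must jointly detect zero objects.

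For compactness, I would invoke that $U_i$ preserves all colimits by hypothesis, in particular filtered colimits, so its left adjoint $F_i$ preserves compact objects. Since each $x_{i \alpha}$ is compact in $C_i$ by assumption, $F_i x_{i \alpha}$ is compact in $C$.

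For detection of zero objects, I would suppose that $X \in C$ satisfies $\mathrm{map}_C(F_i x_{i \alpha}, X) \simeq 0$ for every $i \in I$ and $\alpha \in \Lambda_i$. Transposing across the adjunction $F_i \dashv U_i$, this mapping spectrum is equivalent to $\mathrm{map}_{C_i}(x_{i \alpha}, U_i X)$, so the latter vanishes for every $\alpha \in \Lambda_i$. Since $\{x_{i \alpha} : \alpha \in \Lambda_i\}$ is a set of compact generators of $C_i$, this forces $U_i X \simeq 0$. As this holds for every $i \in I$ and the $U_i$ are jointly conservative, we conclude $X \simeq 0$.

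The indexing set $\{(i, \alpha) : i \in I, \alpha \in \Lambda_i\}$ is small since $I$ and each $\Lambda_i$ are, and by the standard characterization in the presentable stable setting---a small collection of compact objects that jointly detects zero generates under colimits---the family $\{F_i x_{i \alpha}\}$ forms a set of compact generators of $C$, which in particular shows that $C$ is compactly generated. The argument is entirely formal and the principal work is the bookkeeping above; there is no substantive obstacle beyond correctly invoking the adjunction-plus-detection principle.
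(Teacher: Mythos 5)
Your proof is correct and is essentially identical to the paper's: compactness of $F_i x_{i\alpha}$ follows because $U_i$ preserves (filtered) colimits, and detection of zero follows by transposing across $F_i \dashv U_i$, using generation in $C_i$, and then joint conservativity of the $U_i$. The only cosmetic difference is that you phrase the detection step with mapping spectra while the paper uses $\Hom$ on all suspensions $\Sigma^n F_i x_{i\alpha}$, which amounts to the same thing.
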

\begin{proof} We check directly that the indicated set generates $C$. Let $c \in C$ be any object and suppose that $\Hom_C(\Sigma^n F_i x_{i \alpha}, c) \cong 0$ for all choices of indices. Then by adjunction, $\Hom_{C_i}(\Sigma^n x_{i \alpha}, U_i c) \cong 0$, hence $U_i c \simeq 0$ for all $i \in I$. Invoking the joint conservativity of the $U_i$, we deduce that $c \simeq 0$. As for compactness, note that the assumption that each $U_i$ preserves colimits ensures that its left adjoint $F_i$ preserves compact objects.
\end{proof}

\begin{cor} \label{cor:CompactGenerationNaive} The $\infty$-category $\Fun_{G/N}(\sO^{\op}_G, \underline{\Sp}^{G/N})$ has a set of compact generators given by
$$\left\{ (s_H)_!(1) : H \leq G \right\}$$
where $(s_H)_!$ denotes the left adjoint to the functor $(s_H)^*$ of \cref{evaluationFactorizationNaiveSpectra}.
\end{cor}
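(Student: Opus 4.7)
The plan is to apply \cref{lem:CompactGeneration} to the family of evaluation functors $\{ s_H^{\ast} : H \leq G \}$ constructed in \cref{evaluationFactorizationNaiveSpectra}. That observation already supplies all the hypotheses: the $s_H^{\ast}$ are jointly conservative, and each $s_H^{\ast}$ preserves limits and colimits (since both $\res^G_H$ and $\Psi^{H \cap N}$ do, and the target of $s_H^{\ast}$ is detected in that factorization). Each fiber $\Sp^{H/(H \cap N)}$ is compactly generated by its set of orbits, which we prefer to write as $\bigl\{ \ind^{H/(H \cap N)}_{K/(H \cap N)}(\unit) : H \cap N \leq K \leq H \bigr\}$ since every subgroup of $H/(H \cap N)$ has the form $K/(H \cap N)$ for a unique such $K$.

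By \cref{lem:CompactGeneration}, $\Fun_{G/N}(\sO_G^{\op}, \underline{\Sp}^{G/N})$ is therefore compactly generated by
\[ \bigl\{ (s_H)_! \ind^{H/(H \cap N)}_{K/(H \cap N)} (\unit) : H \leq G,\; H \cap N \leq K \leq H \bigr\}. \]
The partial compatibility relation at the end of \cref{evaluationFactorizationNaiveSpectra}, namely
$\res^{H/(H \cap N)}_{K/(H \cap N)} \circ s_H^{\ast} \simeq s_K^{\ast}$ whenever $H \cap N \leq K \leq H$, passes to left adjoints to give $(s_H)_! \circ \ind^{H/(H \cap N)}_{K/(H \cap N)} \simeq (s_K)_!$. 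Hence each generator above is equivalent to $(s_K)_!(\unit)$ for some $K \leq G$, and conversely every $(s_K)_!(\unit)$ appears (take $H = K$). We conclude that $\{ (s_K)_!(\unit) : K \leq G \}$ is a set of compact generators, as claimed.

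There is no real obstacle here — the only thing to be careful about is verifying the adjoint compatibility, which is immediate from taking left adjoints of the square recorded in \cref{evaluationFactorizationNaiveSpectra}, and noting that reindexing $(H,K) \mapsto K$ already covers every subgroup of $G$ without needing to invoke the more elaborate generators of each $\Sp^{H/(H \cap N)}$.
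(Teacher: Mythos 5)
Your proposal is correct and follows essentially the same route as the paper: apply \cref{lem:CompactGeneration} to the jointly conservative, bicontinuous evaluation functors $s_H^{\ast}$, note that each fiber $\Sp^{H/(H\cap N)}$ is compactly generated by its orbits (equivalently, inductions of the unit), and then use the relation $\res^{H/(N\cap H)}_{K/(N\cap H)} \circ s_H^{\ast} \simeq s_K^{\ast}$, passed to left adjoints, to collapse the resulting generators to $\{(s_K)_!(\unit) : K \leq G\}$. This is exactly the paper's argument, differing only in writing the fiberwise generators as induced units rather than orbit suspension spectra.
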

\begin{proof} By applying \cref{lem:CompactGeneration} to the functors $s_H^{\ast}$, we deduce that
$$\{(s_H)_! \left( \frac{H/(H \cap N)}{K/(H \cap N)}_+ \right) : H \leq G \}$$ is a set of compact generators for $\Fun_{G/N}(\sO^{\op}_G, \underline{\Sp}^{G/N})$. Because $\res^{H/(N \cap H)}_{K/(N \cap H)} s_H^{\ast} \simeq s_K^{\ast}$, we may eliminate redundant expressions and reduce to the set $\{ (s_H)_!(1) : H \leq G \}$.
\end{proof}

\subsection{\texorpdfstring{$N$}{N}-Borel \texorpdfstring{$G$}{G}-spectra}\label{SS:NBorel}

We next consider Borel $G$-spectra relative to $N$. Let $\psi$ denote the extension $N \to G \to G/N$. Our main goal is to prove \cref{thm:BorelSpectraAsCompleteObjects}, which plays a key role in the sequel. 

\begin{dfn} \label{dfn:TwistedClassifyingSpace} Let $B^{\psi}_{G/N} N \subset \sO^{\op}_G$ be the full subcategory on those $G$-orbits that are $N$-free.
\end{dfn}

Note that $B^{\psi}_{G/N} N$ is a cosieve in $\sO^{\op}_G$: this amounts to the observation that if $U$ is $N$-free and $f: V \to U$ is a $G$-equivariant map of $G$-sets, then $V$ is $N$-free.

\begin{lem} \label{lm:TwistedClassifyingSpaceIsSpace} The cocartesian fibration $r_N^{\op}: \sO^{\op}_G \to \sO^{\op}_{G/N}$ restricts to a left fibration
\[ \rho_N: B^{\psi}_{G/N} N \to \sO^{\op}_{G/N}. \]
\end{lem}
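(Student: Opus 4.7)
The plan is to verify the two defining properties of a left fibration in turn: first that $\rho_N$ is a cocartesian fibration, and second that its fibers are $\infty$-groupoids. The first is essentially formal, given the observation recorded just before the statement, that $B^{\psi}_{G/N} N$ is a cosieve in $\sO_G^{\op}$. Concretely, for any $U \in B^{\psi}_{G/N} N$ and any edge $e : r_N^{\op}(U) \to W$ in $\sO_{G/N}^{\op}$, an $r_N^{\op}$-cocartesian lift $\widetilde{e} : U \to U'$ exists in $\sO_G^{\op}$ because $r_N^{\op}$ is cocartesian, and $U'$ then lies in $B^{\psi}_{G/N} N$ by the cosieve property. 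Hence $\widetilde{e}$ already lies in $B^{\psi}_{G/N} N$ and serves as a $\rho_N$-cocartesian lift.

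For the second property, I would make the fiber over a chosen orbit $(G/N)/(K/N)$ explicit: since $\sO_G^{\op}$ is a $1$-category, it suffices to check that this fiber is a groupoid. Its objects are $G$-orbits $G/H$ with $HN = K$ and $H \cap N = 1$; equivalently, $H$ is a complement of $N$ in $K$, so the composite $H \hookrightarrow K \twoheadrightarrow K/N$ is an isomorphism and in particular $|H| = |K/N|$. Using the computation of $r_N$ on morphisms recorded in \cref{inflationFunctors}, a morphism $G/H \to G/H'$ in the fiber corresponds to a $G$-map $G/H' \to G/H$ represented by a coset $\gamma H$ with $\gamma^{-1} H' \gamma \subseteq H$, whose quotient by $N$ is the identity on $G/K$; the latter condition forces $\gamma \in K$.

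To conclude, I would observe that each such morphism is automatically invertible. Since $|H| = |H'| = |K/N|$, the containment $\gamma^{-1} H' \gamma \subseteq H$ is an equality of finite subgroups of the same order, so $g H' \mapsto g \gamma H$ defines a bijection $G/H' \to G/H$ of $G$-sets. The fiber is therefore a groupoid, completing the verification. There is no real obstacle here; the only point that requires care is the explicit identification of morphisms in the fiber with $K$-conjugations between complements of $N$ in $K$, after which finiteness immediately forces such morphisms to be isomorphisms.
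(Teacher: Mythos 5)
Your proof is correct and takes essentially the same route as the paper: the cosieve observation gives that $\rho_N$ is a cocartesian fibration, and then one checks that morphisms lying over identities are invertible. The paper phrases this second step as conservativity of $\rho_N$ (a map of $N$-free orbits inducing an isomorphism on $N$-quotients is an isomorphism), and your explicit identification of fiber morphisms with $K$-conjugations between complements of $N$ in $K$ is just a concrete unwinding of that same check.
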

\begin{proof} Because $B^{\psi}_{G/N} N$ is a cosieve, the inclusion $B^{\psi}_{G/N} N \subset \sO^{\op}_G$ is stable under $r_N^{\op}$-cocartesian edges, so $\rho_N$ is a cocartesian fibration such that the inclusion preserves cocartesian edges. Furthermore, if $f: U \to V$ is a $G$-equivariant map of $N$-free $G$-sets such that $\overline{f}: U/N \to V/N$ is an isomorphism, then it is easy to check that $f$ is an isomorphism. Because $\rho_N$ is conservative, we deduce that $\rho_N$ is in addition a left fibration.
\end{proof}

\begin{rem} If $N$ yields a product decomposition $G \cong G/N \times N$, then $B^{\psi}_{G/N} N$ is spanned by those orbits of the form $G/\Gamma_{\phi}$ where $\Gamma_{\phi}$ is the graph of a homomorphism $\phi: M \to G/N$ for $M \leq N$. As a $G/N$-space, $B^{\psi}_{G/N} N$ then is the classifying $G/N$-space for $G/N$-equivariant principal $N$-bundles, which is usually denoted as $B_{G/N} N$. We thus think of $B^{\psi}_{G/N} N$ as a twisted variant of $B_{G/N} N$ for non-trivial extensions $\psi$. Many other authors have also studied equivariant classifying spaces in varying levels of generality -- for example, see \cite{Guillou2017, Luck05, Lck2014, AlaskaNotes}.
\end{rem}


\begin{dfn} \label{dfn:BorelGSpectraRelativeToNormalSubgroup} Let $\Sp^G_{\Borel{N}} := \Fun_{G/N}(B^{\psi}_{G/N} N, \underline{\Sp}^{G/N})$ be the $\infty$-category of \emph{Borel $G$-spectra relative to $N$}, or \emph{$N$-Borel $G$-spectra}. We will also refer to $G/N$-functors $$X: B^{\psi}_{G/N} N \to \ul{\Sp}^{G/N}$$
as \emph{$G/N$-spectra with $\psi$-twisted $N$-action}.
\end{dfn}

\begin{ntn} Given an abelian group $A$, we will use $B^t_{C_2} A$ as preferred alternative notation in lieu of $B^{\psi}_{C_2} A$ for the defining extension $\psi = [A \to A \rtimes C_2 \to C_2]$ of the semidirect product, where $C_2$ acts on $A$ by the inversion involution. We will also refer to $C_2$-functors $X:B^t_{C_2} A \to \ul{\Sp}^{C_2}$ as \emph{$C_2$-spectra with twisted $A$-action}, leaving $\psi$ implicit.
\end{ntn}

\begin{dfn}
Define the forgetful functor 
\[ \sU_b[N]: \Sp^G \to \Sp^G_{\Borel{N}} = \Fun_{G/N}(B^{\psi}_{G/N} N, \underline{\Sp}^{G/N}) \]
to be the composition of $\sU[N]$ with restriction along the $G/N$-functor $i: B^{\psi}_{G/N} N \subset \sO^{\op}_G$. Also let
\[ \sF_b[N], \: \sF^{\vee}_b[N]: \Sp^G_{\Borel{N}} \to \Sp^G  \]
denote the left resp. right adjoints to $\sU_b[N]$ (cf. \cref{VariousPropertiesForgetfulFunctorBorelSpectra}(1) below).
\end{dfn}


Parallel to the above discussion of $\sU[N]$, we now enumerate some of the properties of $\sU_b[N]$.

\begin{obs}[Properties of {$\sU_b[N]$}] \label{VariousPropertiesForgetfulFunctorBorelSpectra}
\leavevmode
\begin{enumerate}[leftmargin=6ex] \item Because both $\sU[N]$ and restriction along $i$ preserve limits and colimits, $\sU_b[N]$ preserves limits and colimits and thus admits left and right adjoints $\sF_b[N]$ and $\sF^{\vee}_b[N]$ that factor through $\sF[N]$ and $\sF^{\vee}[N]$.

\item For all $G/H \in  B^{\psi}_{G/N} N$ we have $H \cap N = 1$. Therefore, the smaller collection of functors
$$\{ s_H^{\ast}: \Sp^G_{\Borel{N}} \to \Sp^H : H \cap N = 1 \}$$
is jointly conservative and preserves and detects limits and colimits. Moreover, from \cref{evaluationFactorizationNaiveSpectra} we get that $s_H^{\ast} \circ \sU_b[N] \simeq \res^G_H$. 

\item Since $\sU[N]$ is lax symmetric monoidal by \cref{LaxMonoidalForgetfulFunctorToNaiveSpectra} and restriction is symmetric monoidal, we get that $\sU_b[N]$ is a lax symmetric monoidal functor. However, because each $s_H^{\ast} \circ \sU_b[N]$ for $H \cap N = 1$ is now symmetric monoidal, $\sU_b[N]$ is in fact symmetric monoidal.

\item As in \cref{cnstr:ForgetfulFunctorToNaiveGSpectra}, the functor $\sU_b[N]$ is the fiber over $(G/N)/(G/N)$ of a $G/N$-functor $$\widetilde{\sU}_b[N]: \sO_{G/N}^{\op} \times_{\sO_G^{\op}} \ul{\Sp}^G \to \ul{\Fun}_{G/N}(\sO_G^{\op}, \ul{\Sp}^{G/N}). $$ 

Also, as in \cref{ExtendingNaiveSpectraToGCategory}, $\Sp^G_{\Borel{N}}$ extends to a $G$-$\infty$-category $\underline{\Sp}^G_{\Borel{N}}$ and $\sU_b[N]$ extends to a $G$-functor $\underline{\sU}_b[N]: \underline{\Sp}^G \to \underline{\Sp}^G_{\Borel{N}}$, given on the fiber over $G/H$ by
\[  \sU_b[N \cap H]: \Sp^H \to \Fun_{H/(N \cap H)}(B^{\psi_H}_{H/(H \cap N)} (N \cap H), \underline{\Sp}^{H/(N \cap H)}) \]
for the restricted extension $\psi_H \coloneq [N \cap H \to H \to H/(N \cap H)]$.

\item As with $N$-naive $G$-spectra, we have a commutative diagram
\[ \begin{tikzcd}[row sep=4ex, column sep=4ex, text height=1.5ex, text depth=0.25ex]
\Sp^G \ar{r}{\sU_b[N]} \ar{d}[swap]{\Omega^{\infty}} & \Fun_{G/N}(B^{\psi}_{G/N} N, \underline{\Sp}^{G/N}) \ar{d}{\Omega^{\infty}}  \\
\Spc^G \ar{r}{\sU'_b[N]} \ar[equal]{d} & \Fun_{G/N}(B^{\psi}_{G/N} N, \underline{\Spc}^{G/N}) \ar{d}{\simeq} \\
\Fun(\sO^{\op}_G,\Sp) \ar{r}{i^{\ast}} & \Fun(B^{\psi}_{G/N} N, \Spc).
\end{tikzcd} \]
where now we may identify $\sU'_b[N]$ with restriction along $i$. Consider the transposed lax commutative diagram
\[ \begin{tikzcd}[row sep=4ex, column sep=4ex, text height=1.5ex, text depth=0.25ex]
\Sp^G \ar{r}{\sU_b[N]} \ar[phantom]{rd}{\NWarrow}  & \Fun_{G/N}(B^{\psi}_{G/N} N, \underline{\Sp}^{G/N})  \\
\Spc^G \ar{r}{\sU'_b[N]} \ar{u}{\Sigma^{\infty}_+} & \Fun_{G/N}(B^{\psi}_{G/N} N, \underline{\Spc}^{G/N}) \ar{u}{\Sigma^{\infty}_+}.
\end{tikzcd} \]
For any subgroup $H$ such that $H \cap N = 1$, we may extend this diagram to
\[ \begin{tikzcd}[row sep=4ex, column sep=4ex, text height=1.5ex, text depth=0.25ex]
\Sp^G \ar{r}{\sU_b[N]} \ar[phantom]{rd}{\NWarrow} & \Fun_{G/N}(B^{\psi}_{G/N} N, \underline{\Sp}^{G/N}) \ar{r}{\ev_H} & \Sp^H \\
\Spc^G \ar{r}{\sU'_b[N]} \ar{u}{\Sigma^{\infty}_+} & \Fun_{G/N}(B^{\psi}_{G/N} N, \underline{\Spc}^{G/N}) \ar{u}{\Sigma^{\infty}_+} \ar{r}{\ev_H} & \Spc^H \ar{u}{\Sigma^{\infty}_+}
\end{tikzcd} \]
where the horizontal composites are given by the restriction functor $\res^G_H$ since $H/(H \cap N) \cong H$ (cf. \cref{evaluationFactorizationNaiveSpectra} and the analogous setup for $G$-spaces). The righthand square commutes by definition, and the outer square commutes by the compatibility of restriction with $\Sigma^{\infty}_+$. Since the $\ev_H$ are jointly conservative, it follows that the the lefthand square commutes.
\end{enumerate}
\end{obs}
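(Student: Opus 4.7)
The plan is to derive each of the five items as a routine consequence of the already-established properties of $\sU[N]$ together with the fact that $\sU_b[N] = i^{\ast} \circ \sU[N]$ for the cosieve inclusion $i: B^{\psi}_{G/N} N \hookrightarrow \sO_G^{\op}$ of \cref{dfn:TwistedClassifyingSpace}. The key observation undergirding everything is that the orbits in $B^{\psi}_{G/N} N$ are exactly those $G/H$ with $H \cap N = 1$, so the fiberwise evaluations $\{s_H^{\ast} : H \cap N = 1\}$ on $\Sp^G_{\Borel{N}}$ are jointly conservative and preserve and detect limits and colimits, by the pointwise formula \cite[Prop.~9.17]{Exp2} for $G/N$-(co)limits in $G/N$-functor categories.

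For (1), since $\sU[N]$ preserves limits and colimits (\cref{evaluationFactorizationNaiveSpectra}) and $i^{\ast}$ does as well for the same pointwise reason, so does $\sU_b[N]$; the adjoints then exist by the adjoint functor theorem applied to the presentable stable $\infty$-category $\Sp^G_{\Borel{N}}$, and the factorization through $\sF[N]$ and $\sF^{\vee}[N]$ follows by composing with the left and right adjoints $i_!, i_{\ast}$ of $i^{\ast}$. For (2), the identity $s_H^{\ast} \sU_b[N] \simeq \res^G_H$ is the specialization of $s_H^{\ast} \sU[N] \simeq \Psi^{H \cap N} \res^G_H$ in \cref{evaluationFactorizationNaiveSpectra} to the case $H \cap N = 1$, where $\Psi^{1}$ is the identity. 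For (3), $\sU_b[N]$ is lax symmetric monoidal as the composite of the lax monoidal $\sU[N]$ (\cref{LaxMonoidalForgetfulFunctorToNaiveSpectra}) and the strong monoidal restriction $i^{\ast}$ between pointwise monoidal structures; to upgrade to strong monoidality I would test the lax structure maps against the jointly conservative strong monoidal family from (2), observing that $s_H^{\ast} \sU_b[N] \simeq \res^G_H$ is strong monoidal, so each lax structure map becomes an equivalence after applying every $s_H^{\ast}$ and is thus an equivalence.

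For (4), the $G$-extension proceeds exactly as in \cref{ExtendingNaiveSpectraToGCategory}: the restriction functor $\FF_G \to \FF_H$ carries $N$-free $G$-sets to $(H \cap N)$-free $H$-sets (a subgroup $K \leq H$ with $K \cap (H \cap N) = 1$ automatically satisfies $K \cap N = 1$ since $K \leq H$), so we obtain compatible restrictions $\Sp^G_{\Borel{N}} \to \Sp^H_{\Borel{H \cap N}}$ that straighten to the $G$-$\infty$-category $\underline{\Sp}^G_{\Borel{N}}$, and precomposition of $\widetilde{\sU}[N]$ along the resulting inclusion recovers $\widetilde{\sU}_b[N]$. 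For (5), the upper square with $\Omega^{\infty}$ commutes by restricting the analogous diagram for $\sU[N]$ along $i$, while $\sU'_b[N]$ is identified with $i^{\ast}$ via the straightening equivalence $\Fun_{G/N}(B^{\psi}_{G/N} N, \underline{\Spc}^{G/N}) \simeq \Fun(B^{\psi}_{G/N} N, \Spc)$ of \cite[Prop.~3.10]{Exp2}. The transposed lax square is checked by applying the jointly conservative $\ev_H$ for $H \cap N = 1$, reducing via $\ev_H \Sigma^{\infty}_+ \simeq \Sigma^{\infty}_+ \ev_H$ and $\ev_H \sU_b[N] \simeq \res^G_H$ to the standard compatibility of $\Sigma^{\infty}_+$ with $\res^G_H$. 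The whole proof is bookkeeping rather than deep content; the mildest technical point is the coherent assembly in (4), which reduces to the compatibility already used for $\widetilde{\sU}[N]$.
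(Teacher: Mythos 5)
Your proposal is correct and follows essentially the same route as the paper: each item is deduced from the corresponding property of $\sU[N]$ via the cosieve inclusion $i: B^{\psi}_{G/N} N \subset \sO_G^{\op}$, with the joint conservativity of the evaluations $s_H^{\ast}$ (for $H \cap N = 1$) doing the work in upgrading lax monoidality to strong monoidality in (3) and in checking the lax square in (5), exactly as in the paper's inline justifications. The only cosmetic remark is that in (4) your parenthetical states the implication in the reverse direction from the one needed, but since $K \cap N = K \cap (H \cap N)$ for $K \leq H$ the two are identical, so nothing is lost.
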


\begin{ntn} \label{ntn:NFreeFamily} Let $\Gamma_N$ be the \emph{$N$-free} $G$-family consisting of subgroups $H$ such that $H \cap N = 1$.
\end{ntn}

Recall the definitions of $\cF$-torsion and $\cF$-complete $G$-spectra from \cref{cnstr:GspaceFromGfamily}. 

\begin{thm} \label{thm:BorelSpectraAsCompleteObjects} The functors $\sF_b[N]$ and $\sF_b^{\vee}[N]$ are fully faithful with essential image the $\Gamma_N$-torsion and $\Gamma_N$-complete $G$-spectra, respectively.
\end{thm}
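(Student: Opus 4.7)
The plan is to identify the adjunction $\sF_b[N] \dashv \sU_b[N] \dashv \sF_b^\vee[N]$ with the adjunction $j_! \dashv j^* \dashv j_*$ of the $\Gamma_N$-recollement on $\Sp^G$ from \cref{cnstr:GspaceFromGfamily}. First I would show that $\sU_b[N]$ annihilates every $\Gamma_N^{-1}$-local $G$-spectrum $X$. By \cref{lem:GeometricFixedPointsDetectionCriterion}, $X^{\phi K} \simeq 0$ for all $K \in \Gamma_N$. For any $H \in \Gamma_N$, every subgroup $K \leq H$ also lies in $\Gamma_N$ (since $K \cap N \leq H \cap N = 1$), so $(\res^G_H X)^{\phi K} \simeq 0$ and joint conservativity of geometric fixed points on $\Sp^H$ forces $\res^G_H X \simeq 0$. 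Combined with the identification $s_H^* \circ \sU_b[N] \simeq \res^G_H$ for $H \in \Gamma_N$ from \cref{VariousPropertiesForgetfulFunctorBorelSpectra}(2) and the joint conservativity of $\{s_H^*\}_{H \in \Gamma_N}$, this yields $\sU_b[N](X) \simeq 0$. Hence $\sU_b[N] \simeq \bar{U} \circ j^*$ for a unique colimit- and limit-preserving functor $\bar{U}: \Sp^{h\Gamma_N} \to \Sp^G_{\Borel{N}}$.

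If $\bar{U}$ is an equivalence, the conclusion follows formally: its inverse, composed with $j_!$ and $j_*$ respectively, realizes $\sF_b[N]$ and $\sF_b^\vee[N]$ as composites of equivalences with the recollement embeddings, which are fully faithful with essential images $\Sp^{\tau\Gamma_N}$ and $\Sp^{h\Gamma_N}$. To prove $\bar{U}$ is an equivalence I would match compact generators. Both $\Sp^{h\Gamma_N} \simeq \Sp^{\tau\Gamma_N}$ (via \cref{rem:torsionCompleteEquivalence}) and $\Sp^G_{\Borel{N}}$ are compactly generated presentable stable $\infty$-categories: the former by $\{G/H_+ : H \in \Gamma_N\}$ per \cref{rem:MathewComparison}, and the latter by $\{(s_H)_!(1) : H \in \Gamma_N\}$ via \cref{lem:CompactGeneration} applied to the jointly conservative, colimit-preserving family $\{s_H^*\}_{H \in \Gamma_N}$ (as in \cref{cor:CompactGenerationNaive}). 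Passing to left adjoints in $s_H^* \sU_b[N] \simeq \res^G_H$ yields $\sF_b[N] \circ (s_H)_! \simeq \ind^G_H$, so the left adjoint $L \simeq j^* \circ \sF_b[N]$ to $\bar{U}$ sends $(s_H)_!(1)$ to $j^*(G/H_+)$, i.e., to the generator corresponding to $G/H_+$ under the torsion-complete equivalence.

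It then remains to verify that the unit $(s_H)_!(1) \to \bar{U}(L((s_H)_!(1))) \simeq \sU_b[N](G/H_+)$ is an equivalence, which after applying the jointly conservative family $\{s_K^*\}_{K \in \Gamma_N}$ reduces to the identification $s_K^*(s_H)_!(1) \simeq \res^G_K(G/H_+)$. Both sides should decompose as $\bigsqcup_{[g] \in K \backslash G / H} K/(K \cap g H g^{-1})_+$: the right-hand side by the standard double-coset formula, and the left-hand side by unwinding the parametrized Kan extension formula using that $B^\psi_{G/N} N$ is a $G/N$-groupoid (\cref{lm:TwistedClassifyingSpaceIsSpace}) together with the identifications $HN/N \cong H$ and $KN/N \cong K$ for $H, K \in \Gamma_N$. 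The principal obstacle is this final compact-generator computation: making precise the identification of the $G/N$-parametrized mapping space in $B^\psi_{G/N} N$ between two $N$-free orbits with the ordinary mapping space in $\sO_G^{\op}$. This step is conceptually clear but requires careful bookkeeping of $G/N$-equivariant structures within the parametrized higher-categorical framework.
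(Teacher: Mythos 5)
Your overall architecture is sound and genuinely different from the paper's: you first show $\sU_b[N]$ kills $\Gamma_N^{-1}$-local objects (the geometric-fixed-point argument for this is correct) and hence factors as $\bar{U}\circ j^*$ through the $\Gamma_N$-recollement, and then you try to prove $\bar{U}$ is an equivalence by matching compact generators; the formal deductions surrounding this (left adjoint $L \simeq j^*\sF_b[N]$, $\sF_b[N](s_H)_! \simeq \ind^G_H$, generation statements, and the reduction of the theorem to $\bar{U}$ being an equivalence) are all fine. But the proof has a genuine gap exactly where you flag the ``principal obstacle'': the claim that the unit $(s_H)_!(1)\to \sU_b[N](G/H_+)$ becomes an equivalence after applying $s_K^*$. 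You reduce this to ``$s_K^*(s_H)_!(1)\simeq \res^G_K(G/H_+)$, both sides decompose by double cosets,'' but (i) you never actually compute $s_K^*(s_H)_!(1)$ — this requires the base-change formula of \cref{lem:adjointability} for the pullback $\underline{U_K}\times_{B^{\psi}_{G/N}N}\underline{U_H}$ of $G/N$-spaces, i.e.\ an identification of parametrized mapping spaces in $B^{\psi}_{G/N}N$ between $N$-free orbits, which is precisely the nontrivial content — and (ii) even granting abstract equivalences of both sides, an abstract double-coset match does not show that the \emph{unit map itself} is an equivalence; you would still have to trace through that $s_K^*$ of the unit realizes the canonical comparison. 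As written, the central step of your argument is asserted rather than proved.

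It is worth noting how the paper closes exactly this hole without any double-coset bookkeeping: it compares the stable adjunction with its unstable analogue, using $\Sigma^\infty_+ i^* \simeq \sU_b[N]\Sigma^\infty_+$ (hence $\Sigma^\infty_+ i_!\simeq \sF_b[N]\Sigma^\infty_+$) from \cref{VariousPropertiesForgetfulFunctorBorelSpectra}(5). Since $i_!$ is left Kan extension along the full subcategory inclusion $B^{\psi}_{G/N}N\subset\sO_G^{\op}$, the space-level unit is automatically an equivalence; and since the compact generators $(s_H)_!(1)$ are suspension spectra (because $(s_H)_!\Sigma^\infty_+\simeq\Sigma^\infty_+(s'_H)_!$), the spectrum-level unit is an equivalence on generators, hence everywhere by colimit-preservation. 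If you import this suspension-spectrum argument to verify your unit on generators, your factorization-through-$j^*$ strategy does go through and in fact packages the identification of both $\sF_b[N]$ and $\sF_b^{\vee}[N]$ (the paper instead handles $\sF_b^{\vee}[N]$ by a separate recollement argument); but without it, or without carrying out the parametrized double-coset computation in detail, the proposal is incomplete.
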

\begin{proof} We first check that the unit $\eta: \id \to \sU_b[N] \sF_b[N]$ is an equivalence to show that the left adjoint $\sF_b[N]$ is fully faithful. Because $\Omega^{\infty} \sU_b[N] \simeq i^{\ast} \Omega^{\infty}$ and $\Sigma^{\infty}_+ i^{\ast} \simeq \sU_b[N] \Sigma^{\infty}_+$ by \cref{VariousPropertiesForgetfulFunctorBorelSpectra}(5), we have an equivalence of left adjoints $\Sigma^{\infty}_+ i_! \simeq \sF_b[N] \Sigma^{\infty}_+$, and for $X \in \Fun(B^{\psi}_{G/N} N, \Spc)$ we may identify $\eta_{\Sigma^{\infty}_+ X}$ with $\Sigma^{\infty}_+ \eta'_X$, where $\eta': \id \to i^{\ast} i_!$ is the unit of the adjunction $i_! \dashv i_{\ast}$. But $\eta'$ is an equivalence since $i_!$ is left Kan extension along the inclusion of a full subcategory. Thus, $\eta$ is an equivalence on all suspension spectra. In view of the commutative diagram for $H \in \Gamma_N$
\[ \begin{tikzcd}[row sep=4ex, column sep=4ex, text height=1.5ex, text depth=0.25ex]
\Fun_{G/N}(B^{\psi}_{G/N} N, \underline{\Sp}^{G/N}) \ar{r}{s_H^{\ast}} \ar{d}{\Omega^{\infty}} & \Sp^H \ar{d}{\Omega^{\infty}} \\
\Fun(B^{\psi}_{G/N} N, \Spc) \ar{r}{{s'_H}^{\ast}} & \Spc^H
\end{tikzcd} \]
where $s_H' = (\ind^G_H)^{\op}: \sO_H^{\op} \to B^{\psi}_{G/N} N \subset \sO_G^{\op}$, we have an equivalence of left adjoints ${s_H}_! \Sigma^{\infty}_+ \simeq \Sigma^{\infty}_+ {s'_H}_!$, so in particular ${s_H}_! (1)$ is a suspension spectrum. Elaborating upon \cref{cor:CompactGenerationNaive}, we observe that the set $\{ {s_H}_! (1) : H \in \Gamma_N \}$ constitutes a set of compact generators for $\Sp^G_{\Borel{N}}$. Because both the domain and codomain of $\eta$ commute with colimits, we conclude that $\eta$ is an equivalence. Moreover, because $(s_H)^{\ast} \circ \sU_b[N] \simeq \res^G_H$ as noted in \cref{evaluationFactorizationNaiveSpectra}, by adjunction $\sF_b[N] \circ (s_H)_! \simeq \ind^G_H$ and thus the essential image of $\sF_b[N]$ is the localizing subcategory generated by the set $\{ G/H_+ : H \in \Gamma_N \}$. This equals the full subcategory of $\Gamma_N$-torsion $G$-spectra. Because we already have the stable recollement
\[ \begin{tikzcd}[row sep=4ex, column sep=4ex, text height=1.5ex, text depth=0.25ex]
\Sp^{h \Gamma_N} \ar[shift right=1,right hook->]{r}[swap]{j_{\ast}} & \Sp^G \ar[shift right=2]{l}[swap]{j^{\ast}} \ar[shift left=2]{r}{i^{\ast}} & \Sp^{\Phi \Gamma_N} \ar[shift left=1,left hook->]{l}{i_{\ast}},
\end{tikzcd} \]
it follows that $\sF^{\vee}_b[N]$ is fully faithful with essential image the $\Gamma_N$-complete $G$-spectra. In more detail:
\begin{itemize} \item The composite $i^{\ast} \sF_b[N] \simeq 0$ because $\sF_b[N](X) \in j_!(\Sp^{h \Gamma_N})$ and $i^{\ast} j_! \simeq 0$. Passing to adjoints, we get $\sU_b[N] i_{\ast} \simeq 0$. Then for all $X \in \Sp^G_{\Borel{N}}$, $\sF^{\vee}_b[N](X)$ is $\Gamma_N$-complete by the equivalence
\[ \Map(i_{\ast} Z, \sF^{\vee}_b[N](X)) \simeq \Map(\sU_b[N] i_{\ast} Z, X) \simeq 0. \]
\item Using that $\sF_b[N]$ is an equivalence onto its essential image, we see that the composite $\sU_b[N] j_!$ is an equivalence from $\Sp^{h \Gamma_N}$ to $\Sp^G_{\Borel{N}}$. Its right adjoint $j^{\ast} \sF^{\vee}_b[N]$ is thus an equivalence.
\item Combining these two assertions, we have that the composite
\[ \begin{tikzcd}[row sep=4ex, column sep=4ex, text height=1.5ex, text depth=0.25ex]
\Sp^G_{\Borel{N}} \ar{r}{\sF^{\vee}_b[N]} \ar[bend right=15]{rr}[swap]{\simeq} & \Sp^G \ar{r}{j^{\ast}} & \Sp^{h \Gamma_N} \ar{r}{j_{\ast}} & \Sp^G
\end{tikzcd} \]
is equivalent to $\sF^{\vee}_b[N]$ via the unit $\sF^{\vee}_b[N] \xto{\simeq} j_{\ast} j^{\ast} \sF^{\vee}_b[N]$ and is fully faithful onto $\Gamma_N$-complete $G$-spectra.
\end{itemize}
\end{proof}

\begin{rem}[Monoidality] \label{rem:MonoidalIdentificationOfBorelSpectra} Because $\sU_b[N]$ is symmetric monoidal by \cref{VariousPropertiesForgetfulFunctorBorelSpectra}(3), its right adjoint $\sF^{\vee}_b[N]$ is lax symmetric monoidal. Therefore, the equivalence
$$\sF^{\vee}_b[N]: \Fun_{G/N}(B^{\psi}_{G/N} N, \underline{\Sp}^{G/N}) \xto{\simeq} \Sp^{h \Gamma_N}$$
of \cref{thm:BorelSpectraAsCompleteObjects} is one of symmetric monoidal $\infty$-categories with respect to the pointwise symmetric monoidal structure on the lefthand side and the symmetric monoidal structure induced by the $\Gamma_N$-symmetric monoidal recollement on the righthand side.
\end{rem}

\begin{cor}[Compatibility with restriction] \label{cor:BorelCompatibilityWithRestriction} The left and right adjoints $\sF_b[N]$ and $\sF_b^{\vee}[N]$ extend to $G$-left and right adjoints
\[  \underline{\sF}_b[N], \: \underline{\sF}_b^{\vee}[N]: \underline{\Sp}^G_{\Borel{N}} \to \underline{\Sp}^G.  \]
\end{cor}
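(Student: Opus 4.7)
The plan is to verify the standard criterion for existence of parametrized adjoints: a $G$-functor $F: \underline{C} \to \underline{D}$ between $G$-$\infty$-categories admits a $G$-left (resp.\ $G$-right) adjoint if and only if each fiber $F_{G/H}$ admits an ordinary left (resp.\ right) adjoint and the Beck--Chevalley mate transformation along each morphism $f: G/K \to G/H$ of $G$-orbits is an equivalence. We apply this to the $G$-functor $\underline{\sU}_b[N]: \underline{\Sp}^G \to \underline{\Sp}^G_{\Borel{N}}$ of \cref{VariousPropertiesForgetfulFunctorBorelSpectra}(4), whose fiber over $G/H$ is $\sU_b[N \cap H]$ for the restricted extension $\psi_H = [N \cap H \to H \to H/(N \cap H)]$.

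Fiberwise existence of adjoints is immediate from \cref{thm:BorelSpectraAsCompleteObjects} applied to the pair $(H, N \cap H)$, yielding adjoints $\sF_b[N \cap H]$ and $\sF^{\vee}_b[N \cap H]$ that are fully faithful with essential images the $\Gamma_{N \cap H}$-torsion and $\Gamma_{N \cap H}$-complete $H$-spectra inside $\Sp^H$. To check the Beck--Chevalley condition, let $f: G/K \to G/H$ be a map of $G$-orbits and write $\res: \Sp^H \to \Sp^K$ and $\res_b: \Sp^H_{\Borel{N \cap H}} \to \Sp^K_{\Borel{N \cap K}}$ for the restriction functors encoded by the two $G$-$\infty$-category structures. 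We need the mate transformations
\[ \sF_b[N \cap K] \circ \res_b \Rightarrow \res \circ \sF_b[N \cap H], \qquad \res \circ \sF^{\vee}_b[N \cap H] \Rightarrow \sF^{\vee}_b[N \cap K] \circ \res_b \]
to be equivalences. Identifying the outer adjoints with the inclusions of $\Gamma_{N \cap H}$- and $\Gamma_{N \cap K}$-torsion (resp.\ complete) spectra via \cref{thm:BorelSpectraAsCompleteObjects}, this reduces to two inputs: first, the observation that the restriction of the $H$-family $\Gamma_{N \cap H}$ along $K \subset H$ is exactly the $K$-family $\Gamma_{N \cap K}$, combined with the fact (recorded in \cref{ParamRecollementFamily}) that $\res$ preserves $\Gamma$-torsion and $\Gamma$-complete objects; and second, the compatibility of the unit and counit of the fiberwise $\sF_b \dashv \sU_b \dashv \sF^{\vee}_b$ adjunctions with restriction, which is automatic because $\underline{\sU}_b[N]$ is already a $G$-functor.

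The main obstacle is the Beck--Chevalley check, but by the above it reduces cleanly to the already-established stability of the $\Gamma_N$-recollement under restriction. Granted this, we obtain $\underline{\sF}_b[N]$ and $\underline{\sF}^{\vee}_b[N]$ as $G$-left and $G$-right adjoints to $\underline{\sU}_b[N]$. As an immediate consequence, the fiberwise equivalences of \cref{thm:BorelSpectraAsCompleteObjects} promote to $G$-equivalences $\underline{\Sp}^G_{\Borel{N}} \simeq \underline{\Sp}^{\tau \Gamma_N}$ and $\underline{\Sp}^G_{\Borel{N}} \simeq \underline{\Sp}^{h \Gamma_N}$ of $G$-$\infty$-categories.
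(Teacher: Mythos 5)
Your proposal is correct and follows essentially the same route as the paper, whose proof is simply to combine \cref{ParamRecollementFamily} with \cref{thm:BorelSpectraAsCompleteObjects}; you have merely made explicit the fiberwise-adjoints-plus-Beck--Chevalley criterion and the (correct) observation that $\Gamma_{N\cap H}$ restricts to $\Gamma_{N\cap K}$, which is exactly how those two ingredients are meant to be combined.
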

\begin{proof} Combine \cref{ParamRecollementFamily} and \cref{thm:BorelSpectraAsCompleteObjects}.
\end{proof}

We conclude this section by applying \cref{thm:BorelSpectraAsCompleteObjects} to decompose the $\infty$-category of $D_{2p^n}$-spectra; this example plays a crucial role in our study of real cyclotomic spectra in \cite{QS21b}. 

\begin{exm} \label{exm:DihedralRecollement} Let $\Gamma = \Gamma_{\mu_{p^n}}$ be the $D_{2p^n}$-family that consists of those subgroups $H$ such that $H \cap \mu_{p^n} = 1$. Note that $H \notin \Gamma$ if and only if $\mu_p \leq H$. Therefore, $\Sp^{\Phi \Gamma} \simeq \Sp^{D_{2 p^{n-1}}}$ for $D_{2 p^{n-1}}$ viewed as the quotient $D_{2p^n}/\mu_p$. Together with \cref{thm:BorelSpectraAsCompleteObjects}, we obtain a stable symmetric monoidal recollement
\[ \begin{tikzcd}[row sep=4ex, column sep=10ex, text height=1.5ex, text depth=0.5ex]
\Fun_{C_2}(B^t_{C_2} \mu_{p^n}, \underline{\Sp}^{C_2}) \ar[shift right=1,right hook->]{r}[swap]{j_{\ast} = \sF^{\vee}_b[\mu_{p^n}]} & \Sp^{D_{2p^n}} \ar[shift right=2]{l}[swap]{j^{\ast} = \sU_b[\mu_{p^n}]} \ar[shift left=2]{r}{i^{\ast} = \Phi^{\mu_p}} & \Sp^{D_{2p^{n-1}}} \ar[shift left=1,left hook->]{l}{i_{\ast}}.
\end{tikzcd} \]
Furthermore, using \cref{ParamRecollementFamily}, this extends to a $C_2$-stable $C_2$-recollement
\[ \begin{tikzcd}[row sep=4ex, column sep=8ex, text height=1.5ex, text depth=0.5ex]
\ul{\Fun}_{C_2}(B^t_{C_2} \mu_{p^n}, \underline{\Sp}^{C_2}) \ar[shift right=1,right hook->]{r}[swap]{\widetilde{\sF}^{\vee}_b[\mu_{p^n}]} &  \sO^{\op}_{C_2} \times_{\sO^{\op}_{D_{2p^n}}} \ul{\Sp}^{D_{2p^n}} \ar[shift right=2]{l}[swap]{ \widetilde{\sU}_b[\mu_{p^n}]} \ar[shift left=2]{r}{\Phi^{\mu_p}} & \sO^{\op}_{C_2} \times_{\sO^{\op}_{D_{2p^{n-1}}}} \ul{\Sp}^{D_{2p^{n-1}}} \ar[shift left=1,left hook->]{l}{i_{\ast}}
\end{tikzcd} \]
whose fiber over $C_2/1$ is the stable symmetric monoidal recollement
\[ \begin{tikzcd}[row sep=4ex, column sep=10ex, text height=1.5ex, text depth=0.5ex]
\Fun(B \mu_{p^n}, \Sp) \ar[shift right=1,right hook->]{r}[swap]{\sF^{\vee}_b[\mu_{p^n}]} & \Sp^{\mu_{p^n}} \ar[shift right=2]{l}[swap]{\sU_b[\mu_{p^n}]} \ar[shift left=2]{r}{\Phi^{\mu_p}} & \Sp^{\mu_{p^{n-1}}} \ar[shift left=1,left hook->]{l}{i_{\ast}},
\end{tikzcd} \]
with the $C_2$-action induced by the inversion action of $C_2$ on $\mu_{p^n}$.
\end{exm}

\section{Parametrized norm maps and ambidexterity}
\label{section:NormMaps}

In this section, we construct \emph{parametrized norm maps} that will permit us to define the parametrized Tate construction (\cref{dfn:ParamTateCnstr}). Our strategy is to mimic Lurie's construction of the norm maps \cite[\S 6.1.6]{HA} in a parametrized setting over a base $\infty$-category $S$, eventually specializing to $S = \sO_G^{\op}$. We first collect a few necessary aspects of the theory of $S$-colimits, limits, and Kan extensions from \cite{Exp2}.

\begin{obs} \label{GeneralTheoryParamColimits} Let $K$, $L$, and $C$ be $S$-$\infty$-categories (with $p: L \to S$ the structure map), and let $\phi: K \to L$ be an $S$-cocartesian fibration \cite[Def.~7.1]{Exp2}. We are interested in computing the left adjoint $\phi_!$ to the restriction functor
$$\phi^{\ast}: \Fun_S(L,C) \to \Fun_S(K,C)$$
as a (pointwise) $S$-left Kan extension \cite[Def.~10.1 or Def.~9.13]{Exp2}. In \cite[Thm.~9.15 and Prop.~10.8]{Exp2}, the second author gave a pointwise existence criterion and formula for $\phi_!$ of an $S$-functor $F: K \to C$. Namely, for all objects $x \in L$, let $\underline{x} \coloneq \{ x \} \times_{L, \ev_0} \Ar^{\cocart}(L)$ be as in \cite[Notn.~2.29]{Exp2} and let $i_x: \underline{x} \to L$ be the $S$-functor given by evaluation at the target (which serves as a canonical extension of the inclusion of the point $x \in L$ to an $S$-functor). Let $s = p(x)$ and note that $q = (\id, \Ar(p)): \trivfibto{\underline{x}}{S^{s/}}$ is a trivial fibration (cf. \cite[Lem.~12.10]{Exp2}), so we may think of $\underline{x}$ as an \emph{$S$-point} of $L$. Let
\[ F_x = (q \circ \pr_{\underline{x}}, F \circ \pr_K): K_{\underline{x}} \coloneq \underline{x} \times_L K \to C_{\underline{s}} \coloneq S^{s/} \times_S C \]
denote the resulting $S^{s/}$-functor. Then $\phi_! F$ exists if the $S^{s/}$-colimit of $F_x$ exists for all $x \in L$, after which $(\phi_! F)|_{\underline{x}}$ is computed as that $S^{s/}$-colimit. We will also say that $C$ \emph{admits the relevant $S$-colimits} with respect to $\phi: K \to L$ if for all $x \in L$ with $p(x)=s$, $C_{\underline{s}}$ admits all $S^{s/}$-colimits indexed by $K_{\underline{x}}$. In this case, the left adjoint $\phi_!$ to $\phi^{\ast}$ exists by \cite[Cor.~9.16]{Exp2}.
 
Now suppose instead that $\phi: K \to L$ is an $S$-cartesian fibration \cite[Def.~7.1]{Exp2}. In view of the discussion of vertical opposites in \cite[\S 5]{Exp2} and the observation that the formation of vertical opposites exchanges $S$-cocartesian and $S$-cartesian fibrations, we may dualize the above discussion to see that the $S$-right Kan extension $\phi_{\ast} F$ exists if the $S^{s/}$-limit of $F_x$ exists for all $x \in L$, after which $\phi_{\ast} F|_{\underline{x}}$ is computed as that $S^{s/}$-limit. Likewise, we have the dual notion of $C$ admitting the relevant $S$-limits with respect to $\phi$, in which case the right adjoint $\phi_{\ast}$ exists.

Finally, suppose that $K$ and $L$ are $S$-spaces. Using the cocartesian model structure on $s\Set^+_{/S}$ and the description of the fibrations between fibrant objects \cite[Prop.~B.2.7]{HA}, up to equivalence we may replace any $S$-functor $\phi: K \to L$ by a categorical fibration. But a categorical fibration between left fibrations over $S$ is necessarily both an $S$-cocartesian and $S$-cartesian fibration, hence both of the above formulas apply to compute $\phi_!$ and $\phi_{\ast}$.
\end{obs}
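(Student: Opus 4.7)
The plan is to derive both halves of the observation as formal consequences of the theory of $S$-Kan extensions developed in \cite{Exp2}, together with a duality argument and a standard model-categorical replacement. For the first half, given an $S$-cocartesian fibration $\phi: K \to L$ and an $S$-functor $F: K \to C$, the candidate left adjoint $\phi_! F$ is produced as the pointwise $S$-left Kan extension of $F$ along $\phi$. By \cite[Thm.~9.15 and Prop.~10.8]{Exp2}, the value of $\phi_! F$ at an $S$-point $\underline{x} \to L$ lying over $s = p(x)$ is computed as the $S^{s/}$-colimit of the sliced diagram $F_x: K_{\underline{x}} \to C_{\underline{s}}$, where $K_{\underline{x}} = \underline{x} \times_L K$ plays the role of the usual comma category. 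Assuming $C$ admits all the relevant $S^{s/}$-colimits, these pointwise formulas assemble via \cite[Cor.~9.16]{Exp2} into a bona fide left adjoint to $\phi^{\ast}$.

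For the dual statement regarding $\phi_{\ast}$, I would invoke vertical opposites as in \cite[\S 5]{Exp2}: the operation $(-)^{\mathrm{vop}}$, given fiberwise by the ordinary opposite, interchanges $S$-cocartesian and $S$-cartesian fibrations and converts $S$-colimits into $S$-limits. Applying the preceding case to $\phi^{\mathrm{vop}}: K^{\mathrm{vop}} \to L^{\mathrm{vop}}$ with target $C^{\mathrm{vop}}$ and translating back then yields the claimed pointwise $S^{s/}$-limit formula for $\phi_{\ast}$; the criterion that $C$ admits the relevant $S$-limits is dual to the cocartesian one and ensures existence.

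Finally, when $K$ and $L$ are $S$-spaces, any $S$-functor $\phi: K \to L$ can be replaced up to equivalence by a categorical fibration of fibrant objects using the cocartesian model structure on $s\mathrm{Set}^+_{/S}$. By \cite[Prop.~B.2.7]{HA}, such a fibration between left fibrations over $S$ is simultaneously $S$-cocartesian (every edge of a left fibration is already cocartesian) and $S$-cartesian (every edge is invertible over the base, so cartesianness follows by conservativity). Both pointwise formulas therefore apply. The only genuine bookkeeping obstacle in the argument is identifying $\underline{x} \simeq S^{s/}$ via the trivial fibration $q = (\id, \Ar(p))$ and checking that $F_x$ is suitably $S^{s/}$-functorial; this verification is precisely the content of the cited results in \cite{Exp2}, so no new technical work is required at this stage.
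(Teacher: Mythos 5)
Your proposal is correct and follows essentially the same route as the paper: cite the pointwise existence criterion and formula of \cite[Thm.~9.15, Prop.~10.8, Cor.~9.16]{Exp2} for the $S$-cocartesian case, dualize via vertical opposites for the $S$-cartesian case, and handle $S$-spaces by replacing $\phi$ with a categorical fibration, which is then both an $S$-cocartesian and $S$-cartesian fibration. The only difference is your added parenthetical justifications in the left-fibration step, which the paper simply asserts; they are harmless.
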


\begin{obs} We can also consider the $S$-functor $S$-$\infty$-category $\underline{\Fun}_S(K,C) \to S$ whose cocartesian sections are $\Fun_S(K,C)$. Let $\phi: K \to L$ be an $S$-cocartesian fibration and suppose that for every $s \in S$, the $S^{s/}$-$\infty$-category $C_{\underline{s}}$ admits the relevant $S^{s/}$-colimits with respect to $\phi_{\underline{s}}: K_{\underline{s}} \to L_{\underline{s}}$, so that the restriction functor
$$\phi_{\underline{s}}^{\ast}: \Fun_{S^{s/}}(L_{\underline{s}}, C_{\underline{s}}) \to \Fun_{S^{s/}}(K_{\underline{s}}, C_{\underline{s}})$$
admits a left adjoint $(\phi_{\underline{s}})_!$ computed as above. Then using the built-in compatibility of $S$-left Kan extension with restriction, by \cite[Prop.~7.3.2.11]{HA} these fiberwise left adjoints assemble to yield an $S$-adjunction \cite[Def.~8.3]{Exp2}
\[ \adjunct{\underline{\phi}_!}{\underline{\Fun}_S(K,C)}{\underline{\Fun}_S(L,C)}{\underline{\phi}^{\ast}} \]
(also see \cite[Cor.~9.16 and Thm.~10.5]{Exp2}). In particular, upon forgetting the structure maps\footnote{In other words, a relative adjunction yields an adjunction between the Grothendieck constructions.} we have an ordinary adjunction $\underline{\phi}_! \dashv \underline{\phi}^{\ast}$. Similarly, for $\phi$ an $S$-cartesian fibration we can consider the $S$-adjunction
\[ \adjunct{\underline{\phi}^{\ast}}{\underline{\Fun}_S(L,C)}{\underline{\Fun}_S(K,C)}{\underline{\phi}_{\ast}}. \]
For $\phi: X \to Y$ a map of $S$-spaces, we will consider $\underline{\phi}_! \dashv \underline{\phi}^{\ast} \dashv \underline{\phi}_{\ast}$.
\end{obs}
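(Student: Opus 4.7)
The claim is that the fiberwise left adjoints built in \cref{GeneralTheoryParamColimits} assemble into an honest $S$-left adjoint to $\underline{\phi}^{\ast}$; the plan is to invoke \cite[Prop.~7.3.2.11]{HA}, which constructs relative left adjoints out of compatible fiberwise data. First I would set up the inputs: both $\underline{\Fun}_S(K,C)$ and $\underline{\Fun}_S(L,C)$ are cocartesian fibrations over $S$, and $\underline{\phi}^{\ast}$ is induced fiberwise by precomposition with $\phi_{\underline{s}}$. Since restriction is compatible with base change in $S$, $\underline{\phi}^{\ast}$ preserves cocartesian edges and so qualifies as an $S$-functor.

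The main step is verifying the Beck--Chevalley compatibility required by \cite[Prop.~7.3.2.11]{HA}: for every arrow $\alpha: s \to t$ in $S$, the exchange natural transformation $(\phi_{\underline{s}})_! \circ \alpha^{\ast} \Rightarrow \alpha^{\ast} \circ (\phi_{\underline{t}})_!$ must be an equivalence. This is where the pointwise formula recalled in \cref{GeneralTheoryParamColimits} does the work: for an object $x$ in the relevant fiber of $L$, both sides compute the same $S^{p(x)/}$-colimit of the restriction $F_x$ of $F$ to the slice $K_{\underline{x}}$, since the slices $\underline{x}$, the indexing $K_{\underline{x}}$, and the restricted $F_x$ all transform naturally under base change. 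The main obstacle is therefore essentially bookkeeping: identifying the slices on both sides of the exchange transformation, which then makes the two colimit expressions manifestly equivalent.

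Granting Beck--Chevalley, \cite[Prop.~7.3.2.11]{HA} produces the $S$-adjunction $\underline{\phi}_! \dashv \underline{\phi}^{\ast}$ in the sense of \cite[Def.~8.3]{Exp2}, and forgetting structure maps yields the underlying ordinary adjunction. For the $S$-cartesian case, the same argument runs in parallel with right adjoints in place of left adjoints, or one dualizes via vertical opposites (as in \cref{GeneralTheoryParamColimits}). Finally, for $\phi: X \to Y$ a map of $S$-spaces one replaces $\phi$ by a categorical fibration; such a map between left fibrations over $S$ is automatically both $S$-cocartesian and $S$-cartesian, so both preceding constructions apply and combine to give the triple adjunction $\underline{\phi}_! \dashv \underline{\phi}^{\ast} \dashv \underline{\phi}_{\ast}$.
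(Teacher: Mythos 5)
Your argument is correct and follows essentially the same route as the paper: fiberwise left adjoints given by parametrized Kan extension, the exchange/Beck--Chevalley compatibility over morphisms of $S$ supplied by the pointwise colimit formula (this is the paper's ``built-in compatibility of $S$-left Kan extension with restriction,'' and the same computation underlies its adjointability lemma immediately afterwards), then \cite[Prop.~7.3.2.11]{HA} to assemble the $S$-adjunction, with the cartesian case by dualization and the $S$-space case by replacing $\phi$ by a categorical fibration. Nothing further is needed.
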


The key result that enables the construction of norm maps is the following lemma on adjointability. Note for the formulation of the statement that $S$-(co)cartesian fibrations are stable under pullback, and the property that $C$ admits the relevant $S$-(co)limits with respect to $\phi$ is stable under pullbacks in the $\phi$ variable.

\begin{lem} \label{lem:adjointability} Let $C$ be an $S$-$\infty$-category and let
\[ \begin{tikzcd}[row sep=4ex, column sep=4ex, text height=1.5ex, text depth=0.25ex]
K' \ar{r}{f'} \ar{d}[swap]{\phi'} & K \ar{d}{\phi} \\
L' \ar{r}{f} & L
\end{tikzcd} \]
be a pullback square of $S$-$\infty$-categories. Consider the resulting commutative square of $S$-functor categories and restriction functors
\[ \begin{tikzcd}[row sep=4ex, column sep=6ex, text height=1.5ex, text depth=0.25ex]
\Fun_S(K',C)  & \Fun_S(K,C) \ar{l}[swap]{{f'}^{\ast}} \\
\Fun_S(L',C) \ar{u}{{\phi'}^{\ast}} & \Fun_S(L,C) \ar{u}[swap]{\phi^{\ast}} \ar{l}[swap]{f^{\ast}}.
\end{tikzcd} \]
\begin{enumerate} \item If $\phi$ is an $S$-cocartesian fibration and $C$ admits the relevant $S$-colimits, then the square is left adjointable, i.e., the natural map ${\phi'}_! {f'}^{\ast} \to f^{\ast} \phi_!$ is an equivalence.
\item If $\phi$ is an $S$-cartesian fibration and $C$ admits the relevant $S$-limits, then this square is right adjointable, i.e., the natural map $f^{\ast} \phi_{\ast} \to {\phi'}_{\ast} {f'}^{\ast}$ is an equivalence.
\item If $K, L, K', L'$ are $S$-spaces, then the square is both left and right adjointable provided that $C$ admits the relevant $S$-colimits and $S$-limits.
\end{enumerate}
Likewise, we have the same results for the commutative square of $S$-functor $S$-$\infty$-categories
\[ \begin{tikzcd}[row sep=4ex, column sep=6ex, text height=1.5ex, text depth=0.5ex]
\underline{\Fun}_S(K',C)  & \underline{\Fun}_S(K,C) \ar{l}[swap]{\underline{f'}^{\ast}} \\
\underline{\Fun}_S(L',C) \ar{u}{\underline{\phi'}^{\ast}} & \underline{\Fun}_S(L,C) \ar{u}[swap]{\underline{\phi}^{\ast}} \ar{l}[swap]{\underline{f}^{\ast}}.
\end{tikzcd} \]
\end{lem}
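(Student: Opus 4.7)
The plan is to reduce each claim to the pointwise formula for $S$-Kan extensions recalled in the observation on $S$-colimits, and then use the pullback hypothesis to identify the relevant indexing $S$-$\infty$-categories. First, I would note that the classes of $S$-cocartesian and $S$-cartesian fibrations, as well as the property that $C$ admits the relevant $S$-(co)limits, are both stable under base change, so under the hypotheses of (1), (2), or (3), all four restriction functors in the square admit the required left or right adjoints and the exchange transformation in question is well-defined.

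For (1), fix an $S$-functor $G : K \to C$ and let me check that $\phi'_! {f'}^{\ast} G \to f^{\ast} \phi_! G$ is an equivalence after restriction to each $S$-point $\underline{x'}$ of $L'$, where $x' \in L'$ lies over $s = p(x')$. Writing $y = f(x') \in L$, the $S$-functor $f$ sends $\underline{x'}$ into $\underline{y}$, and since the structure maps $\underline{x'} \to S^{s/}$ and $\underline{y} \to S^{s/}$ are both trivial fibrations and commute with this map, the induced map $\underline{x'} \to \underline{y}$ is an equivalence over $S^{s/}$. Combining with the pullback hypothesis $K' \simeq L' \times_L K$ gives a pullback square of $S^{s/}$-$\infty$-categories
\[
\begin{tikzcd}[row sep=4ex, column sep=4ex]
K'_{\underline{x'}} \ar[r] \ar[d] & K_{\underline{y}} \ar[d] \\
\underline{x'} \ar[r, "\simeq"] & \underline{y}
\end{tikzcd}
\]
whose top horizontal arrow is therefore an equivalence. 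The pointwise formula then identifies $(\phi'_! {f'}^{\ast} G)|_{\underline{x'}}$ with the $S^{s/}$-colimit of ${f'}^{\ast} G$ restricted along $K'_{\underline{x'}} \xto{\simeq} K_{\underline{y}} \to K \xto{G} C$, while $(f^{\ast} \phi_! G)|_{\underline{x'}}$ identifies, via the same equivalence $\underline{x'} \simeq \underline{y}$, with the $S^{s/}$-colimit of $G$ along $K_{\underline{y}} \to K \xto{G} C$. Tracing through the construction of the exchange transformation shows the comparison is exactly this equivalence of diagrams, hence an equivalence of $S^{s/}$-colimits.

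Statement (2) is the dual, obtained by passing to vertical opposites as in the observation on $S$-colimits—this exchanges $S$-cocartesian with $S$-cartesian fibrations and $S$-colimits with $S$-limits, so (2) follows formally from (the dual of) (1). Statement (3) follows by observing that an $S$-functor between $S$-spaces may, after a cocartesian fibrant replacement over $S$, be taken to be both an $S$-cartesian and an $S$-cocartesian fibration (since categorical fibrations between left fibrations over $S$ have this property), so (1) and (2) simultaneously apply.

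For the ``likewise'' assertion about the $S$-functor $S$-$\infty$-categories, I would argue fiberwise over $S$. Unwinding the definition, the fiber of $\underline{\Fun}_S(K,C) \to S$ over $t \in S$ is $\Fun_{S^{t/}}(K_{\underline{t}}, C_{\underline{t}})$, and restriction along a pulled-back square over $S^{t/}$ is precisely the non-parametrized $S^{t/}$-case already handled. Since the $S$-left (resp. $S$-right) adjoints $\underline{\phi}_!$ and $\underline{\phi'}_!$ exist and are computed fiberwise by the construction recalled in the observation, adjointability of the $S$-functor-$\infty$-category square reduces to adjointability of each such fiberwise square, which is what we have just shown. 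The main subtlety—and the step I expect to be the only non-formal one—is the equivalence $\underline{x'} \simeq \underline{y}$ over $S^{s/}$ and verifying that the exchange transformation is identified with the induced map on $S^{s/}$-colimits; once this is in hand, the rest is bookkeeping.
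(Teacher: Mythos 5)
Your proposal is correct and follows the paper's proof essentially verbatim: evaluate on $S$-points of $L'$, use the pullback hypothesis to identify $\underline{x'} \times_{L'} K' \simeq \underline{f(x')} \times_L K$ as $S^{s/}$-$\infty$-categories so that the exchange map becomes an equivalence of $S^{s/}$-colimits, dualize for (2), replace $\phi$ by a categorical fibration for (3), and check the $\underline{\Fun}_S$ statement fiberwise. The extra detail you supply on the equivalence $\underline{x'} \simeq \underline{y}$ over $S^{s/}$ is just an unpacking of the paper's one-line identification.
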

\begin{proof} Let $F: K \to C$ be an $S$-functor. For (1), we need to check that ${\phi'}_! {f'}^{\ast} F \to f^{\ast} \phi_! F$ is an equivalence of $S$-functors. It suffices to evaluate on $S$-points $\underline{x}$ in $L'$, and we then have the map
\[ \colim^{S^{s/}} (F \circ f')_x \to \colim^{S^{s/}} F_{f(x)} \]
where $x$ lies over $s$. But since $\underline{x} \times_{L'} K' \simeq \underline{f(x)} \times_L K$ as $S^{s/}$-$\infty$-categories, these $S^{s/}$-colimits are equivalent under the comparison map. The proof of (2) is similar. For (3) we replace $\phi$ by a categorical fibration and then use (1) and (2). For the corresponding assertion about $\underline{\Fun}_S(-,C)$, it suffices to check that the natural transformations of interest are equivalences fiberwise, upon which we reduce to the prior assertion for $\Fun_{S^{s/}}(-,C_{\underline{s}})$ (ranging over all $s \in S$).
\end{proof}

\subsection{Ambidexterity of parametrized local systems}\label{SS:Ambi}

In this subsection, we extend Hopkins and Lurie's study of ambidexterity for local systems \cite[\S 4.3]{HL13} to the parametrized setting. The following definition generalizes \cite[Def.~4.3]{HL13}.
 
\begin{dfn} Let $C$ be an $S$-$\infty$-category. The $\infty$-category of \emph{$S$-local systems} on $C$ $$\LocSys^S(C) \to \Spc^S$$ is the cartesian fibration classified by the composite 
 \[  \Fun_S(-,C): (\Spc^S)^{\op} \subset \Cat_{\infty}^{S,\op} \to \Cat_{\infty}. \]
The \emph{$S$-$\infty$-category of $S$-local systems} on $C$ $$\ul{\LocSys}^S(C) \to \Spc^S$$ is the cartesian fibration classified by the composite
\[  \underline{\Fun}_S(-,C): (\Spc^S)^{\op} \subset \Cat_{\infty}^{S,\op} \to \Cat_{\infty}^S \xto{U} \Cat_{\infty}. \]
where $U$ forgets the structure map of a cocartesian fibration.
\end{dfn}

\begin{cor} \label{LocalSystemsIsBCFibration} Suppose that for all $s \in S$, $C_{\underline{s}}$ admits all $S^{s/}$-colimits indexed by $S^{s/}$-spaces. Then $\LocSys^S(C)$ and $\ul{\LocSys}^S(C)$ are Beck-Chevalley fibrations \cite[Def.~4.1.3]{HL13}.
\end{cor}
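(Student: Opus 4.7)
The plan is to verify the three defining conditions of a Beck--Chevalley fibration from \cite[Def.~4.1.3]{HL13} in turn: (i) that we have a cartesian fibration, (ii) that for every morphism $f$ in the base $\Spc^S$ the pullback functor $f^*$ admits a left adjoint $f_!$, and (iii) that for every pullback square in $\Spc^S$ the associated Beck--Chevalley transformation $f'_! {g'}^* \Rightarrow g^* f_!$ is an equivalence. Point (i) is immediate by construction of $\LocSys^S(C)$ and $\underline{\LocSys}^S(C)$ as cartesian fibrations classified by $\Fun_S(-,C)$ and $\underline{\Fun}_S(-,C)$.

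For (ii), let $f: X \to Y$ be a morphism of $S$-spaces. Using the cocartesian model structure on $s\Set^+_{/S}$, we may replace $f$ by a categorical fibration; as remarked in \cref{GeneralTheoryParamColimits}, any such map between left fibrations over $S$ is automatically both an $S$-cocartesian and an $S$-cartesian fibration, so the pointwise criterion for the existence of $f_!$ as an $S$-left Kan extension applies. That criterion demands that $C$ admit the relevant $S$-colimits: for each $S$-point $\underline{y}$ of $Y$ lying over $s \in S$, the pullback $\underline{y} \times_Y X$ is again an $S^{s/}$-space, and so the colimit in question is indexed by an $S^{s/}$-space. Our standing hypothesis that $C_{\underline{s}}$ admits all $S^{s/}$-colimits indexed by $S^{s/}$-spaces therefore guarantees the existence of $f_!$. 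The same argument, applied fiberwise, produces the left adjoint $\underline{f}_!$ in the $\underline{\Fun}_S$-version.

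For (iii), the Beck--Chevalley condition for a pullback square of $S$-spaces is precisely the content of \cref{lem:adjointability}(3), which was proven by reducing (after replacing vertical maps by categorical fibrations) to the pointwise formulas for $\phi_!$ and an identification of slice $S$-$\infty$-categories of the pullback. The ``Likewise'' clause of that lemma supplies the analogous assertion for $\underline{\Fun}_S(-,C)$, thereby handling $\underline{\LocSys}^S(C)$.

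The work is essentially all bundled into \cref{GeneralTheoryParamColimits} and \cref{lem:adjointability}, so the main conceptual point is simply to observe that the hypothesis on $C$ is tailored exactly to guarantee the existence of the $S$-left Kan extensions indexed by the fibers of maps of $S$-spaces. The only step requiring a bit of care is ensuring that the fibered replacement of $f$ may be carried out functorially enough to produce coherent adjoints and Beck--Chevalley squares, but this is handled uniformly by the model-categorical argument underlying \cref{lem:adjointability}(3).
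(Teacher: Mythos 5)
Your proposal is correct and follows essentially the same route as the paper: the paper's proof likewise observes that the hypothesis supplies exactly the $S^{s/}$-colimits needed (so that the left adjoints exist via the pointwise $S$-Kan extension criterion of \cref{GeneralTheoryParamColimits}) and then invokes \cref{lem:adjointability}(3), together with its ``Likewise'' clause, for the Beck--Chevalley condition for both $\LocSys^S(C)$ and $\ul{\LocSys}^S(C)$. You have merely unpacked the definition of a Beck--Chevalley fibration clause by clause, which the paper leaves implicit.
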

\begin{proof} Note that by the same argument as \cite[Rem.~5.14]{Exp2}, the hypothesis ensures that $C$ admits all $S$-colimits indexed by $S$-spaces. The corollary is then immediate from \cref{lem:adjointability}.
\end{proof}

\begin{rem}
The natural transformation
\[ \theta: \underline{\Fun}_S(-,C) \Rightarrow \mathrm{const}_S: (\Spc^S)^{\op} \to \Cat_{\infty}, \]
given objectwise by the structure map $\theta_X: \underline{\Fun}_S(X,C) \to S$, unstraightens to define a cocartesian fibration
$$\ul{\LocSys}^S(C) \to S,$$
so $\ul{\LocSys}^S(C)$ is indeed an $S$-$\infty$-category.
\end{rem}

We now have the general theory of ambidexterity \cite[\S 4.1-2]{HL13} for a Beck-Chevalley fibration, along with the attendant notions of ambidextrous and weakly ambidextrous morphisms \cite[Constr.~4.1.8 and Def.~4.1.11]{HL13} in $\Spc^S$. For the reader's convenience, let us recall the relevance of these notions for constructing norm maps, referring to \cite[\S 4.1]{HL13} for greater detail and precise definitions.

\begin{obs} Suppose $\sE \to \Spc^S$ is a Beck-Chevalley fibration, $f: X \to Y$ is a map of $S$-spaces, the left and right adjoints $f_!$ and $f_{\ast}$ to $f^{\ast}$ exist, and we wish to construct a norm map $\Nm_f: f_! \to f_{\ast}$. Consider the commutative diagram
\[ \begin{tikzcd}[row sep=4ex, column sep=6ex, text height=1.5ex, text depth=0.5ex]
X \ar{rd}{\delta} \ar[bend left=20]{rrd}{=} \ar[bend right=20]{rdd}[swap]{=}  & \\
& X \times_Y X \ar{r}{\pr_2} \ar{d}{\pr_1} & X \ar{d}{f} \\
& X \ar{r}{f} & Y
\end{tikzcd} \]
and suppose we have already constructed a norm map $\Nm_{\delta}: \delta_! \to \delta_{\ast}$ and shown it to be an equivalence. If $\Nm_{\delta}^{-1}: \delta_{\ast} \xto{\simeq} \delta_!$ is a choice of inverse, then we have a natural transformation
\[ \pr_1^{\ast} \xto{\eta_{\delta}} \delta_{\ast} \delta^{\ast} \pr_1^{\ast} \simeq \delta_{\ast} \xto{\Nm_{\delta}^{-1}} \delta_! \simeq \delta_! \delta^{\ast} \pr_2^{\ast} \xto{\epsilon_{\delta}} \pr_2^{\ast}. \]
By adjunction and using the Beck-Chevalley property, we obtain a map
\[ f^{\ast} f_! \simeq (\pr_1)_! \pr_2^{\ast} \to \id. \]
Finally, we may adjoint this map in turn to define
\[ \Nm_f: f_! \to f_{\ast}. \]

Thus, for an inductive construction of norm maps, we may single out a class of `ambidextrous' morphisms for which a norm map has been constructed and shown to be an equivalence, and then define `weakly ambidextrous' morphisms to be those morphisms $f: X \to Y$ whose diagonal $\delta: X \to X \times_Y X$ is ambidextrous.
\end{obs}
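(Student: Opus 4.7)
The plan is to follow the bootstrap prescription in the displayed construction, treating the assumed equivalence $\Nm_\delta \colon \delta_! \xto{\simeq} \delta_*$ as the only substantive input. Since the task reduces to verifying that the indicated composite is well-defined and natural in the base-change data packaged by the Beck--Chevalley fibration $\sE \to \Spc^S$, I would proceed in four steps.

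First, I would record the canonical identifications $\delta^* \pr_i^* \simeq \id_{\sE_X}$ for $i = 1, 2$, which follow from $\pr_i \circ \delta \simeq \id_X$ together with the pseudofunctoriality of $\sE$. Using these, I would splice the unit $\eta_\delta$ of $\delta^* \dashv \delta_*$, the chosen inverse $\Nm_\delta^{-1}$, and the counit $\epsilon_\delta$ of $\delta_! \dashv \delta^*$ to obtain
\[ \mu \colon \pr_1^* \Longrightarrow \pr_2^* \]
as a natural transformation of functors $\sE_X \to \sE_{X \times_Y X}$.

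Second, I would take the mate of $\mu$ along the adjunction $(\pr_1)_! \dashv \pr_1^*$ to produce a natural transformation $(\pr_1)_! \pr_2^* \to \id_{\sE_X}$. Third, I would invoke the Beck--Chevalley property of $\sE$ for the pullback square
\[ \begin{tikzcd}
X \times_Y X \ar{r}{\pr_2} \ar{d}[swap]{\pr_1} & X \ar{d}{f} \\ X \ar{r}[swap]{f} & Y
\end{tikzcd} \]
to identify $(\pr_1)_! \pr_2^* \xto{\simeq} f^* f_!$, yielding a map $f^* f_! \to \id_{\sE_X}$. Finally, adjointing under $f^* \dashv f_*$ gives the sought-after norm map $\Nm_f \colon f_! \to f_*$.

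The chief difficulty I anticipate is coherence rather than content: each intermediate identification (the unit/counit splicings, the mate construction, the Beck--Chevalley exchange) must be assembled as a natural transformation of $\infty$-functors, not merely at the level of homotopy categories, and then shown to be compatible with all choices (e.g.\ of inverse to $\Nm_\delta$) up to contractible ambiguity. The cleanest way to handle this is to perform the entire construction internally to the Beck--Chevalley fibration $\sE \to \Spc^S$, where the requisite coherences are built into the definition, following the template of \cite[\S 4.1]{HL13}.
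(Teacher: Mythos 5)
Your proposal matches the paper's construction exactly: splice the unit of $\delta^* \dashv \delta_*$, the chosen inverse of $\Nm_\delta$, and the counit of $\delta_! \dashv \delta^*$ to get $\pr_1^* \Rightarrow \pr_2^*$, pass to the mate, apply the Beck--Chevalley exchange for the pullback square, and adjoint under $f^* \dashv f_*$, with all coherences delegated to the formalism of \cite[\S 4.1]{HL13} as the paper also does. This is essentially the same argument, so there is nothing further to add.
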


Continuing our study, we henceforth suppose that $C_{\underline{s}}$ also admits all $S^{s/}$-limits indexed by $S^{s/}$-spaces, so that the right adjoints $f_{\ast}$, $\underline{f}_{\ast}$ exist for all maps $f$ of $S$-spaces. Then by \cite[Rem.~4.1.12]{HL13}, for $\LocSys^S(C)$ a map $f: X \to Y$ in $\Spc^S$ is ambidextrous if and only if the norm map $\Nm_{f'}: f'_! \to f'_{\ast}$ is an equivalence for all pullbacks $f': X' \to Y'$ of $f$, and similarly for $\ul{\LocSys}^S(C)$.

To simplify the following discussion, we will phrase all of our statements for $\LocSys^S(C)$. However, such statements have obvious implications for $\ul{\LocSys}^S(C)$ via checking fiberwise.

\begin{lem} \label{lem:AmbidexCheckedFiberwise} \begin{enumerate}[leftmargin=*] \item Let $f: X \to Y$ be a weakly ambidextrous morphism. Then $f$ is ambidextrous if and only if for all $y \in Y$, the norm map $\Nm_{f_y}$ for the pullback $f_y: X_{\underline{y}} \to \underline{y}$ is an equivalence.
\end{enumerate}
\begin{enumerate}
    \setcounter{enumi}{1}
    \item $f: X \to Y$ is weakly ambidextrous if and only if for all $y \in Y$, the pullback $f_y: X_{\underline{y}} \to \underline{y}$ is weakly ambidextrous.
\end{enumerate}
\end{lem}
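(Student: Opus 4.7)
The plan is to prove (1) and (2) simultaneously by induction on the level $n$ of $n$-ambidexterity, following the strategy of Hopkins--Lurie for ordinary local systems. The base case $n = -2$ (corresponding to $f$ being an equivalence) is tautological. For the inductive step, I will invoke two key inputs: the Beck--Chevalley property of $\LocSys^S(C)$ established in \cref{LocalSystemsIsBCFibration}, which implies that norm maps are compatible with arbitrary base change; and the observation that a map in $\Fun_S(Z,C)$ is an equivalence iff its restriction along every $S$-point $\underline{z} \to Z$ is, since the collection of such restriction functors is jointly conservative (this already underlies the pointwise formulas of \cref{GeneralTheoryParamColimits}).

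For part (1), the forward direction is immediate from the definition: if $f$ is ambidextrous then $\Nm_{f'}$ is an equivalence for every pullback $f'$, in particular for each $f_y$. For the converse, suppose each $\Nm_{f_y}$ is an equivalence and let $f': X' \to Y'$ be an arbitrary pullback of $f$. To show $\Nm_{f'}$ is an equivalence of $S$-functors, I restrict along each $S$-point $\underline{y'} \to Y'$. By compatibility of the norm with base change, this restriction is canonically equivalent to $\Nm_{f'_{y'}}$ for the pullback $f'_{y'}: X' \times_{Y'} \underline{y'} \to \underline{y'}$. Since $\underline{y'}$ is corepresentable and hence $S$-contractible, the composite $\underline{y'} \to Y' \to Y$ factors through some $S$-point $\underline{y} \to Y$, so $f'_{y'}$ is itself a base change of $f_y$. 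Consequently $\Nm_{f'_{y'}}$ is a base change of $\Nm_{f_y}$, which is an equivalence by hypothesis, and equivalences of norm maps persist under base change in a Beck--Chevalley fibration.

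For part (2), the key principle is that the formation of diagonals commutes with pullback: the diagonal $\delta_{f_y}$ of $f_y$ is identified with the base change of $\delta_f$ along the projection $X \times_Y X \times_Y \underline{y} \to X \times_Y X$. Moreover, every $S$-point of $X \times_Y X$ lies over an $S$-point of $Y$ via the composite $X \times_Y X \xrightarrow{\pr_1} X \xrightarrow{f} Y$, so the family of base changes $\{\delta_{f_y}\}_{y \in Y}$ captures, in a fiberwise sense, all of $\delta_f$. Applying part (1) to the diagonal $\delta_f$ at one lower level of ambidexterity (available from the inductive hypothesis), we deduce that $\delta_f$ is ambidextrous iff each $\delta_{f_y}$ is, which by definition is precisely the condition that each $f_y$ is weakly ambidextrous. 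This yields both directions of (2).

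The main obstacle will be verifying the precise combinatorics of $S$-points and base changes, especially in part (2) where one must justify that the fiberwise diagonals $\delta_{f_y}$ exhaust the relevant pullbacks of $\delta_f$ needed for its ambidexterity; the decisive point is that the factorization $\underline{y'} \to \underline{y}$ through a corepresentable allows one to reduce arbitrary $S$-pointwise questions to those indexed by points of $Y$ itself. Once this reduction is cleanly executed, the proof becomes a formal consequence of base-change stability of norm maps and fiberwise conservativity.
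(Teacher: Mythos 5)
Your proof is correct and follows essentially the same route as the paper: the converse in (1) comes down to checking the norm on $S$-points via the base-change compatibility of norm maps (\cite[Rem.~4.2.3]{HL13} together with \cref{lem:adjointability}) and the joint conservativity of the evaluation functors $i_y^{\ast}$, using that every $S$-fiber of a pullback of $f$ is an $S$-fiber of $f$, and (2) is obtained by factoring each $S$-point of $X \times_Y X$ through $X_{\underline{y}} \times_{\underline{y}} X_{\underline{y}}$ and applying (1) to the diagonal. Your explicit induction on the ambidexterity level is a harmless, in fact slightly more careful, repackaging (it supplies the weak ambidexterity of $\delta_f$ needed to invoke (1), which the paper leaves tacit); the only caveat is that your blanket claim that ``equivalences of norm maps persist under base change'' is false in general---this is precisely why ambidexterity is defined by quantifying over all pullbacks---but it is innocuous here because the relevant base change is along the equivalence $\underline{y'} \simeq \underline{y}$.
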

\begin{proof} For (1), first note that the maps $f_y$ are weakly ambidextrous by \cite[Prop.~4.1.10(3)]{HL13}, so the statement is well-posed. The `only if' direction holds by definition. For the `if' direction, suppose given a pullback square of $S$-spaces
\[ \begin{tikzcd}[row sep=4ex, column sep=4ex, text height=1.5ex, text depth=0.25ex]
X' \ar{d}[swap]{f'} \ar{r}{\phi'} & X \ar{d}{f} \\
Y' \ar{r}{\phi} & Y.
\end{tikzcd} \]
For any point $y' \in Y'$, if we let $y=\phi(y')$ then we have an equivalence $X'_{\underline{y'}} \simeq X_{\underline{y}} \to \underline{y'} \simeq \underline{y}$. Therefore, without loss of generality it suffices to prove that $\Nm_f: f_! \to f_{\ast}$ is an equivalence. Let $y \in Y$ and denote the inclusion of the $S$-point as $i_y: \underline{y} \to Y$ and the $S$-fiber as $j_y: X_{\underline{y}} \to X$. By \cite[Rem.~4.2.3]{HL13} and \cref{lem:adjointability}, we have an equivalence
\[ i_y^{\ast}  \Nm_f \simeq \Nm_{f_y} j_y^{\ast} : \Fun_S(X,C) \to \Fun_S(\underline{y},C) \simeq C_s \]
(where $y$ covers $s$), which by assumption is an equivalence. Because the evaluation functors $i_y^{\ast}$ are jointly conservative, it follows that $\Nm_f$ is an equivalence.

For (2), we only need to prove the `if' direction. We will show that the diagonal $\delta: X \to X \times_Y X$ is ambidextrous. Let $\delta_y$ denote the diagonal for $f_y$. For all $y$ we have a pullback square
\[ \begin{tikzcd}[row sep=4ex, column sep=6ex, text height=1.5ex, text depth=0.25ex]
 X_{\underline{y}} \ar{r}{j_y} \ar{d}{\delta_y} & X \ar{d}{\delta} \\
 X_{\underline{y}} \times_{\underline{y}} X_{\underline{y}} \ar{r}{(j_y,j_y)} & X \times_Y X.
\end{tikzcd} \]
Given any object $(x,x') \in X \times_Y X$ with $f(x)=f(x')=y$,\footnote{We write an equality here because we are implicitly modeling $f$ as a categorical fibration of left fibrations over $S$.} the inclusion of the $S$-fiber
$$i_{(x,x')}: \underline{(x,x')} \to X \times_Y X$$
factors through $X_{\underline{y}} \times_{\underline{y}} X_{\underline{y}}$. Therefore, if $\delta_y$ is ambidextrous, the norm map for $X_{\underline{(x,x')}} \to \underline{(x,x')}$ is an equivalence. By statement (1) of the lemma, we conclude that $\delta$ is ambidextrous.
\end{proof}

Recall from \cite[Prop.~4.1.10(6)]{HL13} that given a weakly $n$-ambidextrous morphism $f: X \to Y$ and $-2 \leq m \leq n$, $f$ is weakly $m$-ambidextrous if and only if $f$ is $m$-truncated \cite[Def.~5.5.6.1]{HTT}. To identify the $n$-truncated maps in $\Spc^S$, we have the following result.

\begin{lem} \label{lem:TruncationPresheaves} Let $X: S \to \Spc$ be an $S$-space. Then $X$ is $n$-truncated as an object of $\Spc^S$ if and only if for each $s \in S$, $X(s)$ is an $n$-truncated space. Similarly, for a map $f: X \to Y$ of $S$-spaces, $f$ is $n$-truncated if and only if $f(s)$ is an $n$-truncated map of spaces for all $s \in S$.
\end{lem}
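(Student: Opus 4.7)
The plan is to use the standard characterization of $n$-truncated objects and morphisms via mapping spaces, combined with Yoneda and the fact that every $S$-space is a colimit of representables under the identification $\Spc^S \simeq \Fun(S, \Spc)$ given by straightening. Recall that $X \in \sC$ is $n$-truncated iff $\Map_{\sC}(Y, X)$ is an $n$-truncated space for all $Y$, and $f: X \to Y$ in $\sC$ is $n$-truncated iff $\Map_{\sC}(Z, f)$ is an $n$-truncated map of spaces for all $Z$.

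For the object statement, the ``only if'' direction is immediate: for each $s \in S$, evaluating at the representable $h_s \in \Spc^S$ gives $X(s) \simeq \Map_{\Spc^S}(h_s, X)$ by Yoneda, which must be $n$-truncated. For the converse, write any $Y \in \Spc^S$ as a colimit of representables $Y \simeq \colim_i h_{s_i}$ (cf. \cite[\S 5.1.5]{HTT}); then
\[ \Map_{\Spc^S}(Y, X) \simeq \lim_i \Map_{\Spc^S}(h_{s_i}, X) \simeq \lim_i X(s_i), \]
which is a limit of $n$-truncated spaces and hence $n$-truncated. An equivalent framing, which I may prefer for crispness, is to observe that since $\Spc_{\leq n} \hookrightarrow \Spc$ preserves limits and admits the left adjoint $\tau_{\leq n}$, postcomposition identifies $\Fun(S, \Spc)_{\leq n} \simeq \Fun(S, \Spc_{\leq n})$ and computes $\tau_{\leq n}$ pointwise, so $X \to \tau_{\leq n}(X)$ is an equivalence in $\Spc^S$ iff it is pointwise.

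The map statement is handled by the same Yoneda-plus-representables argument: taking $Z = h_s$ shows the ``only if'' direction since $\Map(h_s, f) \simeq f(s)$, while the ``if'' direction follows from writing an arbitrary $Z$ as a colimit of representables and observing that a limit of $n$-truncated maps of spaces is $n$-truncated. Alternatively, one may induct on $n$ using that $f$ is $n$-truncated iff its diagonal $\delta_f: X \to X \times_Y X$ is $(n-1)$-truncated, with base case $n = -2$ (i.e., $f$ an equivalence) detected pointwise in $\Fun(S, \Spc)$; the induction closes because pullbacks in $\Fun(S, \Spc)$ are computed pointwise. There is no substantive obstacle here---this is a routine presheaf-category fact---so the main task is simply choosing the cleanest of the two presentations.
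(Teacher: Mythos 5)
Your proof is correct and takes essentially the same route as the paper: both prove the map statement via the Yoneda embedding to get the ``only if'' direction, and then use that representables generate $\Spc^S$ under colimits together with the fact that limits of $n$-truncated spaces/maps are $n$-truncated to get the converse. The paper cites this as a repetition of \cite[5.5.8.26]{HTT}, and your alternative framings (the localization $\tau_{\leq n}$ computed pointwise, or induction on the diagonal) are fine but not needed.
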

\begin{proof} We repeat the argument of \cite[5.5.8.26]{HTT} for the reader's convenience. It suffices to prove the result for maps. Let $j: S^{\op} \to \Spc^S$ denote the Yoneda embedding. Then if $f$ is $n$-truncated, for any $s \in S$,
\[ f(s) \simeq \Map(j(s),f): \Map(j(s),X) \simeq X(s) \to \Map(j(s),Y) \simeq Y(s) \]
 is $n$-truncated. Conversely, suppose each $f(s)$ is $n$-truncated. The collection of $S$-spaces $Z$ for which $\Map(Z,f)$ is $n$-truncated is stable under colimits, because limits of $n$-truncated spaces and maps are again $n$-truncated. Since the representable functors $j(s)$ generate $\Spc^S$ under colimits, it follows that $f$ itself is $n$-truncated.
\end{proof}

Let us now consider the $n=-1$ case.

\begin{dfn} Let $C$ be an $S$-$\infty$-category. $C$ is \emph{$S$-pointed} if for every $s \in S$, $C_s$ is pointed, and for every $\alpha: s \to t$, the pushforward functor $\alpha_{\sharp}: C_s \to C_{t}$ preserves the zero object. If $S = \sO_G^{\op}$, we also say that $C$ is \emph{$G$-pointed}.
\end{dfn}

\begin{lem} \label{lem:pointedAmbidex} $C$ is $S$-pointed if and only if for every $s \in S$, the weakly $(-1)$-ambidextrous morphism $0_s: \emptyset \to \underline{s}$ is ambidextrous.
\end{lem}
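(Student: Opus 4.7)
The plan is to analyze the norm map of $0_s$ directly via the pointwise $S$-Kan extension formula of \cref{GeneralTheoryParamColimits}, then invoke the fiber criterion of \cref{lem:AmbidexCheckedFiberwise}(1). First, $0_s$ is indeed weakly $(-1)$-ambidextrous because its diagonal $\emptyset \to \emptyset \times_{\underline{s}} \emptyset \simeq \emptyset$ is an equivalence and hence (vacuously) ambidextrous. Identifying $\Fun_S(\emptyset, C) \simeq \ast$ and $\Fun_S(\underline{s}, C) \simeq C_s$ via the parametrized Yoneda lemma, the pointwise formula computes $(0_s)_!(\ast)$ and $(0_s)_{\ast}(\ast)$ as the $S^{s/}$-colimit and $S^{s/}$-limit of the empty diagram in $C_{\underline{s}}$; at the top fiber $C_s$ these correspond to the initial and terminal objects respectively, so that $\Nm_{0_s}$ becomes the canonical map between them.

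Next, I apply \cref{lem:AmbidexCheckedFiberwise}(1) to reduce ambidexterity of $0_s$ to the norm maps for its fiberwise pullbacks $(0_s)_y: \emptyset \to \underline{y}$ over each $y \in \underline{s}$ lying above some $t \in S$. Each such pullback is again of the form $\emptyset \to \underline{t}$, and the same computation identifies its norm map with the canonical initial-to-terminal map of $C_t$. Letting $s$ and $y = \mathrm{id}_s$ range over $S$, ambidexterity of every $0_s$ thus translates directly into the statement that each $C_t$ is pointed.

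The remaining, subtler content of $S$-pointedness is that every pushforward $\alpha_{\sharp}: C_s \to C_t$ must preserve the zero object. For this I will leverage the $S$-functoriality of $(0_s)_!(\ast): \underline{s} \to C$: as the value of an $S$-left adjoint it is an honest $S$-functor and hence carries cocartesian edges to cocartesian edges. Given $\alpha: s \to t$ viewed as an object of $\underline{s}$, the cocartesian edge in $\underline{s}$ from $\mathrm{id}_s$ to $\alpha$ covering $\alpha$ is sent to a cocartesian edge in $C$ from the initial object of $C_s$ to the initial object of $C_t$ covering $\alpha$; since this cocartesian edge realizes $\alpha_{\sharp}$ applied to the initial object of $C_s$, we conclude that $\alpha_{\sharp}$ preserves initial (hence, under fiberwise pointedness, zero) objects. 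The forward implication is immediate: if $C$ is $S$-pointed then each $C_t$ is pointed, so the fiber criterion yields ambidexterity of each $0_s$. The main subtlety I anticipate is precisely in this last step, namely extracting the pushforward-preservation condition from the $S$-functoriality of the adjoint rather than from its fiberwise values alone.
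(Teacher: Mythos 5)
Your proof is correct and follows essentially the same route as the paper's: identify $\Nm_{0_s}$ with the initial-to-terminal map in $C_s$, reduce via the fiberwise criterion \cref{lem:AmbidexCheckedFiberwise}(1) to the pullbacks $0_t$ over $S$-points of $\underline{s}$, and match the resulting condition against the definition of $S$-pointedness. The one place where you go into more detail than the paper is in spelling out why $\alpha_{\sharp}$ preserves initial (hence zero) objects, which you correctly deduce from the fact that $(0_s)_!(\ast)$ is a cocartesian section of $C_{\underline{s}}$ together with the pointwise Kan extension formula; the paper leaves this bridging observation implicit behind the Beck--Chevalley adjointability supplied by the standing hypotheses. (A minor phrasing quibble: $(0_s)_!(\ast)$ is an $S$-functor simply because it is by definition an object of $\Fun_S(\underline{s},C)$, not because of any special property of being ``the value of an $S$-left adjoint.'')
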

\begin{proof} For any $s \in S$, it is easy to see that the norm map $\Nm_{0_s}$ is the canonical map between the initial and final object in $C_s$. Moreover, for any $[\alpha: s \rightarrow t] \in \underline{s}$, the map $\underline{\alpha} \to \underline{s}$ is homotopic to $\alpha^{\ast}: \underline{t} \to \underline{s}$ and the pullback of $0_s$ along $\alpha^{\ast}$ is $0_t$. Thus $0_s$ is ambidextrous if and only if $C_s$ admits a zero object and for all $\alpha: s \to t$, the pushforward functor $\alpha_{\sharp}: C_s \to C_t$ preserves the zero object. The conclusion then follows.
\end{proof}

\begin{wrn} \label{wrn:ambidexCounterexample} In contrast to the non-parametrized case \cite[Prop.~6.1.6.7]{HA}, if $C$ is $S$-pointed then we may have weakly $(-1)$-ambidextrous morphisms that fail to be ambidextrous. For example, let $S = \sO_{C_2}^{\op}$ and let $p: J \to \sO_{C_2}^{\op}$ be the $C_2$-functor given by the inclusion of the full subcategory on the free transitive $C_2$-sets (so $J \simeq BC_2$). Then $J$ is a $C_2$-space via $p$, and for any $C_2$-$\infty$-category $C$, we have that $\Fun_{C_2}(J, C) \simeq (C_{C_2/1})^{h C_2}$ for the $C_2$-action on the fiber $C_{C_2/1}$ encoded by the cocartesian fibration. On the one hand, the $C_2$-diagonal functor $\delta: J \to J \times_{\sO_{C_2}^{\op}} J$ is an equivalence, so $J$ is a $(-1)$-truncated $C_2$-space. On the other hand, for $C = \underline{\Sp}^{C_2}$, the restriction $p^{\ast}$ may be identified with
\[ j^{\ast}: \Sp^{C_2} \simeq \Fun_{C_2}(\sO_{C_2}^{\op}, \underline{\Sp}^{C_2}) \to \Fun_{C_2}(J,\underline{\Sp}^{C_2}) \simeq \Fun(BC_2, \Sp) \]
which we saw has left and right adjoints $j_!$ and $j_{\ast}$ such that the norm map $j_! \to j_{\ast}$ is \emph{not} an equivalence.
\end{wrn}

We next consider the $n=0$ case. Let $T = S^{\op}$. Recall from \cite[Def.~4.1]{Exp4} that an $\infty$-category $T$ is said to be \emph{atomic orbital} if its finite coproduct completion $\FF_T$ admits pullbacks and $T$ has no non-trivial retracts (i.e., every retract is an equivalence). For example, $\sO_G$ is atomic orbital. We now assume that $T$ is atomic orbital, and we regard $V \in T$ as `orbits' and $U \in \FF_T$ as `finite $T$-sets'. Recall the notation $\underline{U}$ from \cref{exm:corepresentableDiagrams}.


\begin{lem} \label{lem:ComparisonCoproductProduct} Suppose $C$ is $S$-pointed. Then for any finite $T$-set $U$, orbit $V$ and morphism in $\Spc^S$
\[ f: \underline{U} \to \underline{V} \]
(necessarily specified by a morphism $f: U \to V$ in $\FF_T$), the diagonal $\delta: \underline{U} \to \underline{U} \times_{\underline{V}} \underline{U}$ is ambidextrous. Consequently, if $g: X \to Y$ is a morphism between finite coproducts of representables, then $g$ is weakly $0$-ambidextrous.
\end{lem}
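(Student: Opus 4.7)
The plan is to analyze $\delta$ as a summand inclusion of $S$-spaces and then verify ambidexterity fiberwise. I would first compute the pullback $U \times_V U$ in $\FF_T$ explicitly using the atomic orbital hypothesis. Decomposing $U = \coprod_i U_i$ into orbits, the diagonal restricts to $\delta_i: U_i \to U_i \times_V U_i$, which is split by the first projection. Since $U_i$ is an orbit, $\delta_i$ factors through a single orbit component $Z_{i,k_0}$ of $U_i \times_V U_i$, exhibiting $U_i$ as a retract of $Z_{i,k_0}$ in $T$; atomicity forces this retract to be an equivalence. Hence $U_i \times_V U_i \cong U_i \sqcup W_i$ in $\FF_T$ with $\delta_i$ the first summand inclusion, and assembling these yields a decomposition $U \times_V U \cong U \sqcup W$. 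Passing through the (opposite) Yoneda embedding $T \to \Spc^S$ (which preserves pullbacks and coproducts), I obtain $\underline{U} \times_{\underline{V}} \underline{U} \simeq \underline{U} \sqcup \underline{W}$ with $\delta$ the inclusion of the first summand.

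Being a summand inclusion, $\delta$ is a monomorphism of $S$-spaces, so its own diagonal is an equivalence; thus $\delta$ is weakly $(-1)$-ambidextrous. By \cref{lem:AmbidexCheckedFiberwise}(1), it suffices to show that for every point $y$ of $\underline{U} \times_{\underline{V}} \underline{U}$ lying over $s \in S$, the pullback $\delta_y$ is ambidextrous. If $y$ sits in the summand $\underline{U}$, then $\delta_y$ is an equivalence and the norm map is the identity. If $y$ sits in $\underline{W}$, then the fiber is empty and $\delta_y$ is the canonical map $0_s: \emptyset \to \underline{s} \simeq \underline{y}$, which is ambidextrous by \cref{lem:pointedAmbidex} and the $S$-pointedness of $C$.

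For the consequently clause, given $g: X \to Y$ between finite coproducts of corepresentables, I would write $X = \coprod_i \underline{U_i}$ and $Y = \coprod_j \underline{V_j}$ with $U_i, V_j$ orbits, so that $g$ is determined by orbit-level maps $g_i: \underline{U_i} \to \underline{V_{j(i)}}$. Computing
\[ X \times_Y X \simeq \coprod_{i,i'\,:\,j(i)=j(i')} \underline{U_i} \times_{\underline{V_{j(i)}}} \underline{U_{i'}}, \]
the diagonal $\delta_g$ hits only the $i=i'$ summands, and on these restricts to $\delta_{g_i}$; combining with the Step~1 decomposition realizes $\delta_g$ as a summand inclusion. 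The same fiberwise argument as above (equivalence on the diagonal, $0_s$ off of it) then shows $\delta_g$ is ambidextrous, so $g$ is weakly $0$-ambidextrous.

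The main obstacle is Step~1: showing that atomicity of $T$ forces the diagonal in $\FF_T$ to split off as a clopen summand of the pullback. Once this splitting is established, the rest is a routine reduction to the $S$-pointed hypothesis via the two cited lemmas, with no further input from $C$ required.
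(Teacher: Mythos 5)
Your argument is correct and follows essentially the same route as the paper: atomicity forces the diagonal to be an equivalence onto a summand of the (representably decomposed) pullback, and then ambidexterity is checked on $S$-fibers, which are either equivalences or maps $\emptyset \to \underline{s}$ handled by \cref{lem:pointedAmbidex} via \cref{lem:AmbidexCheckedFiberwise}. Your treatment of the ``consequently'' clause just makes explicit the coproduct decomposition that the paper leaves to the same fiberwise lemma, so no substantive difference.
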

\begin{proof} By our assumption on $T$, $\underline{U} \times_{\underline{V}} \underline{U}$ decomposes as a finite disjoint union of representables $\coprod_{i \in I} \underline{V_i}$. Moreover, because $T$ admits no non-trivial retracts, for some $J \subset I$ we have that $\underline{U} \simeq \coprod_{j \in J} \underline{V_j}$ with matching orbits, and $\delta$ is a summand inclusion $\underline{U} \to (\coprod_{j \in J} \underline{V_j}) \sqcup (\coprod_{i \in I-J } \underline{V_i})$. $\delta$ is then ambidextrous by \cref{lem:AmbidexCheckedFiberwise} and \cref{lem:pointedAmbidex}. The final consequence also follows by \cref{lem:AmbidexCheckedFiberwise}.
\end{proof}

By \cref{lem:ComparisonCoproductProduct}, the following definition is well-posed.

\begin{dfn} \label{dfn:semiadditive} Let $C$ be $S$-pointed. We say that $C$ is \emph{$S$-semiadditive} if for each morphism $f: U \to V$ in $\FF_T$, the norm map $\Nm_f$ for $f: \underline{U} \to \underline{V}$ is an equivalence. If $S = \sO_G^{\op}$, we will instead say that $C$ is \emph{$G$-semiadditive}.
\end{dfn}

Equivalently, in \cref{dfn:semiadditive} we could demand only that the norm maps for $f: U \to V$ with $V$ an orbit are equivalences.

\begin{rem} Unwinding the definition of the norm maps produced via our setup and in \cite[Constr.~5.2]{Exp4}, one sees that \cref{dfn:semiadditive} is the same as the notion of $T$-semiadditive given in \cite[Def.~5.3]{Exp4}. In particular, for $T = \sO_G^{\op}$, $\underline{\Sp}^G$ is an example of a $G$-semiadditive $G$-$\infty$-category. This amounts to the familiar fact that for each orbit $G/H$, $\Sp^H$ is semiadditive, and for each map of orbits $f: G/H \to G/K$, the left and right adjoints to the restriction functor $f^{\ast}: \Sp^K \to \Sp^H$ given by induction and coinduction are canonically equivalent.
\end{rem}

In the remainder of this subsection, we further specialize to the case $S = \sO_G^{\op}$ for $G$ a finite group. We have already encountered a potential problem in \cref{wrn:ambidexCounterexample} with developing a useful theory of $G$-ambidexterity. The issue is essentially due to the presence of fiberwise discrete $G$-spaces that do not arise from $G$-sets. To remedy this, we will restrict our attention to the Borel subclass of $G$-spaces.

\begin{dfn} \label{dfn:BorelGSpace} Suppose that $X \to \sO_G^{\op}$ is a $G$-space. Then $X$ is \emph{Borel} if the functor $\sO^{\op}_G \to \Spc$ classifying $X$ is a right Kan extension along the inclusion of the full subcategory $BG \subset \sO_G^{\op}$.
\end{dfn}

\begin{rem} \cref{dfn:BorelGSpace} is equivalent to the following condition on a $G$-space $X$: if we let $$X_H \coloneq \sO^{\op}_H \times_{(\ind^G_H)^{\op},\sO^{\op}_G} X$$ denote the restriction of $X$ to an $H$-space, then for every subgroup $H \leq G$, the natural map
\[ X_{G/H} \simeq \Map^{\cocart}_{/\sO^{\op}_H}(\sO^{\op}_H,X_H) \to \Map_{/BH}(BH,BH \times_{\sO^{\op}_H} X_H) \simeq (X_{G/1})^{h H} \]
is an equivalence.
\end{rem}

\begin{rem} Limits and coproducts of Borel $G$-spaces are Borel. Moreover, for every $G$-set $U$, the $G$-space $\underline{U}$ is Borel. Indeed, this amounts to the observation that $\Hom_G(G/H,U) \cong U^H$ for all subgroups $H \leq G$. In particular, since representables are Borel, the Borel property is stable under passage to $G$-fibers.
\end{rem}

\begin{dfn} Let $f: X \to Y$ be a map of Borel $G$-spaces and let $f_0: X_0 \to Y_0$ denote the underlying map of spaces. Then by \cref{lem:TruncationPresheaves}, $f$ is $n$-truncated if and only if $f_0$ is $n$-truncated. Furthermore, because every $G$-orbit is a finite set, the following two conditions are equivalent:
\begin{enumerate}
    \item For every $y \in Y_0$, the homotopy fiber $(X_0)_y$ is a finite $n$-type \cite[Def.~4.4.1]{HL13}.
    \item For every $y \in Y$, the underlying space of the homotopy $G$-fiber $X_{\underline{y}}$ is a finite $n$-type.
\end{enumerate}
In this case, we say that $f$ is \emph{$\pi$-finite $n$-truncated}. Note that if $f$ is $\pi$-finite $n$-truncated, then its diagonal is $\pi$-finite $(n-1)$-truncated; indeed, this property can be checked for the underlying spaces.

More generally, if $f:X \to Y$ is a map of $G$-spaces such that for every $y \in Y$, $X_{\ul{y}}$ is Borel, then we say that $f$ is \emph{($\pi$-finite) $n$-truncated} if the above conditions hold for all $y$ and $X_{\ul{y}}$.
\end{dfn}

\begin{lem} \label{lem:BorelAmbidex} Let $f: X \to Y$ be a map of $G$-spaces such that for all $y \in Y$, $X_{\ul{y}}$ is Borel.
\begin{enumerate}
\item Suppose that $C$ is $G$-pointed.
    \begin{enumerate}
    \item If $f$ is $(-1)$-truncated, then $f$ is $(-1)$-ambidextrous.
    \item If $f$ is $0$-truncated, then $f$ is weakly $0$-ambidextrous.
    \end{enumerate}
\item Suppose in addition that $C$ is $G$-semiadditive.
    \begin{enumerate}
    \item If $f$ is $\pi$-finite $0$-truncated, then $f$ is $0$-ambidextrous.
    \item If $f$ is $\pi$-finite $1$-truncated, then $f$ is weakly $1$-ambidextrous.
    \end{enumerate}
\end{enumerate}
\end{lem}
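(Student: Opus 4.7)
The four assertions will be proved in the order (1)(a), (1)(b), (2)(a), (2)(b), each building on the previous via \cref{lem:AmbidexCheckedFiberwise} and the observation that the Borel hypothesis on the $G$-fibers of $f$ propagates to the $G$-fibers of the diagonal of $f$, since pullbacks and fibers of Borel $G$-spaces are again Borel.

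For (1)(a), if $f$ is $(-1)$-truncated then by \cref{lem:AmbidexCheckedFiberwise}(1) it suffices to show $f_y: X_{\ul{y}} \to \ul{y}$ is ambidextrous for each $y \in Y$. Since $X_{\ul{y}}$ is Borel with $(-1)$-truncated underlying space (by \cref{lem:TruncationPresheaves}), either $X_{\ul{y}} \simeq \ul{y}$, in which case $f_y$ is an equivalence and trivially ambidextrous, or $X_{\ul{y}} \simeq \emptyset$, in which case $f_y$ is the map $0_s: \emptyset \to \ul{s}$ (with $y$ covering $s$) whose ambidexterity is equivalent to $C_{\ul{s}}$ being $S^{s/}$-pointed by \cref{lem:pointedAmbidex}; this holds because $G$-pointedness of $C$ passes to $S^{s/}$-pointedness of $C_{\ul{s}}$. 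For (1)(b), a $0$-truncated map $f$ has $(-1)$-truncated diagonal $\delta$, and by the paragraph before \cref{lem:BorelAmbidex} the $G$-fibers of $\delta$ remain Borel. Then (1)(a) applied to $\delta$ shows $\delta$ is ambidextrous, i.e., $f$ is weakly $0$-ambidextrous.

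For (2)(a), let $f: X \to Y$ be $\pi$-finite $0$-truncated. Using \cref{lem:AmbidexCheckedFiberwise}(1) and noting that (1)(b) already furnishes weak $0$-ambidexterity of each $f_y$, it suffices to show the norm map for each $f_y: X_{\ul{y}} \to \ul{y}$ is an equivalence. Here $X_{\ul{y}}$ is a Borel $S^{s/}$-space whose underlying space is a finite $0$-type. The key point is that a Borel $H$-space with finite discrete underlying space is precisely $\ul{U}$ for a finite $H$-set $U$; this follows because on the Borel locus a $G$-space is determined by its underlying space with $G$-action, and a finite discrete $G$-space is a $G$-set. Thus $f_y$ is identified with a map $\ul{U} \to \ul{\{\ast\}}$ of representables in $\Spc^{S^{s/}}$, and its norm map is an equivalence by \cref{dfn:semiadditive} applied to $C_{\ul{s}}$, which inherits $S^{s/}$-semiadditivity from $G$-semiadditivity of $C$. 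Finally, for (2)(b), a $\pi$-finite $1$-truncated map $f$ has $\pi$-finite $0$-truncated diagonal $\delta$ (since the fiber of $\delta$ at $(x,x')$ is a loop space at a point of a finite $1$-type, hence a finite $0$-type), and the Borel hypothesis propagates to the $G$-fibers of $\delta$. Then (2)(a) applied to $\delta$ shows $\delta$ is $0$-ambidextrous, so $f$ is weakly $1$-ambidextrous.

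The main obstacle I anticipate is verifying the identification in (2)(a) of a Borel $H$-space with finite discrete underlying space as $\ul{U}$ for a finite $H$-set $U$, and then checking that the norm map supplied by the general formalism agrees with the one built into \cref{dfn:semiadditive}. Once this compatibility is established, the rest of the argument is a clean two-step induction that leverages how truncatedness and $\pi$-finiteness of a map descend to its diagonal.
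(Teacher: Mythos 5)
Your proof is correct and follows essentially the same route as the paper's: reduce fiberwise via \cref{lem:AmbidexCheckedFiberwise}, observe that a Borel fiber with $(-1)$-truncated (resp.\ $\pi$-finite $0$-truncated) underlying space is empty or terminal (resp.\ a finite $G$-set), conclude via \cref{lem:pointedAmbidex} and \cref{dfn:semiadditive}, and pass to the diagonal for the two ``weakly ambidextrous'' statements. The obstacle you flag at the end is a non-issue here, since the paper's notion of $G$-semiadditivity in \cref{dfn:semiadditive} is already defined as the invertibility of the very norm maps produced by the ambidexterity formalism applied to $\LocSys^G(C)$, so no separate compatibility check is needed.
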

\begin{proof} By \cref{lem:AmbidexCheckedFiberwise} and under our hypothesis on the parametrized fibers, we may suppose in the proof that $Y = \underline{G/H}$ and $X$ is Borel. For (1), if $f$ is $(-1)$-truncated then the underlying space of $X$ is a discrete set that injects into $G/H$. But if $X$ is non-empty then $f$ must also be a surjective map of $G$-sets since the $G$-action on $G/H$ is transitive. Thus either $X = \emptyset$ or $f$ is an equivalence, so by \cref{lem:pointedAmbidex}, $f$ is $(-1)$-ambidextrous. If $f$ is $0$-truncated, then the $(-1)$-truncated diagonal $X \to X \times_Y X$ is $(-1)$-ambidextrous as just shown, so $f$ is weakly $0$-ambidextrous.

For (2), we employ the same strategy. If $f$ is $\pi$-finite $0$-truncated, then $X$ is necessarily a finite $G$-set, so $f$ is $0$-ambidextrous by hypothesis. If $f$ is $\pi$-finite $1$-truncated, then the diagonal $X \to X \times_Y X$ is $\pi$-finite $0$-truncated and hence $0$-ambidextrous, so $f$ is weakly $1$-ambidextrous.
\end{proof}

To apply the parametrized ambidexterity theory to our situation of interest, we need the following lemma.

\begin{lem} \label{lem:BorelClassifyingSpace} The $G/N$-space $B^{\psi}_{G/N} N$ of \cref{dfn:TwistedClassifyingSpace} is Borel.
\end{lem}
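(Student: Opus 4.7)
My plan is to verify the pointwise Borel criterion recorded in the remark following \cref{dfn:BorelGSpace}: letting $F : \sO_{G/N}^{\op} \to \Spc$ be the classifying functor of the left fibration $\rho_N$, I must show that for every subgroup $N \leq M \leq G$,
\[ F((G/N)/(M/N)) \simeq F((G/N)/1)^{h(M/N)}, \]
where the $M/N$-action is obtained by restricting the $G/N$-action on $F((G/N)/1)$ induced by cocartesian transport in $\rho_N$.

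I would first identify the fiber over $(G/N)/1$ as $BN$. An $N$-free orbit $G/H$ lies over $(G/N)/1$ precisely when $HN = N$, which combined with $H \cap N = 1$ forces $H = 1$; the unique resulting object $G/1$ has automorphism group in the fiber equal to $\ker(G \twoheadrightarrow G/N) = N$. Cocartesian transport along an automorphism $\overline{g}$ of $(G/N)/1$ is computed by lifting $\overline{g}$ to any $g \in G$ and conjugating on $N$, so the resulting $G/N$-action on $BN$ coincides with the standard action encoded by $\psi$, namely the straightening of the Kan fibration $BG \to B(G/N)$.

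Next, I would identify the fiber over $(G/N)/(M/N)$. Its objects are $N$-free orbits $G/H$ with $HN = M$, equivalently complements $H \leq M$ of $N$ in $M$. A direct computation of the $\Hom$-sets of $\sO_G^{\op}$ shows that the morphisms lying over $\mathrm{id}_{G/M}$ are precisely those coming from elements of $M$ conjugating one complement into another, giving Weyl automorphism groups $N_M(H)/H = N^H$ at each object (for $M = N \rtimes H$). Hence the fiber is naturally the groupoid parameterizing splittings of the restricted extension $\psi|_M : 1 \to N \to M \to M/N \to 1$, taken up to $N$-conjugacy.

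The last step would be to invoke the classical identification of this groupoid of splittings with
\[ \Map_{/B(M/N)}(B(M/N), BM) \simeq (BN)^{h(M/N)}, \]
where the homotopy fixed points use the action from the first step; when $\psi|_M$ is non-split, both sides are empty, matching the fiber computation (e.g., for the extension $C_2 \to C_4 \to C_2$). Naturality in $M$ is immediate from the construction, so the Borel condition holds globally. The main obstacle will be this final identification, which requires a careful match-up between the conjugation morphisms of $\rho_N$ and the natural transformations parametrizing sections of $BM \to B(M/N)$, especially in the non-split case where the twisted $G/N$-action on $BN$ must be tracked at the level of higher coherences rather than just underlying automorphisms.
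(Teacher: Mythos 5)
Your plan follows the same approach as the paper: verify the pointwise Borel criterion by identifying the underlying fiber as $BN$ with the twisted $G/N$-action, identifying the fiber over $(G/N)/(M/N)$ as the groupoid of complements of $N$ in $M$ with $M$-conjugation morphisms, and matching it against the groupoid of splittings of $\psi|_M$ computing $(BN)^{h(M/N)}$. The paper streamlines this slightly by first noting that restriction to $\sO_{K/N}^{\op}$ yields $B^{\psi'}_{K/N}N$ for the restricted extension, so it suffices to verify the criterion at the top orbit; your fiberwise calculation over each $(G/N)/(M/N)$ amounts to the same thing.

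The step you flag as "the main obstacle" — checking that the \emph{natural} comparison map, rather than some abstract bijection of objects and automorphism groups, implements the equivalence — is indeed exactly where the paper spends all its effort: it writes down an explicit inverse to the evaluation map via the cocartesian section $s_U = (-\times U)$, computes $\chi(s_U)$ concretely, and verifies full faithfulness by hand via the coset identifications $G/1 \cong G/N \times G/H$ and $N_G(H) \cap N \cong W_G(H)$, together with the observation that $N \cap C_G(H) = N \cap N_G(H)$ (using $N \cap H = 1$). You are right that this must be traced through; without it the argument only shows the two groupoids are abstractly equivalent, which is not what the Borel criterion asserts. However, your worry about "higher coherences" is overcautious: $\rho_N$ is a left fibration between $1$-categories, so every object in sight — the fiber, $BN$, the action groupoid, the section space $\Map_{/B(M/N)}(B(M/N),BM)$ — is a finite $1$-groupoid, and there are no higher coherences beyond the $1$-categorical functoriality that the coset computation handles directly. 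Filling in that computation would complete your proof in the same way the paper does.
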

\begin{proof} For any subgroup $K/N$ of $G/N$, $(B^{\psi}_{G/N} N)_{K/N} \simeq (B^{\psi'}_{K/N} N)$ for $\psi' = [N \to K \to K/N]$. Therefore, without loss of generality it suffices to prove that the map of groupoids
\[ \chi: \Map^{\cocart}_{/\sO_{G/N}^{\op}}(\sO_{G/N}^{\op}, B^{\psi}_{G/N} N) \to \Map_{/B(G/N)}(B(G/N), (B^{\psi}_{G/N} N) \times_{\sO_{G/N}^{\op}} B(G/N) ) \]
is an equivalence. The fiber $E$ of $B^{\psi}_{G/N} N$ over the terminal $G/N$-set $\ast$ is spanned by those $N$-free $G$-orbits $U$ such that $U/N \cong \ast$, and an explicit inverse to the evaluation map
\[ \Map^{\cocart}_{/\sO_{G/N}^{\op}}(\sO_{G/N}^{\op}, B^{\psi}_{G/N} N) \xto{\simeq} E \]
is given by sending $U$ to the cocartesian section $s_U = (- \times U): \sO_{G/N}^{\op} \to B^{\psi}_{G/N} N$ that sends $V$ to $V \times U$: this follows from our identification of the cocartesian edges in \cref{lem:QuotientMapCartesianFibration}. Then
\[ \chi(s_U): B(G/N) \to (B^{\psi}_{G/N} N) \times_{\sO_{G/N}^{\op}} B(G/N) \] is the section which sends $G/N$ to the free transitive $G$-set $G/N \times U$. Let us now select a basepoint to identify $U \cong G/H$ for $H$ a subgroup such that $H \cap N = 1$ and $G = N H$. We have $W_G H \cong \Aut_G(G/H)$, where a coset $\overline{a} \in W_G H$ gives an automorphism $\theta_{\overline{a} }$ of $G/H$ that sends $1H$ to the well-defined coset $aH$, and under $\chi$ this is sent to the automorphism $\id \times \theta_{\overline{a} }$ of the section $\chi(s_{G/H})$.

By elementary group theory, each coset in $G/N$ has a unique representative $x N$ with $x \in H$, and each coset in $G/H$ has a unique representative $y H$ with $y \in N$. Moreover, the inclusion $N_G(H) \cap N \to N_G(H)$ yields an isomorphism $N_G(H) \cap N  \cong W_G(H)$; the map is an injection because $N \cap H = 1$ and a surjection because $G = NH$. The two surjections $G/1 \to G/N$ and $G/1 \to G/H$ sending $1G$ to $1N$ and $1H$ define an isomorphism $G/1 \xto{\cong} G/N \times G/H$ for which an explicit inverse sends $(xN,yH)$ to $x \cdot y$. Under this isomorphism, $\id \times \theta_{\overline{a} }$ is sent to the unique element $a \in N_G(H) \cap N$ that is a representative for $\overline{a}$.

On the other hand, $(B^{\psi}_{G/N} N) \times_{\sO_{G/N}^{\op}} B(G/N) \simeq BG$ where we select $G/1$ to be the unique object of $BG$. We compute the groupoid of maps $\Map_{/B(G/N)}(B(G/N), BG)$ to have objects given by splittings $\tau: G/N \to G$ of the surjection $\pi: G \to G/N$ and morphisms $\tau \to \tau'$ given by $n \in N$ such that for every coset $bN$, $n \tau(b N) n^{-1} = \tau'(b N)$. In particular, $\Aut(\tau) = N \cap C_G(H)$ for $H = \tau(G/N)$. However, if $n h n^{-1} = h' \in H$, then $\pi(n h n^{-1} h^{-1}) = 1$ shows that $n h n^{-1} h^{-1} \in N \cap H =1$, so in fact $n h = h n$ and thus $\Aut(\tau) = N \cap N_G(H)$. Combining this with the explicit understanding of the comparison map given above, we deduce that $\chi$ is fully faithful. Essential surjectivity is also clear by the bijection between splittings of $\pi$ and subgroups $H$ with $N \cap H = 1$ and $G = N H$. We conclude that $\chi$ is an equivalence.
\end{proof}

\subsection{The parametrized Tate construction}\label{SS:PTate}

In view of \cref{lem:BorelClassifyingSpace}, we may define the parametrized Tate construction (\cref{dfn:ParamTateCnstr}) by applying the parametrized ambidexterity theory to the Beck-Chevalley fibration $$\LocSys^{G/N}(\underline{\Sp}^{G/N}) \to \Spc^{G/N}.$$ Let $\rho_N: B^{\psi}_{G/N} N \to \sO_{G/N}^{\op}$ be the structure map as in \cref{lm:TwistedClassifyingSpaceIsSpace}, and let
 \[ {\rho_N}^{\ast}: \Sp^{G/N} \simeq \Fun_{G/N}(\sO_{G/N}^{\op},\underline{\Sp}^{G/N}) \to  \Fun_{G/N}(B^{\psi}_{G/N} N, \underline{\Sp}^{G/N}) \]
be the functor given by restriction along $\rho_N$. We first introduce alternative notation for its left and right adjoints $(\rho_N)_!$ and $(\rho_N)_{\ast}$.


\begin{ntn} Given $X \in \Fun_{G/N}(B^{\psi}_{G/N} N, \underline{\Sp}^{G/N})$, we write
\begin{align*} X_{h[\psi]} \coloneq (\rho_N)_! (X) \text{ and } X^{h[\psi]} \coloneq (\rho_N)_{\ast} X
\end{align*}
for the parametrized homotopy orbits and fixed points functors, respectively.
\end{ntn}

\begin{dfn} \label{dfn:ParamTateCnstr} The $G/N$-functor $\rho_N$ has as its underlying map of spaces $B N \to \ast$, which is $\pi$-finite $1$-truncated. By \cref{lem:BorelAmbidex}, $\rho_N$ is weakly $1$-ambidextrous, so we can construct the norm map $\Nm_{\rho_N}: (\rho_N)_! \to (\rho_N)_{\ast}$. Let
\[ (-)^{t[\psi]}: \Fun_{G/N}(B^{\psi}_{G/N} N, \underline{\Sp}^{G/N}) \to \Sp^{G/N} \]
denote the cofiber of $\Nm_{\rho_N}$.
\end{dfn}

\begin{ntn} When the group extension $\psi$ is clear from context, we will also write
\[ X_{h_{G/N} N} = X_{h[\psi]} \:, \quad X^{h_{G/N} N} = X^{h[\psi]} \:, \quad X^{t_{G/N} N} = X^{t[\psi]}. \]
\end{ntn}

\begin{obs}[The norm vanishes on induced objects] \label{NormVanishesOnInduced} For $H \in \Gamma_N$ (so that $H \cap N = 1$), let $U = \rho_N(G/H) \cong \frac{G/N}{H N/N}$ be the $G/N$-orbit and $s_H: \underline{U} \to B^{\psi}_{G/N} N$ be the unique $G/N$-functor that selects $G/H$, so that the functor $s_H^{\ast}$ of \cref{evaluationFactorizationNaiveSpectra} is obtained by restriction along $s_H$. Note that the map of Borel $G/N$-spaces $s_H$ is $\pi$-finite $0$-truncated because its underlying map of spaces is $U \to B N$ with $U$ a finite discrete set. By \cref{lem:BorelAmbidex}, we see that $\Nm_{s_H}: {s_H}_! \xto{\simeq} (s_H)_{\ast}$.

Now consider the composite map $p_U = \rho_N \circ (s_H)$, which is also $\pi$-finite $0$-truncated. On the one hand, the associated norm map $\Nm_{p_U}$ is an equivalence (explicitly, between induction and coinduction from $\Sp^H$ to $\Sp^{G/N}$ for $H \cong H N/N$ viewed as a subgroup of $G/N$). On the other hand, by \cite[Rem.~4.2.4]{HL13}, we have that $\Nm_{p_U}$ is homotopic to the composite $((\rho_N)_{\ast} \Nm_{s_H}) \circ (\Nm_{\rho_N} (s_H)_!)$. We deduce that $\Nm_{\rho_N}$ is an equivalence on the image of $(s_H)_!$. This extends the observation that the ordinary norm map $X_{h G} \to X^{h G}$ is an equivalence on objects induced from $\Sp$ to $\Fun(B G, \Sp)$.
\end{obs}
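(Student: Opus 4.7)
The goal is to show that $\Nm_{\rho_N}$ becomes an equivalence after precomposition with $(s_H)_!$ for each $H \in \Gamma_N$. The plan is to use the compositionality of norm maps under factorizations, applied to the two-step factorization $\underline{U} \xrightarrow{s_H} B^{\psi}_{G/N} N \xrightarrow{\rho_N} \underline{\ast}$ in $\Spc^{G/N}$.

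The first step is to place both $s_H$ and the composite $p_U := \rho_N \circ s_H$ within the range of applicability of the parametrized ambidexterity theory of \cref{SS:Ambi}. The composite $p_U: \underline{U} \to \underline{\ast}$ is a map of representable $G/N$-spaces, hence $0$-ambidextrous by the $G/N$-semiadditivity of $\underline{\Sp}^{G/N}$ (\cref{dfn:semiadditive}); concretely $\Nm_{p_U}$ is the standard ambidexterity equivalence between induction and coinduction along the subgroup $HN/N \leq G/N$. For $s_H$ itself, the underlying map of spaces is the classifying map $U \to BN$ of the $N$-torsor $G/H \to U$, which is $\pi$-finite $0$-truncated since $U$ is a finite set and $BN$ is a connected $1$-type. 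Combined with \cref{lem:BorelClassifyingSpace}, which ensures $B^{\psi}_{G/N} N$ (and hence every parametrized fiber of $s_H$) is Borel, \cref{lem:BorelAmbidex}(2)(a) then implies that $s_H$ is $0$-ambidextrous, so $\Nm_{s_H}: (s_H)_! \xrightarrow{\simeq} (s_H)_{\ast}$.

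The second step invokes the compositionality of parametrized norm maps from \cite[Rem.~4.2.4]{HL13}, which gives
\[ \Nm_{p_U} \simeq \bigl((\rho_N)_{\ast}\, \Nm_{s_H}\bigr) \circ \bigl(\Nm_{\rho_N}\, (s_H)_!\bigr). \]
Since $\Nm_{p_U}$ is an equivalence and $(\rho_N)_{\ast}\,\Nm_{s_H}$ is an equivalence (functors preserve equivalences), two-out-of-three forces $\Nm_{\rho_N}\,(s_H)_!$ to be an equivalence, which is exactly the claim. This is the parametrized refinement of the classical fact that $X_{hG} \to X^{hG}$ is an equivalence on objects induced up from $\Sp$ to $\Fun(BG,\Sp)$.

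The main obstacle is not the final two-out-of-three argument but rather verifying the parametrized $\pi$-finiteness and Borel-fiberwise hypotheses needed to invoke \cref{lem:BorelAmbidex}, which rests on the somewhat delicate identification of $B^{\psi}_{G/N} N$ as a Borel $G/N$-space carried out in \cref{lem:BorelClassifyingSpace}; once that is in hand, the remaining steps are formal consequences of the general ambidexterity framework.
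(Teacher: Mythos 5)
Your proposal is correct and follows essentially the same route as the paper: factor $p_U = \rho_N \circ s_H$, show $\Nm_{s_H}$ is an equivalence via the $\pi$-finite $0$-truncated/Borel criterion (\cref{lem:BorelAmbidex}, resting on \cref{lem:BorelClassifyingSpace}), identify $\Nm_{p_U}$ as the induction--coinduction equivalence, and conclude by the compositionality of norms from \cite[Rem.~4.2.4]{HL13} together with two-out-of-three. The only difference is that you spell out slightly more explicitly why the parametrized fibers of $s_H$ are Borel, which the paper leaves implicit.
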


By \cref{thm:BorelSpectraAsCompleteObjects}, we also have a norm map $\Nm': \sF_b[N] \to \sF^{\vee}_b[N]$ arising from the $\Gamma_N$-recollement of $\Sp^G$, with functors $\sF_b[N]$ and $\sF^{\vee}_b[N]$ as in \cref{VariousPropertiesForgetfulFunctorBorelSpectra}. We now proceed to show that the two norm maps $\Psi^N \Nm'$ and $\Nm_{\rho_N}$ are equivalent.

\begin{lem} \label{lm:IdentifyingDiagonalAsComposition} The functor $\rho_N^{\ast}$ is homotopic to the composite
\[ \sU_b[N] \circ {\inf}^N: \Sp^{G/N} \to \Sp^G \to \Fun_{G/N}(B^{\psi}_{G/N} N, \underline{\Sp}^{G/N}). \]
\end{lem}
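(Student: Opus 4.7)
My plan is to show first that it suffices to prove the stronger statement $\sU[N] \circ \inf^N \simeq (r_N^{\op})^{\ast}$ as functors $\Sp^{G/N} \to \Fun_{G/N}(\sO_G^{\op}, \underline{\Sp}^{G/N})$. Indeed, since $B^{\psi}_{G/N} N \subset \sO_G^{\op}$ is a cosieve with inclusion $i$ and $\rho_N = r_N^{\op} \circ i$ by \cref{lm:TwistedClassifyingSpaceIsSpace}, restriction along $i$ gives $\rho_N^{\ast} = i^{\ast} \circ (r_N^{\op})^{\ast}$, while by definition $\sU_b[N] = i^{\ast} \circ \sU[N]$.

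Next I would construct a natural transformation $\alpha: (r_N^{\op})^{\ast} \Rightarrow \sU[N] \circ \inf^N$ and argue it is a pointwise equivalence on each $G/H \in \sO_G^{\op}$. To construct $\alpha$, I will use that $\sU[N]$ is produced in \cref{cnstr:ForgetfulFunctorToNaiveGSpectra} by adjointing the $G/N$-functor $\underline{\Psi}[N] \circ \underline{\res}[N]$, and $\underline{\Psi}[N]$ is itself defined via the relative right adjoint $\widehat{\Psi}[N]$ of $\underline{\inf}[N]$. Therefore, the units of the fiberwise adjunctions $\inf^{H \cap N} \dashv \Psi^{H \cap N}$ assemble into a natural transformation from $(r_N^{\op})^{\ast}$ to the composite $\sU[N]\inf^N$; equivalently, one can produce $\alpha$ by adjointing the canonical map $\sF[N](r_N^{\op})^{\ast} \to \inf^N$ furnished by the fiberwise counits of $\sF[N] \dashv \sU[N]$ composed with the relative counit of $\underline{\inf}[N] \dashv \widehat{\Psi}[N]$.

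The verification that $\alpha$ is a pointwise equivalence then reduces, by \cref{evaluationFactorizationNaiveSpectra}, to comparing at $G/H$ the spectra $\res^{G/N}_{HN/N}(Y)$ and $\Psi^{H\cap N} \res^G_H \inf^N(Y)$. The base-change square relating inflation and restriction (coming from the pullback of groupoids $BH \to BG \leftarrow B(HN/N) \to B(G/N)$, which is a commutative square since $H \cap N$ is the kernel of $H \to G/N$) yields $\res^G_H \inf^N \simeq \inf^{H \cap N} \res^{G/N}_{HN/N}$, reducing the comparison to the unit map $\res^{G/N}_{HN/N}(Y) \to \Psi^{H\cap N} \inf^{H\cap N} \res^{G/N}_{HN/N}(Y)$, which is an equivalence since $\inf^{H \cap N}$ is fully faithful with right adjoint $\Psi^{H \cap N}$.

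The main obstacle will be in organizing the coherences; specifically, I must verify that the fiberwise units assemble to a genuine natural transformation of $G/N$-functors and that the inflation-restriction base-change squares assemble compatibly as $G/H$ varies. This coherence is essentially built into the construction of $\underline{\inf}[N]$ and $\widehat{\Psi}[N]$ via the natural transformation $q_N$ in \cref{cnstr:ForgetfulFunctorToNaiveGSpectra}, so the bulk of the work consists of tracing through the relative adjunctions of \cite[Prop.~7.3.2.6]{HA} carefully enough to identify the requisite unit transformation; once this is in hand, pointwise conservativity via \cref{evaluationFactorizationNaiveSpectra} completes the argument.
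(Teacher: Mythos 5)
Your reduction to the ``stronger statement'' $\sU[N] \circ \inf^N \simeq (r_N^{\op})^{\ast}$ on all of $\Fun_{G/N}(\sO_G^{\op}, \underline{\Sp}^{G/N})$ does not work: that statement is false, and the step you use to justify it --- that the unit $\res^{G/N}_{HN/N}(Y) \to \Psi^{H\cap N} \inf^{H\cap N} \res^{G/N}_{HN/N}(Y)$ is an equivalence ``since $\inf^{H \cap N}$ is fully faithful with right adjoint $\Psi^{H \cap N}$'' --- is incorrect. Inflation into genuine equivariant spectra is \emph{not} fully faithful: its right adjoint is the categorical fixed points functor $\Psi$, and $\Psi^{N} \inf^{N}$ is not equivalent to the identity in general. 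For example, for $G = N = C_p$ and $Y = \mathbb{S}$, the tom Dieck splitting gives $\pi_0\bigl(\Psi^{C_p} \inf^{C_p} \mathbb{S}\bigr) \cong A(C_p) \ne \ZZ$. (The fact you likely have in mind, $\Phi^{N} \inf^{N} \simeq \id$, concerns \emph{geometric} fixed points, whereas it is $\Psi^{H \cap N}$, not $\Phi^{H \cap N}$, that appears in $\sU[N]$ by \cref{evaluationFactorizationNaiveSpectra}.) Concretely, evaluating your proposed global equivalence at the orbit $G/G$ compares $\Psi^{N}\inf^{N} Y$ with $Y$, and these differ.

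This is exactly why the lemma only concerns the restriction along $i: B^{\psi}_{G/N} N \subset \sO_G^{\op}$: over the $N$-free orbits one has $H \cap N = 1$, so the fiberwise composite $\Psi^{H \cap N} \circ \res^{H N}_H \circ \inf^N \simeq \Psi^{H\cap N} \inf^{H \cap N} \res^{G/N}_{HN/N}$ is canonically the identity, and no fully faithfulness is required. The paper's proof proceeds this way: it identifies $\sU_b[N] \circ \inf^N$ as adjoint to the composite $\pr \circ \widehat{\Psi}[N] \circ \underline{\res}[N] \circ \underline{\inf}'[N]$ pulled back over $B^{\psi}_{G/N} N$, and observes that this composite is homotopic to the identity because its fibers over orbits $G/H$ with $H \in \Gamma_N$ are. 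Your final paragraph (tracing the relative adjunctions of \cref{cnstr:ForgetfulFunctorToNaiveGSpectra} to organize the coherences, then checking fiberwise) is the right machinery and essentially reproduces the paper's argument --- but you must restrict to $B^{\psi}_{G/N} N$ \emph{before} making the fiberwise identification, rather than proving a statement over all of $\sO_G^{\op}$ and restricting afterwards.
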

\begin{proof} For the proof, we work in the setup of \cref{cnstr:ForgetfulFunctorToNaiveGSpectra}. Let
\[ \underline{\inf}'[N]: \underline{\Sp}^{G/N} \to \sO_{G/N}^{\op} \times_{i_N^{\op},\sO_G^{\op}} \underline{\Sp}^G \]
be the $G/N$-functor defined by unstraightening the natural transformation
$$\SH q_N^{\op}: \SH \omega_{G/N}^{\op} \to \SH \omega_G^{\op} \iota_N^{\op},$$
so for a $G/N$ orbit $V = \frac{G/N}{K/N}$, the fiber $\underline{\inf}'[N]_V: \Sp^{K/N} \to \Sp^K$ is given by the inflation functor $\inf^N$. By definition, the composite
\[ \begin{tikzcd}[row sep=4ex, column sep=6ex, text height=1.5ex, text depth=0.5ex]
\underline{\Sp}^{G/N} \ar{r}{\underline{\inf}'[N]} & \sO_{G/N}^{\op} \times_{i_N^{\op},\sO^{\op}_G} \Sp^G \ar{r}{\widetilde{\sU}[N]} & \underline{\Fun}_{G/N}(\sO_G^{\op}, \underline{\Sp}^{G/N})
\end{tikzcd} \]
is adjoint to the composite (abusively denoting $\underline{\inf}'[N]$ for its pullback along $r_N^{\op}$)
\[ \begin{tikzcd}[row sep=4ex, column sep=6ex, text height=1.5ex, text depth=0.5ex]
\sO^{\op}_G \times_{r_N^{\op},\sO_{G/N}^{\op}} \underline{\Sp}^{G/N} \ar{r}{\underline{\inf}'[N]} & \sO_G^{\op} \times_{(i_N r_N)^{\op}, \sO_G^{\op}} \underline{\Sp}^G \ar{r}{\underline{\res}[N]} & \underline{\Sp}^G \ar{r}{\widehat{\Psi}[N]} & \sO^{\op}_G \times_{r_N^{\op},\sO_{G/N}^{\op}} \underline{\Sp}^{G/N} \ar{r}{\pr} & \underline{\Sp}^{G/N}.
\end{tikzcd} \]
For a $G$-orbit $G/H$, the fiber of $\widehat{\Psi}[N] \circ \underline{\res}[N] \circ \underline{\inf}'[N]$ over $G/H$ is given by the composition
\[ \begin{tikzcd}[row sep=4ex, column sep=6ex, text height=1.5ex, text depth=0.5ex]
\Sp^{H N/N} \ar{r}{\inf^N} & \Sp^{H N} \ar{r}{\res^{H N}_H} & \Sp^H \ar{r}{\Psi^{H \cap N}} & \Sp^{H/(H \cap N)}.
\end{tikzcd} \]
Using that $H N/N \cong H/(H \cap N)$, the composition $\res^{H N}_H \circ \inf^N$ is homotopic to $\inf^{H \cap N}$. Therefore, if $H \in \Gamma_N$ so that $H \cap N = 1$, the entire composite is trivial. We deduce that the composite
\[ \begin{tikzcd}[row sep=4ex, column sep=6ex, text height=1.5ex, text depth=0.5ex]
B^{\psi}_{G/N} N \times_{\rho_N,\sO_{G/N}^{\op}} \underline{\Sp}^{G/N} \ar{r}{\underline{\inf}'[N]} & B^{\psi}_{G/N} N \times_{\rho_N, \sO_G^{\op}} \underline{\Sp}^G \ar{r}{\underline{\res}[N]} & \underline{\Sp}^G \ar{r}{\widehat{\Psi}[N]} & B^{\psi}_{G/N} N \times_{\rho_N, \sO_{G/N}^{\op}} \underline{\Sp}^{G/N} 
\end{tikzcd} \]
is homotopic to the identity, which proves the claim.
\end{proof}

\begin{obs} Passing to right adjoints in \cref{lm:IdentifyingDiagonalAsComposition} we get that $(\rho_N)_{\ast} \simeq \Psi^N \sF^{\vee}_b[N]$. Let
$$(-)^{t'[\psi]} : \Fun_{G/N}(B^{\psi}_{G/N} N, \underline{\Sp}^{G/N}) \to \Sp^{G/N}$$
be the cofiber of $\Psi^N \Nm'$. Since the orbits $G/H_+ \in \Sp^G$ for $H \in \Gamma_N$ are both $\Gamma_N$-torsion and $\Gamma_N$-complete, $\Nm' \circ (s_H)_!(1)$ is an equivalence for all $H \in \Gamma_N$. Therefore, $(-)^{t'[\psi]}$ vanishes on each $(s_H)_!(1)$. Because $\{ (s_H)_!(1): H \in \Gamma_N \}$ is a set of compact generators for $\Fun_{G/N}(B^{\psi}_{G/N} N, \underline{\Sp}^{G/N})$ and $(\rho_N)_! $ is a colimit preserving functor, the composite
\[ (-)_{h[\psi]} = (\rho_N)_! \xtolong{\Nm_{\rho_N}}{1} (-)^{h[\psi]} = (\rho_N)_{\ast} \simeq \Psi^N \sF^{\vee}_b[N] \to (-)^{t'[\psi]} \]
is null-homotopic. We thereby obtain a natural transformation $\nu: (-)^{t[\psi]} \to (-)^{t'[\psi]}$. Taking fibers, we also have a natural transformation $\mu: (\rho_N)_! \to \Psi^N \sF_b[N]$. All together, for all $X \in \Sp^G_{\Borel{N}}$ we have a morphism of fiber sequences of $G/N$-spectra
\[ \begin{tikzcd}[row sep=4ex, column sep=8ex, text height=1.5ex, text depth=0.5ex]
X_{h[\psi]} \ar{r}{(\Nm_{\rho_N})_X} \ar{d}{\mu_X} & X^{h[\psi]} \ar{r} \ar{d}{\simeq} & X^{t[\psi]} \ar{d}{\nu_X} \\
\Psi^N \sF_b[N] (X) \ar{r}{\Psi^N \Nm'_X} & \Psi^N \sF_b^{\vee}[N](X) \ar{r} & X^{t'[\psi]}.
\end{tikzcd} \]
\end{obs}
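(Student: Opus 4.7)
The plan is to build the comparison diagram in three stages: first identify the right-hand fiber sequence, then construct the morphism $\nu$ between cofibers by appealing to compact generation, and finally extract $\mu$ by taking fibers.

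First, I would use \cref{lm:IdentifyingDiagonalAsComposition}, which presents $\rho_N^*$ as the composite $\sU_b[N] \circ \inf^N$. Since each of these functors admits a right adjoint ($\Psi^N$ for $\inf^N$, and $\sF_b^\vee[N]$ for $\sU_b[N]$ by \cref{thm:BorelSpectraAsCompleteObjects}), passing to right adjoints gives a canonical equivalence $(\rho_N)_* \simeq \Psi^N \sF_b^\vee[N]$. This is precisely the middle vertical equivalence in the target diagram, so defining $(-)^{t'[\psi]}$ as the cofiber of $\Psi^N \Nm'$ makes both rows fiber sequences with matching middle terms.

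Next, to construct $\nu$, I need to show the composite
\[
(\rho_N)_! \xrightarrow{\Nm_{\rho_N}} (\rho_N)_* \simeq \Psi^N \sF_b^\vee[N] \longrightarrow (-)^{t'[\psi]}
\]
is null-homotopic. The strategy is to check this on a set of compact generators. By (the proof of) \cref{thm:BorelSpectraAsCompleteObjects}, the set $\{(s_H)_!(1) : H \in \Gamma_N\}$ compactly generates $\Fun_{G/N}(B^\psi_{G/N} N, \ul{\Sp}^{G/N})$, and the adjunction identification $\sF_b[N] \circ (s_H)_! \simeq \ind_H^G$ together with $\sF_b^\vee[N] \circ (s_H)_! \simeq \ind_H^G$ (via the equivalence between $\Gamma_N$-torsion and $\Gamma_N$-complete objects, using that $G/H_+$ is simultaneously torsion and complete) shows that $\Nm'$ is an equivalence on each $(s_H)_!(1)$. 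Consequently $(-)^{t'[\psi]} \circ (s_H)_!(1) \simeq 0$, so the natural transformation in question vanishes on this generating set. Because $(\rho_N)_!$ preserves colimits and the target functor is exact, the space of natural transformations out of $(\rho_N)_!$ is computed by evaluation on compact generators, yielding the null-homotopy.

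Finally, the universal property of the cofiber $(-)^{t[\psi]} = \operatorname{cofib}(\Nm_{\rho_N})$ then produces the desired $\nu \colon (-)^{t[\psi]} \to (-)^{t'[\psi]}$ fitting into a map between the two cofiber sequences, and $\mu \colon (\rho_N)_! \to \Psi^N \sF_b[N]$ is obtained by taking horizontal fibers. I expect the main technical point to be the reduction of the null-homotopy from pointwise values on generators to a genuine equivalence of natural transformations of functors; this is where one must use colimit-preservation of $(\rho_N)_!$ and the stability of the target, and it is the only step that goes beyond formal adjoint nonsense.
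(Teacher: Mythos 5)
Your proposal is correct and follows essentially the same route as the paper: identify $(\rho_N)_* \simeq \Psi^N \sF_b^\vee[N]$ by passing to right adjoints in \cref{lm:IdentifyingDiagonalAsComposition}, check that $\Nm'$ (hence $\Psi^N\Nm'$) is an equivalence on the compact generators $(s_H)_!(1)$ because $G/H_+$ is both $\Gamma_N$-torsion and $\Gamma_N$-complete, deduce the null-homotopy of the composite out of the colimit-preserving functor $(\rho_N)_!$, and then obtain $\nu$ from the cofiber and $\mu$ by taking fibers. The only point to state a bit more carefully is the last reduction: exactness of $(-)^{t'[\psi]}$ lets you pass from vanishing on the generators to vanishing on their thick closure, i.e.\ all compact objects, and colimit-preservation of $(\rho_N)_!$ identifies natural transformations out of it with those out of its restriction to compacts, which is exactly the argument the paper intends.
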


\begin{thm} \label{thm:EquivalentTateConstructions} The natural transformations $\mu$ and $\nu$ are equivalences.
\end{thm}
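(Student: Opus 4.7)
The plan is to show that $\mu$ is an equivalence, which will immediately imply that $\nu$ is one too: in the given morphism of fiber sequences the middle vertical is an equivalence, so taking cofibers shows $\nu$ is an equivalence as soon as $\mu$ is (and vice versa). I will attack $\mu$ by checking it on a set of compact generators, so the first order of business is to verify that both functors involved preserve colimits. The source $(\rho_N)_! = (-)_{h[\psi]}$ is a left adjoint and hence colimit-preserving. For the target, $\sF_b[N]$ is a left adjoint by \cref{thm:BorelSpectraAsCompleteObjects}, and $\Psi^N : \Sp^G \to \Sp^{G/N}$ is colimit-preserving (as recorded earlier in the discussion of the Bachmann--Hoyois functoriality), so $\Psi^N \sF_b[N]$ preserves colimits as well.

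Next, I invoke the compact generation established in the proof of \cref{thm:BorelSpectraAsCompleteObjects}: the collection $\{(s_H)_!(1) : H \in \Gamma_N\}$ is a set of compact generators for $\Fun_{G/N}(B^{\psi}_{G/N} N, \underline{\Sp}^{G/N})$. It therefore suffices to verify that $\mu$ is an equivalence on each $(s_H)_!(1)$ with $H \in \Gamma_N$. On such an object, two separate vanishing statements apply: first, $\Nm_{\rho_N}$ is an equivalence at $(s_H)_!(1)$ by the Adams-type vanishing of \cref{NormVanishesOnInduced}; second, $\sF_b[N](s_H)_!(1) \simeq G/H_+$ (again from the proof of \cref{thm:BorelSpectraAsCompleteObjects}), and by \cref{rem:MathewComparison} the orbit $G/H_+$ for $H \in \Gamma_N$ is simultaneously $\Gamma_N$-torsion and $\Gamma_N$-complete, so the recollement-theoretic norm $\Nm' : \sF_b[N](s_H)_!(1) \to \sF_b^{\vee}[N](s_H)_!(1)$ is already an equivalence in $\Sp^G$. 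Applying $\Psi^N$, we conclude that $\Psi^N \Nm'$ is an equivalence at $(s_H)_!(1)$ as well.

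Finally, I assemble these facts: evaluating the left square of the morphism of fiber sequences at $X = (s_H)_!(1)$, both horizontal maps are equivalences by the previous paragraph and the right vertical $X^{h[\psi]} \simeq \Psi^N \sF_b^{\vee}[N](X)$ is an equivalence by construction, so $\mu_X$ must also be an equivalence. By colimit-preservation this propagates to all objects, giving that $\mu$ is an equivalence, whence $\nu$ is too upon passing to cofibers. The only subtle point to watch is the commutativity of the relevant square, but this is built into the construction of $\mu$ as the map induced on fibers from the commutative square defining $\nu$, together with the middle equivalence $(\rho_N)_{\ast} \simeq \Psi^N \sF_b^{\vee}[N]$ coming from \cref{lm:IdentifyingDiagonalAsComposition}. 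No deep obstacle remains; essentially all of the content was already loaded into identifying the two norms as equivalences on generators and verifying the colimit-preservation needed to conclude by compact generation.
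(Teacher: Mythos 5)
Your proposal is correct and follows essentially the same route as the paper: reduce to $\mu$, check it on the compact generators $(s_H)_!(1)$ for $H \in \Gamma_N$ using that both $\Nm_{\rho_N}$ (via \cref{NormVanishesOnInduced}) and $\Nm'$ (since $G/H_+$ is both $\Gamma_N$-torsion and $\Gamma_N$-complete) are equivalences there, apply two-out-of-three, and propagate by colimit-preservation and compact generation. The only difference is that you spell out explicitly the colimit-preservation of $\Psi^N \sF_b[N]$ and the identification $\sF_b[N](s_H)_!(1) \simeq G/H_+$, which the paper leaves implicit.
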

\begin{proof} It suffices to show that $\mu$ is an equivalence. By \cref{NormVanishesOnInduced}, $\Nm_{\rho_N}$ is an equivalence on $(s_H)_!(1)$ for every $H \in \Gamma_N$, and we just saw the same property for $\Nm'$. Therefore, $\mu$ is an equivalence on each $(s_H)_!(1)$ by the two-out-of-three property of equivalences. Since both $(\rho_N)_!$ and $\Psi^N \sF_b[N]$ preserve colimits and the $(s_H)_!(1)$ form a set of compact generators, we conclude that $\mu$ is an equivalence.
\end{proof}

\begin{rem}[$\infty$-categorical Adams isomorphism] \label{AdamsIsomorphism} By \cref{thm:EquivalentTateConstructions}, for $X \in \Sp^G_{\Borel{N}}$, we have an equivalence of $G/N$-spectra $X_{h[\psi]} \simeq \Psi^N \sF_b[N] (X)$. Viewing $X$ as an `$N$-free' $G$-spectrum via the embedding $\sF_b[N]$, this amounts to the Adams isomorphism for a normal subgroup $N$ of a \emph{finite} group $G$ in our context (compare \cite[Ch.~XVI, Thm.~5.4]{AlaskaNotes}). 

We also note that Sanders \cite{SandersCompactnessLocus} has recently introduced a different formal framework for producing the Adams isomorphism, in the more general situation of a closed normal subgroup of a compact Lie group. It would be interesting to understand the relationship between his results and ours.
\end{rem}

\begin{rem} In view of \cref{thm:EquivalentTateConstructions}, we could have defined the parametrized Tate construction as $(-)^{t'[\psi]}$ to begin with. However, we still need the Adams isomorphism to identify the fiber term $\Psi^N \sF_b[N]$ of $(\rho_N)_{\ast} \to (-)^{t'[\psi]}$ as the parametrized orbits functor $(\rho_N)_!$.
\end{rem}

\begin{rem}[Point-set models] \label{rem:PointSetModels} Let $X \in \Sp^G_{\Borel{N}}$ and consider the fiber sequence of $G/N$-spectra
\[ X_{h[\psi]} \to  X^{h[\psi]} \to X^{t[\psi]}. \]
By \cref{thm:EquivalentTateConstructions} and the monoidal recollement theory for $\Gamma_N$, this fiber sequence is obtained as $\Psi^N$ of the fiber sequence of $G$-spectra
\[ \sF_b^{\vee}[N](X) \otimes {E \Gamma_N}_+ \to \sF_b^{\vee}[N](X) \to \sF_b^{\vee}[N](X) \otimes \widetilde{E \Gamma_N}. \]
If we let $X = \sU_b[N](Y)$ for $Y \in \Sp^G$, then we may also write this as
\[ Y \otimes {E \Gamma_N}_+ \simeq F( {E \Gamma_N}_+, Y) \otimes {E \Gamma_N}_+ \to F( {E \Gamma_N}_+, Y) \to F( {E \Gamma_N}_+, Y) \otimes \widetilde{E \Gamma_N}. \]
\end{rem}

\begin{rem}[Parametrized Tate spectral sequence]\label{rem:TateSS}
The point-set model for the parametrized Tate construction given in \cref{rem:PointSetModels} allows us to apply \cite[Thm.~22.6]{GM95} to define the \emph{parametrized Tate spectral sequence}
$$E^2_{p,q} = \widehat{H}^p_{\Gamma_N}(\pi_q(E)) \Rightarrow \pi_{q-p} E^{t_{G/N}N}$$
to study the $G/N$-parametrized $N$-Tate construction of $E \in \Sp^G_{N\text{-Borel}}$. The $E_2$-term is given by the \emph{$\Gamma_N$-Amitsur--Dress--Tate cohomology} of $N$ with coefficients in $\pi_*(E)$ as defined in \cite[Def.~21.1]{GM95}.\footnote{Here, we recall that coefficients for $\cF$-Amitsur--Dress--Tate cohomology with respect to a $G$-family $\cF$ are given by abelian group-valued presheaves on $\sO_{G,\cF}$, the full subcategory of $\sO_G$ on those orbits with stabilizer in $\cF$. Then for a $G/N$-functor $E: B^{\psi}_{G/N} N \to \underline{\Sp}^{G/N}$, under the equivalence $\Fun_{G/N}(B^{\psi}_{G/N} N, \underline{\Spc}^{G}) \simeq \Fun(B^{\psi}_{G/N} N, \Spc)$ of \cite[Prop.~3.10]{Exp2}, we have that $\pi_q(E) = \pi_0(\Omega^{\infty} \Sigma^{-q} E)$ renders as suitable input.} See also \cite[\S\S2.3, 3.2]{mathew2019}. 

\end{rem}

\begin{obs}[Compatibility with restriction] \label{rem:ParamTateCompatibleRestriction} Note that the norm map $\Nm'$ also extends to a natural transformation of $G$-functors
\[ \left( \underline{\Nm'}: \underline{\sF}_b[N] \Rightarrow \underline{\sF}^{\vee}_b[N] \right): \underline{\Sp}^G_{\Borel{N}} \to \underline{\Sp}^G. \]
Postcomposing with the functor $\widehat{\Psi}[N]: \underline{\Sp}^G \to \sO_G^{\op} \times_{\sO_{G/N}^{\op}} \underline{\Sp}^{G/N}$ defined in \cref{cnstr:ForgetfulFunctorToNaiveGSpectra} and taking the cofiber, we may extend $(-)^{t[\psi]}$ to a functor over $\sO^{\op}_G$
\[ (-)^{\widehat{t}[\psi]}: \underline{\Sp}^G_{\Borel{N}} \to \sO_G^{\op} \times_{\sO_{G/N}^{\op}} \underline{\Sp}^{G/N} \]
that over an orbit $G/H$ is given by
\[ (-)^{t[\psi_H]}: \Fun_{H/(N \cap H)}(B^{\psi_H}_{H/(N \cap N)} (N \cap H), \underline{\Sp}^{H/(N \cap H)}) \to \Sp^{H/(N \cap H)}. \]
However, because $\underline{\Psi}[N]$ is not typically a $G$-functor, $(-)^{\widehat{t}[\psi]}$ may also fail to be a $G$-functor. If instead we precompose by the inclusion 
\[ \underline{\Fun}_{G/N}(B^{\psi}_{G/N} N, \underline{\Sp}^{G/N}) \simeq \sO_{G/N}^{\op} \times_{i_N^{\op},\sO_G^{\op}} \underline{\Sp}^G_{\Borel{N}} \to \underline{\Sp}^G_{\Borel{N}} \] 
and postcompose by the projection to $\underline{\Sp}^{G/N}$, then we obtain a $G/N$-functor
\[ (-)^{\underline{t}[\psi]}: \underline{\Fun}_{G/N}(B^{\psi}_{G/N} N, \underline{\Sp}^{G/N}) \to \underline{\Sp}^{G/N}. \]
By checking fiberwise, it is easy to verify that $(-)^{\underline{t}[\psi]}$ is equivalent to the cofiber of the norm map $\Nm_{\rho_N}: (\underline{\rho_N})_! \to (\underline{\rho_N})_{\ast}$ produced by the ambidexterity theory for the other Beck-Chevalley fibration
$$\underline{\LocSys}^{G/N}(\underline{\Sp}^{G/N}) \to \Spc^{G/N}.$$
In any case, we obtain a compatibility between the parametrized Tate construction and restriction. For example, given a $G/N$-functor $X: B^{\psi}_{G/N} N \to \underline{\Sp}^{G/N}$, we see that the underlying spectrum of $X^{t[\psi]}$ is $X^{tN} \coloneq (\res^{G/N} X)^{t N}$ for the underlying functor $\res^{G/N} X: B N \to \Sp$.
\end{obs}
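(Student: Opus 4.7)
The plan is to exploit the fact that parametrized ambidexterity, as developed in \cref{SS:Ambi}, already produces norm maps internally in the world of $G/N$-functors, so the $G/N$-enhancement $(-)^{\underline{t}[\psi]}$ is essentially forced on us. By \cref{LocalSystemsIsBCFibration}, both $\LocSys^{G/N}(\underline{\Sp}^{G/N})$ and $\underline{\LocSys}^{G/N}(\underline{\Sp}^{G/N})$ are Beck--Chevalley fibrations over $\Spc^{G/N}$, since $\underline{\Sp}^{G/N}$ is $G/N$-bicomplete. By \cref{lem:BorelClassifyingSpace}, $B^{\psi}_{G/N} N$ is a Borel $G/N$-space, and the structure map $\rho_N: B^{\psi}_{G/N} N \to \ast$ is $\pi$-finite $1$-truncated (its underlying map of spaces is $BN \to \ast$). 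Since $\underline{\Sp}^{G/N}$ is $G/N$-semiadditive, \cref{lem:BorelAmbidex} implies $\rho_N$ is weakly $1$-ambidextrous with respect to \emph{both} Beck--Chevalley fibrations.

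First, I would invoke the ambidexterity construction for $\underline{\LocSys}^{G/N}(\underline{\Sp}^{G/N})$ to produce a natural transformation of $G/N$-functors $\Nm_{\underline{\rho_N}}: (\underline{\rho_N})_! \Rightarrow (\underline{\rho_N})_{\ast}$, whose cofiber defines $(-)^{\underline{t}[\psi]}$ as a $G/N$-functor. Next, I would verify that the inductive construction of norm maps in \cite[\S 4.1]{HL13} is natural in the choice of Beck--Chevalley fibration, so that the canonical map $\LocSys^{G/N}(\underline{\Sp}^{G/N}) \to \underline{\LocSys}^{G/N}(\underline{\Sp}^{G/N})$ (projection to the underlying $\infty$-category) carries $\Nm_{\rho_N}$ to the underlying natural transformation of $\Nm_{\underline{\rho_N}}$ at the terminal $G/N$-orbit. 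This shows the fiber of $(-)^{\underline{t}[\psi]}$ at $(G/N)/(G/N)$ recovers $(-)^{t[\psi]}$ of \cref{dfn:ParamTateCnstr}.

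For the compatibility with restriction, I would exploit \cref{lem:adjointability}: for a map of $G/N$-orbits $f: (G/N)/(L/N) \to (G/N)/(K/N)$, the pullback of $\rho_N$ along the induced map on $G/N$-spaces factors through the $G/N$-space $B^{\psi_K}_{K/N} N$ for the restricted extension $\psi_K = [N \to K \to K/N]$, and pullback-restriction commutes with $(\underline{\rho_N})_!$ and $(\underline{\rho_N})_{\ast}$ by \cref{lem:adjointability}(3). Hence the cocartesian edges of $\underline{\Fun}_{G/N}(B^{\psi}_{G/N} N, \underline{\Sp}^{G/N}) \to \sO_{G/N}^{\op}$ intertwine the pushforwards with restriction, and by \cref{lem:AmbidexCheckedFiberwise} the norm map is compatible with these identifications, so the fiber of $(-)^{\underline{t}[\psi]}$ over $(G/N)/(K/N)$ is the $K/N$-parametrized Tate construction for $\psi_K$. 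Finally, the underlying spectrum claim $\res^{G/N} X^{t[\psi]} \simeq X^{tN}$ follows by taking $K = 1$ and observing that $B^{\psi_1}_{1} N \simeq BN$, so $\rho_1: BN \to \ast$ is the usual collapse map and the construction recovers the classical norm map $X_{hN} \to X^{hN}$.

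The main obstacle is to rigorously justify the compatibility between the ambidexterity constructions for the two Beck--Chevalley fibrations (ordinary versus $G/N$-categorical), together with their behavior under pullback along maps of $G/N$-orbits. Working through \cite[\S 4.1]{HL13}, one sees that the entire inductive construction is stable under passage to fibers of any Beck--Chevalley morphism, so this reduces to checking that the forgetful morphism and the pullback morphisms are indeed Beck--Chevalley in the sense of \cite[Def.~4.1.3]{HL13}; this is a bookkeeping exercise using \cref{lem:adjointability} and the fact that Borel $G/N$-spaces are stable under $G/N$-fibers, but it requires some care to set up cleanly.
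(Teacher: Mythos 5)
Your route to the $G/N$-functor $(-)^{\underline{t}[\psi]}$ — construct it directly as the cofiber of the norm map $\Nm_{\underline{\rho_N}}$ furnished by parametrized ambidexterity for $\underline{\LocSys}^{G/N}(\underline{\Sp}^{G/N})$, then check fiberwise agreement with \cref{dfn:ParamTateCnstr} and with the restricted extensions $\psi_K$ for $N \leq K \leq G$ — is legitimate and is in fact the reverse of the paper's order of construction. The paper first extends the recollement norm to a natural transformation of $G$-functors $\underline{\Nm'}: \underline{\sF}_b[N] \Rightarrow \underline{\sF}^{\vee}_b[N]$ (possible because the $\Gamma_N$-recollement is a $G$-recollement, \cref{ParamRecollementFamily} and \cref{cor:BorelCompatibilityWithRestriction}), postcomposes with $\widehat{\Psi}[N]$, takes the cofiber, restricts along $\sO_{G/N}^{\op} \subset \sO_G^{\op}$ and projects, and only then identifies the result with the ambidexterity cofiber by applying \cref{thm:EquivalentTateConstructions} fiberwise. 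Your fiberwise compatibility of the two Beck--Chevalley fibrations (the fiber of $\underline{\Fun}_{G/N}(B^{\psi}_{G/N} N, \underline{\Sp}^{G/N})$ over the terminal orbit is $\Fun_{G/N}(B^{\psi}_{G/N} N, \underline{\Sp}^{G/N})$, and pullback/pushforward restrict accordingly by \cref{lem:adjointability}, with norms compatible by \cite[Rem.~4.2.3]{HL13}) is exactly the ``checking fiberwise'' the paper alludes to, and your route has the small advantage of not needing \cref{thm:EquivalentTateConstructions} to get $G/N$-functoriality and the restriction compatibility, including the underlying-spectrum statement.

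However, you have not addressed the first half of the observation: the functor $(-)^{\widehat{t}[\psi]}: \underline{\Sp}^G_{\Borel{N}} \to \sO_G^{\op} \times_{\sO_{G/N}^{\op}} \underline{\Sp}^{G/N}$ over all of $\sO_G^{\op}$, whose fiber over an \emph{arbitrary} orbit $G/H$ — in particular $H$ with $N \not\subseteq H$ — is $(-)^{t[\psi_H]}$ for $\psi_H = [N \cap H \to H \to H/(N \cap H)]$, together with the caveat that it need not be a $G$-functor. This cannot be extracted from $G/N$-parametrized ambidexterity: $\underline{\LocSys}^{G/N}(\underline{\Sp}^{G/N})$ is indexed over $\sO_{G/N}^{\op}$ and so only sees subgroups containing $N$. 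Producing those fibers is precisely what the paper's starting point buys — the $G$-natural transformation $\underline{\Nm'}$, whose fiber over $G/H$ is the recollement norm for the $H$-family $\Gamma_{N \cap H}$, composed with the (non-$G$-functorial) $\widehat{\Psi}[N]$, and then \cref{thm:EquivalentTateConstructions} applied over each $G/H$ to identify the cofiber with $(-)^{t[\psi_H]}$. So relative to the full statement there is a genuine missing piece, not a step that fails. A minor slip at the end: the ``underlying'' fiber corresponds to the trivial subgroup of $G/N$, i.e.\ $K = N$ inside $G$ with restricted extension $[N \to N \to 1]$ and classifying space $BN$; writing $K = 1$ does not parse there, though the intended computation (the classical norm $X_{hN} \to X^{hN}$ with cofiber $X^{tN}$) is correct.
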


A useful consequence of \cref{thm:EquivalentTateConstructions} is that it enables us to endow the functor $(-)^{t[\psi]}$ and the natural transformation $(-)^{h[\psi]} \to (-)^{t[\psi]}$ with lax symmetric monoidal structures, with respect to the pointwise symmetric monoidal structure on $\Fun_{G/N}(B^{\psi}_{G/N} N, \underline{\Sp}^{G/N})$ and the smash product on $\Sp^{G/N}$.

\begin{cor} \label{cor:TateLaxMonoidalStructure} The functor $(-)^{t[\psi]}$ and the natural transformation $(-)^{h[\psi]} \to (-)^{t[\psi]}$ are lax symmetric monoidal.
\end{cor}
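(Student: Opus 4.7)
The plan is to exploit \cref{thm:EquivalentTateConstructions} to reduce the claim to the composite description $(-)^{t'[\psi]} = \Psi^N \circ ((-) \otimes \widetilde{E\Gamma_N}) \circ \sF^{\vee}_b[N]$, and then equip this composite and its associated natural transformation with a lax symmetric monoidal structure by concatenating standard pieces. Since the corollary only asserts existence of such a structure, it is cleanest to \emph{define} the lax symmetric monoidal structure on $(-)^{t[\psi]}$ by transport along the equivalence $\nu$ of \cref{thm:EquivalentTateConstructions}, together with the analogous transport of the natural transformation $(-)^{h[\psi]} \to (-)^{t[\psi]}$ from the natural transformation $\Psi^N \sF^{\vee}_b[N] \to (-)^{t'[\psi]}$ induced by the unit map $\id \to (-) \otimes \widetilde{E\Gamma_N}$.

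The three lax symmetric monoidal pieces to assemble are: first, by \cref{rem:MonoidalIdentificationOfBorelSpectra}, the inclusion $\sF^{\vee}_b[N]: \Fun_{G/N}(B^{\psi}_{G/N} N, \underline{\Sp}^{G/N}) \xto{\simeq} \Sp^{h\Gamma_N} \subset \Sp^G$ is symmetric monoidal with respect to the pointwise symmetric monoidal structure on the source and the symmetric monoidal structure on $\Sp^{h\Gamma_N}$ inherited from the $\Gamma_N$-recollement. Second, $(-) \otimes \widetilde{E\Gamma_N}: \Sp^G \to \Sp^G$ is symmetric monoidal as a smashing localization at the idempotent $E_\infty$-algebra $\widetilde{E\Gamma_N}$ (\cref{cnstr:GspaceFromGfamily}), and the unit map $\id \to (-) \otimes \widetilde{E\Gamma_N}$, being the unit map of this idempotent algebra, is a lax symmetric monoidal natural transformation. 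Third, $\Psi^N: \Sp^G \to \Sp^{G/N}$ is lax symmetric monoidal as the right adjoint to the symmetric monoidal inflation functor $\inf^N$. Composing, the functor $(-)^{t'[\psi]}$ acquires a lax symmetric monoidal structure, and the natural transformation obtained by applying $\Psi^N \circ (-) \circ \sF^{\vee}_b[N]$ to the unit map $\id \to (-) \otimes \widetilde{E\Gamma_N}$ is a lax symmetric monoidal morphism between lax symmetric monoidal functors.

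Finally, using \cref{thm:EquivalentTateConstructions} we have a commutative square of equivalences of functors relating $\Psi^N \sF^{\vee}_b[N] \to (-)^{t'[\psi]}$ to $(-)^{h[\psi]} \to (-)^{t[\psi]}$; one then transfers the lax symmetric monoidal structures along these equivalences. I expect the only real subtlety to be bookkeeping: verifying that the equivalence $\Psi^N \sF^{\vee}_b[N] \simeq (-)^{h[\psi]} = (\rho_N)_*$ coming from the adjunction identification in \cref{lm:IdentifyingDiagonalAsComposition} is itself an equivalence of lax symmetric monoidal functors. But this follows from the fact that both sides are right adjoints of symmetric monoidal functors ($\inf^N \circ \rho_N^*$ on one hand and $\sU_b[N] \circ \inf^N$ on the other, which are identified as symmetric monoidal functors in \cref{lm:IdentifyingDiagonalAsComposition}), and taking right adjoints of symmetric monoidal functors is functorial at the lax symmetric monoidal level. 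With this identification in hand, the natural transformation $(-)^{h[\psi]} \to (-)^{t[\psi]}$ inherits a lax symmetric monoidal structure, completing the argument.
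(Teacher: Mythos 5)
Your proposal is correct and takes essentially the same approach as the paper's proof: both reduce via \cref{thm:EquivalentTateConstructions} to the functor $(-)^{t'[\psi]}$, use \cref{rem:MonoidalIdentificationOfBorelSpectra} together with the symmetric monoidal $\Gamma_N$-recollement to handle $\sF^\vee_b[N]$ and $(-)\otimes\widetilde{E\Gamma_N}$, and appeal to lax symmetric monoidality of $\Psi^N$. One small caveat worth tightening: the functor $\sF^\vee_b[N]: \Fun_{G/N}(B^\psi_{G/N}N,\underline{\Sp}^{G/N}) \to \Sp^G$ is only \emph{lax} symmetric monoidal with respect to the smash product on $\Sp^G$ (it is a symmetric monoidal equivalence onto $\Sp^{h\Gamma_N}$ with its localized tensor, but the inclusion $j_*:\Sp^{h\Gamma_N}\hookrightarrow\Sp^G$ is only lax), though this does not affect the conclusion since only lax symmetric monoidality is needed.
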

\begin{proof} The cofiber of $\Nm'$ is the lax symmetric monoidal map $j_{\ast} \to i_{\ast} i^{\ast} j_{\ast}$ of the $\Gamma_N$-recollement of $\Sp^G$, where we use \cref{rem:MonoidalIdentificationOfBorelSpectra} to relate the pointwise symmetric monoidal structure on the domain $\Fun_{G/N}(B^{\psi}_{G/N} N, \underline{\Sp}^{G/N})$ to the symmetric monoidal recollement. Since the categorical fixed points functor $\Psi^N$ is also lax symmetric monoidal, we deduce that $\Psi^N \sF^{\vee}_b[N](-) \to (-)^{t'[\psi]}$ is lax symmetric monoidal. The conclusion now follows from \cref{thm:EquivalentTateConstructions}.
\end{proof}

On the other hand, one practical benefit of defining the parametrized Tate construction via the ambidexterity theory is that we may exploit the general naturality properties of norms as detailed in \cite[\S4.2]{HL13}. To state our next result, which involves two normal subgroups $M \trianglelefteq N$ of $G$, we first require a preparatory lemma.

\begin{lem} \label{lm:CategoricalFixedPointsProperties} Let $M \trianglelefteq N \trianglelefteq G$ be two normal subgroups of $G$ and let
\[ \psi = [N \to G \to G/N], \quad \psi' = [N/M \to G/M \to G/N] \]
denote the extensions.
\begin{enumerate}[leftmargin=6ex]
\item $\Psi^M: \Sp^G \to \Sp^{G/M}$ sends $\Gamma_N$-torsion spectra to $\Gamma_{N/M}$-torsion spectra.
\item Let $r_M: \FF_G \to \FF_{G/M}$, $r_M(U) = U/M$ be as in \cref{inflationFunctors}, and regard $\FF_G$, $\FF_{G/M}$ as cartesian fibrations over $\FF_{G/N}$ via $r_N$, $r_{N/M}$ respectively. Then $r_M$ preserves cartesian edges, so the restricted functor $r_M^{\op}: \sO_G^{\op} \to \sO_{G/M}^{\op}$ is a $G/N$-functor. Moreover, $r_M^{\op}$ further restricts to a $G/N$-functor $$\rho_M: B^{\psi}_{G/N} N \to B^{\psi'}_{G/N} N/M.$$
\item We have a commutative diagram
\[ \begin{tikzcd}[row sep=4ex, column sep=8ex, text height=1.5ex, text depth=0.5ex]
\Sp^G_{\Borel{N}} = \Fun_{G/N}(B^{\psi}_{G/N} N, \underline{\Sp}^{G/N}) & \Sp^G \ar{l}{\sU_b[N]} \\
\Sp^{G/M}_{\Borel{N/M}} = \Fun_{G/N}(B^{\psi'}_{G/N} (N/M), \underline{\Sp}^{G/N}) \ar{u}{(\rho_M)^{\ast}} & \Sp^{G/M} \ar{u}[swap]{\inf^G_{G/M}} \ar{l}{\sU_b[N/M]}
\end{tikzcd} \]
that yields a commutative diagram of right adjoints
\[ \begin{tikzcd}[row sep=4ex, column sep=8ex, text height=1.5ex, text depth=0.5ex]
\Sp^G_{\Borel{N}} = \Fun_{G/N}(B^{\psi}_{G/N} N, \underline{\Sp}^{G/N}) \ar{r}{\sF_b^{\vee}[N]} \ar{d}[swap]{(\rho_M)_{\ast}} & \Sp^G \ar{d}{\Psi^M} \\
\Sp^{G/M}_{\Borel{N/M}} = \Fun_{G/N}(B^{\psi'}_{G/N} (N/M), \underline{\Sp}^{G/N}) \ar{r}{\sF_b^{\vee}[N/M]} & \Sp^{G/M}
\end{tikzcd} \]
where the lefthand vertical functor is computed by the $G/N$-right Kan extension along $r^{\op}_M$.
\end{enumerate}
\end{lem}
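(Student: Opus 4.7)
\textbf{Proof plan for \cref{lm:CategoricalFixedPointsProperties}.} For (1), since $\Psi^M$ preserves colimits (as recalled earlier in the paper) and the $\Gamma_N$-torsion $G$-spectra form the localizing subcategory generated by $\{ \Sigma^\infty_+ G/H : H \in \Gamma_N\}$ (cf. \cref{rem:MathewComparison}), it suffices to check that $\Psi^M(\Sigma^\infty_+ G/H)$ is $\Gamma_{N/M}$-torsion whenever $H \cap N = 1$. Using the Bachmann--Hoyois pushforward along the composite $BH \to BG \to B(G/M)$ and factoring this as $BH \twoheadrightarrow B(H/(H\cap M)) \hookrightarrow B(G/M)$, I compute
\[ \Psi^M(\Sigma^\infty_+ G/H) \simeq \ind^{G/M}_{H/(H\cap M)}\Psi^{H\cap M}(\SS) \simeq \Sigma^\infty_+ (G/M)/(HM/M), \]
where the last equivalence uses $H \cap M = 1$ (since $M \leq N$). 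A quick group-theoretic check shows $(HM/M) \cap (N/M) = (HM \cap N)/M = M/M = 1$, so $HM/M \in \Gamma_{N/M}$ and we are done.

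For (2), the description of the $r_N$-cartesian edges recalled in \cref{inflationFunctors} reduces the claim to the group-theoretic identity $HM \cap K = (H\cap K)M$ for $M \leq K \leq G$: writing $x = hm \in HM \cap K$ with $h \in H$, $m \in M$, one has $h = xm^{-1} \in K$ since $m \in M \leq K$, so $h \in H \cap K$ and $x \in (H\cap K)M$. This shows $r_M$ sends the cartesian edge $G/(H\cap K) \to G/H$ (above a $G/N$-map $G/K \to G/HN$ with $N \leq K$) to the cartesian edge over $r_{N/M}$ with stabilizers $(HM/M) \cap (K/M)$ and $HM/M$. The restriction to $B_{G/N}^\psi N$ is the further observation that if $H \cap N = 1$, then $HM \cap N = M$ by the same substitution argument, so $(HM/M) \cap (N/M) = 1$ and $\rho_M(G/H) \in B_{G/N}^{\psi'}(N/M)$.

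For (3), I verify the top square on forgetful functors by evaluating at any orbit $G/H \in B_{G/N}^\psi N$; since $s_H^\ast$ for $H \in \Gamma_N$ are jointly conservative on $\Sp^G_{\Borel{N}}$ (\cref{VariousPropertiesForgetfulFunctorBorelSpectra}(2)), this is enough. On the left we get $\Psi^{H\cap N}\res^G_H\inf^G_{G/M} = \res^{G/M}_H$ (since $H \cap N = 1$ forces $H \cap M = 1$, so $\Psi^{H\cap N} = \id$ and $\res \inf$ collapses to restriction along $H \hookrightarrow G/M$); on the right we get $\Psi^{(HM/M)\cap(N/M)}\res^{G/M}_{HM/M} = \res^{G/M}_{HM/M}$, which agrees under the isomorphism $H \cong HM/M$ established in (2). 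Passing to right adjoints yields the stated diagram, where $(\inf^G_{G/M})^R = \Psi^M$, $(\sU_b[N])^R = \sF^\vee_b[N]$, $(\sU_b[N/M])^R = \sF^\vee_b[N/M]$, and $(\rho_M)^{\ast R} = (\rho_M)_\ast$ is computed as the $G/N$-right Kan extension along $\rho_M$ by the general theory of \cref{GeneralTheoryParamColimits}.

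The main obstacle I anticipate is the bookkeeping needed to promote the fiberwise identifications in (3) to an equivalence of $G/N$-functors that is coherent for all the restriction data. This is handled by appealing to the jointly conservative and limit/colimit-preserving family $\{s_H^\ast\}_{H \in \Gamma_N}$ together with the compatibility $\res^{H/(N\cap H)}_{K/(N\cap H)} \circ s_H^\ast \simeq s_K^\ast$ from \cref{evaluationFactorizationNaiveSpectra}, which suffices to pin down the natural transformation once it is defined on fibers.
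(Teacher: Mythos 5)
Parts (1) and (2) of your argument are correct. For (1) you follow essentially the paper's route: reduce to the compact generators $G/H_+$ with $H\in\Gamma_N$, use that $\Psi^M$ preserves colimits, and identify $\Psi^M(G/H_+)\simeq \frac{G/M}{HM/M}{}_+$, which is $N/M$-free because $HM\cap N=M$. For (2) the paper simply cites \cref{lem:QuotientMapCartesianFibration}(2) applied to the factorization $r_N=r_{N/M}\circ r_M$, whereas you verify preservation of cartesian edges by hand from the explicit description of the $r_N$-cartesian edges in \cref{inflationFunctors} together with the identity $HM\cap K=(H\cap K)M$ for $M\leq K$. This is a valid, more concrete substitute: the stabilizer identity identifies $r_M$ of the upstairs pullback with the downstairs pullback, since the canonical comparison is a map of transitive $G/M$-sets with equal stabilizers and hence an isomorphism.

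The gap is in (3). Joint conservativity of the evaluation functors $\{s_H^{\ast}\}_{H\in\Gamma_N}$ lets you check that an \emph{already constructed} natural transformation is an equivalence; it cannot produce the transformation from fiberwise identifications, and in the $\infty$-categorical setting "defining it on fibers" together with the compatibilities $\res\circ s_H^{\ast}\simeq s_K^{\ast}$ is not a construction. This is exactly the coherence issue you flag at the end, but your proposed resolution is circular: conservativity pins down nothing until a global map between the two composites $\sU_b[N]\circ\inf^G_{G/M}$ and $(\rho_M)^{\ast}\circ\sU_b[N/M]$ exists. The actual content of (3) — which the paper signals by calling it "a relative version of \cref{lm:IdentifyingDiagonalAsComposition}" and "a diagram chase after unpacking the various definitions" — is to build this comparison globally, by unwinding the definitions of $\sU_b[N]$, $\sU_b[N/M]$, $\inf^G_{G/M}$ and $(\rho_M)^{\ast}$ in terms of the unstraightened constructions $\underline{\inf}[-]$, $\underline{\res}[-]$, $\widehat{\Psi}[-]$ of \cref{cnstr:ForgetfulFunctorToNaiveGSpectra} (once for $N\trianglelefteq G$ and once for $N/M\trianglelefteq G/M$), where unit/counit and Beck--Chevalley maps supply a globally defined transformation over $B^{\psi}_{G/N}N$. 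Only after that does your fiberwise computation $\Psi^{H\cap N}\res^G_H\inf^G_{G/M}\simeq\res^{G/M}_{HM/M}\simeq\Psi^{(HM/M)\cap(N/M)}\res^{G/M}_{HM/M}$ (for $H\cap N=1$) finish the argument. Granting the top square, your passage to right adjoints and the identification of $(\rho_M)_{\ast}$ as $G/N$-right Kan extension along $\rho_M$ via \cref{GeneralTheoryParamColimits} are fine.
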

\begin{proof} For (1), note that if $G/H$ is a $N$-free $G$-orbit, then $G/H$ is also $M$-free. Thus, we may compute $\Psi^M(G/H_+)$ as by taking the quotient by the $M$-action to obtain $\frac{G/M}{H M/M}_+$, which is $N/M$-free and thus $\Gamma_{N/M}$-torsion. Because the subcategory of $\Gamma_N$-torsion spectra is the localizing subcategory generated by such $G/H_+$ and $\Psi^M$ preserves colimits, the statement follows. (2) is a direct consequence of \cref{lem:QuotientMapCartesianFibration}(2). (3) is a relative version of \cref{lm:IdentifyingDiagonalAsComposition}, and also follows by an elementary diagram chase after unpacking the various definitions.
\end{proof}

Now suppose $G$ is a semidirect product of $N$ and $G/N$, so we have chosen a splitting $G/N \to G$ of the quotient map such that $G \cong N \rtimes G/N$, and with respect to the $G/N$-action on $N$, the inclusion $M \subset N$ is $G/N$-equivariant. Then $M \rtimes G/N$ is a subgroup of $G$, and we let $$\psi''=[M \to M \rtimes G/N \to G/N].$$ Also regard $B^{\psi'}_{G/N} N/M$ as a based $G/N$-space via the splitting. Then we have a homotopy pullback square of $G/N$-spaces
\[ \begin{tikzcd}[row sep=4ex, column sep=4ex, text height=1.5ex, text depth=0.25ex]
B^{\psi''}_{G/N} M \ar{r} \ar{d}{\rho_M} & B^{\psi}_{G/N} N \ar{d}{\rho_M} \\
\sO_{G/N}^{\op} \ar{r} & B^{\psi'}_{G/N} N/M 
\end{tikzcd} \]
that arises from the fiber sequence $B M \to BN \to B N/M$ of spaces with $G/N$-action.

\begin{prp} \label{prp:ResidualAction} Suppose $X \in \Sp^{G}_{\Borel{N}}$ and also write $X$ for its restriction to $\Sp^G_{\Borel{M}}$. Then $X^{t[\psi'']}$ canonically acquires a `residual action' by lifting to an object in $\Sp^{G}_{\Borel{N/M}}$, and we have a fiber sequence of $G/N$-spectra
\[ (X_{h[\psi'']})^{t[\psi']} \to X^{t[\psi]} \to (X^{t[\psi'']})^{h[\psi']} \]
that restricts to a fiber sequence of Borel $G/N$-spectra
\[ (X_{h M})^{t(N/M)} \to X^{tN} \to (X^{t M})^{h(N/M)}. \]
\end{prp}
\begin{proof} We apply \cite[Rem.~4.2.3]{HL13} to the pullback square above to deduce the first assertion. For the second assertion, we apply \cite[Rem.~4.2.4]{HL13} to the factorization of $\rho_N$ as $\rho_{N/M} \circ \rho_M$ to obtain a commutative diagram of $G/N$-spectra
\[ \begin{tikzcd}[row sep=6ex, column sep=12ex, text height=1.5ex, text depth=1ex]
X_{h[\psi]} \ar{r}{\Nm_{\rho_{N/M}} \circ (\rho_M)_!} \ar{d} & (X_{h[\psi'']})^{h[\psi']} \ar{r}{(\rho_{N/M})_{\ast} \circ \Nm_{\rho_M}} \ar{d} & X^{h[\psi]} \ar{d} \\
0 \ar{r} & (X_{h[\psi'']})^{t[\psi']} \ar{r} \ar{d} & X^{t[\psi]} \ar{d} \\
& 0 \ar{r} & (X^{t[\psi'']})^{h[\psi']}
\end{tikzcd} \]
in which every rectangle is a homotopy pushout (using the two-out-of-three property of homotopy pushouts to show this for the upper righthand square and then the lower righthand square). The final assertion follows from \cref{rem:ParamTateCompatibleRestriction}.
\end{proof}

\subsection{Inverse limit formula for the parametrized Tate construction}


We prove that the parametrized Tate construction can be expressed as a limit involving certain Thom spectra, generalizing \cite[Thm. 16.1]{GM95}. This identification is used to prove parametrized Tate blueshift in \cite{LLQ19} and to define $C_2$-equivariant Mahowald invariants in \cite{Qui19b}. 

\begin{lem}\label{Lem:VRestrictions}
Let $\cF$ be a family of subgroups of $G$. Suppose $V$ be a $G$-representation with $V^H = 0$ for $H \notin \cF$ and $V^H \neq 0$ for $H \in \cF$. Then $S(V^{\oplus \infty}) \simeq E\cF$ and $S^{\infty V} \simeq \widetilde{E\cF}$. 
\end{lem}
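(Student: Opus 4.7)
The plan is to verify both equivalences by computing fixed points against the defining properties of $E\cF$ and $\widetilde{E\cF}$ from \cref{cnstr:GspaceFromGfamily}. Recall that a $G$-space $X$ is equivalent to $E\cF$ if and only if $X^H$ is contractible for $H \in \cF$ and empty for $H \notin \cF$, and dually $X \simeq \widetilde{E\cF}$ when $X^H$ is contractible for $H \in \cF$ and equivalent to $S^0$ for $H \notin \cF$.

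First I would handle $S(V^{\oplus \infty})$, where $V^{\oplus \infty} = \colim_n V^{\oplus n}$ and $S(W)$ denotes the unit sphere of a representation $W$. Since fixed points commute with taking unit spheres and with filtered colimits of closed inclusions, we have
\[ S(V^{\oplus \infty})^H \cong S((V^H)^{\oplus \infty}). \]
If $H \notin \cF$, then $V^H = 0$, so this is $S(0) = \emptyset$. If $H \in \cF$, then $V^H \neq 0$, so $S((V^H)^{\oplus \infty})$ is the unit sphere in an infinite-dimensional inner product space, which is contractible (the standard deformation retraction onto a sub-sphere, iterated). This matches the defining formula for $E\cF^H$, so $S(V^{\oplus \infty}) \simeq E\cF$.

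Next, for $S^{\infty V} \coloneq \colim_n S^{nV}$, the one-point compactification commutes with fixed points, so
\[ (S^{\infty V})^H \cong S^{\infty (V^H)}. \]
When $V^H = 0$, each $S^{nV^H} = S^0$ and the transition maps are identities, so the colimit is $S^0$. When $V^H \neq 0$, this is $S^\infty$ for a nonzero (hence unbounded-dimensional) representation, which is contractible. This matches the defining formula for $\widetilde{E\cF}^H$, giving $S^{\infty V} \simeq \widetilde{E\cF}$.

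Alternatively, and perhaps more cleanly, one can deduce the second equivalence from the first: for each $n$, the inclusion of the unit sphere into the unit disk followed by collapsing the boundary yields a cofiber sequence of pointed $G$-spaces $S(V^{\oplus n})_+ \to S^0 \to S^{nV}$. Taking the filtered colimit over $n$ (along the standard inclusions) produces the cofiber sequence $S(V^{\oplus \infty})_+ \to S^0 \to S^{\infty V}$, which under the first equivalence matches the defining cofiber sequence $E\cF_+ \to S^0 \to \widetilde{E\cF}$ of \cref{cnstr:GspaceFromGfamily}. There is no serious obstacle; the only thing to be careful about is that filtered colimits of pointed $G$-spaces along closed inclusions commute with taking $H$-fixed points, which is standard.
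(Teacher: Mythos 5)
Your argument is correct, and it is exactly the standard one: the paper states this lemma without proof precisely because it is the classical identification of universal spaces via their fixed points (going back to tom Dieck, and used in Greenlees--May), so there is no divergence of method to report. The only point to sharpen is the claimed characterization of $\widetilde{E\cF}$: the bare homotopy types of the fixed points do not quite determine a $G$-space that is not $(-1)$-truncated (one must also remember the restriction maps, or equivalently the unit map from $S^0$), so either observe that the canonical map $S^0 \to S^{\infty V}$ induces equivalences on $H$-fixed points for $H \notin \cF$ while $(S^{\infty V})^H \simeq \ast$ for $H \in \cF$, or simply run your cofiber-sequence alternative $S(V^{\oplus \infty})_+ \to S^0 \to S^{\infty V}$, which settles the second equivalence with no further care; for $E\cF$ itself the naive fixed-point criterion is fine since the candidate presheaf is $(-1)$-truncated.
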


The following lemma generalizes \cite[Lem.~2.6]{GM95}.

\begin{lem}\label{Lem:GM26}
There is an equivalence of $G$-spectra
$$\widetilde{E\cF} \otimes F(E\cF_+,X) \simeq F(\widetilde{E\cF}, E\cF_+ \otimes \Sigma X).$$
\end{lem}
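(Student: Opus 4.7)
The plan is to identify both sides of the claimed equivalence with the cofiber of the same natural map $F(\widetilde{E\cF}, X) \to \widetilde{E\cF} \otimes X$, induced by composing the evaluation $F(\widetilde{E\cF}, X) \to X$ with the localization unit $X \to \widetilde{E\cF} \otimes X$.

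For the right-hand side, I would rotate the cofiber sequence $E\cF_+ \otimes X \to X \to \widetilde{E\cF} \otimes X$ obtained by smashing $E\cF_+ \to S^0 \to \widetilde{E\cF}$ with $X$, producing $X \to \widetilde{E\cF} \otimes X \to E\cF_+ \otimes \Sigma X$. Applying $F(\widetilde{E\cF}, -)$ yields the cofiber sequence
\[ F(\widetilde{E\cF}, X) \to F(\widetilde{E\cF}, \widetilde{E\cF} \otimes X) \to F(\widetilde{E\cF}, E\cF_+ \otimes \Sigma X). \]
The middle term is canonically equivalent to $\widetilde{E\cF} \otimes X$: indeed, the fiber sequence $F(\widetilde{E\cF}, Y) \to Y \to F(E\cF_+, Y)$ obtained by applying $F(-, Y)$ to $E\cF_+ \to S^0 \to \widetilde{E\cF}$ shows that whenever $Y$ is $\cF^{-1}$-local, the evaluation $F(\widetilde{E\cF}, Y) \to Y$ is an equivalence, since the adjunction $\Map(Z, F(E\cF_+, Y)) \simeq \Map(Z \otimes E\cF_+, Y)$ identifies $F(E\cF_+, Y)$ as a mapping spectrum from an $\cF$-torsion object to an $\cF^{-1}$-local one, which is null (see the bulleted characterizations in \cref{cnstr:GspaceFromGfamily}). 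Taking $Y = \widetilde{E\cF} \otimes X$ then presents the right-hand side as the cofiber of $F(\widetilde{E\cF}, X) \to \widetilde{E\cF} \otimes X$.

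For the left-hand side, I would use that the $\cF$-fracture square of \cref{cnstr:GspaceFromGfamily} is a pullback square, so the fibers of its two vertical maps agree. The fiber of the left vertical map $X \to F(E\cF_+, X)$ is $F(\widetilde{E\cF}, X)$, and hence so is the fiber of the right vertical map, giving a cofiber sequence
\[ F(\widetilde{E\cF}, X) \to \widetilde{E\cF} \otimes X \to \widetilde{E\cF} \otimes F(E\cF_+, X), \]
which presents the left-hand side as the cofiber of the same map $F(\widetilde{E\cF}, X) \to \widetilde{E\cF} \otimes X$. The only point requiring mild care is to verify that the two occurrences of this map coincide—but each traces back to the composite of the evaluation $F(\widetilde{E\cF}, X) \to X$ with the unit $X \to \widetilde{E\cF} \otimes X$ described above, so the matching is automatic and the equivalence follows.
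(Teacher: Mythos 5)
Your argument is correct, but it takes a different route from the paper. The paper's proof stays entirely inside the recollement calculus: it rewrites the left side as $i_*i^*j_*j^*(X)$ and the right side as $i_*i^!j_!j^*(\Sigma X)$, then computes both in terms of the gluing-datum description $X = [U, Z \to \phi(U)]$ of an object of a stable recollement, finding that each is $i_*$ of the gluing functor applied to $j^*X$. You instead exhibit both sides as the cofiber of the single map $F(\widetilde{E\cF},X) \to X \to \widetilde{E\cF}\otimes X$, using the rotated smash cofiber sequence together with the idempotence of $\widetilde{E\cF}$ and the vanishing of $F(E\cF_+,-)$ on $\cF^{-1}$-local objects for the right side, and the cartesianness of the $\cF$-fracture square of \cref{cnstr:GspaceFromGfamily} for the left side. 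The underlying inputs are the same (in recollement language, $F(\widetilde{E\cF},-)\simeq i_*i^!$, $\widetilde{E\cF}\otimes(-)\simeq i_*i^*$, and the orthogonality $j^*i_*=0$), so the two proofs are parallel in content; yours is more elementary and self-contained, working only with cofiber sequences and the fracture square, while the paper's is shorter given that the recollement formalism is already set up. Your handling of the one delicate point—that the two occurrences of the map $F(\widetilde{E\cF},X)\to\widetilde{E\cF}\otimes X$ agree—is right: both are the composite of restriction along the unit $S^0\to\widetilde{E\cF}$ with the localization unit, as one sees from the naturality square for evaluation applied to $X\to\widetilde{E\cF}\otimes X$, and this level of detail matches the paper's own (terse) standard of rigor.
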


\begin{proof}
We may identify
$$\widetilde{E\cF} \otimes F(E\cF_+,X) \simeq i_*i^*j_*j^*(X),$$
$$F(\widetilde{E\cF},E\cF_+ \otimes \Sigma X) \simeq i_*i^!j_!j^*(\Sigma X).$$
Writing $X = [U, Z \to \phi(U)]$ for the recollement decomposition of $X$, we find that $i_*i^*j_*j^*(X) = [0, \phi(U) \xrightarrow{=} \phi(U)]$. We also have $j_! j^* \Sigma X = [\Sigma U, 0 \to \Sigma \Phi(U)]$, so $i_*i^!j_!j^*(\Sigma X) = [0, \Phi(U) \xrightarrow{=} \Phi(U)]$ as desired. 
\end{proof}

\begin{thm}\label{Thm:GM161}
Let $\psi = [N \to G \to G/N]$ be an extension with $N \subseteq H$ for each $H \notin \Gamma_N$. Suppose $V$ is as in \cref{Lem:VRestrictions} for $\cF = \Gamma_N$. For any $X \in \Sp^{\Phi \Gamma_N} \simeq \Sp^{G/N}$, we have
$$(j^* i_* X)^{t[\psi]} \simeq \lim_n ({B_{G/N}^{\psi} N}^{-nV} \otimes \Sigma X).$$
\end{thm}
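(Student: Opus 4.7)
The plan is to apply the geometric model from \cref{obs:GeometricModel} and then explicitly resolve $\widetilde{E\Gamma_N}$ into a colimit of representation spheres via \cref{Lem:VRestrictions}. With $Y = i_*X \in \Sp^G$ as the chosen lift, the geometric model gives
\[ (j^*i_*X)^{t[\psi]} \simeq \Psi^N\bigl(F(E{\Gamma_N}_+, Y) \otimes \widetilde{E\Gamma_N}\bigr), \]
and my first move will be to apply \cref{Lem:GM26} to rewrite the inner expression as $F(\widetilde{E\Gamma_N}, E{\Gamma_N}_+ \otimes \Sigma Y)$.

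Next, by \cref{Lem:VRestrictions} we have $\widetilde{E\Gamma_N} \simeq S^{\infty V} = \colim_n S^{nV}$. Since each representation sphere $S^{nV}$ is invertible in $\Sp^G$ with inverse $S^{-nV}$, the function spectrum converts to an inverse limit
\[ F(\widetilde{E\Gamma_N}, E{\Gamma_N}_+ \otimes \Sigma Y) \simeq \lim_n\bigl(S^{-nV} \otimes E{\Gamma_N}_+ \otimes \Sigma Y\bigr). \]
Applying $\Psi^N$, which preserves limits as a right adjoint, pulls the limit outside, reducing the problem to computing $\Psi^N\bigl(S^{-nV} \otimes E{\Gamma_N}_+ \otimes \Sigma Y\bigr)$ termwise.

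The key input here is the $\infty$-categorical Adams isomorphism (\cref{AdamsIsomorphism}): each spectrum in question is $\Gamma_N$-torsion (by virtue of the $E{\Gamma_N}_+$ smash factor), so $\Psi^N$ applied to it is naturally identified with the parametrized homotopy orbits of its pullback via $j^* = \sU_b[N]$. Using that $j^*$ is symmetric monoidal (\cref{VariousPropertiesForgetfulFunctorBorelSpectra}), and that $j^*(E{\Gamma_N}_+)$ is the unit in $\Sp^G_{\Borel{N}}$ since $(E\Gamma_N)^H \simeq \ast$ for all $H \in \Gamma_N$, one obtains
\[ \Psi^N\bigl(S^{-nV} \otimes E{\Gamma_N}_+ \otimes \Sigma Y\bigr) \simeq \bigl(j^*S^{-nV} \otimes j^*\Sigma Y\bigr)_{h[\psi]}. \]

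The final step is twofold: (i) apply the projection formula for the $G/N$-colimit $(-)_{h[\psi]} = (\rho_N)_!$, which follows formally from its being a $G/N$-left adjoint to the constant diagram functor $\delta = \rho_N^*$ (and hence satisfies $(Z \otimes \delta W)_{h[\psi]} \simeq Z_{h[\psi]} \otimes W$), identifying $j^*\Sigma Y$ with $\delta(\Sigma X)$ via \cref{lm:IdentifyingDiagonalAsComposition}; and (ii) recognize $(j^*S^{-nV})_{h[\psi]}$ as the parametrized Thom spectrum $B^{\psi}_{G/N} N^{-nV}$, essentially by definition of the latter as the $G/N$-colimit over $B^{\psi}_{G/N} N$ of the fiberwise sphere spectrum associated to the restriction of the $G$-representation $V$. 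Combining these yields
\[ \Psi^N\bigl(S^{-nV} \otimes E{\Gamma_N}_+ \otimes \Sigma Y\bigr) \simeq B^{\psi}_{G/N} N^{-nV} \otimes \Sigma X, \]
and taking $\lim_n$ completes the proof. The main obstacle is step (ii): verifying the Thom spectrum identification requires carefully tracking the twisted $N$-action on $S^{-nV}$ induced from the $G$-action on $V$, but this is essentially a matter of unpacking the definitions of the parametrized Thom construction and the twisted classifying space $B^{\psi}_{G/N} N$.
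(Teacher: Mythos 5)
Your proof is correct and follows essentially the same route as the paper's: the geometric/point-set model, \cref{Lem:GM26}, the identification $\widetilde{E\Gamma_N}\simeq S^{\infty V}=\colim_n S^{nV}$ converting the function spectrum into an inverse limit, commuting $\Psi^N$ past the limit, and then the Adams isomorphism (\cref{thm:EquivalentTateConstructions}) together with the Thom-spectrum identification $(j^*S^{-nV})_{h[\psi]}\simeq {B_{G/N}^{\psi} N}^{-nV}$. The only difference is expository: your steps (i)--(ii), via monoidality of $j^*$ and the projection formula for $(\rho_N)_!$, spell out what the paper compresses into the single line splitting $\Psi^N$ across the smash product and citing \cref{thm:EquivalentTateConstructions}.
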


\begin{proof}
We compute:
\begin{align*}	F
(j^* i_* X)^{t[\psi]} & \simeq \Psi^N(\widetilde{E\Gamma_N} \otimes F({E\Gamma_N}_+, j^*i_* X)) 				& (\cref{rem:PointSetModels}) \\
					& \simeq \Psi^N(F(\widetilde{E\Gamma_N}, {E\Gamma_N}_+ \otimes \Sigma j^*i_* X)) 		& (\cref{Lem:GM26}) \\
					& \simeq \Psi^N(F(S^{\infty V}, {E\Gamma_N}_+ \otimes \Sigma j^*i_* X))					& (\cref{Lem:VRestrictions}) \\
					& \simeq \Psi^N(F(\lim_n S^{nV}, {E\Gamma_N}_+ \otimes \Sigma j^*i_* X)) 					& \\
					& \simeq \Psi^N( \lim_n F(S^{nV}, {E\Gamma_N}_+ \otimes \Sigma j^*i_* X)) 					& \\
					& \simeq \Psi^N( \lim_n S^{-nV} \otimes {E\Gamma_N}_+ \otimes \Sigma j^*i_* X) 			& \\
					& \simeq \lim_n (\Psi^N(S^{-nV} \otimes {E\Gamma_N}_+) \otimes \Psi^N(\Sigma j^*i_* X)) 	& \\
					& \simeq \lim_n ((S^{-nV})_{h_{G/N}N} \otimes \Sigma X) 						& (\cref{thm:EquivalentTateConstructions}) \\
					& \simeq \lim_n ({B_{G/N}^{\psi} N}^{-nV} \otimes \Sigma X).								& 
\end{align*}
\end{proof}

\begin{rem}
Although we have restricted to finite $G$ in this section, the results of the next section can be used to show that the evident analogue of \cref{Thm:GM161} holds for $G$ a compact Lie group. This agrees with the level of generality in the non-parametrized result of Greenlees--May \cite[Thm.~16.1]{GM95}. 
\end{rem}

\section{Parametrized assembly}\label{Sec:Assembly}

Suppose $K$ is a compact Lie group. Then if $K$ is infinite, the Tate construction $(-)^{t K}$ cannot be defined via the Hopkins-Lurie ambidexterity theory since $B K$ is not $\pi$-finite. Instead, one defines $(-)^{t K}$ to be the cofiber of an \emph{assembly map}
\[ (\SS^{\mathfrak{a}} \otimes -)_{h K} \to (-)^{h K} \]
where $\SS^{\mathfrak{a}}$ is $\Sigma^{\infty}$ of the one-point compactification of the adjoint representation of $K$ (cf. \cite{Klein2001} and \cite[\S I.4]{NS18}). This construction is of particular interest when $K = S^1$. For example, given a cyclotomic spectrum $X$ (e.g., $\THH$ of a ring spectrum $A$), one defines the \emph{topological periodic homology} $\TP(X)$ to be $X^{t S^1}$, and this term (or rather, its profinite completion) participates in a fiber sequence computing the topological cyclic homology $\TC(X)$.

Since we are ultimately interested in applications to trace methods for real algebraic K-theory, we are thus motivated to develop a parametrized refinement of the above picture. More precisely, let $G$ be a finite group and $\psi: G \to \Aut(K)$ a group homomorphism (where $\Aut(K)$ denotes the group of continuous automorphisms of $K$).

\begin{dfn}
The $G$-space $B^{\psi}_G K$ is the Borel $G$-space obtained via right Kan extension along $BG \subset \sO_G^{\op}$ of $B K$ regarded as a space with $G$-action via $\psi$.
\end{dfn}

\begin{rem}
By \cref{lem:BorelClassifyingSpace}, this is consistent with our earlier definition of $B^{\psi}_G K$ when $K$ is finite.
\end{rem}

\begin{wrn}
The role of $G$ in this section is the same as its role in the introduction, but different from its role in the previous two sections. In particular, $G$ now plays the role of the quotient group $G = \widehat{G}/K$. Again, this is to emphasize that $G$ is finite; the notation for the normal subgroup has changed from $N$ to $K$ to emphasize that $K$ can be infinite, while $N$ was always finite. 
\end{wrn}

In this section, we will construct a \emph{parametrized assembly map} (\cref{thm:paramTateGeneral} and \cref{prp:DualizingSpectrum})
\[ (\SS^{\mathfrak{a}} \otimes -)_{h_G K} \to (-)^{h_G K} \]
and then define the parametrized Tate construction $(-)^{t_G K}$ to be its cofiber (\cref{dfn:paramTateGeneral}). We unpack the example of $K = S^1$ with $G= C_2$ acting by complex conjugation as \cref{exm:CircleTate}. We will also equip $(-)^{t_G K}$ with a lax $G$-symmetric monoidal structure and thereby show that it preserves $G$-commutative algebras (\cref{cor:LaxGSymmetricMonoidalTate}); when $K$ is finite, this improves upon \cref{cor:TateLaxMonoidalStructure}.

We will use the following notation heavily in this section. Recall the notation $\underline{U}$ for a finite $G$-set $U$ from \cref{exm:corepresentableDiagrams} and let $\Cat_{\infty, \underline{U}}$ denote the $\infty$-category of $\underline{U}$-$\infty$-categories, i.e., cocartesian fibrations $C \to \underline{U}$.

\begin{ntn}
Let $f: U \to V$ be a map of finite $G$-sets and consider the pullback functor
\[ f^*: \Cat_{\infty,\underline{V}} \to \Cat_{\infty, \underline{U}} \:, \quad C \mapsto C_{\underline{U}}. \]
We write
\[ \begin{tikzcd}[column sep=4em]
\Cat_{\infty, \underline{U}} \ar[shift left = 3]{r}{f_!} \ar[shift right = 3]{r}[swap]{f_*} & \Cat_{\infty, \underline{V}} \ar{l}[description]{f^*}
\end{tikzcd} \]
for the adjoint triple.
\end{ntn}

\begin{rem} \label{rem:BaseChangeEquivalences}
Let
\[ \begin{tikzcd}
U' \ar{r}{f'} \ar{d}{g'} & V' \ar{d}{g} \\
U \ar{r}{f} & V
\end{tikzcd} \]
be a pullback square of finite $G$-sets. Then the exchange transformations
\[ g^* f_* \Rightarrow f'_* g'^*, \: f'_! g'^* \Rightarrow g^* f_!: \Cat_{\infty, \underline{U}} \to \Cat_{\infty, \underline{V'}} \]
are equivalences.
\end{rem}

\subsection{Recollections on \texorpdfstring{$G$}{G}-symmetric monoidal structures}\label{SS:GSM}

One of our main goals in this section is to explain how (for an extension $\psi$ of a finite group $G$ by a compact Lie group $K$) the parametrized Tate construction
\[ (-)^{t_G K}: \Fun_G(B^{\psi}_G K, \underline{\Sp}^G) \to \Sp^G \]
refines to a \emph{lax $G$-symmetric monoidal functor}. We begin by recalling the necessary terminology and concepts from the theory of $G$-symmetric monoidal $\infty$-categories, which will be explained in more detail in \cite{paramalg}.\footnote{Another basic reference is \cite{BachmannHoyoisNorms}, but note that there is some work involved in translating between their setup and ours.}

\begin{dfn} \label{dfn:GSMC}
A \emph{$G$-symmetric monoidal $\infty$-category} $C^\otimes$ is a cocartesian fibration over $\Span(\FF_G)$ whose straightening
\[ F_{C^\otimes}: \Span(\FF_G) \to \Cat_{\infty} \]
is a product-preserving functor. The \emph{underlying $G$-$\infty$-category} of $C^\otimes$ is the restriction $C = C^\otimes|_{\sO_G^{\op}}$ over the subcategory $\sO_G^{\op} \subset \Span(\FF_G)$. Conversely, given a $G$-$\infty$-category $C$, a \emph{$G$-symmetric monoidal structure} $C^\otimes$ on $C$ is such an extension of $C$ over $\Span(\FF_G)$. When the $G$-symmetric monoidal structure is understood, we refer to $C$ itself as a $G$-symmetric monoidal $\infty$-category.

For a map of finite $G$-sets $f: U \to V$, we define the (multiplicative) \emph{norm functor}
\[ f_{\otimes}: C^\otimes_U \to C^\otimes_V \]
as the pushforward functor associated to the span $[U \xot{=} U \xto{f} V]$.

A (strong) \emph{$G$-symmetric monoidal functor} $F: C^\otimes \to D^\otimes$ is a map of cocartesian fibrations over $\Span(\FF_G)$. If we only suppose that $F$ preserves cocartesian edges over $\FF_G^{\op}$, then $F$ is said to be \emph{lax $G$-symmetric monoidal}.
\end{dfn}

\begin{rem}
\cref{dfn:GSMC} is an $\infty$-categorical version of the $G$-symmetric monoidal categories introduced by Hill and Hopkins in \cite{hillhopkins}.
\end{rem}

Given a $G$-symmetric monoidal $\infty$-category $C^{\otimes}$, a \emph{$G$-commutative algebra} $A$ in $C^{\otimes}$ is a section $A^{\otimes}: \Span(\FF_G) \to C^{\otimes}$ whose restriction $A^{\otimes}|_{\FF^{\op}_G}$ preserves cocartesian edges. Though we won't discuss the notion of $G$-commutative algebra much in this paper, the reader should bear in mind that one of the main points of the theory of $G$-symmetric monoidal $\infty$-categories is to streamline arguments involving $G$-commutative algebras. For example, a lax $G$-symmetric monoidal functor necessarily preserves $G$-commutative algebras.

\begin{rem} \label{rem:GAdjSymmMon}
Let $C^{\otimes}, D^{\otimes}$ be $G$-symmetric monoidal $\infty$-categories and let $F: C^{\otimes} \to D^{\otimes}$ by a $G$-symmetric monoidal functor whose underlying $G$-functor is $G$-left adjoint. Then an easy argument with relative adjunctions shows that its $G$-right adjoint $R$ canonically refines to a lax $G$-symmetric monoidal functor.
\end{rem}

Note that if $V \cong \coprod_{j \in J} V_j$ for orbits $\{ V_j \}_{j \in J}$ and we write $f_j: U_{j} \coloneq U \times_V V_j \to V_j$ for the fiber, then we have
\[ f_{\otimes} \simeq \prod_{j \in J} (f_j)_{\otimes}: \prod_{j \in J} C^\otimes_{U_j} \to \prod_{j \in J} C_{V_j}. \]
Because of this, we may think of a $G$-symmetric monoidal structure as being defined by the collection of functors $\{ f_{\otimes} \}$ for those maps $[f: U \to V] \in \FF_G$ with $V$ an orbit, subject to relations encoded by $\Span(\FF_G)$.

\begin{rem} \label{rem:NormSymmetricMonoidal}
Given a $G$-symmetric monoidal $\infty$-category $C^\otimes$, for every finite $G$-set $V$ the fiber $C^\otimes_V$ is a symmetric monoidal $\infty$-category via restriction of $C^\otimes$ along the map
$$\Span(\FF) \to \Span(\FF_G), \quad n \mapsto V^{\sqcup n}.$$
In other words, the fold maps for $V$ define the symmetric monoidal structure on $C^\otimes_V$. Moreover, for any map $f: U \to V$ the norm functor $f_\otimes: C^\otimes_U \to C^\otimes_V$ is then symmetric monoidal.
\end{rem}

\begin{rem}
Suppose $C^\otimes$ is a $G$-symmetric monoidal $\infty$-category and $f: U \to V$ is a map of finite $G$-sets. Then in view of the compatibility between norms and restriction as encoded by $\Span(\FF_G)$, the norm functor $f_\otimes: C^\otimes_U \to C^\otimes_V$ canonically lifts to a \emph{norm $\underline{V}$-functor}
\[ f_\otimes: f_* f^* C^\otimes_{\underline{V}} = f_* C^\otimes_{\underline{U}} \to C^{\otimes}_{\underline{V}}. \]
\end{rem}

We next discuss the main examples of $G$-symmetric monoidal $\infty$-categories.

\begin{exm}
Suppose $C$ is a $G$-$\infty$-category that admits finite $G$-products (e.g., $\underline{\Spc}^G$). Then by applying Barwick's unfurling construction \cite[\S 11]{M1} to the Beck-Chevalley fibration $\widetilde{C} \to (\FF_G)^{\op}$ that encodes the functoriality of restriction and coinduction, we may define the \emph{$G$-cartesian} $G$-symmetric monoidal structure $C^{\times} \to \Span(\FF_G)$ such that the norm functors are given by coinduction. 
\end{exm}

\begin{exm} \label{exm:GSMCSpectra}
The usual symmetric monoidal structures on $\{ \Sp^H \}_{H \leq G}$ together with the Hill--Hopkins--Ravenel norm functors furnish a $G$-symmetric monoidal structure on $\underline{\Sp}^G$. Indeed, we may define this via restricting $\SH^{\otimes}$ (\cref{BachmannHoyoisFunctorNorms}) along the map $\omega_G: \Span(\FF_G) \to \Span(\Gpd_{\fin}), U \mapsto U//G$ .
\end{exm}

\begin{exm}[Pointwise $G$-symmetric monoidal structure] \label{exm:pointwiseGSMC}
Let $C$ be a $G$-symmetric monoidal $\infty$-category and let $I$ be a $G$-$\infty$-category. Then we have a \emph{pointwise} $G$-symmetric monoidal structure on $\underline{\Fun}_G(I, C)$ such that for a map $f: U \to V$ of finite $G$-sets, the norm functor
\[ f_{\otimes}: \Fun_{\underline{U}}(I_{\underline{U}}, C_{\underline{U}}) \to \Fun_{\underline{V}}(I_{\underline{V}}, C_{\underline{V}}) \]
sends a $\underline{U}$-functor $[F: I_{\underline{U}} = f^* I_{\underline{V}} \to C_{\underline{U}} = f^* C_{\underline{V}}]$ to the $\underline{V}$-functor
\[ f_{\otimes} F: I_{\underline{V}} \xto{\eta} f_* f^* I_{\underline{V}} \xto{f_* F} f_* f^* C_{\underline{V}} \xto{f_{\otimes}} C_{\underline{V}}, \]
where $\eta$ is the unit map and $f_{\otimes}: f_* f^* C_{\underline{V}} \to C_{\underline{V}}$ is the norm $\underline{V}$-functor defined by $C^\otimes$. We will construct this in \cite{paramalg} as the cotensor in $G$-$\infty$-operads.\footnote{We note that this will supersede \cref{dfn:S-PointwiseMonoidal} in all examples of interest in this paper. However, since the theory of parametrized $\infty$-operads imposes strong restrictions upon the base $\infty$-category $S$ (namely, that $T = S^{\op}$ is atomic orbital), \cref{dfn:S-PointwiseMonoidal} itself is not superseded by the work done in \cite{paramalg}.} If we write $\underline{\CAlg}_G(C)$ for the $G$-$\infty$-category of $G$-commutative algebras, we then have $\underline{\CAlg}_G(\underline{\Fun}_G(I,C)) \simeq \underline{\Fun}_G(I, \underline{\CAlg}_G(C))$.
\end{exm}

We end this subsection by introducing the concept of \emph{$G$-distributivity}, which will play an important role in establishing the existence and uniqueness of a lax $G$-symmetric monoidal structure on $(-)^{t_G K}$ (\cref{cor:LaxGSymmetricMonoidalTate}).

\begin{dfn} \label{dfn:DistributiveFunctor}
Let $f: U \to V$ be a map of finite $G$-sets, let $C$ be a $\underline{U}$-$\infty$-category, and let $D$ be a $\underline{V}$-$\infty$-category. Let $F: f_* C \to D$ be a $\underline{V}$-functor. Then we say that $F$ is \emph{$\underline{V}$-distributive} if for every pullback square
\[ \begin{tikzcd}
U' \ar{r}{f'} \ar{d}{g'} & V' \ar{d}{g} \\
U \ar{r}{f} & V
\end{tikzcd} \]
of finite $G$-sets and $\underline{U'}$-colimit diagram $\overline{p}: K^{\underline{\rhd}} \to g'^* C$, the $\underline{V'}$-functor
\[ (f'_* K)^{\underline{\rhd}} \xto{\can} f'_* (K^{\underline{\rhd}}) \xto{f'_* \overline{p}} f'_* g'^* C \simeq g^* f_* C \xto{g^* F} g^* D \]
is a $\underline{V'}$-colimit diagram.\footnote{Given a $\underline{U'}$-$\infty$-category $K$, we write $K^{\underline{\rhd}}$ for the parametrized join $K \star_{\underline{U'}} \underline{U'}$ \cite[Def.~4.1]{Exp2}, and likewise $(f'_* K)^{\underline{\rhd}} = (f'_* K) \star_{\underline{V'}} \underline{V'}$ (so the notation $(-)^{\underline{\rhd}}$ implicitly involves the base). Using the compatibility of the parametrized join with restriction \cite[Lem.~4.4]{Exp2}, the canonical map $(f'_* K)^{\underline{\rhd}} \xto{\can} f'_* (K^{\underline{\rhd}})$ is then defined to be the adjoint to $\epsilon^{\underline{\rhd}}: (f'^* f'_* K)^{\underline{\rhd}} \to K^{\underline{\rhd}}$.}
\end{dfn}

\begin{rem}
In the situation of \Cref{dfn:DistributiveFunctor}, suppose that $C$ is $\underline{U}$-cocomplete and $D$ is $\underline{V}$-cocomplete. In view of the pointwise formula for parametrized Kan extensions [Shah, Thm.~10.3], we deduce the following apparently stronger conclusion:
\begin{itemize}
\item[($\ast$)] Suppose that
\[ \begin{tikzcd}
K \ar{r}{p} \ar{d}{\quad \Downarrow \eta}[swap]{\phi} & [3em] (g')^* C \\
L \ar{ru}[swap]{\phi_! p}
\end{tikzcd} \]
is a $\underline{U'}$-left Kan extension diagram. Then
\[ \begin{tikzcd}
f'_* K \ar{r}{f'_* p} \ar{d}{\quad \Downarrow \eta'}[swap]{f'_* \phi} & f'_* g'^* C \ar{r}{g^* F} & g^* D \\
f'_* L \ar{rru}[swap]{g^* F \circ f'_* (\phi_! p)} 
\end{tikzcd} \]
is a $\underline{V'}$-left Kan extension diagram, where $\eta'$ is given by whiskering $f'_* \eta$ by $g^* F$.
\end{itemize}
\end{rem}

The following definition generalizes the notion of a cocomplete symmetric monoidal $\infty$-category in which the tensor product distributes over all colimits.

\begin{dfn} \label{dfn:DistributiveSMC}
Let $C^\otimes$ be a $G$-symmetric monoidal $\infty$-category. Then $C^\otimes$ is \emph{$G$-distributive} if:
\begin{enumerate}
\item $C$ is $G$-cocomplete.
\item For every map $f:U \to V$ of finite $G$-sets, the norm $\underline{V}$-functor $f_{\otimes}$ is $\underline{V}$-distributive (\Cref{dfn:DistributiveFunctor}).
\end{enumerate}
\end{dfn}

\begin{rem}
\Cref{dfn:DistributiveFunctor} is formulated so that the pullback of a $\underline{V}$-distributive functor along a map $g:V' \to V$ is again $\underline{V'}$-distributive. Since condition (2) of \Cref{dfn:DistributiveSMC} is checked against \emph{all} maps of finite $G$-sets, and the pullback along $g$ of a norm $\underline{V}$-functor is the corresponding $\underline{V'}$-norm functor, this base-change condition is superfluous in formulating \Cref{dfn:DistributiveSMC}.
\end{rem}

\begin{exm}
Nardin proved in his thesis \cite{nardin} that the $G$-symmetric monoidal $\infty$-category $\underline{\Sp}^G$ is $G$-distributive. In fact, he constructs the $G$-symmetric monoidal structure on $\underline{\Sp}^G$ as the initial $G$-distributive $G$-presentable $G$-stable $G$-symmetric monoidal $\infty$-category along the lines of Lurie's construction of the tensor product on spectra \cite[\S 4.8.2]{HA}; $G$-distributivity is thus essential to prove the \emph{uniqueness} of the $G$-symmetric monoidal structure on $\underline{\Sp}^G$. One may also establish $G$-distributivity by proving separately that the norm functors preserve sifted colimits and distribute over finite $G$-coproducts (cf. \cite[\S 5.2]{BachmannHoyoisNorms}).
\end{exm}

\begin{exm}
If $C$ is a $G$-distributive $G$-symmetric monoidal $\infty$-category, then the pointwise $G$-symmetric monoidal structure on $\underline{\Fun}_G(I,C)$ is also $G$-distributive.
\end{exm}

\begin{exm} \label{exm:CartesianDistributiveCommutationRelation}
Suppose $C$ is a presentable and cartesian closed $\infty$-category, so that the cartesian symmetric monoidal structure on $C$ is distributive. Then the $G$-cartesian symmetric monoidal structure on the $G$-$\infty$-category $\underline{C}_G$ of $G$-objects in $C$ is $G$-distributive.\footnote{Note that $\underline{C}_G$ is $G$-cocomplete and $G$-complete by \cite[Props.~5.5 and 5.6]{Exp2}.}

We may further unwind the meaning of $G$-distributivity in the case that $C = \Cat_{\infty}$. For a map of finite $G$-sets $f: U \to V$, write
$$f_{\times}: f^* f_* \underline{\Cat}_{\infty, \underline{V}} \to \underline{\Cat}_{\infty, \underline{V}}$$
for the norm $\underline{V}$-functor associated to the $G$-cartesian symmetric monoidal structure; over the fiber $W \xto{\pi} V$, we have that
$$f_{\times}: [C \to \underline{W \times_V U}] \mapsto [f'_* C \to \underline{W}]$$
where $f': W \times_V U \to W$ is the pullback of $f$ along $\pi$. Let $\alpha: A \to U$ be a map of finite $G$-sets, $C$ an $\underline{A}$-$\infty$-category, and $F_C: \underline{A} \to \underline{\Cat}_{\infty, \underline{U}}$ the $\underline{U}$-functor determined by $C$. Then
$$\colim^{\underline{U}} F_C = \alpha_! C \in \Cat_{\infty, \underline{U}}$$
and $G$-distributivity implies a formula for $f_* \alpha_! C \in \Cat_{\infty, \underline{V}}$. Namely, let $f^* \dashv f_*$ also denote the adjunction
\[ \adjunct{f^*}{(\FF_G)^{/V}}{(\FF_G)^{/U}}{f_*} \]
and consider the $\underline{V}$-functor
\[ \underline{f_* A} \simeq f_* \underline{A} \xtolong{f_*(F_C)}{1.5} f_* f^* \underline{\Cat}_{\infty,\underline{V}} \xto{f_{\times}} \underline{\Cat}_{\infty, \underline{V}}. \]
By $G$-distributivity, $f_* \alpha_! C \simeq \colim^{\underline{V}} (f_{\times} \circ f_* (F_C))$. Moreover, the $\underline{V}$-functor $f_* (F_C)$ is equivalent data to the $\underline{f^* f_* A}$-$\infty$-category $\ev^* C$, where $\ev: f^* f_* A \to A$ denotes the counit of the adjunction $f^* \dashv f_*$ (which may be concretely described as evaluation). Now let
$$\pr: f^* f_* A = (f_* A) \times_{V} U \to f_* A$$
denote the projection. Under the correspondence between $\underline{f_* A}$-points in $\underline{\Cat}_{\infty, \underline{V}}$ and $\underline{f_* A}$-$\infty$-categories, one then identifies $f_{\times} \circ f_* (F_C)$ with $\pr_* \ev^* C$. Finally, let $g: f_* A \to V$ denote the structure map. Summing up, given the commutative diagram
\[ \begin{tikzcd}
& f^* f_* A \ar{r}{\pr} \ar{d} \ar{ld}[swap]{\ev} & f_* A \ar{d}{g} \\ 
A \ar{r}{\alpha} & U \ar{r}{f} & V,
\end{tikzcd} \]
we obtain the equivalence
\[ f_* \alpha_! C \simeq g_! \pr_* \ev^* C. \]
\end{exm}

\subsection{Parametrized Verdier quotients}\label{SS:Verdier}

In this section, we set up a parametrized generalization of the theory of Verdier quotients. We first consider the $G$-Verdier quotient of a $G$-stable $G$-$\infty$-category by a full $G$-stable $G$-subcategory (\cref{prp:VerdierQuotient}), and then the $G$-Verdier quotient of a $G$-stable $G$-symmetric monoidal $\infty$-category by a $G$-$\otimes$-ideal (\cref{prp:VerdierQuotientInducedObjects}). We will use this theory to prove that the parametrized Tate construction uniquely admits a lax equivariant symmetric monoidal structure (cf. \cref{prp:MonoidalStructureTate}).

For the definition of $G$-Verdier quotient, recall from \cite{Hinich2016} that given a cocartesian fibration $\pi: C \to S$ and a class of fiberwise edges $\sW \subset C$ closed under cocartesian pushforward, one may form the Dwyer-Kan localization $\pi^W: C[\sW^{-1}] \to S$ as the fibrant replacement of the marked simplicial set $(C, \sW)$ in the cocartesian model structure on $s\Set^+_{/S}$. For every $s \in S$, one then has $(C[\sW^{-1}])_s \simeq C_s[\sW_s^{-1}]$; more generally, for every $\infty$-category $K \to S$, if we let $\sW_K \subset C_K = K \times_S C$ denote the restriction of $\sW$, then $C_K[\sW_K^{-1}] \simeq K \times_S C[\sW^{-1}]$.

\begin{dfn}
Let $C$ be a $G$-stable $G$-$\infty$-category and $D \subset C$ a full $G$-stable $G$-subcategory. We define the \emph{$G$-Verdier quotient} $C/D$ to be the Dwyer-Kan localization $C[\sW^{-1}]$, where $\sW \subset C$ is the class of fiberwise edges given over an orbit $V$ by those edges $x \to y$ with cofiber in $D_V$.
\end{dfn}




\begin{rem} \label{rem:IndCompletion}
Suppose $C$ is a small $G$-$\infty$-category that admits all finite $G$-colimits. Then the fiberwise Ind-completion $\underline{\Ind}_G(C)$ is fiberwise presentable, $G$-cocomplete and $G$-complete,\footnote{Given a $G$-$\infty$-category $E$ that is fiberwise presentable, to check that $E$ admits all small $G$-limits and $G$-colimits it suffices to verify that all restriction functors $f^*: E_V \to E_U$ admit left and right adjoints that satisfy the Beck-Chevalley conditions.} and the fiberwise Yoneda embedding $j^{\omega}_G: \into{C}{\underline{\Ind}_G(C)}$ preserves finite $G$-colimits. Moreover, by \cite[Thm.~9.11]{Exp2b} $\underline{\Ind}_G(C)$ identifies with the full $G$-subcategory
\[ i: \underline{\Fun}^{\lex}_G(C^{\vop}, \underline{\Spc}^G) \subset \underline{\PShv}_G(C) \]
whose fiber over an orbit $V \cong G/H$ is spanned by those $H$-presheaves that strongly preserve $H$-finite $H$-limits, and the $G$-Yoneda embedding $j_G: C \to \underline{\PShv}_G(C)$ factors as
\[ \begin{tikzcd}
C \ar[hookrightarrow]{r}{j^{\omega}_G} & \underline{\Ind}_G(C) \ar[hookrightarrow]{r}{i} & \underline{\PShv}_G(C). 
\end{tikzcd} \]

If we further suppose that $C$ is $G$-stable, then since $j^{\omega}_G$ also preserves all finite $G$-limits (as $j_G$ does and $i$ is a right $G$-adjoint), it follows that $\underline{\Ind}_G(C)$ is $G$-stable and $j^{\omega}_G$ is $G$-exact.
\end{rem}

We have the following parametrized generalization of \cite[Thm.~I.3.3]{NS18}.

\begin{thm} \label{prp:VerdierQuotient} Let $C$ be a small $G$-stable $G$-$\infty$-category and $D \subset C$ a full $G$-stable $G$-subcategory.
\begin{enumerate}
\item The $G$-Verdier quotient $C/D$ is $G$-stable and the quotient $G$-functor $p: C \to C/D$ is $G$-exact. Moreover, for any $G$-stable $G$-$\infty$-category $E$, the restriction functor
\[ p^*: \into{\Fun_G^{\ex}(C/D, E)}{\Fun_G^{\ex}(C, E)} \]
is fully faithful with essential image spanned by those $G$-exact $G$-functors $F: C \to E$ such that $F|_{D} \simeq 0_G$. In particular, passing to mapping spaces we see that $C/D$ is the cofiber of the inclusion $D \subset C$ taken in the $\infty$-category of $G$-stable $G$-$\infty$-categories.
\item The restricted $G$-Yoneda $G$-functor $C/D \to \underline{\PShv}_G(C)$ factors through a $G$-exact $G$-functor
\[ r: C/D \to \underline{\Ind}_G(C) \]
given fiberwise by the formula of \cite[Thm.~I.3.3(ii)]{NS18}.
\item The quotient $G$-functor $p: C \to C/D$ prolongs to a $G$-functor
\[ p_!: \underline{\Ind}_G(C) \to \underline{\Ind}_G(C/D) \]
that preserves all $G$-colimits and admits a fully faithful right $G$-adjoint $p^*$ given by restriction along $p$, or equivalently as the prolongation of $r$. Moreover, $p^*$ also preserves all $G$-colimits and admits a right $G$-adjoint.
\item Let $E$ be a $G$-stable $G$-cocomplete $G$-$\infty$-category. The restriction functor
\[ p^*: \into{\Fun_G^{\ex}(C/D, E)}{\Fun_G^{\ex}(C, E)} \]
then admits a left adjoint $L$, which sends a $G$-exact $G$-functor $F: C \to E$ to the composite
\[ \begin{tikzcd}
C/D \ar{r}{r} & \underline{\Ind}_G(C) \ar{r}{\underline{\Ind}_G(F)} & [3em] \underline{\Ind}_G(E) \ar{r}{\colim^G} & E.
\end{tikzcd} \]
Moreover, this adjunction globalizes to a $G$-adjunction
\[ \adjunct{L}{\underline{\Fun}_G^{\ex}(C, E)}{\underline{\Fun}_G^{\ex}(C/D, E)}{p^*} \]
\end{enumerate}
\end{thm}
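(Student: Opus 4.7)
The plan is to reduce each assertion to the non-parametrized Nikolaus--Scholze theorem \cite[Thm.~I.3.3]{NS18} applied fiberwise, and then to globalize via parametrized formalism. For part (1), the fibers $(C/D)_V \simeq C_V/D_V$ are stable by Nikolaus--Scholze. Given a map $f: U \to V$ of finite $G$-sets, the restriction functor $f^*: C_V \to C_U$ is exact and carries $D_V$ into $D_U$ (since $D \subset C$ is a full $G$-subcategory closed under cocartesian edges), hence sends $\sW_V$ into $\sW_U$, yielding an induced exact restriction functor $(C/D)_V \to (C/D)_U$ whose Beck--Chevalley relations are inherited from those in $C$. Thus $C/D$ is $G$-stable and $p$ is $G$-exact. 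For the universal property, given a $G$-exact $F: C \to E$ with $F|_D \simeq 0_G$, applying the fiberwise result produces factorizations $\overline{F}_V: C_V/D_V \to E_V$; these are uniquely determined by the fiberwise universal property, hence automatically compatible with restriction, so they assemble into a $G$-functor $\overline{F}: C/D \to E$. Full faithfulness of $p^*$ then follows from the corresponding fiberwise statement.

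For (2), by \cref{rem:IndCompletion} the $G$-Yoneda $G$-functor factors as $C \xrightarrow{j^{\omega}_G} \underline{\Ind}_G(C) \hookrightarrow \underline{\PShv}_G(C)$. Fiberwise, the Nikolaus--Scholze construction produces $r_V: (C/D)_V \to \Ind(C_V)$ as the functor characterized by $r_V(x) = \Map_{C_V/D_V}(p(-), p(x))$, which takes values in left-exact presheaves and hence lands in $\underline{\Ind}_G(C)_V$. The key point is that these $r_V$ are determined by the fiberwise universal property as the right adjoint to the fiberwise Ind-prolongation of $p$, so their compatibility with restriction follows formally from the compatibility of Ind-completion with restriction. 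For part (3), prolongation yields $p_!: \underline{\Ind}_G(C) \to \underline{\Ind}_G(C/D)$ preserving all $G$-colimits, and the fully faithful right $G$-adjoint $p^*$ is constructed by noting that full faithfulness is fiberwise. The fact that $p^*$ preserves $G$-colimits is again a fiberwise check against Nikolaus--Scholze, after which the $G$-adjoint functor theorem for $G$-presentable $G$-$\infty$-categories furnishes a right $G$-adjoint to $p^*$.

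For (4), the candidate left adjoint $L$ lands in $G$-exact $G$-functors because each constituent is $G$-exact: the $G$-exactness of $\colim^G: \underline{\Ind}_G(E) \to E$ uses that $E$ is $G$-stable so that $G$-colimits in the Ind-completion commute with finite $G$-limits when evaluated in $E$. The adjunction unit and counit are constructed fiberwise from Nikolaus--Scholze, and their $G$-naturality is automatic from the functoriality of the Ind-completion construction. To globalize to a $G$-adjunction, I would combine \cite[Prop.~7.3.2.11]{HA} with the observation that all steps in the construction are compatible with restriction along maps of finite $G$-sets, appealing to the base-change formulas for $\underline{\Ind}_G(-)$ and for $\colim^G$.

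The hard part will be to promote the fiberwise constructions into genuine $G$-functors with coherently compatible structure; in particular, verifying that $r$ in (2) really is a $G$-functor requires either a careful Beck--Chevalley diagram chase or a more abstract argument characterizing $r$ via $G$-adjunction. A cleaner alternative may be to avoid fiberwise definitions altogether: one could instead take (1) as the defining universal property of the $G$-Verdier quotient, reconstruct $C/D$ in the $\infty$-category of $G$-stable $G$-$\infty$-categories as the pushout $0 \sqcup_D C$, and then deduce (2)--(4) formally from the $G$-Yoneda embedding and the parametrized adjoint functor theorem without ever invoking Dwyer--Kan localizations or fiberwise constructions. The tradeoff is that the explicit formula in (2) would then need to be checked by a separate computation of $G$-mapping spaces out of $C/D$.
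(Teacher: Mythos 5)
Your fiberwise-then-globalize strategy is the same broad outline as the paper's proof, and you correctly flag the real sticking point (globalizing $r$ in part (2)). Two remarks, one substantive gap and one comparison.

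The gap is in part (1). Showing $\sW_V$ is preserved by restriction gives you that $C/D$ is again a cocartesian fibration over $\sO_G^{\op}$, i.e.\ a $G$-$\infty$-category, and that $p$ preserves cocartesian edges. But $G$-stability requires more: you need left and right adjoints $f_!$, $f_*$ to the restriction functors in $C/D$ (induction and coinduction) satisfying Beck--Chevalley, and these do \emph{not} come for free from the localization. Saying the ``Beck--Chevalley relations are inherited from those in $C$'' presupposes the adjoints already exist downstairs. What you actually need is the $G$-exactness hypothesis on $D \subset C$: since $f_!$ and $f_*$ in $C$ preserve $D$, the composites $p \circ f_!$ and $p \circ f_*$ send $\sW$ to equivalences and therefore descend through the localization by its universal property. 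Only then does fiberwise surjectivity of $p$ let you push the Beck--Chevalley equivalences (and the semiadditivity equivalence $f_! \simeq f_*$) forward from $C$ to $C/D$. The same fiberwise surjectivity is also what you want for the characterization of the essential image of $p^*$: if $F' \circ p$ is $G$-exact, then $F'$ is, because $p$ is essentially surjective fiberwise.

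For part (2), the paper avoids the stitching-together problem entirely and this is where it genuinely diverges from your primary proposal. Rather than defining $r$ fiberwise and then chasing Beck--Chevalley squares or invoking an adjunction characterization, it works globally: under the identification $\underline{\Ind}_G(C) \simeq \underline{\Fun}^{\lex}_G(C^{\vop}, \underline{\Spc}^G)$ of \cref{rem:IndCompletion}, the restriction $p^*: \underline{\PShv}_G(C/D) \to \underline{\PShv}_G(C)$ visibly restricts to a $G$-functor $\underline{\Ind}_G(C/D) \to \underline{\Ind}_G(C)$, and one composes with the $G$-Yoneda embedding of $C/D$ to get $r$; the fiberwise NS formula then falls out as a description, not a definition. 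This sidesteps the coherence headache and also hands you (3) directly, since $p_! \dashv p^*$ is then an instance of the universal property of $\underline{\Ind}_G$. Your alternative route via the pushout $0 \sqcup_D C$ in $G$-stable $G$-$\infty$-categories would work too, but as you note you'd then owe a separate computation of mapping objects to recover the explicit formula for $r$, whereas the Dwyer--Kan presentation gives it for free.
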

\begin{proof}
\begin{enumerate}[leftmargin=*]
\item Since $(C/D)_V \simeq C_V / D_V$, we already know that $C/D$ is fiberwise stable and $p$ is fiberwise exact. To see that $C/D$ is moreover $G$-stable and $p$ is $G$-exact, observe that since the inclusion $G$-functor $D \subset C$ is $G$-exact by assumption, it follows by the universal property of the Verdier quotient that the induction and coinduction functors $f_!$ and $f_*$ descend to $C/D$ (so then intertwine with $p$). Moreover, in view of the fiberwise surjectivity of $p$, the functors $\{ f_!, f_* \}_{f \in \Ar(\FF_G)}$ continue to satisfy the Beck-Chevalley condition for pullback squares in $\FF_G$, and the $G$-semiadditivity equivalence $f_! \simeq f_*$ in $C/D$ is implied by that in $C$.

For the second assertion, by the universal property of the Dwyer-Kan localization we already know that
\[ p^*: \into{\Fun^{\text{fib-ex}}_G(C/D, E)}{\Fun^{\text{fib-ex}}_G(C, E)} \]
is fully faithful with essential image spanned by those fiberwise exact $G$-functors $F: C \to E$ such that $F|_{D} \simeq 0_G$. Since we proved that $p$ is $G$-exact, it follows that after passage to subcategories of $G$-exact functors on both sides, $p^*$ remains fully faithful with essential image as described - note that if $F': C/D \to E$ is a $G$-functor such that $F' \circ p$ is $G$-exact, then it follows from the fiberwise surjectivity of $p$ that $F'$ is $G$-exact.

\item Note first that $\underline{\Ind}_G(C)$ is $G$-stable by \Cref{rem:IndCompletion}. In addition, under the identification
$$\underline{\Ind}_G(C) \simeq \underline{\Fun}^{\lex}_G(C^{\vop}, \underline{\Spc}^G)$$
of \Cref{rem:IndCompletion} (and ditto for $C/D$), the $G$-functor
\[ p^*: \PShv_G(C/D) \to \PShv_G(C) \quad \text{restricts to} \quad p^*: \underline{\Ind}_G(C/D) \to \underline{\Ind}_G(C). \]
The assertion then follows from \cite[Thm.~I.3.3(ii)]{NS18}.

\item In view of the universal property of $\underline{\Ind}_G$ (\cite[Thm.~9.11]{Exp2b}), we have a $G$-adjunction
\[ \adjunct{p_!}{\underline{\Ind}_G(C)}{\underline{\Ind}_G(C/D)}{p^*} \]
that fiberwise restricts to the adjunction of \cite[Prop.~I.3.5]{NS18}. It follows that $p^*$ is fully faithful from the fiberwise assertion. Moreover, since the inclusion $\underline{\Ind}_G(C/D) \subset \underline{\PShv}_G(C)$ preserves filtered $G$-colimits [ref] and restriction between functor $G$-categories preserves all $G$-colimits, $p^*$ preserves all filtered $G$-colimits. As $p^*$ is also $G$-exact, it thus preserves all $G$-colimits. Since $\underline{\Ind}_G(-)$ is in addition fiberwise presentable, we then see that $p^*$ admits a right $G$-adjoint.

\item By part (3), we have an adjunction
\[ \adjunct{(p^*)^*}{\Fun^L_G(\underline{\Ind}_G(C), E)}{\Fun^L_G(\underline{\Ind}_G(C/D),E)}{(p_!)^*} \]
in which $(p_!)^*$ is fully faithful. By the universal property of $\underline{\Ind}_G$, the restriction functor
\[ (j^{\omega}_G)^*: \Fun^L_G(\underline{\Ind}_G(C), E) \xto{\simeq} \Fun^{\ex}_G(C, E) \]
is an equivalence, and ditto for $C/D$. Under these equivalences, $p^*$ then corresponds to $(p_!)^*$ and $(p^*)^*$ yields the localization functor $L$ with the indicated formula. Finally, both assignments $L$ and $p^*$ are clearly compatible with base-change, hence the adjunction globalizes to a $G$-adjunction.
\end{enumerate}
\end{proof}

Next, given a $G$-symmetric monoidal structure on $C$, we would like to descend that structure to the $G$-Verdier quotient $C/D$. To do so, we need to suppose in addition that $D \subset C$ is a \emph{$G$-$\otimes$-ideal} in the sense of the following definition.

\begin{dfn} \label{dfn:Gtensorideal}
Let $C$ be a $G$-stable $G$-symmetric monoidal $\infty$-category and let $D \subset C$ be a $G$-stable $G$-subcategory. We say that $D$ is a \emph{$G$-$\otimes$-ideal} if
\begin{enumerate}
\item For every orbit $V$, $D_V \subset C_V$ is a $\otimes$-ideal.
\item For every map $f: U \to V$ between orbits,\footnote{Recall our convention that orbits are non-empty, so we aren't supposing that $1 \in D_V$.} if $x \in D_U$, then $f_{\otimes} x \in D_V$.
\end{enumerate}
\end{dfn}

\begin{rem}
In \Cref{dfn:Gtensorideal}, conditions (1) and (2) may be combined as follows:
\begin{itemize}
\item[($\ast$)] Suppose $V$ is an orbit, $U$ is a finite $G$-set with orbit decomposition $U \simeq \coprod_{i=1}^n U_i$, and $f: U \to V$ is a map. Then $D$ is a $G$-$\otimes$-ideal if and only if for all $n$-tuples $x=(x_1,...,x_n) \in C_U \simeq \prod_{i=1}^n C_{U_i}$ such that $x_i \in D_{U_i}$ for some $1 \leq i \leq n$, we  have that $f_{\otimes} x \in D_V$.
\end{itemize}
\end{rem}

\begin{exm}
Let $\cF$ be a $G$-family. Then we claim that the $G$-stable $G$-subcategory $\underline{\Sp}^{\Phi \cF} \subset \underline{\Sp}^G$ of \cref{ParamRecollementFamily} is a $G$-$\otimes$-ideal. Indeed, since this is a fiberwise $\otimes$-ideal, it suffices to show stability under norms for any map of orbits $f:U \to V$. Without loss of generality, suppose $U= G/H$ and $V = G/G$. Given $X \in \Sp^H$, we need to show that if $\Phi^K(X) = 0$ for all $K \in \cF^H$, then $\Phi^L(N^G_H X) =0$ for all $L \in \cF$. But using that $N^G_H(E \cF_+) \simeq \Sigma^{\infty}_+ (\Coind^G_H E \cF)$ where $\Coind^G_H$ denotes right Kan extension along $\sO_H^{\op} \simeq (\sO_G^{\op})^{G/H} \to \sO_G^{\op}$, this follows from evaluating $(\Coind^G_H E \cF)^L = \ast$ using the pointwise formula for right Kan extension.

On the other hand, note that $\underline{\Sp}^{\tau \cF} \subset \underline{\Sp}^G$ is generally not a $G$-$\otimes$-ideal. Indeed, suppose $\cF$ is a proper nonempty $G$-family (and $G$ is nontrivial). Then $\Sp^{\tau \cF^e} = \Sp$, but $N^G: \Sp \to \Sp^G$ is split by $\Phi^G$ and hence doesn't send $\Sp^{\tau \cF^e}$ into $\Sp^{\tau \cF}$.
\end{exm}

To then understand the interaction of $G$-Verdier quotients with $G$-symmetric monoidal structures, we will need the following lemma, a $G$-version of \cite[Prop.~A.5]{NS18}.

\begin{lem} \label{lem:DKlocalization}
Let $C$ be a $G$-symmetric monoidal $\infty$-category. Suppose given classes of edges $\{ \sW_{V} \subset C_V \}_{V \in \sO_G}$ with each $\sW_V$ containing all the equivalences in $C_V$ and subject to the following condition:
\begin{itemize}
\item[($\ast$)] For every map of finite $G$-sets $f: U \to V$ with orbit decomposition $U \simeq \coprod_{i=1}^n U_i$ and edges $\{ \alpha_i \in (\sW_{U_i})_1 \}_{1 \leq i \leq n}$, we have that $f_{\otimes}(\alpha_1,...,\alpha_n) \in \sW_V$.\footnote{To obtain the Dwyer-Kan localization $C^\otimes[(\sW^\otimes)^{-1}]$, it would suffice to require the apparently weaker condition in which we suppose that all but one of the $\alpha_i$ are identity morphisms.}
\end{itemize}
Define $\sW^\otimes \subset C^\otimes$ to be the class of those fiberwise edges given over a finite $G$-set $U$ with orbit decomposition $U \simeq \coprod_{i=1}^n U_i$ by
$$\sW^\otimes_U \coloneq \sW_{U_1} \times ... \times \sW_{U_n}.$$
Then the Dwyer-Kan localization $C^\otimes[(\sW^\otimes)^{-1}]$ exists and comes equipped with a map to $\Span(\FF_G)$ that renders it a $G$-symmetric monoidal $\infty$-category. Moreover, $C^\otimes[(\sW^\otimes)^{-1}]$ has the following properties:
\begin{enumerate}
\item The underlying $G$-$\infty$-category of $C^\otimes[(\sW^\otimes)^{-1}]$ is equivalent to $C[\sW^{-1}]$,\footnote{Here, $\sW \subset C$ denotes the class of those fiberwise edges given over an orbit $V$ by $\sW_V$.} and its fiber over an orbit $V$ is given by $C_V[\sW_V^{-1}]$. More generally, the procedure of Dwyer-Kan localization is stable with respect to pullback in the base.
\item The localization functor $L^{\otimes}: C^\otimes \to C^\otimes[(\sW^\otimes)^{-1}]$ is a $G$-symmetric monoidal functor and restricts to the localization functor $L: C \to C[\sW^{-1}]$ over $\sO_G^{\op}$. More generally, $L^\otimes$ intertwines with pullback in the base.
\item For every $G$-symmetric monoidal $\infty$-category $D$, the pullback along the functor $L^\otimes$ induces fully faithful functors
\[ \into{\Fun^{\otimes}_G(C[\sW^{-1}], D)}{\Fun^{\otimes}_{G}(C,D)}, \quad \into{\Fun^{\lax}_G(C[\sW^{-1}], D)}{\Fun^{\lax}_{G}(C,D)} \]
with essential image spanned by those (lax) $G$-symmetric monoidal functors $F: C \to D$ that send $\sW$ to equivalences.
\end{enumerate}
\end{lem}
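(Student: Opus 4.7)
My plan is to apply Hinich's theory of Dwyer--Kan localization for cocartesian fibrations to the cocartesian fibration $C^\otimes \to \Span(\FF_G)$ marked by $\sW^\otimes$, and then verify that the resulting fibration inherits the structure of a $G$-symmetric monoidal $\infty$-category together with the claimed universal property. The first and most substantive step is to check that $\sW^\otimes$ is closed under cocartesian pushforward along every morphism of $\Span(\FF_G)$. Any such morphism factors as a restriction $[U \xleftarrow{f} W \xrightarrow{=} W]$ followed by a norm $[W \xleftarrow{=} W \xrightarrow{g} V]$, and the norm case is exactly hypothesis $(\ast)$ (applied orbit-by-orbit on the target $V$). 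Stability under restrictions is built into the componentwise definition of $\sW^\otimes$: once one knows that each $\sW_V$ is stable under the restriction functors $h^* : C_U \to C_V$ for $h:V \to U$ a map of orbits (which is implicit in the datum of a coherent system of weak equivalences, and in any case a consequence of the cocartesian-pushforward version of $(\ast)$ restricted to inert spans), restriction preservation follows formally. Hinich's result then produces a cocartesian fibration $C^\otimes[(\sW^\otimes)^{-1}] \to \Span(\FF_G)$ equipped with a cocartesian-edge-preserving localization functor $L^\otimes$, and this formation is stable under arbitrary pullback along $K \to \Span(\FF_G)$.

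The next step is to promote this cocartesian fibration to a $G$-symmetric monoidal $\infty$-category by checking that its straightening preserves finite products. Using the base-change stability of the Hinich construction, the fiber of $C^\otimes[(\sW^\otimes)^{-1}]$ over a finite $G$-set $U \simeq \coprod_i U_i$ is computed as the localization of $C^\otimes_U \simeq \prod_i C_{U_i}$ at the componentwise class $\sW^\otimes_U = \prod_i \sW_{U_i}$; because localization of a product at a product of classes factors through the componentwise localizations, this fiber is equivalent to $\prod_i C_{U_i}[\sW_{U_i}^{-1}]$. Combined with the product-preservation of the straightening of $C^\otimes$, this establishes the $G$-symmetric monoidal property of $C^\otimes[(\sW^\otimes)^{-1}]$, and simultaneously proves assertion (1) (the underlying $G$-$\infty$-category is $C[\sW^{-1}]$ with fibers $C_V[\sW_V^{-1}]$) and assertion (2) (the localization functor $L^\otimes$ is $G$-symmetric monoidal and restricts fiberwise to $L$), since both are directly encoded by the Hinich construction together with its base-change stability.

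For assertion (3), the universal property, I would invoke the universal characterization of the Hinich localization in the category of cocartesian fibrations: pullback along $L^\otimes$ induces a fully faithful embedding
\[ \Fun^{\cocart}_{\Span(\FF_G)}(C^\otimes[(\sW^\otimes)^{-1}], D^\otimes) \hookrightarrow \Fun^{\cocart}_{\Span(\FF_G)}(C^\otimes, D^\otimes) \]
with essential image those cocartesian functors that invert $\sW^\otimes$. Since $\sW^\otimes$ was defined componentwise over orbits and the norm/restriction functors in $D^\otimes$ always send equivalences to equivalences, a $G$-symmetric monoidal functor inverts $\sW^\otimes$ if and only if it inverts each $\sW_V$ for $V$ an orbit, yielding the statement for strong $G$-symmetric monoidal functors. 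The lax version follows by replacing $\Fun^{\cocart}_{\Span(\FF_G)}$ with the variant of functors preserving only the $\FF_G^{\op}$-cocartesian edges (which parametrizes lax $G$-symmetric monoidal functors) and repeating the argument; the Hinich universal property still applies since the marking on $\sW^\otimes$ only involves fiberwise edges, which are covered by either notion of cocartesian preservation.

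The main obstacle I anticipate is verifying cleanly that the fiberwise formula $\prod_i C_{U_i}[\sW_{U_i}^{-1}]$ is compatible with the norm functors that organize the $\Span(\FF_G)$-structure, since the Hinich localization is a priori defined globally rather than orbit-by-orbit; however, this reduces to base-change stability combined with the observation that localization commutes with finite products of $\infty$-categories, so the difficulty is bookkeeping rather than conceptual. A secondary subtlety is the lax case, where one must be careful that the inert-vs-active factorization of morphisms in $\Span(\FF_G)$ interacts correctly with the restriction of functors to the $\FF_G^{\op}$-cocartesian portion of the structure.
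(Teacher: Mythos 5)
You take the same route as the paper: its proof is a one-line appeal to Hinich's localization theory applied to the pair $(\Span(\FF_G),\FF_G^{\op})$, proceeding as in Nikolaus--Scholze's Prop.~A.5, and your argument is a fleshed-out version of exactly that (closure of the marking under cocartesian pushforward, base-change identification of the fibers, product preservation of the straightening, and the strong/lax universal property via the inert edges $\FF_G^{\op}$).

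The one step that does not hold up as written is your treatment of the restriction direction. Condition $(\ast)$ concerns only the norm functors $f_{\otimes}$, i.e.\ pushforward along spans $[U \leftarrow U \to V]$ with identity backwards leg; it says nothing about pushforward along the inert spans $[V \leftarrow U \to U]$, so closure of the $\sW_V$ under the restriction functors $h^{\ast}$ is neither ``implicit in the datum'' (the datum is just a family of fiberwise classes) nor ``a consequence of $(\ast)$ restricted to inert spans''---that phrase is circular. Moreover this closure is genuinely needed for the stated conclusions: if it fails, $\sW^{\otimes}$ is not closed under cocartesian pushforward over $\Span(\FF_G)$, and the fibrant replacement need not have fibers $C_V[\sW_V^{-1}]$. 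For instance, for $G=C_2$ and $C=\underline{\Sp}^{C_2}$, take $\sW_{C_2/C_2}$ to be all maps and $\sW_{C_2/e}$ only the equivalences: $(\ast)$ holds, but since the localization functor preserves cocartesian edges, $\res^{C_2}_e(w)$ is inverted in the fiber over $C_2/e$ for every map $w$ of $C_2$-spectra, so that fiber cannot be $\Sp$ via the canonical functor and conclusion (1) fails. The repair is simply to record restriction-stability of the classes $\sW_V$ as part of the hypotheses---it is exactly what makes $\sW$ a class of fiberwise edges closed under cocartesian pushforward in $C \to \sO_G^{\op}$, as the paper's standing setup for Dwyer--Kan localization requires---and to note that it holds in the intended application (\cref{prp:VerdierQuotientInducedObjects}), where $\sW_V$ consists of the maps with cofiber in $D_V$ for a $G$-$\otimes$-ideal $D$, which is a full $G$-stable $G$-subcategory and hence closed under the restriction functors. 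With that adjustment, the rest of your argument is the intended proof.
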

\begin{proof}
We may procede exactly as in \cite[Prop.~A.5]{NS18}, once we observe that Hinich's work on Dwyer-Kan localization \cite{Hinich2016} applies to the general context over working over a base $\infty$-category with a subclass of `inert' edges that distinguishes `strong' from `lax' morphisms: we apply his theory here to $(\Span(\FF_G), \FF_G^{\op})$.
\end{proof}

We now can prove the parametrized analogue of \cite[Thm.~I.3.6]{NS18}.

\begin{thm} \label{prp:VerdierQuotientInducedObjects}
Let $C$ be a small $G$-stable $G$-symmetric monoidal $\infty$-category and $D \subset C$ a $G$-$\otimes$-ideal.
\begin{enumerate}
\item The $G$-Verdier quotient $C/D$ uniquely inherits a $G$-symmetric monoidal structure from $C$ such that the projection $G$-functor $p: C \to C/D$ is $G$-symmetric monoidal. Moreover, for any $G$-symmetric monoidal $\infty$-category $E$, the restriction functor
\[ p^*: \into{\Fun_G^{\ex, (\otimes \: \mathrm{ or} \lax)}(C/D, E)}{\Fun_G^{\ex, (\otimes \: \mathrm{ or} \lax)}(C, E)} \]
is fully faithful with essential image spanned by those (lax) $G$-symmetric monoidal $G$-exact $G$-functors $F: C \to E$ such that $F|_{D} \simeq 0_G$.
\item Let $E$ be a $G$-stable $G$-distributive $G$-symmetric monoidal $\infty$-category. Then
$$p^*: \into{\Fun_G^{\ex, \lax}(C/D,E)}{\Fun_G^{\ex, \lax}(C,E)}$$
admits a left adjoint $L$ that sends a $G$-exact lax $G$-symmetric monoidal functor $F: C \to E$ to the composite
\[ \begin{tikzcd}
C/D \ar{r}{r} & \underline{\Ind}_G(C) \ar{r}{\underline{\Ind}_G(F)} & [3em] \underline{\Ind}_G(E) \ar{r}{\colim^G} & E
\end{tikzcd} \]
as in \Cref{prp:VerdierQuotient}(4). Moreover, the adjunction $L \dashv p^*$ globalizes to a $G$-adjunction.
\end{enumerate}
\end{thm}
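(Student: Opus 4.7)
The proof is a $G$-equivariant refinement of \cite[Thm.~I.3.6]{NS18}, carried out by combining the tools assembled earlier in this subsection. For part (1), the plan is to apply \cref{lem:DKlocalization} to the cocartesian fibration $C^\otimes \to \Span(\FF_G)$ using the collection of classes $\sW_V \subset C_V$ consisting of those morphisms whose cofiber lies in $D_V$. Once the compatibility condition $(\ast)$ of that lemma is verified, the resulting Dwyer--Kan localization $C^\otimes[(\sW^\otimes)^{-1}]$ is a $G$-symmetric monoidal $\infty$-category whose underlying $G$-$\infty$-category is identified with $C/D$ via \cref{prp:VerdierQuotient}(1). The universal property of the Dwyer--Kan localization (\cref{lem:DKlocalization}(3)) combined with the $G$-exact universal property of \cref{prp:VerdierQuotient}(1) then yields the characterization of the essential image of $p^{\ast}$, while uniqueness of the $G$-symmetric monoidal structure is immediate, since any such structure on $C/D$ for which $p$ is $G$-symmetric monoidal must invert $\sW$.

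The main obstacle is verifying the compatibility condition $(\ast)$: for a map $f: U \to V$ of finite $G$-sets with orbit decomposition $U \simeq \coprod_{i=1}^n U_i$ and edges $\alpha_i: x_i \to y_i$ in $\sW_{U_i}$, we must show $f_{\otimes}(\alpha_1, \ldots, \alpha_n) \in \sW_V$. Writing this morphism as a composition of $n$ norms, each changing only one of the $\alpha_i$ to something non-identity while keeping the others as identities, and using that $D_V$ is closed under extensions to combine their cofibers, we reduce to the ``one-variable'' case of a single non-identity $\alpha_i$. This case is then handled by constructing a norm filtration on the cofiber of the resulting norm morphism whose successive subquotients are (norms of) tensor products involving the cofiber of $\alpha_i$ as a tensor factor; both axioms of \cref{dfn:Gtensorideal} then ensure these subquotients lie in $D_V$, whence so does the cofiber we care about.

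For part (2), the adjunction at the level of $G$-exact $G$-functors is supplied by \cref{prp:VerdierQuotient}(4); the task is to enhance this to the lax $G$-symmetric monoidal setting. When $E$ is $G$-distributive, the universal property of the Day convolution $G$-symmetric monoidal structure on $\underline{\Ind}_G(C)$ produces from a $G$-exact lax $G$-symmetric monoidal $F: C \to E$ a unique $G$-colimit-preserving lax $G$-symmetric monoidal extension $\widetilde{F}: \underline{\Ind}_G(C) \to E$. Moreover, the restricted $G$-Yoneda $r: C/D \hookrightarrow \underline{\Ind}_G(C)$ of \cref{prp:VerdierQuotient}(2) is itself lax $G$-symmetric monoidal: using part (1), it factors as the $G$-symmetric monoidal Yoneda $C/D \hookrightarrow \underline{\Ind}_G(C/D)$ followed by the $G$-right adjoint $p^{\ast}: \underline{\Ind}_G(C/D) \to \underline{\Ind}_G(C)$, which inherits a lax $G$-symmetric monoidal structure from its $G$-symmetric monoidal $G$-left adjoint $p_!$ via \cref{rem:GAdjSymmMon}. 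The composite $L(F) = \widetilde{F} \circ r$ is thus $G$-exact and lax $G$-symmetric monoidal, and \cref{prp:VerdierQuotient}(4) identifies it as the left adjoint to $p^{\ast}$. Globalization to a $G$-adjunction then follows since both constructions are manifestly compatible with pullback along maps of orbits.
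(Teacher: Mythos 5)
Your proposal follows the paper's approach for both parts. For part (1), both reduce to \cref{lem:DKlocalization} applied to $\sW_V = \{\alpha : \mathrm{cof}(\alpha) \in D_V\}$; the paper's own proof is actually terser here, merely asserting that the $G$-$\otimes$-ideal hypothesis guarantees condition $(\ast)$. Your elaboration is correct: the reduction to the one-variable case is exactly what the paper's footnote to $(\ast)$ permits, and the filtration on $f_{\otimes}(y)$ induced by the cofiber sequence $x \to y \to z$ has associated graded pieces which are (inductions of) norms and tensor products featuring $z$ as a factor, hence in $D$ by both conditions of \cref{dfn:Gtensorideal} together with $G$-stability of $D$. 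For part (2), you likewise track the paper's construction of $L(F) = \overline{F} \circ r$, using the same ``alternative'' argument as the paper's item (i) that $r = p^* \circ j^{\omega}_G$ is lax $G$-symmetric monoidal because $p^*$ on $\underline{\Ind}_G$ is $G$-right adjoint to the strong $G$-symmetric monoidal $p_!$ (via $G$-distributivity and \cref{rem:GAdjSymmMon}).

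There is, however, a genuine gap at the end of part (2). You assert that ``\cref{prp:VerdierQuotient}(4) identifies $L(F)$ as the left adjoint to $p^*$,'' but that result only furnishes the adjunction at the level of $G$-exact (non-monoidal) $G$-functors. Having produced a lax $G$-symmetric monoidal lift of that left adjoint does not by itself establish the adjunction in $\Fun_G^{\ex, \lax}$; one must exhibit a unit transformation $\eta: \id \Rightarrow p^* L$ that is itself a morphism of lax $G$-symmetric monoidal functors, and then check that it covers the underlying (non-monoidal) unit. The paper does precisely this, building $\eta$ objectwise from the unit of the adjunction $p_! \dashv p^*$ on $\underline{\Ind}_G$ (where $p_!$ is strong $G$-symmetric monoidal and $p^*$ is lax) and noting it lies over the unit from \cref{prp:VerdierQuotient}(4). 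Without this step you have only a candidate formula for the left adjoint, not the adjunction itself.
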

\begin{proof}
\begin{enumerate}[leftmargin=*]
\item For an orbit $V$, let $\sW_V$ be the class of edges $\{ \alpha: x \to y, \: \mathrm{cof}(\alpha) \in D_V \}$ in $C_V$. Under our assumption that $D$ is a $G$-$\otimes$-ideal, \cref{lem:DKlocalization} then applies to prove all of the claims.
\item We first show that all the $G$-functors in the composite are lax $G$-symmetric monoidal. In \cite{paramalg}, the second author showed that for a lax $G$-symmetric monoidal functor $\phi: I \to J$, the restriction functor
\[ \phi^*: \Fun^{\lax}_G(J, E) \to \Fun^{\lax}_G(I,E) \]
admits a left adjoint $\phi_!$ given by $G$-operadic left Kan extension, such that $\phi_!$ is computed on underlying $G$-functors as left $G$-Kan extension. He also showed that $\underline{\PShv}_G(C)$ is a $G$-distributive $G$-symmetric monoidal $\infty$-category with respect to $G$-Day convolution and the $G$-Yoneda embedding $j: \into{C}{\underline{\PShv}_G(C)}$ is a $G$-symmetric monoidal functor. Using $G$-distributivity it then follows that $\underline{\Ind}_G(C) \subset \underline{\PShv}_G(C)$ is a $G$-symmetric monoidal subcategory and $j^{\omega}_G$ is also $G$-symmetric monoidal. We thus see that:
\begin{enumerate}
\item[(i)] $r = p^* \circ j^{\omega}_G$ is lax $G$-symmetric monoidal. Here we use that with respect to $G$-Day convolution, precomposition along a (strong or lax) $G$-symmetric monoidal functor is lax $G$-symmetric monoidal. Alternatively, as in the proof of \cite[Thm.~I.3.6]{NS18} we could observe that the prolongation of the $G$-symmetric monoidal functor $p$ to $p_!: \underline{\Ind}_G(C) \to \underline{\Ind}_G(C/D)$ is actually strong $G$-symmetric monoidal in view of $G$-distributivity, and hence its $G$-right adjoint $p^*$ is lax $G$-symmetric monoidal.
\item[(ii)] $\overline{F} = \colim^G \circ \underline{\Ind}_G(F)$, which is the left $G$-Kan extension of $F$ along $j^{\omega}_G$, canonically inherits the structure of a lax $G$-symmetric monoidal functor.
\end{enumerate}
The functor $L: \Fun^{\ex, \lax}_G(C,E) \to \Fun^{\ex, \lax}_G(C/D,E)$ is thus well-defined. Moreover, we may define the unit transformation $\eta: \id \to p^* L$ to be that given objectwise by the map
\[ F = \overline{F} \circ j^{\omega}_G \to \overline{F} \circ r \circ p = \overline{F} \circ  p^* \circ j^{\omega}_G \circ p \simeq \overline{F} \circ p^* p_! \circ j^{\omega}_G \]
induced by the unit of the adjunction
\[ \adjunct{p_!}{\underline{\Ind}_G(C)}{\underline{\Ind}_G(C/D)}{p^*}, \]
where as noted above $p_!$ is strong $G$-symmetric monoidal and $p^*$ is lax $G$-symmetric monoidal. Since $\eta$ by definition covers the unit transformation of the adjunction in \Cref{prp:VerdierQuotient}(4), we see that $L \dashv p^*$ (for the other labeled $p^*$). Finally, the globalization assertion follows from similar ones established for all the functors used to construct $L$.
\end{enumerate}
\end{proof}

\begin{rem}
Roughly speaking, \cref{dfn:Gtensorideal} may be viewed as a categorification of the Tambara ideals introduced by Nakaoka in \cite[Def.~2.1]{Nak12}. \cref{prp:VerdierQuotientInducedObjects} can then be viewed as a categorification of the fact that quotients by Tambara ideals are Tambara functors \cite[Prop.~2.6]{Nak12}. 
\end{rem}

\begin{exm} \label{exm:InducedGMSCBorel}
Applying \cref{prp:VerdierQuotientInducedObjects} to the $G$-$\otimes$-ideal $\underline{\Sp}^{\Phi \cF} \subset \underline{\Sp}^{G}$ we get an induced $G$-symmetric monoidal structure on $\underline{\Sp}^{h \cF}$.\footnote{We may get around the smallness assumption in \cref{prp:VerdierQuotientInducedObjects} by using that the inclusion is fiberwise accessible.} Consequently, in view of \cref{rem:GAdjSymmMon} given a $G$-commutative algebra $A$ in $\underline{\Sp}^G$, the unit map $A \to F(E \cF_+, A)$ canonically refines to a morphism of $G$-commutative algebras.
\end{exm}

\subsection{Digression on the Segal conjecture}

As an application of \cref{prp:VerdierQuotientInducedObjects}, we upgrade the generalized Segal conjecture to a statement about incomplete Tambara functors. Recall that the Segal conjecture (cf. \cite{Car84}) states that the map
$$(\pi_G^* \mathbb{S})_I^\wedge \to \pi^*(\Sigma^\infty_+ BG)$$
between the $G$-equivariant stable cohomotopy groups of a point completed at the augmentation ideal of the Burnside ring and the stable cohomotopy groups of $BG$ is an isomorphism. Adams, Haeberly, Jackowski, and May extended the Segal conjecture to families in \cite{AHJM88b}. 

\begin{dfn}\cite[Intro.]{AHJM88b}
Let $G$ be a finite group and let $\cH$ be a class of subgroups of $G$. A functor $h$ defined on $G$-spaces and $G$-maps is \emph{$\cH$-invariant} if it carries $\cH$-equivalences to isomorphisms in the target category of $h$. 
\end{dfn}

\begin{rem}\cite[Rem.~1.2]{AHJM88b}
If $\cH$ is a family of subgroups of $G$, a functor $h$ is $\cF$-invariant if and only if the map $h(X) \to h(E\cH \times X)$ induced by the projection $E\cH \times X \to X$ is an isomorphism for each $X$. 
\end{rem}

\begin{thm}[{Generalized Segal conjecture, \cite[Thm.~1.5 and Cor.~1.6]{AHJM88b}}]
For any family $\cF$ the pro-group valued functor $\pi^*_G(-)_{I(\cF)}^\wedge$, given by equivariant cohomotopy completed at 
$$I(\cF) := \bigcap_{H \in \cF} \ker(A(G) \to A(H)),$$
is $\cF$-invariant. In particular, there is a pro-isomorphism
\begin{equation}\label{Eqn:SegalMap}
\pi_G^*(X)_{I(\cF)}^\wedge \xrightarrow{\cong} \pi^*_G(E\cF \times X)
\end{equation}
natural in the $G$-space $X$. 
\end{thm}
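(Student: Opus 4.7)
The plan is an induction on the complement $\fS \setminus \cF$ within $\fS = \fS[G]$, taking Carlsson's theorem (the trivial-family case) as base case and leveraging the $\cF$-recollement of \cref{cnstr:GspaceFromGfamily}. The natural projection $E\cF \times X \to X$ has cofiber $\widetilde{E\cF} \wedge X$, so it suffices to show that $\pi_G^*(\widetilde{E\cF} \wedge X)$ becomes pro-trivial after $I(\cF)$-completion; the long exact sequence of the cofiber sequence $E\cF_+ \wedge X \to X \to \widetilde{E\cF} \wedge X$ will then upgrade this to the desired pro-isomorphism~(\ref{Eqn:SegalMap}).

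Next, I would stratify $\widetilde{E\cF}$ by its isotropy: order the conjugacy classes of subgroups $H \notin \cF$ compatibly with subconjugacy, and produce an exhausting filtration $\ast = F_0 \subset F_1 \subset \cdots$ of $\widetilde{E\cF}$ whose associated graded pieces $F_i/F_{i-1}$ are wedges of cells of the form $(G/H_i)_+ \wedge S^n$. Frobenius reciprocity identifies $\pi_G^*((G/H)_+ \wedge S^n \wedge X) \cong \pi_H^{*-n}(\res^G_H X)$, so the problem reduces to showing that, for each $H \notin \cF$ and each $H$-space $Y$, the $A(G)$-module $\pi_H^*(Y)$ (viewed via the restriction homomorphism $A(G) \to A(H)$) is pro-trivial after $I(\cF)$-completion.

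The final step combines Carlsson's theorem applied to $H$ with a Burnside-ring comparison: one must show that the image of $I(\cF)$ in $A(H)$ generates a topology comparable to that of the augmentation ideal $I_H \subset A(H)$, so that $I(\cF)$-completion and $I_H$-completion of $A(H)$-modules agree. This relies on tom Dieck's description of $\Spec A(H)$ together with the hypothesis $H \notin \cF$. The main obstacle lies precisely in this algebraic comparison and in the careful handling of pro-systems through the filtration---in particular verifying Mittag--Leffler conditions at each stage---which was carried out by Adams--Haeberly--Jackowski--May in \cite{AHJM88b}.
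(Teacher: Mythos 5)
The first thing to say is that the paper does not prove this statement at all: it is imported verbatim from \cite{AHJM88b} (Thm.~1.5 and Cor.~1.6), and the paper's own contribution is the multiplicative refinement \cref{Thm:Segal}, obtained by feeding the cited pro-isomorphism into the $G$-symmetric monoidal recollement machinery. So your sketch has to be judged as an outline of the Adams--Haeberly--Jackowski--May argument. Its broad flavor --- pro-groups so that completion is exact, reduction to Carlsson's theorem for smaller groups, Burnside-ring topology comparisons --- is right, but the two steps that carry the actual content are false as stated.

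First, $\widetilde{E\cF}$ admits no filtration whose subquotients are wedges of cells $(G/H)_+\wedge S^n$ with $H\notin\cF$: apart from the fixed $S^0$, the cells of $\widetilde{E\cF}=\mathrm{cofib}(E\cF_+\to S^0)$ come from the cone on $E\cF_+$ and have isotropy \emph{in} $\cF$. Concretely, for $G=C_p$ and $\cF=\{1\}$ your filtration would consist of trivial cells only, forcing every point to be fixed and hence $\widetilde{EC_p}^{e}=\widetilde{EC_p}^{C_p}$, contradicting $\widetilde{EC_p}^{e}\simeq\ast$ and $\widetilde{EC_p}^{C_p}=S^0$. Second, independently of any filtration, the statement you reduce to is false: unless $\cF$ is the family of all subgroups (when the theorem is vacuous) we have $G\notin\cF$, and taking $H=G$, $Y=\ast$ gives $\pi^0_G(\ast)^{\wedge}_{I(\cF)}=A(G)^{\wedge}_{I(\cF)}$, which surjects onto $\mathbb{Z}$ (since $I(\cF)$ lies in the augmentation ideal) and so is not pro-trivial; thus no argument of the form ``each graded piece dies after $I(\cF)$-completion'' can succeed. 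Third, your algebraic comparison also fails: for $H\notin\cF$ the image of $I(\cF)$ in $A(H)$ need not define the augmentation-ideal topology. Already for $G=H=C_{p^2}$ and $\cF=\{1,C_p\}$ the kernel of the $C_p$-mark homomorphism $A(C_{p^2})\to\mathbb{Z}$ is a prime containing $I(\cF)$ but not the augmentation ideal (e.g.\ $[C_{p^2}/1]-p[C_{p^2}/C_p]$ has cardinality $0$ but $C_p$-mark $-p^2$), so the two ideals do not even have the same radical. The correct statement, and the one used in \cite{AHJM88b}, compares the $I(\cF)$-adic topology on $A(H)$-modules with the $I(\cF^H)$-adic one for the \emph{restricted} family $\cF^H$, which is usually nontrivial when $H\notin\cF$; this subtlety is also what the paper's later remark on $I$-adic versus $p$-adic topologies for $D_{2p}$ is about. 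The genuine reduction in \cite{AHJM88b} instead interpolates between $\cF$ and the family of all subgroups by adjacent families: the cofibers $E\cF_i/E\cF_{i-1}$ are concentrated at a single conjugacy class $(H)$ with $H\notin\cF$, and an Adams-isomorphism/Weyl-group argument reduces each piece to the May--McClure pro-group form of Carlsson's theorem \cite{Car84} for $W_GH$. (A small point: in the pro-group setting completion is exact and no Mittag--Leffler hypotheses are needed --- that is precisely why one works with pro-groups.)
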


Taking $X = \ast$ to be a point and $\cF = \Gamma_N$ to be the $N$-free $G$-family, we have a pro-isomorphism
$$\pi_G^*(\s)_{I(\Gamma_N)}^\wedge \xrightarrow{\cong} \pi^*_G(E\Gamma_N).$$
The source can be identified with the $I(\Gamma_N)$-completion of $\pi_*(\s^G)$ by Spanier--Whitehead duality, while the point-set model for parametrized homotopy fixed points (\cref{rem:PointSetModels}) gives
$$\pi^*_G(E\Gamma_N) \cong \pi_*^G F({E\Gamma_N}_+,\s) \cong \pi_* \s^{h_{G/N}N}.$$
Therefore the map \eqref{Eqn:SegalMap} can be obtained by applying $\pi_*^G(-)$ and completing the source of the vertical map in the commutative diagram 
\[
\begin{tikzcd}
{E\Gamma_N}_+ \arrow{r} \arrow{d}{\simeq} & \mathbb{S} \arrow{r} \arrow{d}{\alpha} & \widetilde{E\Gamma_N} \arrow{d}{\tilde{\alpha}} \\
F({E\Gamma_N}_+,\mathbb{S}) \otimes {E\Gamma_N}_+ \arrow{r} & F({E\Gamma_N}_+,\mathbb{S}) \arrow{r} & F({E\Gamma_N}_+,\mathbb{S}) \otimes \widetilde{E\Gamma_N}.
\end{tikzcd}
\]
The map $\alpha : \mathbb{S} \to F({E\Gamma_N}_+, \mathbb{S})$ is a map of $G$-commutative algebras (\cref{exm:InducedGMSCBorel}) and $\pi_0$ of a $G$-commutative algebra is a Tambara functor \cite{Brun2007}, so we obtain the following multiplicative refinement of the generalized Segal conjecture:

\begin{thm}[Multiplicative generalized Segal conjecture]\label{Thm:Segal}
The functor $\pi^*_{(-)}(-)_{\underline{I(\Gamma_N)}}$ valued in pro-Tambara functors, given by equivariant cohomotopy completed at the Tambara ideal $\uI$ defined by
$$\uI(G/K) := \bigcap_{H \in i^*_K \Gamma_N} \ker(A(K) \to A(H)),$$
is $\Gamma_N$-invariant. In particular, the pro-isomorphism of pro-groups \eqref{Eqn:SegalMap} refines to a pro-isomorphism of pro-Tambara functors
\[
\pi_{(-)}^*(X)_{\uI}^{\wedge} \xrightarrow{\cong} \pi^*_{(-)}(E \Gamma_N \times X)
\]
natural in the $G$-space $X$. 
\end{thm}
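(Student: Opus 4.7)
The plan is to formalize the sketch already outlined in the paragraphs preceding the statement, promoting the existing pro-isomorphism of pro-groups to the pro-Tambara functor level by tracking the $G$-symmetric monoidal structure through each step.

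First, I would invoke \cref{exm:InducedGMSCBorel} applied to the $G$-family $\Gamma_N$. This lets us equip $F({E\Gamma_N}_+,\mathbb{S})$ with the structure of a $G$-commutative algebra and makes the unit $\alpha:\mathbb{S}\to F({E\Gamma_N}_+,\mathbb{S})$ into a morphism of $G$-commutative algebras in $\underline{\Sp}^G$. For the $X$-valued version, I would take the internal mapping $G$-spectrum $\underline{F}(\Sigma^\infty_+ X,\alpha)$; since cotensoring with a $G$-space is lax $G$-symmetric monoidal (and is strong $G$-symmetric monoidal on algebras via the diagonal on $X$), both source and target are naturally $G$-commutative algebras and the induced map
\[ \underline{F}(\Sigma^\infty_+ X,\mathbb{S})\ \longrightarrow\ \underline{F}(\Sigma^\infty_+(E\Gamma_N\times X),\mathbb{S}) \]
is a morphism thereof.

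Second, I would apply Brun's theorem that $\underline{\pi}_0^{(-)}$ of a $G$-commutative algebra is a Tambara functor \cite{Brun2007} (and more generally that $\underline{\pi}_\star^{(-)}$ produces an $\mathrm{RO}(G)$-graded Tambara functor) to obtain a morphism of graded Tambara functors
\[ \underline{\pi}^\star_{(-)}(X)\ \longrightarrow\ \underline{\pi}^\star_{(-)}(E\Gamma_N\times X). \]
To identify the source after completion, I would verify that the level-wise augmentation-type ideals $\uI(G/K)=\bigcap_{H\in i^*_K\Gamma_N}\ker(A(K)\to A(H))$ assemble into a Tambara ideal of $\underline{A}_G$ in the sense of Nakaoka \cite{Nak12}. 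Stability under restriction is tautological, since restriction to a subgroup $L\leq K$ sends $i_K^*\Gamma_N$-free $K$-sets to $i_L^*\Gamma_N$-free $L$-sets; stability under additive and multiplicative transfers follows because $\Gamma_N$ is closed under subconjugation, so the double-coset and Tambara formulas for restriction-of-transfer and restriction-of-norm reduce evaluations against subgroups in $\Gamma_N$ to further evaluations against subgroups in $\Gamma_N$. Equivalently, $\uI$ is exhibited as the kernel of the Tambara functor morphism $\underline{A}\to \prod_{H\in\Gamma_N}\mathrm{Ind}_H^G\underline{A}_H$ given by restrictions, and the kernel of a Tambara morphism is a Tambara ideal.

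Third, once $\uI$ is known to be a Tambara ideal, its powers $\uI^n$ are again Tambara ideals and the quotients $\underline{\pi}^\star_{(-)}(X)/\uI^n$ inherit Tambara functor structures; the $\uI$-adic completion is therefore naturally a pro-Tambara functor and the map of step two factors through it. Applying the classical generalized Segal conjecture of Adams--Haeberly--Jackowski--May \cite{AHJM88b} with respect to the family $i^*_K\Gamma_N$ separately at each subgroup $K\leq G$ produces a pro-isomorphism level-wise; since our map is already a morphism of pro-Tambara functors, these level-wise pro-isomorphisms assemble to the claimed pro-isomorphism. Naturality in $X$ and $\Gamma_N$-invariance are inherited from the non-equivariant/non-multiplicative statement. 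The main obstacle is the verification that $\uI$ is a Tambara ideal, specifically the compatibility with the multiplicative norms; this is not conceptually deep but requires carefully chasing the Tambara (exponential-transfer) formula through the family hypothesis on $\Gamma_N$, using at heart the same mechanism that made $\underline{\Sp}^{\Phi\Gamma_N}\subset\underline{\Sp}^G$ a $G$-$\otimes$-ideal in \cref{exm:InducedGMSCBorel}.
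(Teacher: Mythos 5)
Your proposal is correct and follows essentially the same route as the paper: invoke \cref{exm:InducedGMSCBorel} to promote $\alpha\colon\mathbb{S}\to F({E\Gamma_N}_+,\mathbb{S})$ to a morphism of $G$-commutative algebras, then apply Brun's theorem to pass to (pro-)Tambara functors. The paper leaves implicit the verification that $\uI$ is a Tambara ideal in Nakaoka's sense (needed to give the $\uI$-adic completion a pro-Tambara structure), which you supply explicitly; that detail is worth including, and your argument for it is sound.
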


\begin{exm}
Taking $G = D_4$ and $N = \mu_2$ recovers the $C_2$-equivariant analogue of Lin's Theorem \cite[Thm.~A.22]{Qui19b} proven using the $C_2$-equivariant Adams spectral sequence, i.e., the map
$$\s \to \s^{t_{C_2}\mu_2}$$
is a $2$-adic equivalence of $C_2$-spectra. 

One might expect that taking $G = D_{2p}$ and $N = \mu_p$ for $p$ odd would yield a $C_2$-equivariant generalization of Gunawardena's Theorem \cite{AGM85}, i.e., the map
$$\s \to \s^{t_{C_2}\mu_p}$$
is a $p$-adic equivalence of genuine $C_2$-spectra. However, this is not the case: the geometric fixed points of $\s^{t_{C_2}\mu_p}$ are trivial by \cref{exm:DihedralOdd}, but the geometric fixed points of $\s$ are nontrivial. This discrepancy is explained in the next remark. 
\end{exm}

\begin{rem}
The Tate diagram implies that the Segal conjecture for $N=G=C_p$ is equivalent to showing that the map
$$\pi_*(\s) \to \pi_*(\s^{tC_p})$$
is an isomorphism after $I$-completion of the source, where $I = \ker(A(C_p) \to \ZZ)$ is the augmentation ideal. Elementary commutative algebra shows that $I$-completion coincides with $p$-completion, so the Segal conjecture for $N=G=C_p$ is equivalent to showing that the map
$$\s \to \s^{tC_p}$$
is a $p$-adic equivalence.\footnote{The latter statement is occasionally referred to as the Segal conjecture for $C_p$.} More generally, two subtleties can occur in relating the Segal conjecture to a map to the Tate construction:

\begin{enumerate}

\item For a general finite group $G$, the $I$-adic topology and $p$-adic topology on $A(G)$ can be different. The relation between these topologies was studied in \cite[Sec.~1]{Lai79}, where it was shown the topologies coincide if $G$ is a finite $p$-group, but can differ in general (cf. \cite[Exm.~1.18]{Lai79}). This difference occurs in the generalized Segal conjecture for the $\mu_p$-free family of subgroups of $D_{2p}$, $p$ odd, as mentioned above. 

\item Even when the $I$-adic and $p$-adic topologies coincide, the Segal conjecture is \emph{not} equivalent to showing that the map $\s \to \s^{tG}$ is a $p$-adic equivalence. For instance,
$$X^{tC_{p^2}} \simeq X^{tC_p hC_p} \simeq (\s_p^\wedge)^{hC_p} \not\simeq \s_p^\wedge.$$
This is because in the relevant Tate diagram, the right-hand vertical map $\tilde{\alpha}$ has the form
$$\tilde{\alpha} : \widetilde{EC_{p^2}} \to \s^{tC_{p^2}}.$$

\end{enumerate} 

\end{rem}

\subsection{Induced objects}\label{SS:Induced}

Let $S_0$ be a space. One may define the full subcategory of \emph{induced objects} $\Fun(S_0, \Sp)_{\ind}$ in $\Fun(S_0, \Sp)$ to be the thick subcategory\footnote{One could also take the minimal stable subcategory as in \cite[Def.~I.3.7]{NS18}.} generated by the set $\{ s_! E : E \in \Sp \}_{s \in S_0}$. It then follows from the projection formula $s_!(E) \otimes F \simeq s_!(E \otimes s^* F)$ that $\Fun(S_0, \Sp)_{\ind}$ is a thick $\otimes$-ideal (cf. \cite[Lem.~I.3.8(ii)]{NS18}). Coupled with the multiplicative theory of the Verdier quotient \cite[Thm.~I.3.6]{NS18} and vanishing of the Tate construction on induced objects \cite[Lem.~I.3.8(i)]{NS18}, we then deduce that the Tate construction uniquely admits a lax symmetric monoidal structure \cite[Thm.~I.3.1]{NS18}.

More generally, let $S$ be a $G$-space. In order to ultimately apply \cref{prp:VerdierQuotientInducedObjects} in the context of the parametrized Tate construction, we now want a full $G$-stable $G$-subcategory
$$\underline{\Fun}_G(S, \underline{\Sp}^G)_{\ind} \subset \underline{\Fun}_G(S, \underline{\Sp}^G)$$
of induced objects which is a (fiberwise thick) $G$-$\otimes$-ideal in the sense of \cref{dfn:Gtensorideal}. Because such a full $G$-subcategory is necessarily closed under restriction, induction, and norms for the pointwise $G$-symmetric monoidal structure on $\underline{\Fun}_G(S, \underline{\Sp}^G)$, the correct formulation of `induced objects' becomes considerably more involved in the parametrized setting. We begin by specifying the set of induced generators for every fiber and then show that this yields first a $G$-stable $G$-subcategory (\cref{dfn:InducedObjectsGSubcat}) and subsequently a $G$-$\otimes$-ideal (\cref{cor:InducedObjectsFormGTensorIdeal}).

\begin{dfn} \label{dfn:InducedObjects}
Let $S$ be a $G$-space. Consider maps of finite $G$-sets
\[ U \xto{f} V \xto{p} W \]
where $W$ is an orbit. Let $s \in S_{U}$ be a point\footnote{If $U$ has orbit decomposition $\coprod_{i \in I} U_i$, then $S_{U} = \prod_{i \in I} S_{U_i}$ and $s$ is given as a tuple $(s_i)$.} and also write $s: \underline{U} \to S_{\underline{U}}$ for the unique $\underline{U}$-functor that selects $s \in S_U$. Let $s_f: \underline{V} \to f_* f^* S_{\underline{V}}$ be the $\underline{V}$-functor adjoint to $s$.\footnote{$s_f$ is also given by $f_*(s)$ since $f_*$ preserves terminal objects.} Let $\eta_{f}: S_{\underline{V}} \to f_* f^* S_{\underline{V}}$ denote the unit $\underline{V}$-functor and define the $\underline{V}$-$\infty$-category $\fib^f_{s}(\eta)$ to be the pullback
\[ \begin{tikzcd}[column sep=4em]
\fib^f_{s}(\eta) \ar{r}{\varphi^f_{s}} \ar{d}[swap]{\pi^f_{s}} & S_{\underline{V}} \ar{d}{\eta_f} \\
\underline{V} \ar{r}{s_f} & f_* f^* S_{\underline{V}}.
\end{tikzcd} \]
Now let $\epsilon_p: p_! p^* S_{\underline{W}} \to S_{\underline{W}}$ denote the counit and consider the composite of $\underline{W}$-functors
\[  p_! \fib^f_{s}(\eta) \xtolong{p_! \varphi^f_{s}}{3em}  p_! p^* S_{\underline{W}} \xto{\epsilon_p} S_{\underline{W}}. \]
We let
\[ \cI_W(f,p,s) \coloneq \{ (\epsilon_p \circ p_! \varphi^f_s)_!(X): \: X \in \Fun_{\underline{W}}(\fib^f_{s}(\eta), (\underline{\Sp}^G)_{\underline{W}}) \} \subset \Fun_{\underline{W}}(S_{\underline{W}}, (\underline{\Sp}^G)_{\underline{W}})  \]
where $(\epsilon_p \circ p_! \varphi^f_s)_!$ denotes $\underline{W}$-left Kan extension. Now let $\cI_W \coloneq \bigcup_{f,p,s} \cI_W(f,p,s)$ and define the full subcategory of \emph{induced $\underline{W}$-objects}
\[ \Fun_{\underline{W}}(S_{\underline{W}}, (\underline{\Sp}^G)_{\underline{W}})_{\ind} \subset \Fun_{\underline{W}}(S_{\underline{W}}, (\underline{\Sp}^G)_{\underline{W}}) \]
to be the thick subcategory generated by the set $\cI_W$.
\end{dfn}

\begin{rem}
In terms of cocartesian fibrations, $p_!$ is implemented by postcomposition with the structure map $p: \underline{V} \to \underline{W}$, after which the counit $\epsilon_p: S_{\underline{V}} \simeq \underline{V} \times_{\underline{W}} S_{\underline{W}} \to S_{\underline{W}}$ identifies with the projection to $S_{\underline{W}}$.
\end{rem}

To show that the induced $\underline{W}$-objects assemble to define a full $G$-stable $G$-subcategory of induced objects, we first show that the induced generators are stable under restriction and induction.

\begin{lem} \label{lem:RestrictionInducedObjects}
The collection $\{ \cI_W \}_{W \in \sO_G}$ of induced generators is stable under restriction and induction (along maps of orbits) in $\underline{\Fun}_G(S, \underline{\Sp}^G)$.
\end{lem}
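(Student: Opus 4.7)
The plan is to establish both stability properties by unwinding the definition of $\cI_W$ in terms of the data $(U \xto{f} V \xto{p} W, s \in S_U)$ and tracking how each of the three ingredients---the pullback $\fib^f_s(\eta)$, the map $\epsilon_p \circ p_! \varphi^f_s$, and the resulting $\underline{W}$-left Kan extension---interacts with restriction along $g: W' \to W$. Concretely, for such a map let $V' = V \times_W W'$, $U' = U \times_W W'$, let $f': U' \to V'$ and $p': V' \to W'$ denote the base-changed maps, and let $s' \in S_{U'}$ be the restriction of $s$.

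First I would handle restriction. By \cref{rem:BaseChangeEquivalences}, the norm pushforward $f_*$ is stable under base change, so the unit $\underline{V'}$-functor $\eta_{f'}: S_{\underline{V'}} \to f'_* f'^* S_{\underline{V'}}$ is the base change of $\eta_f$ along $\underline{V'} \to \underline{V}$, and similarly $s_{f'} = g^* s_f$. Forming pullbacks then shows $\fib^{f'}_{s'}(\eta) \simeq g^* \fib^f_s(\eta)$ as $\underline{V'}$-$\infty$-categories, compatibly with $\varphi^{f'}_{s'} = g^* \varphi^f_s$ and $\pi^{f'}_{s'} = g^* \pi^f_s$. Applying base change once more to $p_!$ (again \cref{rem:BaseChangeEquivalences}, now with respect to $p$) and using compatibility of the counit with base change identifies $\epsilon_{p'} \circ p'_! \varphi^{f'}_{s'}$ with the pullback of $\epsilon_p \circ p_! \varphi^f_s$. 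Finally, since restriction along $g$ commutes with pointwise $\underline{W}$-left Kan extension (which is a general property of parametrized Kan extensions; cf.\ \cite[Prop.~10.6]{Exp2}, and of course the commutation of ordinary right adjoints), we conclude that
\[ g^* (\epsilon_p \circ p_! \varphi^f_s)_!(X) \simeq (\epsilon_{p'} \circ p'_! \varphi^{f'}_{s'})_!(g^* X), \]
which lies in $\cI_{W'}(f', p', s')$. Thus $g^*$ sends $\cI_W$ into $\cI_{W'}$.

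For induction, I would observe that $\underline{W}$-left Kan extension may be constructed step-by-step: given the data $(f: U \to V, p': V \to W', s \in S_U)$ defining a generator of $\cI_{W'}$, precomposition with $g$ turns it into the data $(f, g \circ p': V \to W, s)$, and one has $g_! \circ p'_! \simeq (g \circ p')_!$ as $\underline{W}$-left Kan extension functors. Since the $\underline{V}$-$\infty$-category $\fib^f_s(\eta)$ depends only on $f$ and $s$ (not on the choice of $p'$ vs.\ $g \circ p'$), and since the composite $\epsilon_{g \circ p'} \circ (g \circ p')_! \varphi^f_s$ is obtained by whiskering $\epsilon_{p'} \circ p'_! \varphi^f_s$ with $g_!$ followed by the counit of $g_! \dashv g^*$, it follows that
\[ g_! (\epsilon_{p'} \circ p'_! \varphi^f_s)_!(X) \simeq (\epsilon_{g \circ p'} \circ (g \circ p')_! \varphi^f_s)_!(X), \]
which lies in $\cI_W(f, g \circ p', s)$.

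The main technical obstacle is the bookkeeping in the restriction case: one must verify that all four operations in play---the parametrized pushforward $f_*$, the formation of the pullback square defining $\fib^f_s(\eta)$, the pushforward $p_!$, and the pointwise $\underline{W}$-left Kan extension---commute with $g^*$ in a coherent manner. This is entirely formal given \cref{rem:BaseChangeEquivalences} and the general theory of \cite{Exp2}, but writing out the chain of equivalences without error is the only subtle part; the induction case is then essentially a one-line consequence of transitivity of parametrized left Kan extension.
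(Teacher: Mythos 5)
Your argument is correct and follows the paper's proof essentially verbatim: restriction is handled by base-changing the entire defining diagram along $g$ via \cref{rem:BaseChangeEquivalences} together with the compatibility of parametrized left Kan extension with restriction, and induction is exactly the transitivity statement recorded in \cref{rem:InductionOfParamColimits}, yielding $g^*(\cI_W(f,p,s)) \subset \cI_{W'}(f',p',s')$ and $g_!(\cI_{W'}(f,p',s)) \subset \cI_{W}(f,g\circ p',s)$ as in the paper. The only cosmetic slip is in the induction step, where the right-hand side should be evaluated on the $\underline{W}$-functor adjoint to $X$ (i.e.\ $\epsilon_g \circ g_! X$) rather than on $X$ itself; this does not affect the membership conclusion.
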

\begin{proof}
We first show closure under restriction. Suppose
\[ \begin{tikzcd}
U' \ar{r}{f'} \ar{d}{g''} & V' \ar{r}{p'} \ar{d}{g'} & W' \ar{d}{g} \\
U \ar{r}{f} & V \ar{r}{p} & W 
\end{tikzcd} \]
is a commutative diagram of pullback squares of finite $G$-sets in which $W$ and $W'$ are orbits. In the setup of \cref{dfn:InducedObjects}, consider the commutative diagram of $\underline{W}$-$\infty$-categories
\[ \begin{tikzcd}
p_! \fib^f_s(\eta) \ar{r}{p_! \varphi^f_s} \ar{d}[swap]{p_! \pi^f_s} & p_! S_{\underline{V}} \ar{r}{\epsilon_p} \ar{d}{p_! \eta_f} & S_{\underline{W}} \\ 
\underline{V} \ar{r}{p_! s_f} & p_! f_* f^* S_{\underline{V}}.
\end{tikzcd} \]
Let $s' = g''^*(s)$. Applying $g^*$ then yields the commutative diagram
\[ \begin{tikzcd}
p'_! \fib^{f'}_{s'}(\eta) \ar{r}{p'_! \varphi^{f'}_{s'}} \ar{d}[swap]{p'_! \pi^{f'}_{s'}} & p'_! S_{\underline{V'}} \ar{r}{\epsilon_{p'}} \ar{d}{p'_! \eta_{f'}} & S_{\underline{W'}} \\
\underline{V'} \ar{r}{p'_! s'_{f'}} & p'_! f'_* f'^* S_{\underline{V'}} 
\end{tikzcd} \]
in view of \cref{rem:BaseChangeEquivalences}. Using the built-in compatibility of parametrized left Kan extension with restriction, we thus see that $g^* (\cI_W(f,p,s)) \subset \cI_{W'}(f',p',s')$.

As for closure under induction, suppose that $h: W \to W''$ is a map of orbits. Then by \cref{rem:InductionOfParamColimits}, we have that $h_!(\cI_W(f,p,s)) \subset \cI_{W''}(f,h \circ p,s)$.
\end{proof}

\begin{rem} \label{rem:InductionOfParamColimits}
Suppose $C$ is a $G$-$\infty$-category that admits finite $G$-coproducts, $p: V \to W$ is a map of orbits, and $F: K \to C_{\underline{V}}$ is a $\underline{V}$-functor. Let $F' = \epsilon_p \circ p_! F: p_! K \to C_{\underline{W}}$ be the $\underline{W}$-functor adjoint to $F$. Then we have a canonical equivalence
\[ \colim^{\underline{W}} F' \simeq p_! (\colim^{\underline{V}} F) \in C_W \]
provided that the parametrized colimits exist. Indeed, this follows from the commutative diagram
\[ \begin{tikzcd}
C_W \ar{r}{p^*} \ar{d}{\delta} & C_V \ar{d}{\delta} \\
\Fun_{\underline{W}}(p_! K, C_{\underline{W}}) \ar{r}{\simeq} & \Fun_{\underline{V}}(K, C_{\underline{V}}).
\end{tikzcd} \]
More generally, let $L$ be a $\underline{W}$-$\infty$-category. Then $\Fun_{\underline{W}}(L, C_{\underline{W}})$ admits finite $\underline{W}$-coproducts; write
\[ \adjunct{p_!}{\Fun_{\underline{V}}(L_{\underline{V}},C_{\underline{V}})}{\Fun_{\underline{W}}(L, C_{\underline{W}})}{p^*} \]
for the resulting adjunction. Let $\varphi: K \to L_{\underline{V}}$ be a $\underline{V}$-functor and write $\varphi': p_! K \to L$ for the $\underline{W}$-functor adjoint to $\varphi$. Suppose that $\varphi_! F$ exists. Then we have a canonical equivalence
\[ \varphi'_! F' \simeq p_!(\varphi_! F): L \to C_{\underline{W}}. \]
\end{rem}

The next lemma highlights the relevance of \cref{lem:RestrictionInducedObjects}.

\begin{lem} \label{lem:GeneratorsDefineGStableSubcat}
Let $C$ be a $G$-stable $G$-$\infty$-category and suppose $\{ \cJ_W \subset C_W \}_{W \in \sO_G}$ is a collection of objects stable under restriction and induction. Let $D_W \subset C_W$ be the thick subcategory generated by the set $\cJ_W$. Then for every map of orbits $f: V \to W$, the adjunction
\[ \adjunct{f_!}{C_V}{C_W}{f^*} \]
restricts to an adjunction between $D_V$ and $D_W$. Consequently, the collection $\{ D_W \}_{W \in \sO_G}$ assembles to define a $G$-stable $G$-subcategory $D \subset C$.
\end{lem}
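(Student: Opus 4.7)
The plan is to first verify that both $f_!$ and $f^*$ preserve the subcategories $D_\bullet$, from which the restriction of the adjunction is automatic (since $D$ is a full subcategory), and then to assemble the $D_W$ into a $G$-stable $G$-subcategory by verifying stability under restriction and the existence of left adjoints satisfying Beck--Chevalley and $G$-semiadditivity.

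First I would observe that, because $C$ is $G$-stable, for any map of orbits $f: V \to W$ the left adjoint $f_!$ is also a right adjoint: by $G$-semiadditivity (\cref{dfn:semiadditive}) the canonical norm equivalence $f_! \simeq f_*$ holds, so $f_!$ preserves both finite limits and finite colimits, i.e., is exact. The restriction $f^*$ is exact for the same abstract reason (or simply because it is both a left and right adjoint). Now recall that any exact functor between stable $\infty$-categories sends a thick subcategory into the thick subcategory generated by its image. Applying this to $f_!$ and $f^*$, together with the hypothesis that $f_!(\cJ_V) \subset \cJ_W$ and $f^*(\cJ_W) \subset \cJ_V$, yields
\[
f_!(D_V) \subset \mathrm{thick}(f_!\cJ_V) \subset \mathrm{thick}(\cJ_W) = D_W, \qquad f^*(D_W) \subset D_V.
\]
Since $D_V, D_W$ are full subcategories, the adjunction $f_! \dashv f^*$ automatically restricts to an adjunction between $D_V$ and $D_W$.

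Next, I would assemble the $D_W$ into a $G$-subcategory $D \subset C$. Concretely, let $D$ be the full subcategory of the total space of $C \to \sO_G^{\op}$ on objects lying in some $D_W$. Stability under the cocartesian pushforwards of $C$ amounts precisely to $f^*(D_W) \subset D_V$ for all maps of orbits $f: V \to W$, which we already established. Hence $D \to \sO_G^{\op}$ is again a cocartesian fibration and the inclusion $D \hookrightarrow C$ preserves cocartesian edges, so $D$ is a $G$-subcategory of $C$.

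Finally, I would verify $G$-stability of $D$. Each fiber $D_W$ is a thick subcategory of the stable $\infty$-category $C_W$, hence is itself stable and fiberwise pointed. For finite $G$-coproducts and $G$-products, the restrictions $f_!|_{D_V}$ and $f_*|_{D_V}$ provide left and right adjoints to $f^*: D_W \to D_V$ (using $f_! \simeq f_*$ in $C$ to deduce that both adjoints exist on $D$). The Beck--Chevalley conditions for $D$ follow from those for $C$ by fully faithfulness of the inclusion, and likewise the norm equivalence $f_! \xto{\simeq} f_*$ on $D$ is inherited from $C$. Since this argument is essentially formal, the main obstacle — such as it is — is just bookkeeping the requisite exactness of $f_!$, which is the content of $G$-semiadditivity; once that is in hand, both claims follow immediately.
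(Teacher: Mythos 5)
Your proof is correct and takes essentially the same route as the paper: the general fact you cite, that an exact functor $F$ satisfies $F(\mathrm{thick}(\cJ)) \subset \mathrm{thick}(F(\cJ))$, is exactly what the paper proves inline (by considering the thick subcategory $D^0_W = \{x \in D_W : f^*(x) \in D_V\}$ and noting it contains $\cJ_W$), applied to $f^*$ and $f_!$ just as you do. The remaining assembly into a $G$-stable $G$-subcategory matches the paper's (terser) argument, with your appeal to $f_! \simeq f_*$ from $G$-semiadditivity supplying the exactness of $f_!$ that the paper uses implicitly.
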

\begin{proof}
It suffices to check that the collection $\{ D_W \}_{W \in \sO_G}$ is stable under restriction and induction; closure under restriction ensures that $\{ D_W \}_{W \in \sO_G}$ assembles to a $G$-subcategory $D$, and closure under induction together with the assumption that $C$ is $G$-stable then ensures that $D$ is $G$-stable and the inclusion $D \subset C$ is $G$-exact. So let $f: V \to W$ be a map of orbits. Let $D^0_W \subset D_W$ be the full stable subcategory consisting of objects $x \in D_W$ such that $f^*(x) \in D_V$. By assumption, $f^*(\cJ_W) \subset \cJ_V$, so $\cJ_W \subset D^0_W$. Since $f^*$ is exact, it follows that $D^0_W$ is a thick subcategory and thus $D^0_W = D_W$. The same argument shows that $f_!(D_V) \subset D_W$. 
\end{proof}

We are now ready to state the main definition of this subsection.

\begin{dfn} \label{dfn:InducedObjectsGSubcat}
By \cref{lem:RestrictionInducedObjects} and \cref{lem:GeneratorsDefineGStableSubcat}, the full subcategories of induced $\underline{W}$-objects ranging over all orbits $W$ assemble to define a $G$-stable full $G$-subcategory
\[ \underline{\Fun}_G(S, \underline{\Sp}^G)_{\ind} \subset \underline{\Fun}_G(S, \underline{\Sp}^G) \]
of \emph{$G$-induced objects}. 
\end{dfn}

Our remaining goal in this subsection is to show that $\underline{\Fun}_G(S, \underline{\Sp}^G)_{\ind}$ is a $G$-$\otimes$-ideal. First, we show that the induced generators are closed under norms.

\begin{lem} \label{lem:ClosureUnderNorms}
The collection $\{ \cI_W \}_{W \in \sO_G}$ is stable under norms (taken along maps of orbits) in the pointwise $G$-symmetric monoidal structure on $\underline{\Fun}_G(S, \underline{\Sp}^G)$.
\end{lem}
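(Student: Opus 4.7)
The plan is to apply the $G$-distributivity of the Hill--Hopkins--Ravenel norm functors on $\underline{\Sp}^G$ (\cref{dfn:DistributiveSMC}, Nardin's theorem) together with base change for parametrized left Kan extensions to express $h_\otimes F$ as a generator of $\cI_{W'}$ for an appropriate choice of data.

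Fix a map of orbits $h: W \to W'$ and a generator $F = \phi_!X \in \cI_W(f,p,s)$ with $\phi = \epsilon_p \circ p_!\varphi^f_s: p_!\fib^f_s(\eta) \to S_{\underline{W}}$. By \cref{exm:pointwiseGSMC}, the norm is the composite
\[ h_\otimes F: S_{\underline{W'}} \xto{\eta_h} h_*S_{\underline{W}} \xto{h_*F} h_*(\underline{\Sp}^G)_{\underline{W}} \xto{h_\otimes} (\underline{\Sp}^G)_{\underline{W'}}, \]
where $\eta_h$ is the unit for $h^* \dashv h_*$ and the final $h_\otimes$ is the norm $\underline{W'}$-functor on $\underline{\Sp}^G$. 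First, I would invoke $G$-distributivity of $\underline{\Sp}^G$ to propagate the $\underline{W}$-left Kan extension structure through the composite $h_\otimes \circ h_*(-)$: since $F$ is a $\underline{W}$-colimit diagram and $h_\otimes$ is $\underline{W'}$-distributive, the composite $h_\otimes \circ h_*(F)$ is canonically a $\underline{W'}$-left Kan extension along $h_*\phi: h_*(p_!\fib^f_s(\eta)) \to h_*S_{\underline{W}}$ of the diagram $h_\otimes \circ h_*(X)$.

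Second, I would use base change for parametrized left Kan extensions (\cref{lem:adjointability}) to pull back along the unit $\eta_h$. Forming the pullback $\widetilde K = S_{\underline{W'}} \times_{h_*S_{\underline{W}}} h_*(p_!\fib^f_s(\eta))$ with structure map $\widetilde\phi: \widetilde K \to S_{\underline{W'}}$, this exhibits $h_\otimes F$ as the $\underline{W'}$-left Kan extension of the pulled-back diagram along $\widetilde\phi$. Third, I would identify the triple $(\widetilde K, \widetilde\phi)$ with $\bigl(\tilde p_!\fib^{\tilde f}_{\tilde s}(\eta),\ \epsilon_{\tilde p} \circ \tilde p_!\varphi^{\tilde f}_{\tilde s}\bigr)$ for an explicit choice of data. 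The natural candidates, furnished by the adjunction $h^* \dashv h_*$ on slices of $\FF_G$ (which at the level of corepresentable $G$-$\infty$-categories agrees with the $h_*$ on $\Cat_\infty$-valued data of \cref{rem:BaseChangeEquivalences}), are $\tilde V = h_*V$ with structure map $\tilde p: h_*V \to W'$, $\tilde U = h_*U$ with $\tilde f = h_*(f)$, and $\tilde s \in S_{h_*U}$ obtained from $s$ via the transpose along $h^* \dashv h_*$ combined with the counit $\epsilon: h^*h_*U \to U$. Granted this identification, $h_\otimes F \in \cI_{W'}(\tilde f, \tilde p, \tilde s)$, which completes the proof.

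The main obstacle is the final identification: verifying through a careful diagram chase that the pullback of $h_*(p_!\fib^f_s(\eta))$ along $\eta_h$ really does coincide with $\tilde p_!\fib^{\tilde f}_{\tilde s}(\eta)$. This amounts to commuting $h_*$ past the formation of $p_!(-)$ and of the parametrized fiber $\fib^f_s(\eta)$ (the latter being a pullback), and then reidentifying the unit-restricted result as the analogous pullback built from $\tilde f$, $\tilde p$, $\tilde s$. The interchange rests on the adjointability of \cref{rem:BaseChangeEquivalences} applied to the relevant pullback squares of finite $G$-sets, together with the explicit pointwise description of $h_*$ as $G$-right Kan extension along $\underline{W} \hookrightarrow \underline{W'}$ as in \cref{exm:CartesianDistributiveCommutationRelation}.
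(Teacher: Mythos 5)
Your overall strategy — use $G$-distributivity of $\underline{\Sp}^G$ to push $h_\otimes$ through the parametrized left Kan extension, base-change along the unit $\eta_h: S_{\underline{W'}} \to h_* S_{\underline{W}}$, and then identify the resulting triple — is exactly what the paper does. But the final identification you propose is incorrect, and since you flag it yourself as the main obstacle, this is a real gap rather than an omitted routine step.

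The issue is the candidate triple. You take $\tilde V = h_* V$ and $\tilde p = q: h_* V \to W'$ (both correct), but you propose $\tilde U = h_* U$ with $\tilde f = h_*(f)$. The correct answer is \emph{not} $h_*$ applied uniformly to the whole diagram. Write $\gamma = h$ and set $V' = \gamma_* V$, $q: V' \to W'$, and let $\ev: \gamma^*\gamma_* V \to V$ be the counit with $\gamma': \gamma^*\gamma_* V \to V'$ the projection. The new domain is the pullback $U' = U \times_V \gamma^*\gamma_* V$ with map $\ev': U' \to U$, and the new "$f$" is the composite $g = \gamma' \circ f': U' \to \gamma^*\gamma_* V \to V'$, with $s' = \ev'^*(s)$. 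So $\gamma_\otimes(\cI_W(f,p,s)) \subset \cI_{W'}(g,q,s')$. In general $U' = U \times_V \gamma^*\gamma_* V$ differs from $\gamma_* U$: even in the simplest case $U = V$ with $f = \id$, one gets $U' = \gamma^*\gamma_* V$, not $\gamma_* V$. (Concretely, for $\gamma: G/1 \to G/G$ and $U = V = G/1$, one has $U' = G/1$ while $\gamma_* U = G/G$.) Your formula $\tilde f = \gamma_*(f): \gamma_* U \to \gamma_* V$ would produce the wrong $\fib^{\tilde f}_{\tilde s}(\eta)$.

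The source of the asymmetry is that the pushforward $\gamma_* p_!$ must itself be rewritten via the $G$-distributivity formula of \cref{exm:CartesianDistributiveCommutationRelation}, $\gamma_* p_! \simeq q_! \gamma'_* \ev^*$, before the unit $\eta_\gamma$ is pulled back. It is the appearance of $\ev^*$ in this formula, together with the base-change equivalence $\ev^* f_* \simeq f'_* \ev'^*$ from \cref{rem:BaseChangeEquivalences} applied to the pullback square defining $U'$, that forces $U'$ to be the fiber product rather than $\gamma_* U$. Once this is corrected, the rest of your argument — pulling the pullback square defining $\fib$ through these equivalences and then applying \cref{lem:adjointability}(3) — does go through and matches the paper's proof.
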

\begin{proof}
Suppose
\[ U \xto{f} V \xto{p} W \xto{\gamma} W' \]
is a sequence of maps of finite $G$-sets in which $W$ and $W'$ are orbits, and
\[ \begin{tikzcd}
\fib^f_s(\eta) \ar{r}{\varphi^f_s} \ar{d}[swap]{\pi^f_s} & S_{\underline{V}} \ar{d}{\eta_{f}} \\
\underline{V} \ar{r}{s_f} & f_* f^* S_{\underline{V}}
\end{tikzcd} \]
is a pullback square of $\underline{V}$-$\infty$-categories as in \cref{dfn:InducedObjects}. Let $X: p_! \fib^f_s(\eta) \to (\underline{\Sp}^G)_{\underline{W}}$ be a $\underline{W}$-functor and consider the induced $\underline{W}$-object
$$Y = (\epsilon_p \circ p_!(\varphi^f_s))_! X: S_{\underline{W}} \to (\underline{\Sp}^G)_{\underline{W}}.$$
We want to show that $Y' = \gamma_{\otimes}((\epsilon_p \circ p_!(\varphi^f_s))_! X)$ is an induced $\underline{W'}$-object. To do so, we will need to consider the commutative diagram of finite $G$-sets
\[ \begin{tikzcd}
& U' \ar{r}{f'} \ar[bend left=25]{rr}{g} \ar{ld}[swap]{\ev'} & \gamma^* \gamma_* V \ar{d} \ar{ld}[swap]{\ev} \ar{r}{\gamma'} & V' = \gamma_* V \ar{d}{q} \\
U \ar{r}{f} & V \ar{r}{p} & W \ar{r}{\gamma} & W'
\end{tikzcd} \]
in which both parallelograms are pullback squares.\footnote{Here, notation is as in \cref{exm:CartesianDistributiveCommutationRelation}; we write $\gamma_*: (\FF_G)^{/W} \to (\FF_G)^{/W'}$ for the right adjoint to pullback along $\gamma$ and $\ev: \gamma^* \gamma_* \to \id$ for the counit of the adjunction.}

First note that by $G$-distributivity of $(\underline{\Sp}^G, \otimes)$, if we let $Y''$ be the $\underline{W'}$-left Kan extension of
\[ \gamma_* p_! \fib^f_s(\eta) \xtolong{\gamma_* X}{1.5} \gamma_* \gamma^* (\underline{\Sp}^G)_{\underline{W'}}  \xtolong{\gamma_{\otimes}}{1.5} (\underline{\Sp}^G)_{\underline{W'}} \]
along
\[ \gamma_* p_! \fib^f_s(\eta) \xtolong{\gamma_* p_! \varphi^f_s}{1.5} \gamma_* p_! S_{\underline{V}} \xtolong{\gamma_* \epsilon_p}{1.5} \gamma_* S_{\underline{W}} \simeq \gamma_* \gamma^* S_{\underline{W'}}, \]
then we may identify $Y'$ as $Y'' \circ \eta_\gamma$. We then need to identify the pullback of $\gamma_* p_! \varphi^f_s$ and $\gamma_* \epsilon_p$ along $\eta_{\gamma}$. To deal with $\gamma_* \epsilon_p$, note that applying $\gamma_*$ to the pullback square of $\underline{W}$-$\infty$-categories
\[ \begin{tikzcd}
p_! p^* S_{\underline{W}} \ar{d} \ar{r}{\epsilon_p} & S_{\underline{W}} \ar{d} \\
\underline{V} \ar{r}{p} & \underline{W}
\end{tikzcd} \]
yields a pullback square of $\underline{W'}$-$\infty$-categories
\[ \begin{tikzcd}
q_! q^* (\gamma_* \gamma^* S_{\underline{W'}}) \ar{r}{\epsilon_q} \ar{d} & \gamma_* S_{\underline{W}} = \gamma_* \gamma^* S_{\underline{W'}} \ar{d} \\
\underline{V'} \ar{r}{q} & \underline{W'}
\end{tikzcd} \]
identifying $\gamma_* \epsilon_p$ with $\epsilon_q$. We then have the commutative diagram
\[ \begin{tikzcd}
q_! q^* S_{\underline{W'}} \ar{r}{q_! q^* \eta_q} \ar{d}{\epsilon_q} & q_! q^* \gamma_* \gamma^* S_{\underline{W'}} \ar{d}{\epsilon_q} \ar{r} & \underline{V'} \ar{d}{q} \\
S_{\underline{W'}} \ar{r}{\eta_q} & \gamma_* \gamma^* S_{\underline{W'}} \ar{r} & \underline{W'}
\end{tikzcd} \]
in which the outer rectangle and the righthand square are pullback squares, hence the lefthand square is a pullback square. Furthermore, under the equivalence $q^* \gamma_* \gamma^* S_{\underline{W'}} \simeq \gamma'_* \gamma'^* S_{\underline{V'}}$ we get that $q_! q^* \eta_q$ identifies with
\[ q_! \eta_{\gamma'}: q_! S_{\underline{V'}} \to q_! \gamma'_* \gamma'^* S_{\underline{V'}}. \]
We next have the sequence of equivalences
\begin{align*} \gamma_* p_! f_* f^* S_{\underline{V}} \simeq q_! \gamma'_* \ev^* f_* f^* S_{\underline{V}} \simeq q_! g_* g^* S_{\underline{V'}}
\end{align*}
where we use \cref{exm:CartesianDistributiveCommutationRelation} for the first equivalence and \cref{rem:BaseChangeEquivalences} for the second equivalence. Moreover, a diagram chase reveals that under this equivalence, $\gamma_* p_! s_f$ identifies with $q_! s'_g$, where $s' = \ev'^*(s) \in S_{U'}$. Putting these observations together, we obtain a commutative diagram
\[ \begin{tikzcd}
q_! \fib^g_{s'}(\eta) \ar{r}{\rho} \ar{d}[swap]{q_! \varphi^g_{s'}} & \gamma_* p_! \fib^f_s(\eta) \ar{r} \ar{d} & \underline{V'} \ar{d}{q_! s'_g} \\ 
q_! S_{\underline{V'}} \ar{r}{q_! \eta_{\gamma'}} & q_! \gamma'_* \gamma'^* S_{\underline{V'}} \ar{r} & q_! g_* g^* S_{\underline{V'}} 
\end{tikzcd} \]
in which the righthand square is $\gamma_* p_!$ of the pullback square defining $\fib^f_s(\eta)$, hence a pullback square, and the outer rectangle is $q_!$ of the pullback square defining $\fib^g_{s'}(\eta)$, hence also a pullback square. It follows that the lefthand square is a pullback square. Finally, if we let $X' = \gamma_{\otimes} \circ \gamma_* X \circ \rho$, then by \cref{lem:adjointability}(3) we get that $Y' \simeq (\epsilon_q \circ q_! \varphi^g_{s'})_! X'$. We conclude that $\gamma_{\otimes} (\cI_W(f,p,s) ) \subset \cI_{W'}(g,q,s')$ and thus the collection $\{ \cI_W \}_{W \in \sO_G}$ is stable under norms.
\end{proof}

On the other hand, to show that $\underline{\Fun}_G(S, \underline{\Sp}^G)_{\ind}$ is fiberwise a $\otimes$-ideal, we will need the following projection formula.

\begin{lem} \label{lem:ProjectionFormula} Suppose $\varphi: I \to J$ is a map of $G$-spaces and let $C$ be a $G$-distributive $G$-symmetric monoidal $\infty$-category.\footnote{In fact, one only needs that for fold maps $p: V^{\sqcup n} \to V$, the norm $\underline{V}$-functor $p_* p^* C_{\underline{V}} \to C_{\underline{V}}$ is distributive. However, this is stronger than $C_V$ being distributive as a symmetric monoidal $\infty$-category.} Then
\[ \varphi_!: \Fun_G(I, C) \to \Fun_G(J, C) \]
satisfies the \emph{projection formula}: that is, for every pair of $G$-functors $X: I \to C$ and $Y: J \to C$, the canonical map
\[ \chi: \varphi_!(X \otimes \varphi^* Y) \to \varphi_! X \otimes Y  \]
adjoint to $X \otimes \varphi^* Y \xto{\eta \otimes \id} \varphi^* \varphi_! X \otimes \varphi^* Y$ is an equivalence.
\end{lem}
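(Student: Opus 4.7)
The plan is to verify $\chi$ is an equivalence pointwise on $J$ and reduce to the distributivity of the binary tensor product over $\underline{V}$-colimits. Fix a point $j \in J$ lying over an orbit $V$ of $G$, and let $\iota_j: I_{\underline{j}} \hookrightarrow I$ denote the inclusion of the $\underline{V}$-fiber. Since $\varphi$ is a map of $G$-spaces, up to equivalence we may model it as a categorical fibration between left fibrations, whence $\varphi$ is simultaneously an $S$-cocartesian and $S$-cartesian fibration; the discussion of \cref{GeneralTheoryParamColimits} then supplies the pointwise formula $(\varphi_! Z)|_{\underline{j}} \simeq \colim^{\underline{V}}_{I_{\underline{j}}} \iota_j^* Z$ for any $Z \in \Fun_G(I,C)$.

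Since $\varphi \circ \iota_j$ factors through the $\underline{V}$-point $\underline{j} \subset J$, the restriction $\iota_j^* \varphi^* Y$ is precisely the pullback of $Y(j) \in C_V$ along the structure map $I_{\underline{j}} \to \underline{V}$, i.e., a ``constant'' $\underline{V}$-diagram at $Y(j)$. Applying the pointwise formula to $Z = X \otimes \varphi^* Y$ and $Z = X$ and tracking the unit of $\varphi_! \dashv \varphi^*$, one identifies $\chi(j)$ with the canonical comparison morphism
\[ \colim^{\underline{V}}_{I_{\underline{j}}} \bigl( \iota_j^* X \otimes Y(j)\bigr) \;\longrightarrow\; \bigl( \colim^{\underline{V}}_{I_{\underline{j}}} \iota_j^* X \bigr) \otimes Y(j). \]
It therefore suffices to show that for any $W \in C_V$, the $\underline{V}$-endofunctor $(-) \otimes W: C_{\underline{V}} \to C_{\underline{V}}$ preserves $\underline{V}$-colimits.

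For this, apply $G$-distributivity (\cref{dfn:DistributiveSMC}) to the fold map $p: V \sqcup V \to V$, whose associated norm $\underline{V}$-functor is the binary tensor product
\[ p_\otimes = \otimes \colon C_{\underline{V}} \times_{\underline{V}} C_{\underline{V}} \simeq p_* p^* C_{\underline{V}} \longrightarrow C_{\underline{V}}. \]
The trick is to feed distributivity the pair consisting of a chosen $\underline{V}$-colimit diagram $\overline{X}: K^{\underline{\rhd}} \to C_{\underline{V}}$ and the trivial $\underline{V}$-colimit diagram $\underline{V}^{\underline{\rhd}} \to C_{\underline{V}}$ selecting $W$ (which is in fact a $\underline{V}$-colimit diagram, as $\underline{V}$ carries a terminal $\underline{V}$-object). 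These assemble into a $\underline{V \sqcup V}$-colimit diagram on $K \sqcup \underline{V}$, and since $p_*(K \sqcup \underline{V}) \simeq K \times_{\underline{V}} \underline{V} \simeq K$, $\underline{V}$-distributivity of $p_\otimes$ produces a $\underline{V}$-colimit diagram $K^{\underline{\rhd}} \to C_{\underline{V}}$ whose value at $k$ is $\overline{X}(k) \otimes W$. The universal property of $\underline{V}$-colimits then supplies the desired equivalence.

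The main obstacle is the bookkeeping in the final paragraph: one must verify that the $\underline{V}$-colimit diagram manufactured from $G$-distributivity of the fold map literally matches the canonical comparison $\chi(j)$ under the pointwise formula. This amounts to a careful diagram chase involving the canonical map $(p_* K)^{\underline{\rhd}} \to p_*(K^{\underline{\rhd}})$ entering \cref{dfn:DistributiveFunctor}, compatibility of parametrized joins with restriction, and a precise identification of the unit $\eta \colon Z \to \varphi^* \varphi_! Z$ after restriction along $\iota_j$.
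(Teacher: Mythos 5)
Your proposal is correct and follows essentially the same route as the paper's proof: reduce to points $j \in J$ via base change (your pointwise Kan-extension formula from \cref{GeneralTheoryParamColimits} is the same content as the paper's appeal to \cref{lem:adjointability}(3)), identify $\chi(j)$ with a parametrized colimit comparison map, and conclude by applying $G$-distributivity to the fold map $V^{\sqcup 2} \to V$, exactly as the paper does (and as its footnote anticipates). Your packaging of the last step as the standalone claim that $(-)\otimes W$ preserves $\underline{V}$-colimits, proved by pairing an arbitrary $\underline{V}$-colimit diagram with the trivial one at $W$, is just a mild rephrasing of the paper's application of distributivity to the Kan-extension diagram along $\pi \sqcup \id$.
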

\begin{proof}
It suffices to check that $\chi$ is an equivalence on all points of $J$. Let $j \in J_V$ and also write $j: \underline{V} \to J$ for the corresponding $G$-functor. We then want to show that $j^* \chi$ is an equivalence of $\underline{V}$-functors. Since $\varphi_!$, $\varphi^*$, and $\otimes$ are all compatible with restriction along $\underline{V} \to \sO_G^{\op}$, we may suppose $V = G/G$ without loss of generality. Now consider the pullback square of $G$-spaces
\[ \begin{tikzcd}
I_j \ar{r}{\iota} \ar{d}{\pi} & I \ar{d}{\varphi} \\
\ast_G \ar{r}{j} & J.
\end{tikzcd} \]
By \cref{lem:adjointability}(3), we have the base-change equivalence $j^* \pi_! \simeq \varphi_! \iota^*$, so we reduce to showing the map
\[ j^* \chi: \pi_! \iota^* (X \otimes \varphi^* Y) \simeq \pi_! (\iota^* X \otimes \pi^* j^* Y) \to \pi_! \iota^* X \otimes j^* Y \]
is an equivalence. For this, we can use the $G$-distributivity of $(C,\otimes)$. Let $p: U = (G/G)^{\sqcup 2} \to G/G$ be the fold map and consider the left $G$-Kan extension diagram
\[ \begin{tikzcd}
I_j \coprod \ast_G \ar{r}{\iota^* X \coprod j^* Y} \ar{d}{\qquad \Downarrow}[swap]{\pi \coprod \id} & [3em] C \coprod C \\
\ast_G \coprod \ast_G \ar[bend right=15]{ru}[swap]{\pi_! \iota^* X \coprod j^* Y}
\end{tikzcd} \]
as a diagram of $\underline{U}$-$\infty$-categories. Then we obtain a left $G$-Kan extension diagram
\[ \begin{tikzcd}
I_j \ar{d}{\qquad \Downarrow}[swap]{\pi} \ar{r}{(\iota^* X, \pi^* j^* Y)} & [3.5em] C \times C = p_* p^* C \ar{r}{\otimes} & C \\ 
\ast_G \ar[bend right=10]{rru}[swap]{\pi_! \iota^* X \otimes j^* Y},
\end{tikzcd} \]
and thus a canonical equivalence $\pi_! (\iota^* X \otimes \pi^* j^* Y) \xto{\simeq} \pi_! \iota^* X \otimes j^* Y$. A chase of the definitions shows this equivalence to be implemented by $j^* \chi$.
\end{proof}

The following lemma now suffices to show that $\underline{\Fun}_G(S, \underline{\Sp}^G)_{\ind}$ is a $G$-$\otimes$-ideal.

\begin{lem} \label{lem:GTensorIdealCondition}
Let $C$ be a $G$-stable $G$-distributive $G$-symmetric monoidal $\infty$-category and suppose $\{ \cJ_W \subset C_W \}_{W \in \sO_G}$ is a collection of objects stable under restriction, induction, and norms. Let $D_W \subset C_W$ be the thick subcategory generated by $\cJ_W$. Suppose in addition that each $D_W$ is a $\otimes$-ideal. Then for every map of orbits $f: V \to W$, $f_{\otimes}: C_V \to C_W$ restricts to a functor $f_{\otimes}: D_V \to D_W$.
\end{lem}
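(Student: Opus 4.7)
Following the template of \cite[Lem.~I.3.8(ii)]{NS18}, define the full subcategory $E_V \subseteq C_V$ to consist of those $x$ such that for every span $V \xleftarrow{p} V' \xto{h} W'$ in $\FF_G$ with $W'$ an orbit, $h_\otimes(p^* x) \in D_{W'}$. Stability of $\cJ$ under restriction and norms yields $\cJ_V \subseteq E_V$, and specializing to $(p,h) = (\id_V, f)$ gives $E_V \subseteq \{x : f_\otimes(x) \in D_W\}$. Hence, if $E_V$ is a thick subcategory of $C_V$, then $D_V \subseteq E_V$, which suffices to conclude.

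Closure of $E_V$ under retracts is formal, and closure under shifts uses the symmetric monoidality of $h_\otimes \circ p^*$ (\cref{rem:NormSymmetricMonoidal}): the identity $h_\otimes(p^*(\Sigma x)) \simeq h_\otimes(\Sigma 1_{V'}) \otimes h_\otimes(p^* x)$ together with the $\otimes$-ideal property of $D_{W'}$ does the job. The main step is closure under cofibers. Given a cofiber sequence $x \to y \to z$ in $C_V$ with $x, z \in E_V$ and a span $V \xleftarrow{p} V' \xto{h} W'$, rewrite $p^* y$ as the pushout of $p^* x \leftarrow p^* z[-1] \to 0$ in $C_{V'}$, and invoke the $\underline{W'}$-distributivity of the norm $\underline{W'}$-functor $h_\otimes: h_* h^* C_{\underline{W'}} \to C_{\underline{W'}}$ provided by the $G$-distributivity of $C$. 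This identifies $h_\otimes(p^* y)$ as the $\underline{W'}$-colimit of a finite $\underline{W'}$-parametrized cube whose non-cone vertices are values of $h_\otimes$ on $G$-equivariant assignments of the labels $\{p^*x, p^*z[-1], 0\}$ to the fibers of $h$. Each such vertex corresponds to a decomposition $V' \simeq V'_x \sqcup V'_{z[-1]} \sqcup V'_0$ into sub-$G$-sets over $W'$, and Beck--Chevalley (\cref{rem:BaseChangeEquivalences}) combined with the multiplicativity of the norm on disjoint unions (\cref{rem:NormSymmetricMonoidal}) expresses the vertex as a tensor product
\[ (h|_{V'_x})_\otimes(p^*x|_{V'_x}) \otimes (h|_{V'_{z[-1]}})_\otimes(p^*z[-1]|_{V'_{z[-1]}}) \otimes (h|_{V'_0})_\otimes(0) \]
in $C_{W'}$. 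Vertices with $V'_0 \neq \emptyset$ vanish; for the others, the $E_V$-property of $x$ applied to the sub-span $V \xleftarrow{p|_{V'_x}} V'_x \xto{h|_{V'_x}} W'$ (after further decomposing $V'_x$ into orbits and using the $\otimes$-ideal property) places the first factor in $D_{W'}$, and analogously the second factor lies in $D_{W'}$ after absorbing the desuspension into a tensor with $h_\otimes(\Sigma^{-1} 1)$. Since $D$ is closed under $G$-induction by \cref{lem:GeneratorsDefineGStableSubcat} and each $D_{W'}$ is thick, the resulting finite $\underline{W'}$-colimit $h_\otimes(p^*y)$ of a $D$-valued diagram lies in $D_{W'}$.

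The principal obstacle is organizing the combinatorics of the distributive cube and identifying each vertex value as a product of partial norms via iterated Beck--Chevalley; once this is in hand, the $\otimes$-ideal and thickness hypotheses on $D$ mechanically deliver the result.
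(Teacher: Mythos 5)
Your proof is correct, but it takes a genuinely different route from the paper's. The paper defines $D^0_V = \{x \in D_V : f_\otimes(x) \in D_W\}$ for the single fixed map $f$, and reduces closure under finite colimits to the claim that $f_*\varphi$ factors through a parametrized subcategory $(f_* D_{\underline{V}})^0$ whose fiber over $W' \to W$ consists of tuples $(x_i)_i \in D_{V'}$ with $\bigotimes_i (f'_i)_\otimes x_i \in D_{W'}$. For a constant diagram $K = L \times \underline{V}$, the fiber of $f_* K$ over $W'$ is $L^n$ ($n$ the number of orbits of $V' = V\times_W W'$), and the value at a mixed tuple $(\ell_i)_i$ is $\bigotimes_i (f'_i)_\otimes (g'_i)^* \varphi_V(\ell_i)$. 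The hypothesis $\varphi_V(\ell) \in D^0_V$ only directly controls the \emph{diagonal} tuples (where $g^* f_\otimes(\varphi_V(\ell)) = \bigotimes_i (f'_i)_\otimes(g'_i)^* \varphi_V(\ell) \in D_{W'}$ by norm--restriction compatibility); handling mixed tuples requires either knowing $(f'_i)_\otimes$ already sends $D_{V'_i}$ into $D_{W'}$ (an induction, e.g.\ on the fiber-size of the map of orbits) or a stronger pointwise hypothesis. Your $E_V$ \emph{is} that stronger hypothesis: by quantifying over all spans $V \xleftarrow{p} V' \xto{h} W'$, the sub-span $V \leftarrow V'_x \to W'$ directly puts each partial-norm factor $(h|_{V'_x})_\otimes((p|_{V'_x})^* x)$ in $D_{W'}$, after which the $\otimes$-ideal and thickness hypotheses close the vertex-by-vertex argument with no auxiliary induction. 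The trade-off is that you must reverify that $\cJ_V \subseteq E_V$ (which uses all three closure hypotheses at once, plus the $\otimes$-ideal property) and be slightly careful with empty sub-$G$-sets (whose contribution is the monoidal unit, rescued only because such vertices are tensored with a zero factor); the paper's $D^0_V$ is minimal but requires the additional combinatorial bookkeeping be pushed into the auxiliary $(f_* D_{\underline{V}})^0$ and would need the stated ``equivalence'' to be replaced by a one-sided implication plus the fiber-size induction. Both approaches hinge on the same distributivity input $h_\otimes(\colim^{\underline{V'}}) \simeq \colim^{\underline{W'}}(h_\otimes \circ h_*(-))$ and on closure of $D$ under $G$-induction from \cref{lem:GeneratorsDefineGStableSubcat}.
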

\begin{proof}
Let $D_V^0 \subset D_V$ be the full stable subcategory consisting of objects $x \in D_V$ such that $f_{\otimes}(x) \in D_W$. We claim that $D_V^0$ is a thick subcategory, for which it suffices to show that $D_V^0$ is closed under desuspensions, finite colimits, and retracts.\footnote{Of course, a simple argument as in \cref{lem:GeneratorsDefineGStableSubcat} no longer works since $f_{\otimes}$ is not an exact functor.} We deal with these cases in turn:
\begin{enumerate}[leftmargin=*]
\item Suppose $x \in D_V^0$. Then $\Sigma^{-1} x = S^{-1} \otimes x$ where $S^{-1} = \Sigma^{-1}(1)$ is the desuspension of the unit in $C_V$. Since $f_{\otimes}$ is symmetric monoidal (\cref{rem:NormSymmetricMonoidal}), we have $f_{\otimes}(\Sigma^{-1} x) \simeq f_{\otimes}(S^{-1}) \otimes f_{\otimes}(x)$. But by assumption $f_{\otimes}(x) \in D_W$ and $D_W \subset C_W$ is a $\otimes$-ideal. It follows that $f_{\otimes}(\Sigma^{-1} x ) \in D_W$ and thus $\Sigma^{-1} x \in D_V^0$. 

\item To show that $D_V^0$ is stable under finite colimits, in order to use the $G$-distributivity of $(C, \otimes)$ it is convenient to establish a more general assertion regarding parametrized colimits. To this end, let $D \subset C$ be the $G$-stable full $G$-subcategory as in \cref{lem:GeneratorsDefineGStableSubcat}. For any map of orbits $W' \to W$, consider the pullback square
\[ \begin{tikzcd}
V' \ar{r}{f'} \ar{d} & W' \ar{d} \\
V \ar{r}{f} & W
\end{tikzcd} \]
and let $D^0_{V'} \subset D_{V'}$ be the full subcategory on those objects $x \in D_{V'}$ such that $f'_{\otimes}(x) \in D_{W'}$. Then using the compatibility of norms with restriction, we see that the collection
$$\{ D^0_{V'} \subset D_{V'} : V' \cong V \times_W W' \}_{W' \in (\FF_G)^{/W}}$$
assembles to define a full $\underline{W}$-subcategory $(f_* D_{\underline{V}})^0$ of $f_* D_{\underline{V}}$.

Now suppose that $K$ is a $\underline{V}$-finite $\underline{V}$-$\infty$-category and $\varphi: K \to D_{\underline{V}}$ is a $\underline{V}$-functor such that $f_* \varphi: f_* K \to f_* D_{\underline{V}}$ factors through $(f_* D_{\underline{V}})^0$; if $K \simeq L \times \underline{V}$ for a finite $\infty$-category $L$, then this hypothesis is equivalent to supposing that $\varphi_V: L \to D_V$ factors through $D^0_V$, so this is the case of interest for us. We aim to show that $\colim^{\underline{V}} \varphi \in D_V^0$. But using $G$-distributivity, we see that
\[ f_{\otimes}(\colim^{\underline{V}} \varphi) \simeq \colim^{\underline{W}}(f_{\otimes} \circ f_* \varphi: f_* L \to f_* D_{\underline{V}} \subset f_* f^* C_{\underline{W}} \to C_{\underline{W}}).  \]
Since $f_* \varphi$ factors through $(f_* D_{\underline{V}})^0$, $f_{\otimes} \circ f_* \varphi$ factors through $D_{\underline{W}}$. Then since $f_* L$ is $\underline{W}$-finite and the inclusion $D_{\underline{W}} \subset C_{\underline{W}}$ is $\underline{W}$-exact, we get that $\colim^{\underline{W}}(f_{\otimes} \circ f_* \varphi) \in D_W$, as desired.
\item Since $f_{\otimes}$ preserves retract diagrams and $D_W$ is stable under retracts, it follows that $D^0_V$ is stable under retracts.
\end{enumerate}
We conclude that $D^0_V$ is a thick subcategory. Since $\cJ_W \subset D^0_V$ by assumption, it follows that $D^0_V = D_V$, as desired.
\end{proof}

\begin{cor} \label{cor:InducedObjectsFormGTensorIdeal}
Let $S$ be a $G$-space. Then the full $G$-subcategory $\underline{\Fun}_G(S, \underline{\Sp}^G)_{\ind}$ of induced objects in $\underline{\Fun}_G(S, \underline{\Sp}^G)$ is a $G$-$\otimes$-ideal.
\end{cor}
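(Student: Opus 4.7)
The plan is to verify the two conditions of \cref{dfn:Gtensorideal} by assembling the preceding lemmas. Condition (2)---stability under norms $f_\otimes$ along maps of orbits $f : V \to W$---will follow immediately from \cref{lem:GTensorIdealCondition} once condition (1) is established, since the remaining hypotheses of that lemma are already in place: $\underline{\Fun}_G(S, \underline{\Sp}^G)$ with its pointwise $G$-symmetric monoidal structure inherits $G$-distributivity from $\underline{\Sp}^G$, and the generators $\{\cI_W\}$ are closed under restriction, induction, and norms by \cref{lem:RestrictionInducedObjects} and \cref{lem:ClosureUnderNorms}. So the real task is to verify condition (1): for each orbit $W$, the fiber $\Fun_{\underline{W}}(S_{\underline{W}}, (\underline{\Sp}^G)_{\underline{W}})_{\ind}$ should be a $\otimes$-ideal in the ambient functor $\infty$-category.

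I will run the standard thick-subcategory argument. Fix $Y$ in the ambient fiber and let $D_W^{00}$ denote the full subcategory of induced objects $z$ such that $z \otimes Y$ is again induced. Since $\otimes$ is exact in each variable and the induced objects are closed under fibers, cofibers, and retracts, $D_W^{00}$ is a thick subcategory, so it suffices to show that every generator $(\epsilon_p \circ p_! \varphi^f_s)_!(X) \in \cI_W$ lies in $D_W^{00}$. Setting $\psi \coloneq \epsilon_p \circ p_! \varphi^f_s$, I first observe that $\psi$ is a morphism of $\underline{W}$-spaces: $S_{\underline{W}}$ is a $\underline{W}$-space by hypothesis on $S$, while $p_! \fib^f_s(\eta)$ is obtained by postcomposing the left fibration $\fib^f_s(\eta) \to \underline{V}$ (a pullback of left fibrations of $\underline{V}$-spaces) with the left fibration $p : \underline{V} \to \underline{W}$. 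The projection formula \cref{lem:ProjectionFormula} then gives
\[
\psi_!(X) \otimes Y \;\simeq\; \psi_!\bigl(X \otimes \psi^* Y\bigr),
\]
and the right-hand side is again a member of $\cI_W(f,p,s)$, completing condition (1).

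The principal technical content is already carried by \cref{lem:ClosureUnderNorms}, where $G$-distributivity of $\underline{\Sp}^G$ and delicate base-change manipulations transport one family of induced generators into another under norms. In the corollary itself, the only subtle point is to confirm that the maps $\psi$ defining the induced generators are themselves maps of $\underline{W}$-spaces, so that the projection formula genuinely applies; once this is in place, the proof is a short assembly of the preceding lemmas.
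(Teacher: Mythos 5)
Your proposal is correct and follows the same route as the paper's proof: the paper's "routine thick subcategory argument" invoking \cref{lem:ProjectionFormula} to handle the fiberwise $\otimes$-ideal condition, combined with \cref{lem:ClosureUnderNorms} and \cref{lem:GTensorIdealCondition} for norm stability, is exactly what you spell out. Your additional verification that $\psi = \epsilon_p \circ p_!\varphi^f_s$ is a map of $\underline{W}$-spaces (so that the projection formula applies) is a point the paper leaves implicit, and it is the right detail to check.
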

\begin{proof}
By a routine thick subcategory argument, \cref{lem:ProjectionFormula} implies that $\underline{\Fun}_G(S, \underline{\Sp}^G)_{\ind}$ is fiberwise a $\otimes$-ideal. Using \cref{lem:ClosureUnderNorms}, \cref{lem:GTensorIdealCondition} then applies to show that $\underline{\Fun}_G(S, \underline{\Sp}^G)$ is a $G$-$\otimes$-ideal.
\end{proof}

\subsection{Main results}\label{SS:MainAssembly}

We are now ready to define a genuine equivariant refinement of the Tate construction for infinite groups. Our discussion is parallel to that in \cite[\S I.4]{NS18}. First, we generalize work of Weiss--Williams \cite{WW95} by constructing parametrized assembly maps.

\begin{thm} \label{thm:paramTateGeneral}
Let $G$ be a finite group, $S$ a $G$-space, and let $p: S \to \ast_G$ be the projection to the terminal $G$-space.
\begin{enumerate}
    \item The $G$-stable $G$-$\infty$-category $\underline{\Fun}_G(S, \underline{\Sp}^G)$ has compactly generated fibers, and taking compact objects fiberwise yields a $G$-stable full $G$-subcategory $\underline{\Fun}_G(S, \underline{\Sp}^G)^{\omega}$ such that
    $$ \underline{\Ind}_G \left( \underline{\Fun}_G(S, \underline{\Sp}^G)^{\omega} \right) \xto{\simeq} \underline{\Fun}_G(S, \underline{\Sp}^G). $$
    \item For every $G$-exact $G$-functor $F_1: \underline{\Fun}_G(S, \underline{\Sp}^G) \to \underline{\Sp}^G$, there exists a $G$-colimit preserving $G$-functor $F_0$ equipped with a natural transformation $\alpha: F_0 \to F_1$, which is an `assembly map' in the sense of being terminal among all such natural transformations. Moreover, $\alpha$ is an equivalence when restricted to the full $G$-subcategory $\underline{\Fun}_G(S, \underline{\Sp}^G)^{\omega}$ of compact objects, and this uniquely specifies $\alpha$.
    \item Every $G$-colimit preserving functor $F: \underline{\Fun}_G(S, \underline{\Sp}^G) \to \underline{\Sp}^G$ is of the form $$F(-) \simeq p_!(D \otimes -)$$ for a uniquely determined $G$-functor $D: S \to \underline{\Sp}^G$.
    \item For $F = p_*$, the assembly map takes the form $p_!(D_S \otimes -) \to p_*(-)$ for
    \[ D_S: S \xto{j} \underline{\Fun}_G(S, \underline{\Spc}^G) \xto{\Sigma^{\infty}_+} \underline{\Fun}_G(S, \underline{\Sp}^G) \xto{p_*} \underline{\Sp}^G \]
    where $j$ is the $G$-Yoneda embedding (under the equivalence $S \simeq S^{\vop}$).
\end{enumerate}
\end{thm}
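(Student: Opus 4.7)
My approach is to identify $\underline{\Fun}_G(S, \underline{\Sp}^G)$ as the free $G$-presentable $G$-stable $\infty$-category generated by $S$, from which all four claims will follow. For Part (1), I would show fiberwise that $\Fun_{\underline{W}}(S_{\underline{W}}, (\underline{\Sp}^G)_{\underline{W}})$ is compactly generated, with compact generators of the form $s_!(x)$ for $\underline{W}$-points $s:\underline{V} \to S_{\underline{W}}$ and compact $x \in \Sp^V$. Fiberwise compactness is preserved by restriction along maps of orbits since those restriction functors admit right adjoints (by $G$-completeness of $\underline{\Sp}^G$) and hence preserve filtered colimits; the fiberwise compact objects therefore assemble into a $G$-stable full $G$-subcategory $\underline{\Fun}_G(S, \underline{\Sp}^G)^{\omega}$. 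The $\underline{\Ind}_G$-equivalence then follows by combining the fiberwise $\Ind$-equivalence with the base-change compatibility of $\underline{\Ind}_G$ discussed in \cref{rem:IndCompletion}.

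For Part (2), I would restrict $F_1$ to compact objects to obtain $F_1^{\omega}$, then use the universal property of $\underline{\Ind}_G$ (as in \cref{prp:VerdierQuotient}(3)) to extend $F_1^{\omega}$ to a $G$-colimit preserving $G$-functor $F_0$ on $\underline{\Fun}_G(S, \underline{\Sp}^G) \simeq \underline{\Ind}_G(\underline{\Fun}_G(S, \underline{\Sp}^G)^{\omega})$, together with a natural transformation $\alpha: F_0 \to F_1$ that is tautologically an equivalence on compact objects. Terminality follows from the same universal property: any natural transformation from a $G$-colimit preserving $F'$ into $F_1$ restricts to a map $F'^{\omega} \to F_1^{\omega}$ and factors uniquely through $\alpha$. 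Uniqueness of $\alpha$ given properties (2) and (3) follows because a $G$-colimit preserving functor is recovered from its restriction to compact objects.

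The main content, and primary obstacle, lies in Part (3), where I would establish the tensor-hom equivalence
\[ \Fun_G(S, \underline{\Sp}^G) \xrightarrow{\simeq} \Fun^L_G(\underline{\Fun}_G(S, \underline{\Sp}^G), \underline{\Sp}^G), \quad D \mapsto \bigl(X \mapsto p_!(D \otimes X)\bigr) \]
between $G$-functors and $G$-colimit preserving $G$-functors. The forward map lands in $G$-colimit preserving functors because $\underline{\Sp}^G$ is $G$-distributive (\cref{exm:GSMCSpectra}), so tensoring with $D$ preserves $G$-colimits in $X$, and $p_!$ is a left $G$-adjoint. The inverse sends $F$ to $F \circ \Sigma^{\infty}_+ \circ j_G$, where $j_G: S \to \underline{\Fun}_G(S, \underline{\Spc}^G)$ is the $G$-Yoneda embedding. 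The key ingredient, and the main technical hurdle, is the parametrized generation statement that $\underline{\Fun}_G(S, \underline{\Sp}^G)$ is generated under $G$-colimits by the essential image of $\Sigma^{\infty}_+ \circ j_G$; this combines the parametrized Yoneda lemma (expressing $G$-presheaves as $G$-colimits of representables) with the fact that suspension spectra together with desuspensions generate $\underline{\Sp}^G$ under $G$-colimits. Compatibility of the two constructions at these generators reduces to the parametrized projection formula (\cref{lem:ProjectionFormula}) applied to each point $s: \underline{V} \to S$, together with the identification $\Sigma^{\infty}_+ j_G(s) \simeq s_!(\mathbb{S})$ and the fact that $p \circ s \simeq \mathrm{id}_{\underline{V}}$.

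For Part (4), combining (2) and (3) shows that the assembly map for $F_1 = p_*$ takes the form $p_!(D_S \otimes -) \to p_*$ for a unique $G$-functor $D_S : S \to \underline{\Sp}^G$. To identify $D_S$, I would evaluate both sides on the compact generator $\Sigma^{\infty}_+ j_G(s) \simeq s_!(\mathbb{S})$: the projection formula together with $p \circ s \simeq \mathrm{id}_{\underline{V}}$ yields $p_!(D_S \otimes s_!(\mathbb{S})) \simeq D_S(s)$, and by (2) this agrees with $p_*(\Sigma^{\infty}_+ j_G(s))$, which is precisely the stated composite formula for $D_S$.
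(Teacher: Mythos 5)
Your proposal is correct in substance and, for parts (1), (2), and (4), follows essentially the same route as the paper: compact generation by the objects $s_!(\mathbb{S})$, the assembly map as the counit of the colocalization $\Fun^L_G(\underline{\Ind}_G(C),\underline{\Sp}^G) \subset \Fun^{\ex}_G(\underline{\Ind}_G(C),\underline{\Sp}^G)$ coming from the universal property of $\underline{\Ind}_G$, and identification of $D_S$ by restricting along $\Sigma^{\infty}_+ \circ j_G$ where the assembly map is an equivalence. The difference is in part (3). The paper does not construct a two-sided inverse by hand: it cites the universal property of parametrized stabilization (Nardin) to identify $\Fun^L_G(\underline{\Fun}_G(S,\underline{\Sp}^G),\underline{\Sp}^G) \simeq \Fun^L_G(\underline{\Fun}_G(S,\underline{\Spc}^G),\underline{\Sp}^G)$ via restriction along $\Sigma^{\infty}_+$, and the parametrized Yoneda lemma to identify the latter with $\Fun_G(S,\underline{\Sp}^G)$ via restriction along $j_G$; after that, the only thing to check is the one-sided identity that $p_!(D\otimes -)$ restricts to $D$, which is your projection-formula computation. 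Your hands-on alternative (generation under $G$-colimits by the image of $\Sigma^{\infty}_+ j_G$, then checking both composites on generators) is workable, but note one subtlety: objectwise agreement of $p_!\bigl((F\Sigma^{\infty}_+ j_G)\otimes -\bigr)$ with $F$ on the generators does not by itself produce a natural equivalence of functors; you need either a natural comparison map (e.g.\ exhibiting both as $G$-left Kan extensions of their common restriction to the image of $\Sigma^{\infty}_+ j_G$) or the full faithfulness of restriction along $\Sigma^{\infty}_+ \circ j_G$ --- which is exactly the content of the two universal properties the paper invokes. So your "main technical hurdle" is correctly located, but as stated it slightly undersells what is needed; once the abstract equivalence is in hand, only the one-sided check is required, which is how the paper proceeds. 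A minor point in part (1): for the fiberwise compact objects to form a $G$-\emph{stable} full $G$-subcategory you should also record closure under induction, not only restriction; this follows by the same argument since the right adjoint of induction (restriction) preserves filtered colimits.
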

\begin{proof} \begin{enumerate}[leftmargin=*]
\item Since $G$-colimits and $G$-limits are computed pointwise [Shah, Prop.~9.17], $\underline{\Fun}_G(S, \underline{\Sp}^G)$ is $G$-cocomplete and $G$-complete; it moreover has presentable fibers by \cite[Prop.~5.4.7.11]{HTT}. Now for every orbit $V \cong G/K$ and point $s \in S_{V}$, also write $s: \underline{V} \to S$ for the corresponding $G$-functor that selects $s$. We then have an adjunction
\[ \adjunct{s_!}{\Sp^K \simeq \Fun_G(\underline{V}, \underline{\Sp}^G)}{\Fun_G(S, \underline{\Sp}^G)}{s^*}, \]
and by the same argument as in the proof of \cite[Thm~I.4.1(i)]{NS18}, the set $\{ s_!(\mathbb{S}) : s \in S_{V} \}$ furnishes a set of compact generators for $\Fun_G(S, \underline{\Sp}^G)$. The argument for compact generation of the other fibers is identical.

Taking compact objects fiberwise then yields a $G$-stable full $G$-subcategory since restriction and induction are strongly cocontinuous, and the last claim is clear by definition of $\underline{\Ind}_G$ as fiberwise Ind-completion.

\item For any small $G$-stable $G$-$\infty$-category $C$ and $G$-cocomplete $G$-stable $G$-$\infty$-category $D$, since $G$-colimits decompose in terms of fiberwise filtered colimits and finite $G$-colimits, one has a colocalization adjunction \cite[Thm.~9.11]{Exp2b}
\[ \begin{tikzcd}
\Fun^{\ex}_G(C,D) \simeq \Fun^L_G(\underline{\Ind}_G C, D) \ar[hookrightarrow, shift left=.5ex]{r} & \Fun^{\ex}_G(\underline{\Ind}_G C, D) \ar[shift left=.5ex]{l}
\end{tikzcd} \]
with right adjoint given by restriction along the fiberwise Yoneda embedding and left adjoint given by fiberwise left Kan extension (which agrees in this case with $G$-left Kan extension). If we then let $C = \underline{\Fun}_G(S, \underline{\Sp}^G)^{\omega}$ and $D = \underline{\Sp}^G$, we may define the assembly map to be the counit of the adjunction, after which its claimed properties are immediate. 
\item We have the equivalences
\begin{align*}
\Fun^L_{G}(\underline{\Fun}_G(S, \underline{\Sp}^G), \underline{\Sp}^G) & \simeq \Fun^L_{G}(\underline{\Fun}_G(S, \underline{\Spc}^G), \underline{\Sp}^G) \\
& \simeq \Fun_{G}(S, \underline{\Sp}^G)
\end{align*}
implemented by restriction along $\Sigma^\infty_+$ and the $G$-Yoneda embedding, respectively; the first equivalence follows from [Nardin, Theorem~7.4] and the second equivalence from the parametrized Yoneda lemma [Shah, Theorem~11.5]. The claim then follows by observing that $p_! (D \otimes -)$ restricts to $D$ under this equivalence.
\item The proof is identical to that of \cite[Thm.~I.4.1(iv)]{NS18}.
\end{enumerate}
\end{proof}

\begin{dfn} \label{dfn:paramTateGeneral}
In the situation of \Cref{thm:paramTateGeneral}, we define $p^T_*$ by extending the assembly map to a cofiber sequence
\[ p_!(D_S \otimes -) \xto{\alpha} p_*(-) \xto{\beta} p^T_*.  \]
If $S = B^\psi_{G} K$ for a group extension $\psi$ of the finite group $G$ by a (possibly infinite) group $K$, then we will also write $(-)^{\underline{t}[\psi]}$ or $(-)^{\underline{t}_{G} K}$ for the $G$-functor $p^T_*$ and $(-)^{t[\psi]}$ or $(-)^{t_G K}$ for the fiber of $p^T_*$ over $G/G$.
\end{dfn}

\begin{rem}[Compatibility with restriction]
Suppose that $V \cong G/H$ is an orbit and let $\psi' = \psi|_{H}$. Then $(B^{\psi}_G K)_{\underline{V}} \simeq B^{\psi'}_{H} K$ and the fiber of $(-)^{\underline{t}_G K}$ over $V$ identifies with $(-)^{t_H K}$.
\end{rem}

\begin{rem}[Agreement of definitions] \label{rem:agreement}
Suppose that $\psi$ is an extension of $G$ by a \emph{finite} group $K$. Then the $G$-functor $(-)^{\underline{t}[\psi]}$ of \cref{dfn:paramTateGeneral} coincides with that in \cref{rem:ParamTateCompatibleRestriction} and hence the functor $(-)^{t[\psi]}$ coincides with that defined  prior in \cref{dfn:ParamTateCnstr}. To see this, by \cref{thm:paramTateGeneral}(2) it suffices to show that the norm map $(-)_{\underline{h}[\psi]} \to (-)^{\underline{h}[\psi]}$ constructed via the parametrized ambidexterity theory (for the Beck-Chevalley fibration $\underline{\LocSys}^{G}(\underline{\Sp}^G) \to \Spc^{G}$) is an equivalence on compact objects. This follows from \cref{NormVanishesOnInduced} and the description of the compact generators given in \cref{thm:paramTateGeneral}(1).
\end{rem}

\begin{rem}[Point-set models]
As when all groups were finite, the parametrized Tate construction for a compact Lie group $K$ admits a point-set model using equivariant universal spaces, cf. \cref{rem:PointSetModels}. 
\end{rem}

For the following pair of propositions, we refer to $p^T_*$ defined with respect to a fixed $G$-space $S$. First, in direct analogy to \cite[Thm.~I.4.1(iii)]{NS18}, we have the following universal property of $p_*^T$.
\begin{prp} \label{prp:UniversalPropertyTate}
The natural transformation $\beta: p_* \to p_*^T$ is initial among those natural transformations from $p_*$ to $G$-exact $G$-functors whose target vanishes on compact objects fiberwise.
\end{prp}
\begin{proof}
We first note that $p_*$ is fiberwise $\kappa$-accessible for some fixed regular cardinal $\kappa$ by the adjoint functor theorem [HTT, Cor.~5.5.2.9]; indeed, for every orbit $V$, $(p_*)_V$ is $\kappa_V$-accessible as a right adjoint between presentable $\infty$-categories, and we may then let $\kappa = \sup_{V \in \sO_G} \kappa_V$. Since the fiber of $\beta$ preserves all $G$-colimits by definition, it follows that $p_*^T$ is also fiberwise $\kappa$-accessible.

Now by \Cref{prp:VerdierQuotient}(4) applied to $\underline{\Fun}_G(S, \underline{\Sp}^G)^{\omega} \subset \underline{\Fun}_G(S, \underline{\Sp}^G)^{\kappa}$, there exists a natural transformation $\beta': p_* \to L(p_*)$ of (fiberwise $\kappa$-accessible) $G$-exact $G$-functors with the indicated universal property. We then have a map of fiber sequences
\[ \begin{tikzcd}
R(p_*) \ar{r}{\alpha'} \ar{d}{\alpha''} & p_* \ar{r}{\beta'} \ar{d}{=} & L(p_*) \ar{d}{\beta''} \\
p_!(D_S \otimes -) \ar{r}{\alpha} & p_* \ar{r}{\beta} & p_*^T.
\end{tikzcd} \]
We want to show that $\beta''$ is an equivalence, for which it suffices to show that $\alpha''$ is an equivalence. Since $\underline{\Fun}_G(S, \underline{\Sp}^G)$ is fiberwise compactly generated and $\alpha''$ is an equivalence when restricted to fiberwise compact objects, this will follow once we show that $R(p_*)$ preserves fiberwise filtered colimits. But this follows as in the proof of \cite[Thm.~I.4.1(iii)]{NS18}.
\end{proof}

We next discuss the interaction of parametrized assembly with $G$-symmetric monoidal structures. Note first that with respect to the pointwise $G$-symmetric monoidal structure of \Cref{exm:pointwiseGSMC} on $\underline{\Fun}_G(S, \underline{\Sp}^G)$, $p^*: \underline{\Sp}^G \to \underline{\Fun}_G(S, \underline{\Sp}^G)$ is a $G$-symmetric monoidal functor, hence its right $G$-adjoint $p_*$ is lax $G$-symmetric monoidal. For $p_*^T$, we then have the following proposition.

\begin{prp} \label{prp:MonoidalStructureTate}
Suppose that $p_*^T$ restricts to the zero functor on the full $G$-subcategory of induced objects. Then $p_*^T$ and the natural transformation $\beta: p_* \to p_*^T$ uniquely inherit the structure of a lax $G$-symmetric monoidal functor and morphism thereof.
\end{prp}
\begin{proof}
As in the proof of \Cref{prp:UniversalPropertyTate}, we may as well ignore size-theoretic issues in our proof. Now by \Cref{prp:VerdierQuotientInducedObjects} applied with respect to the $G$-$\otimes$-ideal of induced objects (cf. \cref{cor:InducedObjectsFormGTensorIdeal}), there exists a natural transformation $\beta': p_* \to L(p_*)$ of lax $G$-symmetric monoidal functors such that $L(p_*)$ vanishes on $G$-induced objects and $\beta'$ is the initial map with respect to this property. Moreover, since $\underline{\Fun}_G(S, \underline{\Sp}^G)^{\omega} \subset \underline{\Fun}_G(S, \underline{\Sp}^G)_{\ind}$ by \cref{lem:CompactObjectsAreInduced}, after forgetting $G$-symmetric monoidal structure we obtain a comparison map $p_*^T \to L(p_*)$ compatible with the maps from $p_*$. By our assumption on $p_*^T$, we likewise obtain a map $L(p_*) \to p_*^T$, and these are easily seen to be mutually inverse equivalences. We may thus equip $\beta$ (and hence $p_*^T$) with the indicated lax $G$-symmetric monoidal structure. Finally, the uniqueness assertion follows from the universal property of $\beta'$.
\end{proof}


\begin{lem} \label{lem:CompactObjectsAreInduced}
We have an inclusion
\[ \underline{\Fun}_G(S, \underline{\Sp}^G)^{\omega} \subset \underline{\Fun}_G(S, \underline{\Sp}^G)_{\ind}. \]
\end{lem}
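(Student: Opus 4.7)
The plan is to verify this inclusion fiberwise by exhibiting a set of compact generators for $\Fun_{\underline{W}}(S_{\underline{W}}, (\underline{\Sp}^G)_{\underline{W}})$ inside the subcategory of induced $\underline{W}$-objects. Since both sides of the claimed inclusion are defined as (fiberwise) thick subcategories, this will suffice.

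First, I would apply the parametrized analogue of the argument given in \cref{thm:paramTateGeneral}(1) to each fiber: for any orbit $W \cong G/H$, the category $\Fun_{\underline{W}}(S_{\underline{W}}, (\underline{\Sp}^G)_{\underline{W}})$ is compactly generated by the set
\[ \bigl\{ \tilde{s}_!(\mathbb{S}) : V' \to W \text{ a map of orbits}, \; s \in S_{V'} \bigr\}, \]
where $\tilde{s}: \underline{V'} \to S_{\underline{W}}$ denotes the $\underline{W}$-functor classifying the point $s$. In particular, the fiber over $W$ of $\underline{\Fun}_G(S, \underline{\Sp}^G)^{\omega}$ equals the thick subcategory generated by these objects.

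Next, I would check that each $\tilde{s}_!(\mathbb{S})$ lies in $\cI_W$ by exploiting the most degenerate case of \cref{dfn:InducedObjects}: take $U = V = V'$, $f = \id_{V'}$ and $p: V' \to W$. In this case $s_f: \underline{V'} \to f_* f^* S_{\underline{V'}} = S_{\underline{V'}}$ is just $s$, the unit $\eta_f$ is the identity, and so the pullback square defining $\fib^f_s(\eta)$ collapses to give $\fib^f_s(\eta) \simeq \underline{V'}$ with $\varphi^f_s = s$. Tracing through the construction, the composite $\epsilon_p \circ p_!\varphi^f_s: \underline{V'} \to S_{\underline{W}}$ is precisely $\tilde{s}$, so $\tilde{s}_!(\mathbb{S}) \in \cI_W(\id_{V'}, p, s)$. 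There is no genuine obstacle here; the only point requiring care is the verification that the degenerate parametrized pullback really does reduce to $\underline{V'}$ with the expected structure map.

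Finally, since the fiber over $W$ of $\underline{\Fun}_G(S, \underline{\Sp}^G)_{\ind}$ is defined to be the thick subcategory generated by $\cI_W$ and we have just shown it contains the compact generators of the fiber, we conclude
\[ \Fun_{\underline{W}}(S_{\underline{W}}, (\underline{\Sp}^G)_{\underline{W}})^{\omega} \subset \Fun_{\underline{W}}(S_{\underline{W}}, (\underline{\Sp}^G)_{\underline{W}})_{\ind}. \]
Allowing $W$ to vary over all orbits then yields the claimed inclusion of full $G$-subcategories.
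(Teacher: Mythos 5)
Your proof is correct and follows essentially the same route as the paper: reduce to showing the compact generators $s_!(\mathbb{S})$ lie in the induced subcategory, then observe that the degenerate case $f=\id$ of \cref{dfn:InducedObjects} collapses $\fib^f_s(\eta)$ to $\underline{V'}$ with structure map the point inclusion, so these generators are induced by construction. The only cosmetic difference is that you carry out the check over an arbitrary orbit $W$, whereas the paper does it over $G/G$ and remarks that the other fibers are identical.
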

\begin{proof}
We will show this inclusion on the fiber over $G/G$; the argument for the other fibers will then be identical. Since both sides are thick subcategories of $\Fun_G(S, \underline{\Sp}^G)$, it suffices to check that the compact generators described in the proof of \cref{thm:paramTateGeneral}(1) are induced. So let $V$ be an orbit and $s \in S_V$ a point. Then if we let $f = \id_V$ and $p: V \to G/G$ be the unique map, the $G$-functor
$$\epsilon_p \circ p_!(\varphi^f_s): p_! \fib^f_s(\eta) \to p_! p^* S \to S$$
in \cref{dfn:InducedObjects} coincides with $s: \underline{V} \to S$. Therefore, $s_! (\SS)$ is also an induced generator.
\end{proof}

We now specialize our discussion to the main case of interest. Let $K$ be a compact Lie group, $G$ a finite group, $\psi: G \to \Aut(K)$ a group homomorphism, and $S = B^{\psi}_G K$.

\begin{obs}[Parametrized actions on {$K$} and its Lie algebra] \label{obs:ActionsLieGroup}
Let $G$ act on $K \times K$ by $\psi \times \psi$ and consider the unique point-set action of $(K \times K) \rtimes G$ on $K$ that extends the $K \times K$-action given by $(k,k')\cdot x = k x k'^{-1}$ and the $G$-action given by $\psi$, so that $((k,k'),g) \cdot x = k \psi_g(x) k'^{-1}$.\footnote{This action is well-defined since $(1,g) \cdot ((k,k'),1) \cdot x = \psi_g(k x k'^{-1}) = \psi_g(k) \psi_g(x) \psi_g(k')_{-1} = ((\psi_g(k), \psi_g(k')), g) \cdot x$.} This defines $K$ first as an object in
$$\Spc^{(K \times K) \rtimes G} \simeq \Fun(\cO_{(K \times K) \rtimes G}^{\op}, \Spc)$$
and then as an object in
\[ \Fun_G(B^{\psi}_G (K \times K), \underline{\Spc}^G) \simeq \Fun(B^{\psi}_G (K \times K), \Spc) \]
via restriction along $B^{\psi}_G (K \times K) \subset \cO_{(K \times K) \rtimes G}^{\op}$. Let
$$\Sigma^{\infty}_+ K \in \Fun_G(B^{\psi}_G (K \times K), \underline{\Sp}^G)$$
denote the infinite $G$-suspension of $K$. Then the dualizing object $D_S$ of \cref{thm:paramTateGeneral}(4) identifies with $(\Sigma^{\infty}_+ K)^{h_G (K \times 1)}$. 

Now let $\mathfrak{a}$ be the Lie algebra of $K$ and let $\mathbb{S}^{\mathfrak{a}}$ be $\Sigma^{\infty}$ of the one-point compactification $S^{\mathfrak{a}}$.  We have that the $K \rtimes G$-action on $K$, uniquely specified as the action extending $k \cdot x = k x k^{-1}$ and $\psi$, defines a point-set $K \rtimes G$-action on $S^{\mathfrak{a}}$ and subsequently $\mathbb{S}^{\mathfrak{a}}$ as an object in $\Fun_G(B^{\psi}_G K, \underline{\Sp}^G)$, where (abusing notation) we also write $\mathbb{S}^{\mathfrak{a}}$ for the infinite $G$-suspension of $S^{\mathfrak{a}}$, which restricts to the prior spectrum $\mathbb{S}^{\mathfrak{a}}$.
\end{obs}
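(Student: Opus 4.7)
The plan is to verify the two substantive assertions packaged into this observation: first, that the pointwise group actions on $K$ and $S^{\mathfrak{a}}$ genuinely assemble into the claimed $G$-functors out of $B^\psi_G(K\times K)$ and $B^\psi_G K$ respectively; and second, that the dualizing object $D_S$ of \cref{thm:paramTateGeneral}(4) identifies with $(\Sigma^{\infty}_+ K)^{h_G(K\times 1)}$. The well-definedness of the $(K\times K)\rtimes G$-action on $K$ is checked on elements exactly as in the footnote; unstraightening then yields an object of $\Spc^{(K\times K)\rtimes G}\simeq \Fun(\cO_{(K\times K)\rtimes G}^{\op},\Spc)$, and restriction along the inclusion $B^\psi_G(K\times K)\subset \cO_{(K\times K)\rtimes G}^{\op}$ of the $\psi$-twisted $(K\times K)$-free orbits furnishes the desired $G$-functor. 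The analogous reasoning with $K$ acting on $\mathfrak{a}$ by the adjoint representation (extended by $\psi$) produces $\mathbb{S}^{\mathfrak{a}}$ as an object of $\Fun_G(B^\psi_G K,\underline{\Sp}^G)$.

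For the identification of $D_S$, I would unwind \cref{thm:paramTateGeneral}(4): $D_S$ is the composite $p_*\circ \Sigma^{\infty}_+\circ j$ where $j$ is the $G$-Yoneda embedding $S\simeq S^{\vop}\to \underline{\Fun}_G(S,\underline{\Spc}^G)$. On a point $x\in S_V$, the parametrized Yoneda lemma identifies $j(x)$ with the corepresentable $\mathrm{Map}_{S_{\underline{V}}}(x,-)$. Specializing to $S=B^\psi_G K$ with the canonical basepoint (using that $B^\psi_G K$ is Borel by \cref{lem:BorelClassifyingSpace}), the corepresentable at the basepoint is modeled by $K$ itself, with the \emph{target} $K$ providing the right-translation $K$-action and the variable \emph{source} point contributing the left-translation $K$-action; the ambient $G$-action by $\psi$ survives both on the source and the target. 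This identifies the image of $\Sigma^{\infty}_+\circ j$ with $\Sigma^{\infty}_+ K\in \Fun_G(B^\psi_G(K\times K),\underline{\Sp}^G)$ equipped with precisely the action described in the first half of the observation.

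The remaining step is to show that $p_*$ along $p\colon B^\psi_G K\to \ast_G$, when viewed through the inclusion of $B^\psi_G K$ as the variable-source factor in $B^\psi_G(K\times K)$, computes parametrized homotopy fixed points for the left-translation factor $K\times 1$. This is a base-change assertion: factor $p$ as $B^\psi_G K\to B^\psi_G(K\times K)/(1\times K)\simeq B^\psi_G K\to \ast_G$ (with the first map corresponding to collapsing the source-$K$ action), and apply the standing identification $(\rho_K)_*\simeq (-)^{h_G K}$ from \cref{SS:PTate} (equivalently \cref{dfn:ParamTateCnstr} or the corresponding identification in the Lie group case furnished by the construction of $(-)^{\underline{h}_G K}$ as a right $G$-adjoint to restriction along $B^\psi_G K\to \ast_G$). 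The parametrized Yoneda lemma \cite[Thm.~11.5]{Exp2} makes the initial rewriting rigorous, while the $K\times K$-compatibility is formal from the symmetry between the two copies of $K$.

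The main obstacle I anticipate is bookkeeping: keeping track of which $K$-factor plays which role (left vs.~right translation) throughout the Yoneda-then-pushforward composite, and ensuring that the residual $G$-action via $\psi$ is correctly propagated on both sides. Once the two $K$-actions and the $G$-action are cleanly separated using the semidirect product description $\widehat{G}\cong K\rtimes G$, the identification $D_S\simeq (\Sigma^{\infty}_+ K)^{h_G(K\times 1)}$ follows formally. A subsequent identification $D_S\simeq \mathbb{S}^{\mathfrak{a}}$ (as anticipated by \cref{prp:DualizingSpectrum}) would then be obtained by replacing $K$ by its one-point compactification $S^{\mathfrak{a}}$ via a stable $K$-equivariant Atiyah-duality / Becker--Gottlieb-style argument, but that step lies outside the scope of the present observation.
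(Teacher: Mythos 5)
Your proposal is correct and amounts to the argument the paper leaves implicit: the observation is stated without proof, and the intended justification is precisely your unwinding of $D_S = p_* \circ \Sigma^{\infty}_+ \circ j$ via the parametrized Yoneda lemma, identifying the Yoneda bifunctor on $B^{\psi}_G K$ with $K$ carrying the twisted $(K \times K)$-action (using that $B^{\psi}_G K$ is Borel, which in the compact Lie case holds by definition) and then applying parametrized homotopy fixed points in one variable, i.e.\ the parametrized analogue of Klein's identification $D_{BK} \simeq (\Sigma^{\infty}_+ K)^{hK}$ from \cite{Klein2001}. Only your phrasing about ``factoring $p$'' is off---what you mean is that the second-variable $p_*$ is the right Kan extension along the projection $B^{\psi}_G(K \times K) \to B^{\psi}_G K$ collapsing the $K \times 1$ factor, which by the identification of this right Kan extension with $(-)^{h_G (K \times 1)}$ leaves the residual twisted action of the other factor---a wording slip, not a gap.
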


Let $S_0$ be the underlying space of $S$ and recall that John Klein identified the dualizing spectrum $D_{S_0}$ with $\mathbb{S}^{\mathfrak{a}}$ as a spectrum with $K$-action \cite[Thm.~10.1]{Klein2001}.

\begin{prp} \label{prp:DualizingSpectrum}
Klein's equivalence promotes to an equivalence
\[ D_S = (\Sigma^{\infty}_+ K)^{h_G (K \times 1)} \simeq \mathbb{S}^{\mathfrak{a}} \in \Fun_G(B^{\psi}_G K, \underline{\Sp}^G). \]
\end{prp}
\begin{proof}
This follows by inspecting the equivariance of the maps in Klein's proof along with invoking equivariant Atiyah duality in $\Sp^G$. In more detail, first note that Klein's point-set $K \times K$-equivariant maps\footnote{Note that Klein writes $G$ and not $K$ for the compact Lie group.} $\alpha: K_+ \to F(K_+, S^{\mathfrak{a}})$ and its adjoint $\widehat{\alpha}: (K \times K)_+ \to S^{\mathfrak{a}}$ are in fact $(K \times K) \rtimes G$-equivariant.\footnote{By choosing an invariant metric, we may assume that $G$ acts by isometries on $\mathfrak{a}$.} Suspending $\widehat{\alpha}$ and taking its adjoint, we then obtain a map
$$ \Sigma^{\infty}_+ K \to F(\Sigma^{\infty}_+ K, \mathbb{S}^{\mathfrak{a}}) \in \Fun_G(B^{\psi}_G (K \times K), \underline{\Sp}^G) $$
which is an equivalence by equivariant Atiyah duality. Taking $G$-parametrized homotopy fixed points with respect to the factor $K \times 1$ then shows the claim.
\end{proof}

\begin{exm}[Complex conjugation on the circle] \label{exm:CircleTate}
Suppose that $K=S^1$ with $G=C_2$ acting by complex conjugation, so that $K \rtimes G \cong O(2)$. Write $S = B^t_{C_2} S^1$ for the corresponding $C_2$-space. Since $B S^1 = \Omega^{\infty} H \ZZ[2]$, a group cohomology computation shows that $(B S^1)^{h C_2} \simeq B \mu_2$ and the map $(B S^1)^{h C_2} \to B S^1$ identifies with the delooping of the inclusion $\mu_2  = \{ \pm 1 \} \subset S^1$. Note also that we have a canonical isomorphism $\mu_2 \cong W_{O(2)} C_2$ of $\mu_2$ with the Weyl group of $C_2$.\footnote{We may choose any section $C_2 \to O(2)$ of the determinant map to view $C_2$ as a subgroup of $O(2)$, but also note that under the isomorphism $O(2) = S^1 \rtimes C_2$ one has a preferred section.} We then see that a $C_2$-functor $X: B^t_{C_2} S^1 \to \underline{\Sp}^{C_2}$ (with underlying $C_2$-spectrum $X$) is equivalent data to:
\begin{enumerate}
\item An $O(2)$-action on the underlying spectrum $X^{e}$.
\item A $\mu_2$-action on the geometric $C_2$-fixed points $X^{\phi C_2}$ such that the gluing map $X^{\phi C_2} \to (X^e)^{t C_2}$ is $\mu_2$-equivariant.
\end{enumerate}
In particular, if we let $D_S$ be the dualizing $C_2$-spectrum and $\sigma$ denote the sign $C_2$-representation, then under the equivalence of \cref{prp:DualizingSpectrum} we see that $D_S = \SS^{\sigma}$ with \emph{trivial} $C_2$-parametrized action, i.e., so that as a $C_2$-functor, $D_S$ factors as
\[ D_S: B^t_{C_2} S^1 \to \ast_{C_2} \xto{\SS^{\sigma}} \underline{\Sp}^{C_2}. \]
Therefore, we may write $D_S \otimes X$ as the $C_2$-tensor of $X$ by the $C_2$-spectrum $S^{\sigma}$. It follows that we may write the parametrized assembly map as
\[ (D_S \otimes X)_{h_{C_2} S^1} \simeq \SS^{\sigma} \otimes X_{h_{C_2} S^1} = \Sigma^{\sigma} X_{h_{C_2} S^1} \to X^{h_{C_2} S^1}. \]
The appearance of $\SS^{\sigma}$ here is closely related to the suspensions by the signed sphere appearing explicitly in \cite[Thm.~C]{Hog16} and implicitly in \cite[Thm.~B]{DMP21}.
\end{exm}

Finally, we establish lax equivariant symmetric monoidality of the parametrized Tate construction by showing that it vanishes on induced objects.

\begin{thm} \label{thm:TateVanishesOnInduced}
Let $K$ be a compact Lie group, $G$ a finite group, and $\psi: G \to \Aut(K)$ a group homomorphism. Then $(-)^{\underline{t}_G K}$ vanishes on the full $G$-subcategory of induced objects.
\end{thm}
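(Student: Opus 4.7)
The plan is to show that the parametrized assembly map $\alpha \colon p_!(D_S \otimes -) \to p_*(-)$ of \cref{thm:paramTateGeneral} is an equivalence on all $G$-induced objects, whence its cofiber $(-)^{\underline{t}_G K}$ vanishes there. First I would reduce to checking the claim on induced generators in the fiber over $G/G$: using that $(-)^{\underline{t}_G K}$ restricts fiberwise to $(-)^{t_H K}$ over $G/H$ and that the formation of induced objects is stable under restriction (\cref{lem:RestrictionInducedObjects}), together with exactness of $(-)^{t_G K}$ and the thick-subcategory description of induced objects (\cref{dfn:InducedObjectsGSubcat}), it suffices to verify vanishing on a single induced generator $Y = \sigma_!(X)$, where $\sigma = \epsilon_p \circ p_! \varphi^f_s$ comes from a diagram $U \xto{f} V \xto{p} G/G$ and a point $s \in S_U$.

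To analyze $\alpha(\sigma_!(X))$, I would commute both sides past $\sigma_!$. On the source, the projection formula (\cref{lem:ProjectionFormula}) gives $D_S \otimes \sigma_!(X) \simeq \sigma_!(\sigma^* D_S \otimes X)$, so $p_!(D_S \otimes \sigma_!(X))$ reduces to a $G$-colimit of $\sigma^* D_S \otimes X$ over $p_! \fib^f_s(\eta)$. On the target, a dual base-change computation (\cref{rem:BaseChangeEquivalences} combined with the pullback square defining $\fib^f_s(\eta)$) rewrites $p_* \sigma_!(X)$ in terms of a suitable $G$-limit. Both computations reduce the comparison to the basic case where $f = \id_V$, in which the induced generator degenerates to $s_!(E)$ for a $\underline{V}$-point $s \colon \underline{V} \to S$ and an object $E \in \Sp^H$ (where $V \cong G/H$). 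These are already compact objects in $\Fun_G(S, \underline{\Sp}^G)$ (cf.\ the proof of \cref{thm:paramTateGeneral}(1) and \cref{lem:CompactObjectsAreInduced}), so $\alpha$ is an equivalence on them by \cref{thm:paramTateGeneral}(2).

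The main obstacle is the bookkeeping involved in commuting the right Kan extension $p_*$ past $\sigma_!$, which is not automatic. The cleanest route is to first show that the full $G$-subcategory of objects on which $\alpha$ is an equivalence is closed under $G$-left Kan extension along maps of the form $\sigma = \epsilon_p \circ p_! \varphi^f_s$. For the source of $\alpha$ this closure is essentially the projection formula; for the target, one exploits that $\fib^f_s(\eta)$ arises as a parametrized pullback against the unit of $f_* f^*$, applying base change together with the $G$-distributivity of $\underline{\Sp}^G$ (\cref{exm:CartesianDistributiveCommutationRelation}) to identify both sides term-by-term. Once this closure is established, any induced generator — being a $G$-left Kan extension along such a $\sigma$ of an object pulled back from the compact generators — lies in the subcategory on which $\alpha$ is an equivalence, and then thick-subcategory closure (via exactness of $(-)^{\underline{t}_G K}$) propagates the vanishing to all of $\underline{\Fun}_G(S, \underline{\Sp}^G)_{\ind}$.
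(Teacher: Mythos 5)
Your reduction to induced generators via thick-subcategory closure and $G$-exactness is correct and matches the paper, as is the observation that the source $p_!(D_S \otimes -)$ interacts well with $\sigma_!$ via the projection formula. However, there is a genuine gap in the crucial step: you claim that after the reduction ``the induced generator degenerates to $s_!(E)$ \dots\ These are already compact objects in $\Fun_G(S, \underline{\Sp}^G)$.'' This is false. The induced generators $\sigma_!(X)$ range over \emph{arbitrary} $G$-functors $X: \fib^f_s(\eta) \to \underline{\Sp}^G$, and $s_!(E)$ is compact only when $E$ is compact. \cref{lem:CompactObjectsAreInduced} gives exactly the opposite containment ($\omega \subset \mathrm{ind}$), and the entire content of the theorem is that the vanishing extends strictly beyond the compact objects to the larger class of induced ones. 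If induced generators were compact, the theorem would be a triviality.

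The missing idea is the filtered-colimit argument, which is where the compact-Lie hypothesis actually enters. One knows $\alpha_X$ is an equivalence for \emph{compact} $X$, and since $\Fun_G(\fib^f_s(\eta), \underline{\Sp}^G)$ is compactly generated, it suffices to show that \emph{both} sides $\rho_!(\SS^{\mathfrak{a}} \otimes \varphi_!(-))$ and $\rho_* \varphi_!(-)$ commute with filtered colimits. The left-hand side is immediate. For the right-hand side, the key observations are: (i) because $K$ is compact Lie, $\fib^f_s(\eta)$ is a \emph{finite} $G$-space, so the base change $\eta_* \varphi_! \simeq s'_! \pi_*$ holds and $\pi_*$ preserves filtered colimits; and (ii) one then identifies $\rho'_* s'_!(-) \simeq \SS^{\mathfrak{a}} \otimes(-)$ via equivariant Atiyah duality (as in \cref{prp:DualizingSpectrum}), reducing the general finite $G$-set $U$ to the orbit case by identifying $f_*f^* S$ as $B^{\psi_U}_G(\prod_{|U|} K)$ with $\prod_{|U|} K$ still compact Lie. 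Your plan never invokes finitude of $\fib^f_s(\eta)$ or Atiyah duality, and without them the ``closure under $G$-left Kan extension along $\sigma$'' assertion you propose to establish has no visible proof: commuting $p_*$ (or $\rho_*$) past an arbitrary left Kan extension is not automatic and is precisely what the filtered-colimit argument circumvents.

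There is also a minor organizational point: the paper uses $G$-exactness of $(-)^{\underline{t}_G K}$ to reduce to the case $p = \id_V$ in the data of an induced generator, so that one only needs to analyze $U \xto{f} G/G$. Your proposal gestures at the general $\sigma = \epsilon_p \circ p_! \varphi^f_s$ without making this reduction explicit, which adds to the bookkeeping you flag as the main obstacle.
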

\begin{proof}
We first recall the proof that $(-)^{t K}$ vanishes on induced objects (cf. \cite[Cor.~10.2]{Klein2001}). Since $(-)^{t K}$ is an exact functor, by a thick subcategory argument it suffices to show that $(-)^{t K}$ vanishes on induced generators. Given a point $s: \ast \to B K$ and writing $p: B K \to \ast$ for the unique functor to a point, we thus need to show that the assembly map
\[ \alpha_E: p_!(\SS^{\mathfrak{a}} \otimes s_! E) \to p_* s_! E \]
is an equivalence for all spectra $E$. By definition, $\alpha_E$ is an equivalence when $E$ is a compact spectrum, so it suffices to show that both sides commute with filtered colimits. For the lefthand side, this is obvious, while for the righthand side, we note that
\[ p_* s_! (-) = (\Sigma^{\infty}_+ K \otimes -)^{h K} \simeq F(D K_+, -)^{h K} \simeq F(K_+, \Sigma^{\mathfrak{a}} -)^{h K} \simeq \SS^{\mathfrak{a}} \otimes - \]
(invoking Atiyah duality for the middle equivalence), so $p_* s_!$ commutes with all colimits.

Now let $S = B^{\psi}_G K$ and write $\rho: S \to \ast_G$ for the unique $G$-functor (we don't use $p$ to avoid confusion with the notation of \cref{dfn:InducedObjects}). Since $(-)^{\underline{t}_G K}$ is fiberwise exact, it suffices to show that $(-)^{\underline{t}_G K}$ vanishes on induced generators. Moreover, since $(-)^{\underline{t}_G K}$ is $G$-exact, for any map of orbits $h: W \to W'$ and $X \in \Fun_{\underline{W}}(S_{\underline{W}}, (\underline{\Sp}^G)_{\underline{W}})$, if $\underline{(X)}^{t_G K} = 0$ then $\underline{(h_! X)}^{t_G K} = 0$. Therefore, considering induced generators as in the setup of \cref{dfn:InducedObjects}, we may suppose that $p = \id_V$ there.\footnote{The case where $V$ is not an orbit also follows since that only involves in addition taking fiberwise coproducts.} Furthermore, without loss of generality we may suppose $V = G/G$. So let $U$ be a finite $G$-set, let $s \in S_U$ be a point, and write $f: U \to G/G$ for the unique map. Consider the pullback square of $G$-$\infty$-categories
\[ \begin{tikzcd}
\fib^f_s(\eta) \ar{r}{\varphi} \ar{d}{\pi} & S \ar{d}{\eta} \\
\ast_G \ar{r}{s'} & f_* f^* S.
\end{tikzcd} \]
We need to show that for any $G$-functor $X: \fib_s^f(\eta) \to \underline{\Sp}^G$, the parametrized assembly map
\[ \alpha_{X}: \rho_!(\SS^{\mathfrak{a}} \otimes \varphi_! X) \to \rho_* \varphi_! X  \]
is an equivalence in $\Sp^G$. We first note that by definition $\alpha_{X}$ is an equivalence if $X$ is compact. Since $\Fun_G(\fib^f_s(\eta), \underline{\Sp}^G)$ is compactly generated by \Cref{thm:paramTateGeneral}(1), it suffices to check that both sides commute with filtered colimits as functors $\Fun_G(\fib_s^f(\eta), \underline{\Sp}^G) \to \Sp^G$. For the lefthand side this is obvious, while for the righthand side we note that since $K$ is compact Lie it follows that $\fib_s^f(\eta)$ is a \emph{finite} $G$-space, hence
\[ \eta_* \varphi_! \simeq s'_! \pi_*: \Fun_G(\fib_s^f(\eta), \underline{\Sp}^G) \to \Fun_G(f_* f^* S, \underline{\Sp}^G), \]
and writing $\rho': f_* f^* S \to \ast_G$ for the unique $G$-functor, we get that
\[ \rho_* \varphi_! \simeq \rho'_* \eta_* \varphi_!  \simeq \rho'_* s'_! \pi_* :  \Fun_G(\fib_s^f(\eta), \underline{\Sp}^G) \to \Sp^G.  \]
Now again using that $\fib_s^f(\eta)$ is finite, we get that $\pi_*$ commutes with filtered colimits, and it remains to check that $\rho'_* s'_!$ commutes with filtered colimits. For this, note first that if $U = G/G$, then by equivariant Atiyah duality applied as in \cref{prp:DualizingSpectrum} we would have
\[ \rho'_* s'_! (-) \simeq \SS^{\mathfrak{a}} \otimes (-). \]
In general, we may identify the indexed product $f_* f^* S = \prod_U B^{\psi}_G K$ as $B^{\psi_U}_G (\prod_{|U|} K)$ for the group homomorphism
\[ \psi_U: G \to \Aut\left(\textstyle\prod_{|U|} K\right), \quad (\psi_U)_g: (k_i)_{i \in U} \mapsto (\psi_g(k_{g^{-1} \cdot i}) )_{i \in U} \]
and thereby reduce to the logic of the prior case since $\prod_{|U|} K$ is still compact Lie.
\end{proof}

\begin{cor} \label{cor:LaxGSymmetricMonoidalTate}
Let $K$ be a compact Lie group, $G$ a finite group, and $\psi: G \to \Aut(K)$ a group homomorphism. Then the $G$-functor $(-)^{\underline{t}_G K}$ and natural transformation $(-)^{\underline{h}_G K} \to (-)^{\underline{t}_G K}$ uniquely inherit the structure of a lax $G$-symmetric monoidal functor and morphism thereof.
\end{cor}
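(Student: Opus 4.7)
The strategy is to apply Proposition~\ref{prp:MonoidalStructureTate} to the $G$-functor $p_*^T = (-)^{\underline{t}_G K}$ and the natural transformation $\beta: p_* \to p_*^T$ defined as the cofiber of the parametrized assembly map, where $p: B^{\psi}_G K \to \ast_G$ is the structure map as in Definition~\ref{dfn:paramTateGeneral}. That proposition delivers exactly the desired conclusion---that $\beta$ and $p_*^T$ uniquely acquire compatible lax $G$-symmetric monoidal structures---provided we verify its single hypothesis, namely that $p_*^T$ vanishes on the full $G$-subcategory of $G$-induced objects $\underline{\Fun}_G(B^{\psi}_G K, \underline{\Sp}^G)_{\ind}$ of Definition~\ref{dfn:InducedObjectsGSubcat}.

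This vanishing is precisely the content of Theorem~\ref{thm:TateVanishesOnInduced}. The only other thing to check before applying Proposition~\ref{prp:MonoidalStructureTate} is that the source $p_*$ itself is lax $G$-symmetric monoidal: this is immediate from Remark~\ref{rem:GAdjSymmMon}, since the constant-diagram $G$-functor $p^*: \underline{\Sp}^G \to \underline{\Fun}_G(B^{\psi}_G K, \underline{\Sp}^G)$ is strongly $G$-symmetric monoidal with respect to the pointwise $G$-symmetric monoidal structure of Example~\ref{exm:pointwiseGSMC}, and $p_*$ is its $G$-right adjoint.

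There is essentially no further obstacle at this stage: all of the substantive work has been done in the preceding subsections. In particular, Corollary~\ref{cor:InducedObjectsFormGTensorIdeal} (induced objects form a $G$-$\otimes$-ideal) together with Proposition~\ref{prp:VerdierQuotientInducedObjects} (the lax $G$-symmetric monoidal universal property of the parametrized Verdier quotient by a $G$-$\otimes$-ideal) supply the machinery used in the proof of Proposition~\ref{prp:MonoidalStructureTate}, while Theorem~\ref{thm:TateVanishesOnInduced} supplies its hypothesis. The corollary therefore follows by direct combination, with the uniqueness assertion being exactly the uniqueness part of Proposition~\ref{prp:MonoidalStructureTate}.
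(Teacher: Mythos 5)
Your proposal is correct and matches the paper's proof exactly: the corollary is obtained by combining \cref{prp:MonoidalStructureTate} with \cref{thm:TateVanishesOnInduced}, with the lax $G$-symmetric monoidal structure on $p_*$ coming from the fact that it is the $G$-right adjoint of the $G$-symmetric monoidal functor $p^*$. The supporting machinery you cite (\cref{cor:InducedObjectsFormGTensorIdeal} and \cref{prp:VerdierQuotientInducedObjects}) is likewise what the paper uses inside the proof of \cref{prp:MonoidalStructureTate}.
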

\begin{proof}
Combine \cref{prp:MonoidalStructureTate} and \cref{thm:TateVanishesOnInduced}.
\end{proof}

\appendix
\section{Pointwise monoidal structure} \label{sec:appendix}

In this appendix, we construct the \emph{pointwise} symmetric monoidal structure on the $S$-functor $\infty$-category $\Fun_S(K,C)$, given appropriate structure on $C$. For a quick reminder on how to construct the pointwise symmetric monoidal structure on the usual functor $\infty$-category, see \cite[Constr.~2.23]{ShahRecoll}.

Suppose $C$ is an $S$-$\infty$-category classified by a functor $S \to \CMon(\Cat_{\infty})$ valued in symmetric monoidal $\infty$-categories and symmetric monoidal functors thereof. In terms of fibrations, such functors correspond to \emph{cocartesian $S$-families of symmetric monoidal $\infty$-categories} $C^\otimes \to S \times \Fin_{\ast}$ \cite[Def.~4.8.3.1]{HA}.\footnote{More precisely, we have an equivalence of $\infty$-categories $(\Cat_{\infty})^{\cocart}_{/S \times \Fin_{\ast}} \simeq \Fun(S, \Fun(\Fin_{\ast}, \Cat_{\infty}))$ under which a cocartesian $S$-family of symmetric monoidal $\infty$-categories corresponds to a functor valued in commutative monoid objects.} Let $\mathfrak{P}_S$ be the categorical pattern $$(\All, \All, \{\lambda_{s,n} : (\angs{n}^{\circ})^{\lhd} \to \{s \} \times \Fin_{\ast} \subset S \times \Fin_{\ast} : s \in S \})$$ on $S \times \Fin_{\ast}$, where $\lambda_{s,n}$ is the usual map appearing in the definition of the model structure on preoperads that sends the cone point $v$ to $\angs{n}$, $i \in \angs{n}^{\circ}$ to $\angs{1}$, and the unique morphism $v \to i$ to the inert morphism $\rho^i: \angs{n} \to \angs{1}$ in $\Fin_{\ast}$ that selects $i \in \angs{n}^{\circ}$. Then cocartesian $S$-families of symmetric monoidal $\infty$-categories are by definition $\mathfrak{P}$-fibered \cite[Def.~B.0.19]{HA} and hence are the fibrant objects for the model structure on $s\Set^+_{/S \times \Fin_{\ast}}$ defined by $\mathfrak{P}$ \cite[Thm.~B.0.20]{HA}. 

\begin{dfn} \label{dfn:S-PointwiseMonoidal} Suppose $C^{\otimes} \to S \times \Fin_{\ast}$ is a cocartesian $S$-family of symmetric monoidal $\infty$-categories and $q: K \to S$ is an $S$-$\infty$-category. Consider the span of marked simplicial sets
\[ \begin{tikzcd}[row sep=4ex, column sep=4ex, text height=1.5ex, text depth=0.25ex]
(\Fin_{\ast})^{\sharp} & \leftnat{K} \times (\Fin_{\ast})^{\sharp} \ar{r}{q \times \id} \ar{l}[swap]{\pr} & S^{\sharp} \times (\Fin_{\ast})^{\sharp}.
\end{tikzcd} \]
Define the \emph{pointwise symmetric monoidal structure} on $\Fun_S(K,C)$ to be
\[ \Fun_S(K,C)^{\otimes} \coloneq \pr_{\ast} (q \times \id)^{\ast} (\leftnat{C}^{\otimes}) \]
regarded as a simplicial set over $\Fin_{\ast}$.
\end{dfn}

Note that the fiber of $\Fun_S(K,C)^{\otimes} \to \Fin_{\ast}$ over $\angs{1}$ is $\Fun_S(K,C)$.

\begin{lem} \label{lm:ShowingPointwiseMonoidal} With respect to the categorical patterns $\mathfrak{P} = \mathfrak{P}_{\ast}$ on $\Fin_{\ast}$ and $\mathfrak{P}_S$ on $S \times \Fin_{\ast}$, the span of marked simplicial sets in Def.~\ref{dfn:S-PointwiseMonoidal} satisfies the hypotheses of \cite[Thm.~B.4.2]{HA}, so $\Fun_S(K,C)^{\otimes}$ is a symmetric monoidal $\infty$-category.
\end{lem}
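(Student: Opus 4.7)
The plan is to verify the hypotheses of \cite[Thm.~B.4.2]{HA} for the span
\[ (\Fin_{\ast})^{\sharp} \xleftarrow{\pr} \leftnat{K} \times (\Fin_{\ast})^{\sharp} \xrightarrow{q \times \id} S^{\sharp} \times (\Fin_{\ast})^{\sharp}, \]
writing $X \coloneq \leftnat{K} \times (\Fin_{\ast})^{\sharp}$ for the middle term. Broadly, Lurie's theorem requires that the right leg carry the ambient structure of a $\mathfrak{P}_S$-fibered object back to fibration-like data on $X$, that the left leg be a flat categorical fibration compatible with $\mathfrak{P}_{\ast}$, and that a Beck--Chevalley-style compatibility hold between $\pr$-cocartesian edges and the distinguished diagrams pulled back along $q \times \id$.

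First I would address the right leg. Since $q: K \to S$ is a cocartesian fibration by the assumption that $K$ is an $S$-$\infty$-category, and $\id_{\Fin_{\ast}}$ is trivially a cocartesian fibration, the product $q \times \id$ is a cocartesian fibration whose cocartesian edges are precisely products of a $q$-cocartesian edge in $K$ with an arbitrary edge in $\Fin_{\ast}$. The markings on $S^{\sharp} \times (\Fin_{\ast})^{\sharp}$ are all edges, so the marking-compatibility conditions reduce to this identification. For each pair $(s,n)$, the distinguished diagram $\lambda_{s,n}: (\angs{n}^{\circ})^{\lhd} \to S \times \Fin_{\ast}$ lifts to $X$ through $q \times \id$ over any chosen object of $K$ above $s$, since the cone point $v$ maps to $(s,\angs{n})$ and the edges $v \to i$ have $S$-component constant at $s$.

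Next I would treat the left leg. The projection $\pr: X \to (\Fin_{\ast})^{\sharp}$ is obtained by base change of $\leftnat{K} \to \ast^{\sharp}$ along $(\Fin_{\ast})^{\sharp} \to \ast^{\sharp}$, hence is a cocartesian fibration (with cocartesian edges those whose $K$-component is an equivalence) and a fortiori a flat categorical fibration. The distinguished diagrams $\lambda_n$ of $\mathfrak{P}_{\ast}$ pull back along $\pr$ to $\leftnat{K} \times \lambda_n$, which continue to satisfy the required lifting conditions by the product structure. The remaining Beck--Chevalley compatibility concerns how $\pr$-cocartesian lifts interact with the diagrams pulled back along $q \times \id$, and this follows at once from the decoupling of the two fibrational directions in $X$: $q$-cocartesian edges project to equivalences in $\Fin_{\ast}$, edges of $(\Fin_{\ast})^{\sharp}$ project to equivalences in $K$, and the two lifting problems thus decompose independently.

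The main obstacle I anticipate is purely bookkeeping: matching Lurie's precise list of hypotheses in \cite[Thm.~B.4.2]{HA} against the product structure of $X$. Once that matching is set up, each verification becomes a formal consequence of $q$ being a cocartesian fibration and $\pr$ being a product projection, requiring no further hypotheses on $S$, $K$, or $C^{\otimes}$ beyond those in the set-up. The theorem then furnishes that $\Fun_S(K,C)^{\otimes} \to \Fin_{\ast}$ is $\mathfrak{P}_{\ast}$-fibered, i.e., a symmetric monoidal $\infty$-category as claimed.
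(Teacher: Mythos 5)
Your overall route is the same as the paper's: verify the hypotheses of \cite[Thm.~B.4.2]{HA} for this span and conclude, via right Quillen-ness of $\pr_{\ast}(q\times\id)^{\ast}$, that the image of the fibrant object $\leftnat{C}^{\otimes}$ is fibrant over $\mathfrak{P}_{\ast}$. The raw facts you assemble are also the right ones ($\pr$ is a product projection, hence a cocartesian fibration with cocartesian edges exactly those whose $K$-component is an equivalence, hence flat; every edge of $S^{\sharp}\times(\Fin_{\ast})^{\sharp}$ is marked, so the marking conditions on $q\times\id$ are automatic). But since the entire content of the lemma is that the hypotheses hold, the places where you misstate them need fixing. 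None of the conditions of B.4.2 asks you to lift the diagrams $\lambda_{s,n}$ of $\mathfrak{P}_S$ through $q\times\id$, and nothing is ``pulled back along $q\times\id$'' except the fibrant object itself; the pattern-diagram conditions concern the diagrams $\lambda_{n}$ of the \emph{target} pattern $\mathfrak{P}_{\ast}$, pulled back along $\pr$, the leg along which one pushes forward. The nontrivial requirement is that the relevant (co)cartesian sections of $\leftnat{K}\times(\angs{n}^{\circ})^{\lhd}\to(\angs{n}^{\circ})^{\lhd}$, composed with $q\times\id$, land in the collection $\{\lambda_{s,n}\}$ of $\mathfrak{P}_S$; this is exactly where one uses that $\pr$-(co)cartesian edges have $K$-component an equivalence, so that $q$ carries these components to equivalences in $S$ and the composite diagrams have (essentially) constant $S$-component. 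Your ``decoupling of the two fibrational directions'' gestures at this but never actually verifies it, and it is the only point of the lemma with any content.

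Relatedly, the paper's verification begins by recording that $\pr$ is both a \emph{cartesian} and a cocartesian fibration (with both classes of edges detected by the $K$-component being an equivalence), because the hypotheses of B.4.2 also call for cartesian lifts along $\pr$, not only cocartesian ones; your sketch addresses only the cocartesian half, which suffices for flatness but not for those conditions. With these two corrections your argument coincides with the paper's: flatness of $\pr$; triviality of all conditions on $q\times\id$ involving markings, since everything in $S^{\sharp}\times(\Fin_{\ast})^{\sharp}$ is marked; existence of marked $\pr$-(co)cartesian lifts, using that equivalences of $K$ are marked in $\leftnat{K}$ (the ``stability of cocartesian edges'' point the paper cites); and the pattern-diagram condition via the constancy argument above. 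As written, though, the proposal verifies in part the wrong conditions and omits the one genuine check.
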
 
\begin{proof} The projection map $\pr$ is both a cartesian and cocartesian fibrations where an edge $e$ is (co)cartesian if and only if its projection to $K$ is an equivalence. Using also the basic stability property of cocartesian edges in $K$ \cite[Lem.~2.4.2.7]{HTT}, it is then easy to verify conditions (1)-(8) of \cite[Thm.~B.4.2]{HA}. By \cite[Thm.~B.4.2]{HA}, $\pr_{\ast} q^{\ast}: s\Set^+_{/\mathfrak{P}_S} \to s\Set^+_{/\mathfrak{P}}$ is right Quillen, which shows that $\Fun_S(K,C)^{\otimes}$ is a fibrant object in $s\Set^+_{/\mathfrak{P}}$ and hence a symmetric monoidal $\infty$-category.
\end{proof}

\begin{rem} An $S$-functor $f: L \to K$ yields a morphism of spans
\[ \begin{tikzcd}[row sep=4ex, column sep=4ex, text height=1.5ex, text depth=0.25ex]
& \leftnat{L} \times (\Fin_{\ast})^{\sharp} \ar{rd} \ar{ld} \ar{d}{f \times \id} & \\
(\Fin_{\ast})^{\sharp} & \leftnat{K} \times (\Fin_{\ast})^{\sharp} \ar{r} \ar{l} & S^{\sharp} \times (\Fin_{\ast})^{\sharp}
\end{tikzcd} \]
and therefore induces a map $f^{\ast}: \leftnat{\Fun_S(K,C)^{\otimes}} \to \leftnat{\Fun_S(L,C)^{\otimes}}$ of marked simplicial sets over $\Fin_{\ast}$. In other words, restriction along $f$ is a symmetric monoidal functor. 
\end{rem}

\begin{vrn} Consistent with the symmetric monoidality of restriction, the hypotheses of \cite[Thm.~B.4.2]{HA} also apply to the span 
\[ \begin{tikzcd}[row sep=4ex, column sep=4ex, text height=1.5ex, text depth=0.25ex]
S^{\sharp} \times (\Fin_{\ast})^{\sharp} & \leftnat{K} \times (\Fin_{\ast})^{\sharp} \ar{r}{q \times \id} \ar{l}[swap]{q \times \id} & S^{\sharp} \times (\Fin_{\ast})^{\sharp}.
\end{tikzcd} \]
We then define $$\underline{\Fun}_S(K,C)^{\otimes} \coloneq (q \times \id)_{\ast} (q \times \id)^{\ast} (\leftnat{C}^{\otimes})$$ as a pointwise symmetric monoidal enhancement of $\underline{\Fun}_S(K,C)$.
\end{vrn}

\bibliographystyle{amsalpha}
\bibliography{master}

\newcommand{\etalchar}[1]{$^{#1}$}
\providecommand{\bysame}{\leavevmode\hbox to3em{\hrulefill}\thinspace}
\providecommand{\MR}{\relax\ifhmode\unskip\space\fi MR }
\providecommand{\MRhref}[2]{%
  \href{http://www.ams.org/mathscinet-getitem?mr=#1}{#2}
}
\providecommand{\href}[2]{#2}
\begin{thebibliography}{AMGR21b}

\bibitem[AGM85]{AGM85}
J.~F. Adams, J.~H. Gunawardena, and H.~Miller, \emph{The {S}egal conjecture for
  elementary abelian {$p$}-groups}, Topology \textbf{24} (1985), no.~4,
  435--460. \MR{816524}

\bibitem[AHJM88]{AHJM88b}
J.~F. Adams, J.-P. Haeberly, S.~Jackowski, and J.~P. May, \emph{A
  generalization of the {S}egal conjecture}, Topology \textbf{27} (1988),
  no.~1, 7--21. \MR{935524}

\bibitem[AMGR17]{AMGR-NaiveApproach}
David Ayala, Aaron Mazel-Gee, and Nick Rozenblyum, \emph{A naive approach to
  genuine {G}-spectra and cyclotomic spectra}, arXiv:1710.06416, 2017.

\bibitem[AMGR21a]{ayala2021derived}
\bysame, \emph{Derived {M}ackey functors and ${C}_{p^n}$-equivariant
  cohomology}, 2021, arXiv:2105.02456.

\bibitem[AMGR21b]{AMGRb}
\bysame, \emph{Stratified noncommutative geometry}, arXiv:1910.14602, 2021.

\bibitem[AMS98]{AMS98}
Matthew Ando, Jack Morava, and Hal Sadofsky, \emph{Completions of {$\bold
  Z/(p)$}-{T}ate cohomology of periodic spectra}, Geom. Topol. \textbf{2}
  (1998), 145--174. \MR{1638030}

\bibitem[Bar17]{M1}
Clark Barwick, \emph{Spectral {M}ackey functors and equivariant algebraic
  {K}-theory ({I})}, Advances in Mathematics \textbf{304} (2017), 646 -- 727.

\bibitem[BDG{\etalchar{+}}16]{Exp1}
Clark Barwick, Emanuele Dotto, Saul Glasman, Denis Nardin, and Jay Shah,
  \emph{Parametrized higher category theory and higher algebra: Expos{\'e} {I}
  - {E}lements of parametrized higher category theory}, arXiv:1608.03657, 2016.

\bibitem[BG16]{BarwickGlasmanNoteRecoll}
Clark Barwick and Saul Glasman, \emph{A note on stable recollements},
  arXiv:1607.02064, 2016.

\bibitem[BH21]{BachmannHoyoisNorms}
Tom Bachmann and Marc Hoyois, \emph{Norms in motivic homotopy theory},
  Ast\'{e}risque (2021), no.~425, 207. \MR{4288071}

\bibitem[BHM93]{BHM93}
Marcel B{\"o}kstedt, Wu~Chung Hsiang, and Ib~Madsen, \emph{The cyclotomic trace
  and algebraic {K}-theory of spaces}, Inventiones mathematicae \textbf{111}
  (1993), no.~1, 465--539.

\bibitem[BM16]{BM16}
Andrew~J. Blumberg and Michael~A. Mandell, \emph{The homotopy theory of
  cyclotomic spectra}, Geometry \& Topology \textbf{19} (2016), no.~6,
  3105--3147.

\bibitem[BR19]{BR19}
Scott~M. Bailey and Nicolas Ricka, \emph{On the {T}ate spectrum of tmf at the
  prime 2}, Math. Z. \textbf{291} (2019), no.~3-4, 821--829. \MR{3936090}

\bibitem[Bru07]{Brun2007}
M.~Brun, \emph{Witt vectors and equivariant ring spectra applied to cobordism},
  Proceedings of the London Mathematical Society \textbf{94} (2007), no.~2,
  351--385.

\bibitem[Car84]{Car84}
Gunnar Carlsson, \emph{Equivariant stable homotopy and {S}egal's {B}urnside
  ring conjecture}, Ann. of Math. (2) \textbf{120} (1984), no.~2, 189--224.
  \MR{763905}

\bibitem[CDH{\etalchar{+}}20a]{CDH+20a}
Baptiste Calm{\`e}s, Emanuele Dotto, Yonatan Harpaz, Fabian Hebestreit, Markus
  Land, Kristian Moi, Denis Nardin, Thomas Nikolaus, and Wolfgang Steimle,
  \emph{Hermitian {K}-theory for stable $\infty$-categories {I}:
  {F}oundations}, arXiv:2009.07223, 2020.

\bibitem[CDH{\etalchar{+}}20b]{CDH+20b}
\bysame, \emph{Hermitian {K}-theory for stable $\infty$-categories {II}:
  {C}obordism categories and additivity}, arXiv:2009.07224, 2020.

\bibitem[CDH{\etalchar{+}}20c]{CDH+20c}
\bysame, \emph{Hermitian {K}-theory for stable $\infty$-categories {III}:
  {G}rothendieck--{W}itt groups of rings}, arXiv:2009.07225, 2020.

\bibitem[CM17]{Clausen2017}
Dustin Clausen and Akhil Mathew, \emph{A short proof of telescopic {T}ate
  vanishing}, Proceedings of the American Mathematical Society \textbf{145}
  (2017), no.~12, 5413--5417.

\bibitem[CSY18]{CSY18}
Shachar Carmeli, Tomer~M Schlank, and Lior Yanovski, \emph{Ambidexterity in
  chromatic homotopy theory}, arXiv:1811.02057 (2018).

\bibitem[CSY20]{CSY20}
Shachar Carmeli, Tomer~M. Schlank, and Lior Yanovski, \emph{Ambidexterity and
  height}, arXiv:2007.13089 (2020).

\bibitem[CSY21]{CSY21}
\bysame, \emph{Chromatic cyclotomic extensions}, arXiv:2103.02471 (2021).

\bibitem[DGM12]{DGM12}
Bj{\o}rn~Ian Dundas, Thomas~G. Goodwillie, and Randy McCarthy, \emph{The
  {L}ocal {S}tructure of {A}lgebraic {K}-{T}heory}, vol.~18, Springer-Verlag
  London, 2012.

\bibitem[DJK{\etalchar{+}}86]{DJKMW86}
Donald~M. Davis, David~C. Johnson, John Klippenstein, Mark Mahowald, and Steven
  Wegmann, \emph{The spectrum {$(P\wedge{\rm BP}\langle 2\rangle)_{-\infty}$}},
  Trans. Amer. Math. Soc. \textbf{296} (1986), no.~1, 95--110. \MR{837800}

\bibitem[DM84]{DM84}
Donald~M. Davis and Mark Mahowald, \emph{The spectrum {$(P\wedge b{\rm
  o})_{-\infty }$}}, Math. Proc. Cambridge Philos. Soc. \textbf{96} (1984),
  no.~1, 85--93. \MR{743704}

\bibitem[DMP21]{DMP21}
Emanuele Dotto, Kristian Moi, and Irakli Patchkoria, \emph{On the geometric
  fixed-points of real topological cyclic homology}, arXiv:2106.04891 (2021).

\bibitem[Gla17]{Glasman17}
Saul Glasman, \emph{Stratified categories, geometric fixed points and a
  generalized {A}rone-{C}hing theorem}, arXiv:1507.01976, 2017.

\bibitem[GM95]{GM95}
J.~P.~C. Greenlees and J.~P. May, \emph{Generalized {T}ate cohomology}, Mem.
  Amer. Math. Soc. \textbf{113} (1995), no.~543, viii+178. \MR{1230773}

\bibitem[GM17]{guillou2}
Bertrand Guillou and J.~P. May, \emph{Models of {$G$}-spectra as presheaves of
  spectra}, arXiv:1110.3571, 2017.

\bibitem[GMM17]{Guillou2017}
Bertrand Guillou, Peter May, and Mona Merling, \emph{Categorical models for
  equivariant classifying spaces}, Algebraic {\&} Geometric Topology
  \textbf{17} (2017), no.~5, 2565--2602.

\bibitem[GR17]{gaitsgory2017study}
Dennis Gaitsgory and Nick Rozenblyum, \emph{A study in derived algebraic
  geometry}, Mathematical Surveys and Monographs, American Mathematical
  Society, 2017.

\bibitem[Gre87]{Gre87}
J.~P.~C. Greenlees, \emph{Representing {T}ate cohomology of {$G$}-spaces},
  Proc. Edinburgh Math. Soc. (2) \textbf{30} (1987), no.~3, 435--443.
  \MR{908451}

\bibitem[GS96]{GS96}
J.~P.~C. Greenlees and Hal Sadofsky, \emph{The {T}ate spectrum of
  {$v_n$}-periodic complex oriented theories}, Math. Z. \textbf{222} (1996),
  no.~3, 391--405. \MR{1400199}

\bibitem[HH16]{hillhopkins}
Michael~A. Hill and Michael~J. Hopkins, \emph{Equivariant symmetric monoidal
  structures}, arXiv:1610.03114 (2016).

\bibitem[HHR16]{HHR}
Michael Hill, Michael Hopkins, and Douglas Ravenel, \emph{On the nonexistence
  of elements of {K}ervaire invariant one}, Annals of Mathematics \textbf{184}
  (2016), no.~1, 1--262.

\bibitem[Hin16]{Hinich2016}
Vladimir Hinich, \emph{Dwyer{\textendash}{K}an localization revisited},
  Homology, Homotopy and Applications \textbf{18} (2016), no.~1, 27--48.

\bibitem[HK01]{HK01}
Po~Hu and Igor Kriz, \emph{Real-oriented homotopy theory and an analogue of the
  {A}dams-{N}ovikov spectral sequence}, Topology \textbf{40} (2001), no.~2,
  317--399. \MR{1808224}

\bibitem[HL13]{HL13}
Michael Hopkins and Jacob Lurie, \emph{Ambidexterity in {K}(n)-local stable
  homotopy theory}, Preprint from the web page of the author.

\bibitem[HM97]{HM97}
Lars Hesselholt and Ib~Madsen, \emph{On the {K}-theory of finite algebras over
  {W}itt vectors of perfect fields}, Topology \textbf{36} (1997), no.~1,
  29--101.

\bibitem[HM03]{HM03}
Lars Hesselholt and Ib~Madsen, \emph{On the {K}-theory of local fields}, Annals
  of Mathematics \textbf{158} (2003), no.~1, 1--113.

\bibitem[HM13]{HM13}
Lars Hesselholt and Ib~Madsen, \emph{Real algebraic {K}-theory}, Preprint from
  the web page of the author, 2013.

\bibitem[H{\o}g16]{Hog16}
Amalie H{\o}genhaven, \emph{Real topological cyclic homology of spherical group
  rings}, arXiv:1611.01204, 2016.

\bibitem[HS96]{HS96}
Mark Hovey and Hal Sadofsky, \emph{Tate cohomology lowers chromatic {B}ousfield
  classes}, Proc. Amer. Math. Soc. \textbf{124} (1996), no.~11, 3579--3585.
  \MR{1343699}

\bibitem[HS20]{HS20}
Jeremy Hahn and XiaoLin~Danny Shi, \emph{Real orientations of {L}ubin--{T}ate
  spectra}, Inventiones mathematicae (2020), 1--46.

\bibitem[HSV19]{heine2019eal}
Hadrian Heine, Markus Spitzweck, and Paula Verdugo, \emph{Real {K}-theory for
  {W}aldhausen infinity categories with genuine duality}, arXiv:1911.11682
  (2019).

\bibitem[HSWX18]{HSWX18}
Michael~A Hill, XiaoLin~Danny Shi, Guozhen Wang, and Zhouli Xu, \emph{The slice
  spectral sequence of a ${C}_4 $-equivariant height-4 {L}ubin-{T}ate theory},
  arXiv:1811.07960 (2018).

\bibitem[Kle01]{Klein2001}
John~R. Klein, \emph{The dualizing spectrum of a topological group},
  Mathematische Annalen \textbf{319} (2001), no.~3, 421--456.

\bibitem[Kuh04]{Kuhn2004}
Nicholas~J. Kuhn, \emph{Tate cohomology and periodic localization of polynomial
  functors}, Inventiones mathematicae \textbf{157} (2004), no.~2, 345--370.

\bibitem[L\"05]{Luck05}
Wolfgang L\"{u}ck, \emph{Survey on classifying spaces for families of
  subgroups}, Infinite Groups: Geometric, Combinatorial and Dynamical Aspects,
  Birkh\"{a}user-Verlag, 2005, pp.~269--322.

\bibitem[Lai79]{Lai79}
Erkki Laitinen, \emph{On the {B}urnside ring and stable cohomotopy of a finite
  group}, Math. Scand. \textbf{44} (1979), no.~1, 37--72. \MR{544579}

\bibitem[LLQ19]{LLQ19}
Guchuan Li, Vitaly Lorman, and J.D. Quigley, \emph{Tate blueshift and vanishing
  for {R}eal oriented cohomology}, arXiv:1910.06191 (2019).

\bibitem[LMSM86]{MR866482}
L.~G. Lewis, Jr., J.~P. May, M.~Steinberger, and J.~E. McClure,
  \emph{Equivariant stable homotopy theory}, Lecture Notes in Mathematics, vol.
  1213, Springer-Verlag, Berlin, 1986, With contributions by J. E. McClure.
  \MR{866482 (88e:55002)}

\bibitem[LSWX19]{LSWX19}
Guchuan Li, XiaoLin~Danny Shi, Guozhen Wang, and Zhouli Xu, \emph{Hurewicz
  images of real bordism theory and real {J}ohnson-{W}ilson theories}, Adv.
  Math. \textbf{342} (2019), 67--115. \MR{3877362}

\bibitem[LU14]{Lck2014}
Wolfgang L\"{u}ck and Bernardo Uribe, \emph{Equivariant principal bundles and
  their classifying spaces}, Algebraic {\&} Geometric Topology \textbf{14}
  (2014), no.~4, 1925--1995.

\bibitem[Lur09]{HTT}
Jacob Lurie, \emph{Higher topos theory}, Annals of Mathematics Studies, vol.
  170, Princeton University Press, Princeton, NJ, 2009. \MR{2522659
  (2010j:18001)}

\bibitem[Lur11]{Lur13}
\bysame, \emph{Course on algebraic {L}-theory and surgery}, 2011.

\bibitem[Lur17]{HA}
\bysame, \emph{Higher algebra}, Preprint from the web page of the author, May
  2017.

\bibitem[Mat17]{mathew2017xamples}
Akhil Mathew, \emph{Examples of descent up to nilpotence}, arXiv:1701.01528
  (2017).

\bibitem[May96]{AlaskaNotes}
J.~P. May, \emph{Equivariant homotopy and cohomology theory}, CBMS Regional
  Conference Series in Mathematics, vol.~91, Published for the Conference Board
  of the Mathematical Sciences, Washington, DC; by the American Mathematical
  Society, Providence, RI, 1996, With contributions by M. Cole, G.
  Comeza\~{n}a, S. Costenoble, A. D. Elmendorf, J. P. C. Greenlees, L. G.
  Lewis, Jr., R. J. Piacenza, G. Triantafillou, and S. Waner. \MR{1413302}

\bibitem[MM15]{MM15}
Akhil Mathew and Lennart Meier, \emph{Affineness and chromatic homotopy
  theory}, J. Topol. \textbf{8} (2015), no.~2, 476--528. \MR{3356769}

\bibitem[MNN17]{MATHEW2017994}
Akhil Mathew, Niko Naumann, and Justin Noel, \emph{Nilpotence and descent in
  equivariant stable homotopy theory}, Advances in Mathematics \textbf{305}
  (2017), 994--1084.

\bibitem[MNN19]{mathew2019}
\bysame, \emph{Derived induction and restriction theory}, Geometry {\&}
  Topology \textbf{23} (2019), no.~2, 541--636.

\bibitem[MSZ20]{MSZ20}
Lennart Meier, XiaoLin~Danny Shi, and Mingcong Zeng, \emph{Norms of
  {E}ilenberg-{M}ac {L}ane spectra and real bordism}, arXiv:2008.04963 (2020).

\bibitem[Nak12]{Nak12}
Hiroyuki Nakaoka, \emph{Ideals of {T}ambara functors}, Adv. Math. \textbf{230}
  (2012), no.~4-6, 2295--2331. \MR{2927371}

\bibitem[Nar16]{Exp4}
Denis Nardin, \emph{Parametrized higher category theory and higher algebra:
  Expos{\'e} {IV} -- {S}tability with respect to an orbital $\infty$-category},
  arXiv:1608.07704, 2016.

\bibitem[Nar17]{nardin}
Denis Nardin, \emph{Stability and distributivity over orbital
  $\infty$-categories}, Ph.D. thesis, 2017.

\bibitem[NS]{paramalg}
Denis Nardin and Jay Shah, \emph{Parametrized and equivariant higher algebra},
  In preparation.

\bibitem[NS18]{NS18}
Thomas Nikolaus and Peter Scholze, \emph{On topological cyclic homology}, Acta
  Math. \textbf{221} (2018), no.~2, 203--409.

\bibitem[QS19]{QS19}
J.D. Quigley and Jay Shah, \emph{On the parametrized {T}ate construction and
  two theories of real $p$-cyclotomic spectra}, arXiv:1909.03920 (2019).

\bibitem[QS21]{QS21b}
\bysame, \emph{On the equivalence of two theories of real cyclotomic spectra},
  arXiv:2112.07462 (2021).

\bibitem[Qui21]{Qui19b}
J.D. Quigley, \emph{Real motivic and {$C_2$}-equivariant {M}ahowald
  invariants}, J. Topol. \textbf{14} (2021), no.~2, 369--418. \MR{4235014}

\bibitem[Rog08]{Rog08}
John Rognes, \emph{Galois extensions of structured ring spectra. {S}tably
  dualizable groups}, Mem. Amer. Math. Soc. \textbf{192} (2008), no.~898,
  viii+137. \MR{2387923}

\bibitem[San19]{SandersCompactnessLocus}
Beren Sanders, \emph{The compactness locus of a geometric functor and the
  formal construction of the {A}dams isomorphism}, Journal of Topology
  \textbf{12} (2019), no.~2, 287--327.

\bibitem[Sch17]{Sch17}
Marco Schlichting, \emph{Hermitian {$K$}-theory, derived equivalences and
  {K}aroubi's fundamental theorem}, J. Pure Appl. Algebra \textbf{221} (2017),
  no.~7, 1729--1844. \MR{3614976}

\bibitem[Sha21a]{Exp2}
Jay Shah, \emph{Parametrized higher category theory}, arXiv:1809.05892, 2021.

\bibitem[Sha21b]{Exp2b}
\bysame, \emph{Parametrized higher category theory {II}: {U}niversal
  constructions}, arXiv:2109.11954, 2021.

\bibitem[Sha21c]{ShahRecoll}
\bysame, \emph{Recollements and stratification}, arXiv:2110.06567, 2021.

\bibitem[Spi16]{spitzweck2016}
Markus Spitzweck, \emph{A {G}rothendieck-{W}itt space for stable infinity
  categories with duality}, arXiv:1610.10044 (2016).

\bibitem[Swa60]{Swa60}
Richard~G. Swan, \emph{A new method in fixed point theory}, Comment. Math.
  Helv. \textbf{34} (1960), 1--16. \MR{115176}

\bibitem[Tat52]{Tat52}
John Tate, \emph{The higher dimensional cohomology groups of class field
  theory}, Annals of Mathematics (1952), 294--297.

\bibitem[Ull13]{Ull13}
John Ullman, \emph{On the slice spectral sequence}, Algebr. Geom. Topol.
  \textbf{13} (2013), no.~3, 1743--1755. \MR{3071141}

\bibitem[WW95]{WW95}
Michael Weiss and Bruce Williams, \emph{Assembly}, Novikov conjectures, index
  theorems and rigidity, {V}ol. 2 ({O}berwolfach, 1993), London Math. Soc.
  Lecture Note Ser., vol. 227, Cambridge Univ. Press, Cambridge, 1995,
  pp.~332--352. \MR{1388318}

\end{thebibliography}

\end{document}